\pdfoutput=1
\documentclass[12pt]{amsart}
\usepackage{lmodern}
\usepackage{ifthen}
\usepackage{amsfonts}
\usepackage{amsxtra}
\usepackage{amssymb}
\usepackage{mathdots}
\usepackage{array}
\usepackage[margin=0.8in]{geometry}
\usepackage{xcolor}
\definecolor{cite}{rgb}{0.30,0.60,1.00}
\definecolor{url}{rgb}{0.00,0.00,0.80}
\definecolor{link}{rgb}{0.40,0.10,0.20}
\usepackage[colorlinks,linkcolor=link,urlcolor=url,citecolor=cite,pagebackref,breaklinks]{hyperref}
\usepackage{bbm}
\usepackage{mathtools}
\usepackage{mathrsfs}
\usepackage{appendix}
\usepackage[all]{xy}
\usepackage[lite,abbrev,msc-links,alphabetic]{amsrefs}

\usepackage{graphicx}
\usepackage{multirow}
\usepackage{pstricks}
\usepackage{pst-pdf}
\usepackage{enumitem}
\usepackage{pifont}
\usepackage{marvosym}
\usepackage{txfonts}

\usepackage[OT2,T1]{fontenc}
\DeclareSymbolFont{cyrletters}{OT2}{wncyr}{m}{n}
\DeclareMathSymbol{\Sha}{\mathalpha}{cyrletters}{"58}

\usepackage{graphicx}

\makeatletter
\providecommand*{\Dashv}{%
  \mathrel{%
    \mathpalette\@Dashv\vDash
  }%
}
\newcommand*{\@Dashv}[2]{%
  \reflectbox{$\m@th#1#2$}%
}
\makeatother

\setitemize[0]{leftmargin=20pt}
\setenumerate[0]{leftmargin=30pt}

\numberwithin{equation}{section}

\theoremstyle{plain}
\newtheorem{proposition}{Proposition}[section]

\newtheorem{corollary}[proposition]{Corollary}
\newtheorem{lem}[proposition]{Lemma}
\newtheorem{theorem}[proposition]{Theorem}

\theoremstyle{definition}
\newtheorem{definition}[proposition]{Definition}

\newtheorem{notation}[proposition]{Notation}

\theoremstyle{remark}
\newtheorem{remark}[proposition]{Remark}
\newtheorem{example}[proposition]{Example}

\renewcommand{\b}[1]{\mathbf{#1}}
\renewcommand{\c}[1]{\mathcal{#1}}
\renewcommand{\d}[1]{\mathbb{#1}}
\newcommand{\f}[1]{\mathfrak{#1}}
\renewcommand{\r}[1]{\mathrm{#1}}
\newcommand{\s}[1]{\mathscr{#1}}
\renewcommand{\sf}[1]{\mathsf{#1}}
\renewcommand{\(}{\left(}
\renewcommand{\)}{\right)}
\newcommand{\res}{\mathbin{|}}

\newcommand{\ol}[1]{\overline{#1}{}}

\newcommand{\ul}{\underline}
\renewcommand{\leq}{\leqslant}
\renewcommand{\geq}{\geqslant}

\newcommand{\bA}{\b A}

\newcommand{\cV}{\c V}
\newcommand{\cW}{\c W}

\newcommand{\dC}{\d C}

\newcommand{\dH}{\d H}

\newcommand{\dL}{\d L}

\newcommand{\dN}{\d N}

\newcommand{\dQ}{\d Q}
\newcommand{\dR}{\d R}

\newcommand{\dZ}{\d Z}
\newcommand{\fA}{\f A}

\newcommand{\fD}{\f D}

\newcommand{\fF}{\f F}

\newcommand{\fS}{\f S}
\newcommand{\fT}{\f T}
\newcommand{\fU}{\f U}
\newcommand{\fV}{\f V}
\newcommand{\fW}{\f W}

\newcommand{\fg}{\f g}
\newcommand{\fh}{\f h}

\newcommand{\fl}{\f l}

\newcommand{\fp}{\f p}

\newcommand{\rC}{\r C}
\newcommand{\rD}{\r D}
\newcommand{\rE}{\r E}
\newcommand{\rF}{\r F}
\newcommand{\rG}{\r G}
\newcommand{\rH}{\r H}
\newcommand{\rI}{\r I}

\newcommand{\rL}{\r L}
\newcommand{\rM}{\r M}

\newcommand{\rO}{\r O}

\newcommand{\rR}{\r R}

\newcommand{\rU}{\r U}

\newcommand{\rZ}{\r Z}

\newcommand{\rd}{\,\r d}
\newcommand{\re}{\r e}

\newcommand{\rh}{\r h}

\newcommand{\sA}{\s A}

\newcommand{\sD}{\s D}

\newcommand{\sF}{\s F}
\newcommand{\sG}{\s G}

\newcommand{\sO}{\s O}

\newcommand{\sS}{\s S}

\newcommand{\sV}{\s V}
\newcommand{\sW}{\s W}

\newcommand{\sfM}{\sf M}

\newcommand{\sft}{\sf t}

\newcommand{\sfv}{\sf v}

\newcommand{\sfy}{\sf y}
\newcommand{\sfz}{\sf z}

\newcommand{\tB}{\mathtt{B}}

\newcommand{\tF}{\mathtt{F}}

\newcommand{\tS}{\mathtt{S}}
\newcommand{\tT}{\mathtt{T}}

\newcommand{\tc}{\mathtt{c}}

\newcommand{\te}{\mathtt{e}}

\newcommand{\ti}{\mathtt{i}}

\newcommand{\tq}{\mathtt{q}}

\newcommand{\tw}{\mathtt{w}}

\newcommand{\balpha}{\boldsymbol{\alpha}}
\newcommand{\bbeta}{\boldsymbol{\beta}}

\newcommand{\pres}[2]{\prescript{#1}{}{#2}}

\newcommand{\hs}{\r{hs}}
\newcommand{\Herm}{\r{Herm}}

\newcommand{\RE}{\r{Re}\,}

\DeclareMathOperator{\Aut}{Aut}
\DeclareMathOperator{\BC}{BC}
\DeclareMathOperator{\CH}{CH}

\DeclareMathOperator{\Div}{Div}

\DeclareMathOperator{\End}{End}

\DeclareMathOperator{\GL}{GL}

\DeclareMathOperator{\Hom}{Hom}

\DeclareMathOperator{\Ker}{ker}

\DeclareMathOperator{\Mp}{Mp}
\DeclareMathOperator{\Mat}{Mat}

\DeclareMathOperator{\rank}{rk}
\DeclareMathOperator{\Res}{Res}

\DeclareMathOperator{\SO}{SO}
\DeclareMathOperator{\Sp}{Sp}

\DeclareMathOperator{\Sym}{Sym}

\DeclareMathOperator{\tr}{tr}
\DeclareMathOperator{\Tr}{Tr}

\DeclareMathOperator{\vol}{vol}

\begin{document}

\title[Archimedean arithmetic Siegel--Weil formula]
{Archimedean arithmetic Siegel--Weil formula for general weights over Shimura curves}

\author{Yifeng Liu}
\address{Institute for Advanced Study in Mathematics, Zhejiang University, Hangzhou 310058, China}
\email{liuyf0719@zju.edu.cn}

\date{\today}
\subjclass[2020]{11F67, 11F70, 11F72}
\keywords{arithmetic Siegel--Weil formula, Funke--Millson forms}

\begin{abstract}
  We prove an averaging formula for the canonical archimedean height pairing of special divisors with weights over orthogonal and unitary Shimura curves in terms of derivatives of Whittaker functions.
\end{abstract}

\maketitle

\tableofcontents

\section{Introduction}

Height is one of the most important concepts in number theory, with earliest prototype dating back to the method of \emph{infinite descent} by Fermat. A century ago, Mordell systemically used the notion of height to prove the famous Mordell theorem on the finite generation of rational points on elliptic curves. The next highlight moment of height was Faltings' proof of the Mordell conjecture and the Gross--Zagier formula toward the BSD conjecture, both in the 1980s. The current article is a small step toward a direction which we would describe as \emph{height with coefficients}.

Recall that the Gross--Zagier formula \cite{GZ86} computes the central derivative of the Rankin--Selberg $L$-function of a cusp newform of weight $2$ and trivial Nebentypus and a ring class group character of an imaginary quadratic field, jointly under the so-called Heegner condition, through the height of the so-called Heegner point. Under the philosophy of this formula, one expects an arithmetic interpretation of the central derivative of $L(s,f,\chi)$, where $f$ is a Hilbert cusp newform over a totally real field $F$ and $\chi$ is a Hecke character of a CM extension $E/F$, jointly satisfying
\begin{itemize}
  \item \emph{self-duality condition}: the Nebentypus of $f$ is inverse to the restriction of $\chi$ to $F$ (which would imply that $L(s,f,\chi)$ is self-dual);

  \item \emph{odd-parity condition}: the root number of $L(s,f,\chi)$ is $-1$ (so that the central vanishing order is odd);

  \item \emph{weight-balancing condition}: the weight of $\chi$ at every complex embedding of $E$ is at most the weight of $f$ at the underlying real place of $F$ minus $2$.
\end{itemize}

When $f$ has parallel weight $2$ (so that $\chi$ must be a Dirichlet character), such expectation has been achieved by Yuan--Zhang--Zhang \cite{YZZ13} unconditionally. In higher weight case, S.~Zhang \cite{Zha97} obtained a formula, still under the Heegner condition (so that $F=\dQ$), for $f$ a cusp newform of arbitrary even weight and trivial Nebentypus and $\chi$ a ring class group character, through the height of the so-called Heegner cycle. However, unlike the weight $2$ case, Zhang's formula crucially replies on the Heegner condition as the construction involves the Kuga--Sato variety, which does not exist for other quaternionic Shimura curves.\footnote{For quaternionic Shimura curves over $\dQ$, one does have an analogue of the Kuga--Sato variety; but it remains the challenge to construct the Heegner cycle.} For this reason, there is almost no further progress in the case of general weights since Zhang's work, except for a recent preprint \cite{LS} where the authors allow the character to have nonzero weights (but still under the Heegner condition).

In the ongoing work \cite{Liu26}, we will provide a solution to this problem. We propose a (conjectural) formula computing the central $L$-derivative of an arbitrary cohomological isobaric automorphic representation $\Pi$ of $\GL_{2r,E}$ that is conjugate self-dual with respect to a CM extension $E/F$, in terms of arithmetic theta lifting, as long as the weight of $\Pi$ satisfies a \emph{balancing condition}.\footnote{Informally speaking, this means that $0$ can be inserted in the middle place of the weight with respect to any complex embedding of $E$.} Note that when $\Pi$ is of minimal weight (that is, the zero weight), this has already been carried out in \cites{Liu11,Liu12} for $r=1$ unconditionally, and in \cites{LL21,LL22} for general $r$ under certain conditions. For a general weight $\lambda$, we perform the following acts in \cite{Liu26}:
\begin{itemize}
  \item construct a relative motive $\rM_\lambda$ over a certain unitary Shimura variety $X$ over $E$ whose complex uniformization is an open ball of dimension $2r-1$;\footnote{Currently, we have not found the parallel construction for orthogonal Shimura varieties.}

  \item when $\lambda$ is balanced, define generating series with values in the motivic cohomology $\rH^{2r}(X,\rM_\lambda(r))$ (which we call \emph{motivic generating series}), generalizing the famous work of Kudla--Millson \cite{KM90} where $\rM_\lambda$ is the unit motive so that $\rH^{2r}(X,\rM_\lambda(r))=\CH^r(X)$;

  \item define the \emph{motivic arithmetic theta lifting} of $\Pi$ as above assuming the modularity of the motivic generating series;

  \item define a \emph{global height pairing} for $\rH^{2r}(X,\rM_\lambda(r))$ and propose a conjectural \emph{motivic arithmetic inner product formula} relating $L'(1/2,\Pi)$ and the height of the motivic arithmetic theta lifting of $\Pi$;

  \item prove the motivic arithmetic inner product formula when $r=1$.
\end{itemize}
Now going back to the $L$-function $L(s,f,\chi)$ we have discussed (with $f,\chi$ satisfying the three conditions), the last act indeed provides a formula for its central derivative by taking $\Pi$ to be $\BC(\pi_f)\otimes\chi$, where $\pi_f$ is the automorphic representation of $\GL_{2,F}$ generated by $f$ and $\BC$ denotes the automorphic quadratic base change from $F$ to $E$.\footnote{Note that $\Pi$ is not necessarily cuspidal; but it is cohomological, self-dual, and of balanced weight.} It is worth noting that we do not even require the weights of $f$ to have the same parity at different real places -- the self-duality condition only implies that at a given archimedean place, the weight of $f$ and that of $\chi$ have the same parity.

\begin{remark}
When the weights of $f$ do not have the same parity at different real places, it is necessary to pass to the unitary group since $\pi_f$ alone is \emph{not} cohomological as an automorphic representation of $\GL_{2,F}$; but $\Pi$ is still a cohomological automorphic representation of $\GL_{2,E}$.
\end{remark}

The global height on $\rH^{2r}(X,\rM_\lambda(r))$ can also be decomposed into a sum of local heights over all places of $E$ for certain representatives of classes, similar to the case of constant coefficients. Gladly, when $r=1$ (so $X$ is a curve), such local heights coincide with the one defined and studied by Brylinski in \cite{Bry89}. In some sense, our work \cite{Liu26} fulfills an expectation of Brylinski,\footnote{See the last sentence of \cite{Bry89}*{\S0}.} at least for unitary Shimura curves, that there should be a global theory overseeing these local heights.

The primary purpose of this article is to compute the archimedean local height of the motivic generating series when $r=1$, that is, over unitary Shimura curves. In fact, we also perform the parallel computation over orthogonal Shimura curves.\footnote{Although we do not know yet how to define the motivic generating series in the orthogonal case, its ``archimedean realization'' can be defined.} More precisely, here is what we will do in this article:
\begin{itemize}
  \item In \S\ref{ss:2}, we review the Funke--Millson theory \cite{FM06} -- the generalization of the Kudla--Millson theory \cites{KM86,KM90} to general weights -- for a quadratic space of signature $(m-2,2),(m,0),(m,0),\dots$, including Funke--Millson forms over the corresponding orthogonal Shimura variety, their generating series, together with main properties.

  \item In \S\ref{ss:3}, we develop the parallel theory for a hermitian space of signature $(p,q),(m,0),(m,0),\dots$ ($p+q=m$), following the same line in \cite{FM06}.

  \item In \S\ref{ss:4}, we review the notion of relative Hodge structure (a.k.a. variation of Hodge structures) over complex manifolds, especially the one with complex coefficients, which is not so well recorded in literature.

  \item In \S\ref{ss:5}, we review the notion of height pairing of divisors with coefficients in a relative Hodge structure, both real and complex, following the work \cite{Bry89}.

  \item In \S\ref{ss:6} and \S\ref{ss:7}, we define generating series of special divisors with coefficients in a relative Hodge structure associated with a balanced weight in the orthogonal case with $m=3$; we construct the associated Green functions and compute their height pairing; finally, we deduce an averaging formula for the \emph{canonical} archimedean height pairing for the special divisors.

  \item In \S\ref{ss:8} and \S\ref{ss:9}, we define generating series of special divisors with coefficients in a relative Hodge structure associated with a balanced weight in the unitary case with $m=2$ and $p=q=1$; we construct the associated Green functions and compute their height pairing; finally, we deduce an averaging formula for the \emph{canonical} archimedean height pairing for the special divisors.
\end{itemize}

To end the introduction, we would like to present a theorem that will actually be used in \cite{Liu26}. Denote by $G_1$ the isometry group of the quasi-split skew-hermitian form of rank $2$ over $E/F$ and write $\dH_1$ for its hermitian symmetric domain (of complex dimension $[F:\dQ]$).

Fix a complex embedding $u_0\colon E\to\dC$ with the underlying real embedding $v_0\colon F\to\dR$. Let $V$ be a hermitian space over $E$ of signature $(1,1)$ at $v_0$ and $(2,0)$ at others. Put $H\coloneqq\rU(V)$ and take a balanced weight $\lambda$ for $H$, which, in this case, is just a tuple $b=(b_u)_u\in\dN^{\Hom(E,\dC)}$. Choose a sufficiently small open compact subgroup $L$ of $H(\bA_F^\infty)$ ($\bA$ for ad\`{e}les) and denote by $X_L$ the corresponding unitary Shimura curve over $\dC$ (via the embedding $u_0$). From $\lambda$, we have a complex relative Hodge structure $\cV_\lambda$ over $X_L$ (Definition \ref{de:hodge_unitary}) whose underlying (complex) local system we denote by $V_\lambda$. For every Schwartz function $\phi^\infty\in\sS(V\otimes_F\bA_F^\infty)^L$ and every $t\in F^\times$, we define in \S\ref{ss:8} a weighted special divisor with coefficients in $\cV_\lambda$:
\[
\rD^\lambda_t(\phi^\infty)(\tau)\in\Div(X_L,\cV_\lambda)\coloneqq\bigoplus_{x\in X_L} (V_\lambda)_x\cap(\cV_\lambda^{(0,0)})_x,
\]
which is a function on $\tau\in\dH_1$. Now assume that $V$ is anisotropic so that $X_L$ is proper, and that $\lambda$ is nontrivial. In this case, $\rD^\lambda_t(\phi^\infty)(\tau)$ is automatically cohomologically trivial.

\begin{theorem}[Theorem \ref{th:canonical_unitary}]
Let $K$ be an open compact subgroup of $G_1(\bA_F^\infty)$ and choose a fundamental domain $\dH_1^K$ of $\dH_1$ for the congruent subgroup $G_1(F)\cap K$. Consider an element $t_2\in F$ and a pair $(\phi^\infty_1,\phi^\infty_2)\in(\sS(V\otimes_F\bA_F^\infty)^L)^2$ such that $\phi^\infty_1\otimes\ol{\phi^\infty_2}$ is regularly supported (Definition \ref{de:regular_unitary}). For every (holomorphic) hermitian Hilbert cusp form $f$ on $\dH_1$ of (classical) weights $(2b_u+1,-2b_{\ol{u}}-1)_u$ and level $G_1(F)\cap K$,
\begin{align*}
&\vol(L,\rd^\natural h)\int_{\dH_1^K}\ol{f(\tau_1)}\sum_{t_1\in F}
\left\langle\rD^\lambda_{t_1}(\phi^\infty_1)(\tau_1),\rD^\lambda_{t_2}(\phi^\infty_2)(\tau_2)\right\rangle_{X_L}\rd\tau_1 \\
&\qquad\quad=\frac{1}{2}\int_{\dH_1^K}\ol{f(\tau_1)}\sum_{t_1\in F}\sum_{T\in\fT(t_1,t_2)_{v_0}}
W'_T(0,g_{\tau_1,\tau_2},\Phi_\lambda\otimes(\phi^\infty_1\otimes\ol{\phi^\infty_2}))
\rd\tau_1
\end{align*}
holds in which both sides are absolutely convergent, where
\begin{itemize}
  \item $\vol(L,\rd^\natural h)$ is the volume of $L$ under the Siegel--Weil measure (Definition \ref{de:measure_unitary});

  \item $\langle\;,\;\rangle_{X_L}$ denotes the canonical (hermitian) archimedean height pairing between cohomologically trivial divisors with coefficients (Definition \ref{de:canonical_complex});

  \item $\rd\tau_1$ denotes the measure on $\dH_1$ that induces the Petersson inner product;

  \item $\fT(t_1,t_2)_{v_0}$ is a certain set of moment matrices (Notation \ref{no:moment_unitary}); and

  \item $W_T(s,g,\Phi)$ is the $T$-th Whittaker function, whose precise definition we leave to \S\ref{ss:8}.
\end{itemize}
\end{theorem}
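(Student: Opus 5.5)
The plan follows the Kudla strategy for archimedean arithmetic Siegel--Weil formulas. We replace the canonical height pairing by an explicit Green-function pairing and then remove the error by integrating against a cusp form. The first step uses the constructions of \S\ref{ss:8}. For each $t\in F^\times$ and $\tau\in\dH_1$, we use the Funke--Millson form of \S\ref{ss:3} (in the balanced weight $\lambda$) to build Kudla-type Green currents $\Xi^\lambda_t(\phi^\infty)(\tau)$ for $\rD^\lambda_t(\phi^\infty)(\tau)$, with values in $\cV_\lambda$. They come from integrating the Funke--Millson Schwartz form against the exponential integral in the usual way. This gives a pair $(\rD^\lambda_t,\Xi^\lambda_t)$ whose associated $(1,1)$-form $\omega^\lambda_t(\tau)$ is the $t$-th Fourier coefficient of the Funke--Millson theta series.

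Following \cite{Bry89} and \S\ref{ss:5}, the local height pairing of two such divisors with coefficients, computed with these Green currents, is a star product. We evaluate it as an integral over $X_L$. The regular support hypothesis (Definition \ref{de:regular_unitary}) ensures that $\rD_{t_1}$ and $\rD_{t_2}$ have disjoint supports, so no self-intersection terms appear. We then unfold the integral over $X_L$ against the sum over $V$, exactly as in the Siegel--Weil argument. The result is a sum over pairs $(x_1,x_2)$ with moment matrix $T\in\fT(t_1,t_2)_{v_0}$ of orbital integrals over $H(F_{v_0})\simeq\rU(1,1)$. After the factor $\vol(L,\rd^\natural h)$ is absorbed (Definition \ref{de:measure_unitary}), each orbital integral should be the archimedean local computation. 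It factors into the nonarchimedean local Whittaker values times an archimedean integral.

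The archimedean identity to prove is that the star-product local integral equals $\frac12 W'_{T,v_0}(0,g,\Phi_{\lambda,v_0})$ up to a holomorphic correction. I expect this to be the main obstacle. I would attack it by writing $\Phi_\lambda$ at $v_0$ through the Funke--Millson polynomial twisted by the harmonic tensors of $V_\lambda$. I would then use the Laplace-type identity $\partial\bar\partial\Xi = \omega-\delta$ to reduce the difference to a term of the form $\int \omega_1\wedge\Xi_2$. This term is computed by Shintani/Siegel--Weil to be $\frac{\rd}{\rd s}$ of the Eisenstein Whittaker coefficient. The remaining terms are integrals of a derivative of an incomplete Gamma function against weighted polynomials. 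I would match these with the derivative of the confluent hypergeometric Whittaker function by an explicit one-variable computation, inducting on the weight $b_{u_0}$ via raising and lowering operators.

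Finally, the explicit Green pairing and the canonical pairing $\langle\;,\;\rangle_{X_L}$ (Definition \ref{de:canonical_complex}) differ by the pairing of $\rD_{t_1}$ with the harmonic projection of the Green current. This error is a contribution from the nonholomorphic, noncuspidal part in $\tau_1$. Since $\lambda$ is nontrivial, the divisors are cohomologically trivial and there is no constant term. The correction terms, together with any nonholomorphic parts of the Whittaker side, are either orthogonal to holomorphic cusp forms of weight $(2b_u+1,-2b_{\ol u}-1)$ or are images of lowering operators. So they vanish after we integrate against $\ol{f(\tau_1)}$ over $\dH_1^K$. Absolute convergence follows from the rapid decay of $f$ and the moderate growth of both generating series.
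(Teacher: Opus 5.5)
Your overall architecture (Green functions built from the Funke--Millson form, unfolding to orbital integrals, a local archimedean identity, then averaging against $f$ to reach the canonical pairing) is the one the paper uses. The paper's proof of this theorem is precisely that last step, deduced from Theorem~\ref{th:sw_global_unitary}, and that is where your argument has a genuine gap.

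Write $\omega_i$ for the tail form $\rF^\lambda_{t_i}(\phi^\infty_i)(\tau_i)$, and let $\eta_1$ be a smooth solution of $\rD\rD^\tc\eta_1=\omega_1$, chosen linearly in $\omega_1$. Then $\frac12\int_{X_L}\rG^\lambda_{t_1}(\phi^\infty_1)(\tau_1)\ast\rG^\lambda_{t_2}(\phi^\infty_2)(\tau_2)$ differs from the canonical pairing by $\frac12\int_{X_L}\tr(\eta_1\wedge\ol{\omega_2})$. After averaging against $\ol{f(\tau_1)}$, this error is governed by the theta lift $\theta_f\coloneqq\int_{\dH_1^K}\ol{f(\tau_1)}\sum_{t_1}\rF^\lambda_{t_1}(\phi^\infty_1)(\tau_1)\rd\tau_1$.

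The missing idea is that $\theta_f=0$. The paper proves this by local theta dichotomy: the holomorphic discrete series of weight $(b_{u_0}+\frac12,-b_{\ol{u_0}}-\frac12)$ already has nonzero theta lift $\tS_{[\lambda_{v_0}]}$ to the compact $\rU(2)$, hence zero lift to $H(F_{v_0})\simeq\rU(1,1)$. Your substitutes do not give this.
\begin{itemize}
\item Cohomological triviality carries no information here. Since $\rH^2(X_L,\tS_{[\lambda]}(V_\infty))=0$, \emph{every} coefficient-valued $2$-form on $X_L$ is $\rD\rD^\tc$-exact, so nothing follows about $\theta_f$.
\item Lying in the image of a lowering operator does not make a form orthogonal to holomorphic cusp forms. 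Since $\rL f=0$, it is images of \emph{raising} operators that are orthogonal to $f$.
\item Nothing should be discarded on the Whittaker side: the right-hand side keeps the full sum of $W'_T$.
\end{itemize}

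Your plan for the local archimedean identity also does not match what works here.
\begin{itemize}
\item The paper's local statement (Theorem~\ref{th:sw_local_unitary}) is exact, $\int_\fD g_\lambda(x_1)\ast g_\lambda(x_2)=C_\infty^{-1}W'_{T(x_1,x_2)}(0,1_4,\Phi_\lambda)$, with no holomorphic correction. The $\frac12$ in the statement comes only from the definition of the canonical pairing.
\item An explicit one-variable matching in the style of Kudla and Liu diagonalizes $T$ using the $\rU(2)$-invariance of the star product. The paper points out that this invariance fails for nontrivial $\lambda$.
\item Siegel--Weil computes values, not derivatives, so it cannot directly produce the $\frac{\rd}{\rd s}$ term you attribute to it.
\end{itemize}

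The paper instead follows Garcia--Sankaran in three steps:
\begin{enumerate}
\item It transgresses $g_\lambda(x_1)\ast g_\lambda(x_2)$ to the smooth form $g^\circ_\lambda(x_1,x_2)$ built from $\nu_\lambda$ and $\varphi_\lambda$, using $\omega_{\psi_{F,v_0}}(\rL_{v_0})\varphi_\lambda=-\rD\rD^\tc\nu_\lambda$ (Lemma~\ref{le:holomorphy_unitary}).
\item It applies the local Siegel--Weil formula to express the resulting integral through $W_{tT}(0,1_4,\Psi_\lambda)$.
\item It uses Proposition~\ref{pr:whittaker_unitary}, $\rL_i f_{\Phi_\lambda}(s)=-s f_{\Psi_{\lambda i}}(s)+s f_{\Phi_{\lambda i}}(s)$, where $\Phi_{\lambda i}$ comes from the definite space. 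Since $W_T(0,\cdot,\Phi_{\lambda i})=0$ for $\det T<0$, the $t$-integral becomes a total derivative of $W'_T$. The derivative thus enters only through the factor $s$ in this identity.
\end{enumerate}
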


The proof of the above theorem replies on a non-harmonic Green function $\rG^\lambda_t(\phi^\infty)$ for $\rD^\lambda_t(\phi^\infty)$, which is the analogue of the Kudla--Millson Green function in the case of trivial weights. To compute the height pairing between $\rG^\lambda_{t_1}(\phi^\infty_1)(\tau_1)$ and $\rG^\lambda_{t_2}(\phi^\infty_2)(\tau_2)$, we follow the strategy in \cite{GS19}. There is a key difference between the case of general weights and the case of trivial weights, namely, the $\rU(2)$-invariance property \cite{Liu11}*{Proposition~4.10} does not hold in general. As a result, the approach via direct computation in \cite{Liu11} does not work anymore; however, the conceptual approach of \cite{GS19} still works, although we do not find an interpretation of $\rG^\lambda_t(\phi^\infty)$ via superconnections.

\subsubsection*{Notation}

\begin{itemize}
  \item The imaginary unit is denoted by $\ti$; and the natural constant is denoted by $\te$.

  \item For every $t\in\dR_{>0}$ and $s\in\dC$, $t^s$ means $\te^{(\ln t)\cdot s}$. For $z\in\dC^\times$, $|z|\coloneqq(z\ol{z})^{1/2}$.

  \item For $b\in\dN=\{0,1,2,\dots\}$, denote by $\fS_b$ the group of permutations on $\{1,\dots,b\}$. For a finite tuple $b=(b_v)_v$ of elements in $\dN$, put $\fS_b\coloneqq\prod_{v}\fS_{b_v}$.

  \item All quadratic spaces (over a ring $F$) or hermitian spaces (over an \'{e}tale quadratic extension $E/F$ of rings) are assumed to be of finite rank and nondegenerate. For such a space $V$ with the form $(\;,\;)_V$ when $2\in F^\times$, we have the moment matrix
      \[
      T(x)=\frac{1}{2}\((x_i,x_j)_V\)_{1\leq i,j\leq r}
      \]
      for a tuple $x=(x_1,\dots,x_r)\in V^r$; and for an $r$-by-$r$ matrix $T$ with entries in $F$ or $E$, put
      \[
      V^r_T\coloneqq\left\{x\in V^r\res T(x)=T\right\}.
      \]

  \item Denote by $\psi\colon\dQ\backslash\bA_\dQ\to\dC^\times$ the standard additive automorphic character, that is, the unique character such that $\psi_{\dQ,\infty}(x)=\re^{2\pi ix}$. For every number field $F$, put $\psi_F\coloneqq\psi\circ\Tr_{F/\dQ}\colon F\backslash\bA_F\to\dC^\times$.
\end{itemize}

Let $m\geq 0$ be an integer.
\begin{itemize}
  \item Denote by $\fW_m$ the set of tuple $\lambda=(b_1,b_2,\dots,b_m)$ of decreasing integers. For $\lambda\in\fW_m$, put $b(\lambda)\coloneqq\sum_{i=1}^m b_i$. We denote the zero tuple by $\b{0}$ (when the length $m$ is clear).

  \item We have an involution map on $\fW_m$ sending $\lambda$ to $\ol\lambda\coloneqq(-b_m,\dots,-b_2,-b_1)$. Denote by $\fW'_m$ the subset of $\fW_m$ consisting of $\lambda$ satisfying $\ol\lambda=\lambda$.

  \item For $\lambda\in\fW_m$, denote by $0\leq s=s(\lambda)\leq m$ the least integer such that $b_s>0$ ($s=0$ if $b_1\leq 0$).

  \item Denote by $\fW_m^+$ the subset of $\fW_m$ consisting of $\lambda$ such that $b_m\geq 0$. We will adopt the convention that an element $\lambda\in\fW_m^+$ is naturally an element in $\fW_r^+$ for every integer $r\geq s(\lambda)$ by adding or deleting zeros.

  \item For $\lambda\in\fW_m$, put $\lambda^+\coloneqq(b_1,\dots,b_s,0,\dots,0)\in\fW_m^+$.

  \item For $\lambda\in\fW_m^+$, denote by $\fF(\lambda,m)$ the set of semistandard fillings of the Young diagram of $\lambda$ by $\{1,\dots,m\}$ \cite{FM06}*{Definition~3.6}.
\end{itemize}

\subsubsection*{Acknowledgements}

The author is partially supported by the National Key R\&D Program of China No.~2022YFA1005300.

\section{Funke--Millson theory for orthogonal groups}
\label{ss:2}

In this section, we review the work of Funke--Millson \cite{FM06}. Let $F$ be a ring, $r$ and $m$ positive integers.

\begin{notation}\label{no:lambda_orthogonal}
Consider a free $F$-module $U$ of rank $r$ and an element $\lambda\in\fW_r^+$ such that $b!$ is invertible in $F$, where $b\coloneqq b(\lambda)$. We have an idempotent $\pi_\lambda\in\dZ[1/b!][\fS_b]$ (with respect to the standard filling) hence a direct summand
\[
\tS_\lambda(U)\coloneqq\pi_\lambda\(\tT^b(U)\)
\]
of $\tT^b(U)$ of $F[\GL_F(U)]$-modules. See \cite{FM06}*{\S3.1.1} for more details.

In general, for a finite set $\fV$ and a tuple $\lambda=(\lambda_v)_v\in(\fW_r^+)^\fV$ such that $b_v!$ ($b_v\coloneqq b(\lambda_v)$) is invertible in $F$ for every $v\in\fV$, we have
\[
\tT^b(U)\coloneqq\bigotimes_{v\in\fV}\tT^{b_v}(U)
\]
(tensor over $F$) and similarly $\tS_\lambda(U)$, together with the projector
\[
\pi_\lambda\coloneqq\bigotimes_{v\in\fV}\pi_{\lambda_v}.
\]
\end{notation}

\begin{definition}\label{de:schur_orthogonal}
Consider a (nondegenerate) quadratic space $V$ over $F$ of rank $m$ and an element $\lambda\in\fW'_m$ such that $m\cdot b!$ is invertible in $F$, where $b\coloneqq b(\lambda^+)$. For every pair $I=(i,j)$ with $1\leq i<j\leq b$, we have the contraction operator
\[
C_I\colon\tT^b(V)\to\tT^{b-2}(V)
\]
by ``pairing'' the $i$-th and $j$-th factors of $\tT^b(V)$, and the expansion operator
\[
A_I\colon\tT^{b-2}(V)\to\tT^b(V)
\]
by inserting the map $F\to V\otimes_FV^\vee\to V\otimes_F V$ to the $(i,j)$-th spot of $\tT^b(V)$, where the first arrow is the coevaluation map and the second one is the isomorphism induced from the nondegenerate pairing. If we put
\[
\tT_{[b]}(V)\coloneqq\bigcap_{I}\Ker A_I,
\]
then
\begin{align}\label{eq:harmonic_decomposition_orthogonal}
\tT^b(V)=\tT_{[b]}(V)\oplus\(\bigoplus_{r=1}^{\lfloor b/2\rfloor}\tT_{[b]}(V)_{b-2r}\),
\end{align}
where
\[
\tT_{[b]}(V)_{b-2r}\coloneqq\sum A_{I_1}\circ\cdots\circ A_{I_r}\(\tT^{[b-2r]}(V)\).
\]
It is clear that the decomposition \eqref{eq:harmonic_decomposition_orthogonal} is stable under the action of both $\rO(V)$ and $\fS_b$. In particular, we may define
\[
\tS_{[\lambda]}(V)\coloneqq\pi_{\lambda^+}\(\tT_{[b]}(V)\),
\]
which is a direct summand of the $F[\rO(V)]$-module $\tT_{[b]}(V)$ hence $\tT^b(V)$.
\end{definition}

\begin{example}\label{ex:ratio_orthogonal}
Suppose that $m=3$ and $\lambda=(b,0,-b)$ for some $b\in\dN$. There exists a unique homogenous polynomial $Q_\lambda\in\dZ[(3\cdot b!)^{-1}][T_{11},T_{12},T_{21},T_{22}]$ of degree $b$, symmetric in $T_{12}$ and $T_{21}$, such that for every three-dimensional quadratic space $V$ over a $\dZ[(3\cdot b!)^{-1}]$-ring $F$, one has
\[
([x_1^{\otimes b}],[x_2^{\otimes b}])_{\tS_{[\lambda]}(V)}=Q_\lambda((x_1,x_1)_V,(x_1,x_2)_V,(x_2,x_1)_V,(x_2,x_2)_V),\quad \forall x_1,x_2\in V
\]
(note that $[x_1^{\otimes b}]$ and $[x_2^{\otimes b}]$ are symmetric, hence already belong to the direct summand $\tS_{[\lambda]}(V)$ of $\tT_{[b]}(V)$ as $m=3$). Put $q_\lambda\coloneqq Q_\lambda(1,1,1,1)$, which is a nonzero rational number.
\end{example}

Now we recall the construction of generating series with weights and the corresponding Funke--Millson forms in the orthogonal case. Suppose that $F$ is a totally real field. Let $W_r$ be the standard symplectic space over $F$ of dimension $2r$, whose symplectic form is given by the matrix
\[
\tw_r\coloneqq\begin{pmatrix}  & 1_r \\ -1_r &  \end{pmatrix}.
\]

\begin{notation}
Denote by $\Sym_r$ the scheme of $r$-by-$r$ symmetric matrices. Put $\Sym^\circ_r\coloneqq\Sym_r\cap\GL_r$. Denote by $\Sym_r(F)^+\subseteq\Sym_r(F)$ the subset of totally semi-positive definite elements.
\end{notation}

Put $G_r\coloneqq\Sp(W_r)$ as a reductive group over $F$, equipped with homomorphisms
\begin{align*}
m&\colon\GL_{r,F}\to G_r,\qquad  a\mapsto
\begin{pmatrix} a  &  \\  &\pres{t}a^{-1}   \end{pmatrix}, \\
n&\colon\Sym_{r,F}\to G_r,\qquad  b\mapsto
\begin{pmatrix} 1_r  & b \\  & 1_r   \end{pmatrix}
\end{align*}
of algebraic groups over $F$.

For every $v\in\Hom(F,\dR)$, denote by $\fg_{r,v}$ the complexified Lie algebra of $G_{r,v}$. Denote by
\[
\dH_r\coloneqq\left\{\tau=x_{\tau}+\ti y_{\tau}\res x_{\tau}\in\Sym_r(F_\infty),y_{\tau}\in\Sym_r^\circ(F_\infty)^+\right\}
\]
the corresponding Siegel upper half space of genus $r$. For every $T\in\Sym_r(F)$, we introduce the holomorphic function
\[
\tq^T(\tau)\coloneqq\prod_{v\in\Hom(F,\dR)}\te^{2\pi\ti\tr(T\tau)_v}
\]
on $\dH_r$.

For every $v\in\Hom(F,\dR)$, the map
\[
A+\ti B\mapsto
\begin{pmatrix} A & B \\ -B & A \end{pmatrix}
\]
identifies $\rU(r)$ with a maximal compact subgroup $K_{r,v}$ of $G_r(F_v)$.\footnote{Here, $\rU(r)\subseteq\GL_r(\dC)$ is the standard unitary group defined by the identity matrix.} In particular, we have the character $\delta_v\colon K_{r,v}\to\dC^\times$ that is the determinant on $\rU(r)$. Put
\[
K_{r,\infty}\coloneqq\prod_{v\in\Hom(F,\dR)}K_{r,v},\quad
\delta\coloneqq\prod_{v\in\Hom(F,\dR)}\delta_v\colon K_{r,\infty}\to\dC^\times.
\]
Define
\[
\delta^{1/2}\colon\widetilde{K_{r,\infty}}\to\dC^\times
\]
as the pullback of $\det$ along the map $\dC^\times\to\dC^\times$ sending $z$ to $z^2$. For every tuple $b=(b_v)_v\in\dN^{\Hom(F,\dR)}$, we have the representation $\tT^b(\dC^r)$ (Notation \ref{no:lambda_orthogonal}) of $\fS_b\times K_{r,\infty}$, where $\dC^r$ is regarded as the standard representation of $K_{r,v}$ for every $v\in\Hom(F,\dR)$.

Fix a distinguished real place $v_0\in\Hom(F,\dR)$. Let $V$ be a (nondegenerate) quadratic space over $F$ of signature $(m-2,2)$ at $v_0$ and $(m,0)$ at other real places, for some integer $m\geq 3$. Put $H\coloneqq\SO(V)$ as a reductive group over $F$. Write $V_\infty$ for $V\otimes_\dQ\dR$. Take a tuple $\lambda=(\lambda_v)_v\in(\fW'_m)^{\Hom(F,\dR)}$. Put
\[
\tS_{[\lambda]}(V_\infty)\coloneqq\bigotimes_{v\in\Hom(F,\dR)}\tS_{[\lambda_v]}(V_v)
\]
(tensor over $\dR$) (Definition \ref{de:schur_orthogonal}) as a representation of $H(F_\infty)$ with real coefficients.

The corresponding symmetric domain $\fD$ of $\Res_{F/\dQ}H$ is the set of \emph{oriented} negative definite 2-planes in $V\otimes_{F,v_0}\dR$, which is naturally a hermitian symmetric domain of (complex) dimension $m-2$. Let $L\subseteq H(\bA_F^\infty)$ be a neat open compact subgroup. We have the complex Shimura variety
\[
X_L\coloneqq H(F)\backslash\(\fD\times H(\bA_F^\infty)/L\),
\]
together with the real local system
\[
H(F)\backslash\(\fD\times \tS_{[\lambda]}(V_\infty)\times H(\bA_F^\infty)/L\)
\]
on $X_L$, which we still denote as $\tS_{[\lambda]}(V_\infty)$ by abuse of notation.

For every tuple $x=(x_1,\dots,x_r)\in V^r$ satisfying $T(x)\in\Sym_r(F)^+$ and every element $h\in H(\bA_F^\infty)/L$, we have a special morphism
\[
\zeta_{x,h}\colon Y_{(hLh^{-1})^x}\xrightarrow{\cdot h} X_L
\]
give by the right translation by $h$, where $Y_{L_x}$ is the complex Shimura variety for the point-wise stabilizer $H^x$ of $x$ in $H$ and the open compact subgroup $(hLh^{-1})^x\coloneqq hLh^{-1}\cap H^x(\bA_F^\infty)$. The morphism $\zeta_{x,h}$ is finite and unramified of codimension $\rank T(x)$.

Now we suppose that $r\geq s(\lambda_v)$ for every $v\in\Hom(F,\dR)$, so that $\lambda^+\coloneqq(\lambda_v^+)_v$ is naturally an element of $(\fW_r^+)^{\Hom(F,\dR)}$. Put $b=(b_v\coloneqq b(\lambda_v^+))_v$. We have the real representation
\[
\tT_b(V_\infty)\coloneqq\bigotimes_{v\in\Hom(F,\dR)}\tT^{b_v}(V_v)
\]
(tensor over $\dR$) of $\fS_b\times H(F_\infty)$, together with canonical projections
\begin{align}\label{eq:harmonic_orthogonal}
[\phantom{a}]\colon\tT_b(V_\infty)\to
\tT_{[b]}(V_\infty)\coloneqq\bigotimes_{v\in\Hom(F,\dR)}\tT_{[b_v]}(V_v)
\end{align}
(Definition \ref{de:schur_orthogonal}) and
\begin{align}\label{eq:schur_orthogonal}
\pi_{\lambda^+}\coloneqq\otimes_v\pi_{\lambda_v^+}\colon\tT_{[b]}(V_\infty)\to\tS_{[\lambda]}(V_\infty)
\end{align}
(Notation \ref{no:lambda_orthogonal}). We adopt the convention that $\tT_b(V_\infty)=\{0\}$ for $b\in\dZ^{\Hom(F,\dR)}\setminus\dN^{\Hom(F,\dR)}$.

\begin{notation}
Put $\fF(\lambda^+,r)\coloneqq\prod_{v\in\Hom(F,\dR)}\fF(\lambda^+_v,r)$.
\begin{enumerate}
  \item Denote by $\{\epsilon_1,\dots,\epsilon_r\}$ the standard basis of $\dQ^r$. For every element $f=(f_v)_v\in\fF(\lambda^+,r)$, we have the vector
      \[
      \epsilon_f\coloneqq \otimes_{v\in\Hom(F,\dR)}\epsilon_{f_v}\in\tT^b(\dQ^r),
      \]
      so that $\{\pi_{\lambda^+}(\epsilon_f)\}$ is a basis of $\tS_{\lambda^+}(\dQ^r)$ \cite{FM06}*{Theorem~3.7}.

  \item For every tuple $x=(x_v)_v\in\prod_{v\in\Hom(F,\dR)}V_v^r=V_\infty^r$ with $x_v=(x_{v,1},\dots,x_{v,r})\in V_v^r$ and every element $f=(f_v)_v\in\fF(\lambda^+,r)$, we have the vector
      \[
      x_f\coloneqq\otimes_{v\in\Hom(F,\dR)}(x_v)_{f_v}\in\tT_b(V_\infty),
      \]
      hence the element $\pi_{\lambda^+}([x_f])\in\tS_{[\lambda]}(V_\infty)$.
\end{enumerate}
\end{notation}

For every $f\in\fF(\lambda^+,r)$, the element $\pi_{\lambda^+}([x_f])$ is invariant under $H^x(F_\infty)$. Thus we obtain a (unique) map
\begin{align}\label{eq:special_cycle_orthogonal}
\tS_{\lambda^+}(\dR^r)\to\zeta_{x,h}^*\tS_{[\lambda]}(V_\infty)
\end{align}
of real local systems on $X_L$ sending $\pi_{\lambda^+}(\epsilon_f)$ to the global section $\zeta_{x,h}^*\pi_{\lambda^+}([x_f])$ for every $f\in\fF(\lambda^+,r)$, where $\tS_{\lambda^+}(\dR^r)$ is regarded as the constant local system. By purity, \eqref{eq:special_cycle_orthogonal} induces a map
\[
\rH^0(Y_{L^x},\tS_{\lambda^+}(\dR^r))=\rH^0(Y_{L^x},\dR)\otimes_\dR\tS_{\lambda^+}(\dR^r)
\to\rH^{\rank T(x),\rank T(x)}(X_L,\tS_{[\lambda]}(V_\infty)_\dC)
\]
or
\[
\rH^0(Y_{L^x},\dR)\to\tS_{\lambda^+}(\dR^r)^\vee\otimes_\dR\rH^{\rank T(x),\rank T(x)}(X_L,\tS_{[\lambda]}(V_\infty)_\dC).
\]
Denote by
\[
\rZ^\lambda(x,h)\in\tS_{\lambda^+}(\dR^r)^\vee\otimes_\dR\rH^{\rank T(x),\rank T(x)}(X_L,\tS_{[\lambda]}(V_\infty)_\dC)
\]
the image of the fundamental class of $Y_{L^x}$ under the above map.\footnote{The element $\rZ^\lambda(x,h)$ vanishes unless $\rank T(x)\geq s(\lambda_v)$ for every $v\in\Hom(F,\dR)$.} Finally, put
\[
\rC^\lambda(x,h)\coloneqq \rZ^\lambda(x,h)\wedge \Omega_L^{r-\rank T(x)}\in
\tS_{\lambda^+}(\dR^r)^\vee\otimes_\dR\rH^{r,r}(X_L,\tS_{[\lambda]}(V_\infty)_\dC),
\]
where $\Omega_L\in\rH^{1,1}(X_L,\dC)$ denotes the Chern class of the dual tautological (line) bundle on $X_L$. It is easy to see that $\rC^\lambda(x,h)$ depends only on $h^{-1}x\in L\backslash(V^\infty)^r$. Thus, it makes sense to define, for every element $x\in L\backslash(V^\infty)^r$ satisfying $T(x)\in\Sym_r(F)^+$, an element
\[
\rC^\lambda(x)\in
\tS_{\lambda^+}(\dR^r)^\vee\otimes_\dR\rH^{r,r}(X_L,\tS_{[\lambda]}(V_\infty)_\dC),
\]
which is compatible under the pullback along the map $X_{L'}\to X_L$ for $L'\subseteq L$.

\begin{definition}\label{de:cycle_orthogonal}
Take a Schwartz function $\phi^\infty\in\sS((V^\infty)^r)^L$. For every $T\in\Sym_r(F)^+$, define
\[
\rC^\lambda_T(\phi^\infty)\coloneqq\sum_{x\in L\backslash(V^\infty)^r_T}
\phi^\infty(x)\cdot \rC^\lambda(x),
\]
which is indeed a finite sum. The \emph{generating series of special cycles in cohomology} with weight $\lambda$ is defined as the formal $q$-expansion (of genus $r$):
\[
\rC^\lambda(\phi^\infty)\coloneqq\sum_{T\in\Sym_r(F)^+}\rC^\lambda_T(\phi^\infty)\cdot \tq^T
\]
with coefficients in $\tS_{\lambda^+}(\dR^r)^\vee\otimes_\dR\rH^{r,r}(X_L,\tS_{[\lambda]}(V_\infty)_\dC)$.
\end{definition}

We recall the Funke--Millson form associated with the special cycles. In \cite{FM06}*{\S5}, the authors defined an element (with $q=2$ which we omit from the subscript, so that $\varphi_{r,b}$ really means $\varphi_{r\cdot 2,b}$ in \cite{FM06})
\[
\varphi_{r,b}\in\Hom_\dC\(\tT^b(\dC^r)\otimes\delta^{m/2},\sS(V_\infty^r)\otimes_\dC\sA^{r,r}(\fD)\otimes_\dR\tT_b(V_\infty)\)
^{\widetilde{K_{r,\infty}}\times H(F_\infty)\times\fS_b},
\]
where $\widetilde{K_{r,\infty}}\times H(F_\infty)$ acts on $\sS(V_\infty^r)$ via the Weil representation $\omega_{\psi_{F,\infty}}$ (with respect to $\psi_{F,\infty}$), and the other actions are obvious ones. Applying \eqref{eq:harmonic_orthogonal}, we obtain an element
\[
\varphi_{r,[b]}\in\Hom_\dC\(\tT^b(\dC^r)\otimes\delta^{m/2},\sS(V_\infty^r)\otimes_\dC\sA^{r,r}(\fD)
\otimes_\dR\tT_{[b]}(V_\infty)\)^{\widetilde{K_{r,\infty}}\times H(F_\infty)\times\fS_b}.
\]
Then applying the projector $\pi_{\lambda^+}$ to $\varphi_{r,[b]}$, we obtain the element
\[
\varphi_{r,\lambda}\in\Hom_\dC\(\tS_{\lambda^+}(\dC^r)\otimes\delta^{m/2},\sS(V_\infty^r)
\otimes_\dC\sA^{r,r}(\fD)\otimes_\dR\tS_{[\lambda]}(V_\infty)\)
^{\widetilde{K_{r,\infty}}\times H(F_\infty)}.
\]
For every $x\in V_\infty^r$ and $h\in H(\bA_F^\infty)$, we can evaluate $\varphi_{r,\lambda}$ at $x$ and then take its pushforward along the composite map
\begin{align}\label{eq:translate_orthogonal}
\fD\to X_{hLh^{-1}}\xrightarrow{\cdot h} X_L
\end{align}
to obtain an element
\[
\varphi_\lambda(x,h)_L\in\tS_{\lambda^+}(\dC^r)^\vee\otimes_\dC\sA^{r,r}(X_L,\tS_{[\lambda]}(V_\infty)),
\]
where $\sA^{\bullet,\bullet}(X_L,\tS_{[\lambda]}(V_\infty))\coloneqq\sA^{\bullet,\bullet}(X_L)\otimes_\dR\tS_{[\lambda]}(V_\infty)$. It is clear that $\varphi_\lambda(x,h)$ is invariant under the diagonal action of $H(F)$ on $V_\infty^r\times H(\bA_F^\infty)$.

Take a Schwartz function $\phi^\infty\in\sS((V^\infty)^r)^L$.

\begin{definition}[Generating function of Funke--Millson forms]\label{de:form_orthogonal}
Define a function $\rF^\lambda(\phi^\infty)$ on $\dH_r$ valued in $\tS_{\lambda^+}(\dC^r)^\vee\otimes_\dC\sA^{r,r}(X_L,\tS_{[\lambda]}(V_\infty)_\dC)$ by the formula
\begin{align*}
\rF^\lambda(\phi^\infty)(\tau)\coloneqq\sum_{(x,h)\in H(F)\backslash V^r\times H(\bA_F^\infty)/L}\phi^\infty(h^{-1}x)\cdot \(a.\varphi_\lambda(xa,h)_L\)\cdot\prod_{v\in\Hom(F,\dR)}\te^{2\pi\ti \tr(T(x)x_{\tau})_v},
\end{align*}
where $a\in\GL_r(F_\infty)$ is an arbitrary element satisfying $y_{\tau}=a\cdot\pres{t}a$ and $\det a\in(F_\infty)_{>0}$, which acts on $\phi_\lambda(xa)$ via the factor $\tS_{\lambda^+}(\dC^r)^\vee$. It is clear that $\rF^\lambda(\phi^\infty)(\tau)$ is absolutely convergent and well-defined (that is, independent of the choice of $a$).
\end{definition}

\begin{proposition}\label{pr:form_orthogonal}
Take a Schwartz function $\phi^\infty\in\sS((V^\infty)^r)^L$.
\begin{enumerate}
  \item The function $\rF^\lambda(\phi^\infty)$ is a (not necessarily holomorphic) Siegel modular form (over $F$ of genus $r$) for the $\widetilde{K_{r,\infty}}$-type $\tS_{\lambda^+}(\dC^r)^\vee\otimes\delta^{-m/2}$ with coefficients in closed forms in $\sA^{r,r}(X_L,\tS_{[\lambda]}(V_\infty))$.

  \item In view of (1), the associated cohomology class of $\rF^\lambda(\phi^\infty)$ coincides with $\rC^\lambda(\phi^\infty)$.

  \item The $q$-expansion $\rC^\lambda(\phi^\infty)$ is a (holomorphic) Siegel modular form for the $\widetilde{K_{r,\infty}}$-type $\tS_{\lambda^+}(\dC^r)^\vee\otimes\delta^{-m/2}$ with coefficients in $\rH^{r,r}(X_L,\tS_{[\lambda]}(V_\infty)_\dC)$. It is a cusp form if $r=\max_v\{s(\lambda_v)\}$.
\end{enumerate}
\end{proposition}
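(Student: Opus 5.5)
This proposition is essentially the main theorem of Funke--Millson \cite{FM06} transported to our normalizations. The plan is to reduce each part to the corresponding statement there, checking only that our twists (the projector $\pi_{\lambda^+}$, the harmonic projection \eqref{eq:harmonic_orthogonal}, the adelic formulation, and the factor $a$) are compatible. Throughout, the archimedean Schwartz form is $\varphi_{r,\lambda}$ at $v_0$. At the other real places the space is positive definite, so the corresponding factor is a polynomial times Gaussian, $\pi_{\lambda^+}([x_f])\te^{-\pi\tr T(x)}$, which is the $\tS_{\lambda^+}$-valued vector of a Weil representation of the right $K$-type. These are the only ingredients.

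For (1), I would write $\rF^\lambda(\phi^\infty)(\tau)$ as the theta integral $\theta(g_\tau,\varphi_{r,\lambda}\otimes\phi^\infty)$, where $g_\tau=n(x_\tau)m(a)$. The factor $a.\varphi_\lambda(xa,h)$ together with the exponential is exactly $\omega(g_\tau)\varphi$, up to the character $\det(a)^{m/2}$ that converts the theta function into a classical form. Modularity then follows from Poisson summation, i.e.\ automorphy of theta functions. The $K$-type is $\tS_{\lambda^+}(\dC^r)^\vee\otimes\delta^{-m/2}$, because $\varphi_{r,\lambda}$ is $\widetilde{K_{r,\infty}}$-equivariant from $\tS_{\lambda^+}(\dC^r)\otimes\delta^{m/2}$; this follows from the equivariance recorded above, and $\pi_{\lambda^+}$ commutes with $K$ since it acts on the other tensor factor. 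Closedness reduces to \cite{FM06}*{\S5}: there $\varphi_{r,b}$ is $d$-closed as a form with values in $\tT_b(V_\infty)$ with the flat connection, and closedness is preserved by the $H(F_\infty)$-equivariant projections $[\;]$ and $\pi_{\lambda^+}$. Absolute convergence follows from the rapid decay of the Schwartz form.

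For (2), I would invoke the main theorem of \cite{FM06} (their Theorem 7.6 type statement). It says that the cohomology class of $\theta_{\varphi_{r,b}}$ has $T$-th Fourier coefficient equal to the Poincaré dual of the special cycle, twisted by the vector $x_f$, wedged with $\Omega_L^{r-\rank T}$. The proof there proceeds Fourier coefficient by Fourier coefficient. One unfolds over $H(F)$-orbits and uses the Thom-form property: for $x$ with $T(x)$ positive definite, $\varphi(x)$ is a Thom form for the normal bundle of $\fD_x$, tensored with $x_f$. For degenerate $T$, one separates off the radical, which produces the power of the Euler form $\Omega_L$. I would check that applying $[\;]$ and $\pi_{\lambda^+}$ turns their coefficient into our $\pi_{\lambda^+}([x_f])$, which is precisely the section used to define \eqref{eq:special_cycle_orthogonal}. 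The adelic sum over $L\backslash(V^\infty)^r_T$ then matches Definition \ref{de:cycle_orthogonal} after unfolding the double coset. The main obstacle is this bookkeeping, in particular the claim that the Fourier coefficients are independent of $y_\tau$ in cohomology. In \cite{FM06} this holds because $\varphi(xa)$ is cohomologous to its limit; our $a$-action via $\tS_{\lambda^+}(\dC^r)^\vee$ must be tracked through that homotopy.

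For (3), holomorphy is formal given (1) and (2): a form whose cohomology-valued Fourier coefficients are constants times $\tq^T$ is holomorphic, and modularity passes to cohomology classes. For cuspidality when $r=\max_v s(\lambda_v)$, the singular $T$ have $\rank T<r\le$ some $s(\lambda_v)$, so those coefficients vanish by the footnote that $\rZ^\lambda(x,h)=0$ unless $\rank T(x)\ge s(\lambda_v)$. This vanishing holds because $\pi_{\lambda^+}([x_f])=0$ when the $x_{v,i}$ span a space of dimension less than the number of rows of $\lambda_v$. Vanishing of all singular Fourier coefficients at every cusp then gives cuspidality; the other cusps are handled by applying the same argument to translates $\omega(\gamma)\phi^\infty$.
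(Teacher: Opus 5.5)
Your plan is correct, and it has the same backbone as the paper's proof, which simply invokes \cite{FM06}*{Theorem~7.1\&7.6}. The paper adds one point, Remark~\ref{re:form_orthogonal}: since \cite{FM06} only allows nonzero weight at $v_0$, it extends \cite{FM06}*{Theorem~5.9(i)} and \cite{FM06}*{Theorem~6.11} to every real place.

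Your treatment of the positive definite places is exactly that extension made explicit. At $v\neq v_0$, the operators $\rE_v$ applied to the Gaussian give $x_f$ plus terms in the image of the expansion operators $A_I$. After the harmonic projection the factor is therefore $\pi_{\lambda^+}([x_f])$ times the Gaussian. This is a pluriharmonic vector that is flat on $\fD$, $H^x$-invariant, and killed by $\rL_v$. So both relations of the Remark hold identically at $v$, and their $A$-terms disappear under $[\;]$.

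The genuine difference is in (3). The paper gets holomorphy from the lowering-operator identity (the generalized \cite{FM06}*{Theorem~6.11}): after harmonic projection, the lowering operators send $\varphi_{r,\lambda}$ to exact forms. You instead read holomorphy off from (2). That is logically sound, but it obliges you to prove (2) for every $\tau$, with the dependence $\te^{-2\pi\tr(Ty_\tau)}$ established directly rather than taken from a coefficient computation that already presupposes holomorphy.

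This can be done as follows.
\begin{enumerate}
  \item Replace $\varphi_{r,\lambda}$ in cohomology by $\sigma_\lambda(\varphi_{r,0})$, using \cite{FM06}*{Theorem~5.9}; at $v\neq v_0$ this holds on the nose by your computation.
  \item Note that the $a$-action exactly cancels the polynomial twist, i.e.\ $a.\sigma_\lambda(xa)=\sigma_\lambda(x)$.
  \item Apply the Kudla--Millson Thom-form argument to $\varphi_{r,0}(xa)$.
  \item For singular $T$, choose $a$ adapted to the radical (block lower-triangular after a $\GL_r(F)$-change of variables). Then $xa=(x'a_1,0)$, and $\varphi_{r,0}(xa)$ splits off a power of $\varphi_{1,0}(0)$, which gives the required power of $\Omega_L$ with no extra $y$-dependence.
\end{enumerate}

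So your route trades the lowering-operator identity for this bookkeeping, and it makes transparent that the places $v\neq v_0$ contribute only the flat section $\pi_{\lambda^+}([x_f])$. The paper's route gives holomorphy immediately and lets one evaluate the coefficients at any convenient $y$.
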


\begin{proof}
This follows from \cite{FM06}*{Theorem~7.1\&7.6} (together with Remark \ref{re:form_orthogonal} below).
\end{proof}

\begin{remark}\label{re:form_orthogonal}
Careful readers may notice that \cite{FM06} only addressed the case where weights away from $v_0$ are zero. In fact, the general case can be proved in the same way using the following modification.

Take an element $v\in\Hom(F,\dR)$. For $(j,k)\in\{1,\dots,b_v\}\times\{1,\dots,b_v-1\}$, we have an operator
\[
A_{jk}^v\colon\tT^{b_v-2}(V_v)\to\tT^{b_v}(V_v)
\]
given by the insertion of the map $\dR\to V_v\otimes_\dR V_v$ that is the dual of the quadratic form to the $(j,k+1)$-th (resp.\ $(k,j)$-th) spot when $j\leq k$ (resp.\ $k<j$); we regard it as a map
\[
A_{jk}^v\colon\tT_{b-2_v}(V_\infty)\to\tT_b(V_\infty),
\]
where $n_v=(0,\dots,n,\dots,0)$ denotes the element in $\dN^{\Hom(F,\dR)}$ with only nonzero entry $n$ at $v$ for $n\in\dN$. On the other hand, for $1\leq j\leq b_v$, we have an operator
\[
\varsigma_j^v\colon \sS(V_v)\otimes_\dR\tT^{b_v-1}(V_v)\to\sS(V_v)\otimes_\dR\tT^{b_v}(V_v)
\]
such that its value on $x\in V_v$ is the operator that inserts $x$ at the $j$-th spot; we regard it as a map
\[
\varsigma_j^v\colon\sS(V_\infty)\otimes_\dR\tT_{b-1_v}(V_\infty)\to\sS(V_\infty)\otimes_\dR\tT_b(V_\infty).
\]

Then by the same proof, \cite{FM06}*{Theorem~5.9(i)} generalizes to the relation
\[
[\varphi_{1,b}]=[\varsigma_j^v\varphi_{1,b-1_v}]+\frac{1}{4\pi}\sum_{k=1}^{b_v-1}[A_{jk}^v\varphi_{1,b-2_v}],\quad
1\leq j\leq b_v
\]
for every $v\in\Hom(F,\dR)$; and \cite{FM06}*{Theorem~6.11} generalizes to the relation
\[
[\omega_{\psi_{F,\infty}}(\rL_v)\varphi_{1,b}]=\frac{1}{8\pi}\sum_{j=1}^{b_v}\sum_{k=1}^{b_v-1}[A_{jk}^v\varphi_{1,b-2_v}]
\]
for every $v\in\Hom(F,\dR)$, where $\rL_v$ denotes the ``lowering operator'' $\frac{1}{2}\(\begin{smallmatrix} 1& -\ti \\ -\ti & -1 \end{smallmatrix}\)$ in $\fg_{1,v}$.\footnote{There seems to be a sign error in the formula for $\omega(L)$ in the proof of \cite{FM06}*{Theorem~6.11}.}
\end{remark}

\section{Funke--Millson theory for unitary groups}
\label{ss:3}

In this section, we develop the parallel theory for unitary groups. Let $E/F$ be an \'{e}tale quadratic extension of rings with $\ol{\phantom{a}}\in\Aut(E/F)$ the nontrivial involution, $r$ and $m$ positive integers.

\begin{definition}\label{de:schur_unitary}
Consider a (nondegenerate) hermitian space $V$ over $E/F$ of rank $m$ and an $F$-ring $\dL$ such that the set $\fU$ of $F$-ring homomorphisms from $E$ to $\dL$ is nonempty (hence consists of two elements).

Consider a pair $\lambda\in\fW_m^\fU$ satisfying $\lambda_{\ol{u}}=\ol{\lambda_u}$ for every $u\in\fU$ and put $b\coloneqq(b(\lambda^+_u))_u$. Assume that $m\cdot\prod_{u\in\fU}b_u!$ is invertible in $\dL$. Put
\[
\tT_b(V)\coloneqq\bigotimes_{u\in\fU}\tT^{b_u}(V_u)
\]
(tensor over $\dL)$ as an algebraic representation of $\rU(V)$ over $\dL$, where $\rU(V)$ acts on $V_u\coloneqq V\otimes_{E,u}\dL$ via the map $u$. For every pair $I=(i_u)_u$ with $1\leq i_u\leq b_u$, we have the contraction operator
\[
C_I\colon\tT_b(V)\to\tT_{b-1}(V)
\]
by ``pairing'' the $i_u$-th factor of $\tT_{b_u}(V_u)$ for the two elements $u\in\fU$, and the expansion operator
\[
A_I\colon\tT_{b-1}(V)\to\tT_b(V)
\]
by inserting the coevaluation map $\dL\to\bigotimes_{u\in\fU}V_u$ to the $i_u$-th spot of $\tT^{b_u}(V_u)$ for $u\in\fU$. If we put
\[
\tT_{[b]}(V)\coloneqq\bigcap_{I}\Ker A_I,
\]
then
\begin{align}\label{eq:decomposition_unitary}
\tT_b(V)=\tT_{[b]}(V)\oplus\(\bigoplus_{r=1}^{\min_{u\in\fU}\{\lfloor b_u/2\rfloor\}}\tT_{[b]}(V)_{b-r}\),
\end{align}
where
\[
\tT_{[b]}(V)_{b-r}\coloneqq\sum A_{I_1}\circ\cdots\circ A_{I_r}\(\tT_{[b-r]}(V)\).
\]
It is clear that the decomposition \eqref{eq:decomposition_unitary} is stable under the action of both $\rU(V)$ and $\fS_b$. In particular, we may define
\[
\tS_{[\lambda]}(V)\coloneqq(\otimes_{u\in\fU}\pi_{\lambda_u^+})\(\tT_{[b]}(V)\)
\]
(Notation \ref{no:lambda_orthogonal}), which is a direct summand of the $\dL[\rU(V)]$-module $\tT_{[b]}(V)$ hence $\tT^{b}(V)$.
\end{definition}

Now we construct of generating series with weights and the corresponding Funke--Millson forms in the unitary case. Suppose that $E/F$ is a totally imaginary extension of a totally real field, so that $\ol{\phantom{a}}$ is the complex conjugation. For every element $u\in\Hom(E,\dC)$, we denote by $v(u)\in\Hom(F,\dR)$ its induced real place of $F$.

\begin{notation}
Denote by $\Herm_r$ the $F$-scheme of $r$-by-$r$ hermitian matrices with respect to $E/F$. Put $\Herm^\circ_r\coloneqq\Herm_r\cap\Res_{E/F}\GL_r$. Denote by $\Herm_r(F)^+\subseteq\Herm_r(F)$ the subset of totally semi-positive definite elements.
\end{notation}

Let $W_r$ be the standard skew-hermitian space over $E/F$ of dimension $2r$, whose skew-hermitian form is given by the matrix $\tw_r$. Put $G_r\coloneqq\rU(W_r)$ as a reductive group over $F$, equipped with homomorphisms
\begin{align*}
m&\colon\Res_{E/F}\GL_r\to G_r,\qquad  a\mapsto
\begin{pmatrix} a  &  \\  &\pres{t}{\ol{a}}^{-1}   \end{pmatrix}, \\
n&\colon\Herm_{r,F}\to G_r,\qquad  b\mapsto
\begin{pmatrix} 1_r  & b \\  & 1_r   \end{pmatrix}
\end{align*}
of algebraic groups over $F$.

For every $v\in\Hom(F,\dR)$, denote by $\fg_{r,v}$ the complexified Lie algebra of $G_{r,v}$. Denote by
\[
\dH_r\coloneqq
\left\{\tau\in(\Res_{E/F}\Mat_r)(F_\infty)\left|\frac{1}{2\ti}\(\tau-\pres{t}{\ol\tau}\)\in\Herm_r^\circ(F_\infty)^+\right.\right\}
\]
the corresponding hermitian Siegel upper half space of genus $r$. In what follows, we write
\[
x_{\tau}\coloneqq\frac{1}{2}\(\tau+\pres{t}{\ol\tau}\),\quad
y_{\tau}\coloneqq\frac{1}{2\ti}\(\tau-\pres{t}{\ol\tau}\)
\]
for $\tau\in\dH_r$, so that $\tau=x_{\tau}+\ti y_{\tau}$.

For every $T\in\Herm_r(F)$, we introduce the holomorphic function
\[
\tq^T(\tau)\coloneqq\prod_{v\in\Hom(F,\dR)}\te^{2\pi\ti\tr(T\tau)_v}
\]
on $\dH_r$.

For every $v\in\Hom(F,\dR)$, the map
\[
(U_1,U_2)\mapsto u^{-1}\frac{1}{2}
\begin{pmatrix}
U_1+U_2   & -\ti U_1+\ti U_2 \\
\ti U_1-\ti U_2   & U_1+U_2 \\
\end{pmatrix},
\]
where $u\in\Hom(E,\dC)$ is an element above $v$, identifies $\rU(r)\times\rU(r)$ with a maximal compact subgroup $K_{r,v}$ of $G_r(F_v)$ that is independent of the choice of $u$. In addition, the map $(U_1,U_2)\to\det(U_1 U_2^{-1})$ defines a character $\delta_v\colon K_{r,v}\to\dC^\times$, which is also independent of the choice of $u$. Put
\[
K_{r,\infty}\coloneqq\prod_{v\in\Hom(F,\dR)}K_{r,v},\quad
\delta\coloneqq\prod_{v\in\Hom(F,\dR)}\delta_v\colon K_{r,\infty}\to\dC^\times.
\]
Define
\[
\delta^{1/2}\colon\widetilde{K_{r,\infty}}\to\dC^\times
\]
as the pullback of $\det$ along the map $\dC^\times\to\dC^\times$ sending $z$ to $z^2$. For every tuple $b=(b_u)_u\in\dN^{\Hom(E,\dC)}$, we have the representation $\tT^b(\dC^r)$ (Notation \ref{no:lambda_orthogonal}) of $\fS_b\times K_{r,\infty}$, where the action of $K_{r,\infty}$ is the unique one such that for every $k\in K_{r,\infty}$ and every $u\in\Hom(E,\dC)$, if we write $u(k)=(U_1,U_2)\in\rU(r)\times\rU(r)$, then $U_1$ acts on $\tT^{b_u}(\dC^r)$ via itself as an element of $\GL_r(\dC)$ and $U_2$ acts on $\tT^{b_{\ol{u}}}(\dC^r)$ via the element $\ol{U_2}$ of $\GL_r(\dC)$.

\begin{definition}
We say that a tuple $\lambda=(\lambda_u)_u\in\fW_m^{\Hom(E,\dC)}$ is \emph{hermitian} if $\lambda_{\ol{u}}=\ol{\lambda_u}$ holds for every element $u\in\Hom(E,\dC)$.
\end{definition}

Fix a distinguished embedding $u_0\in\Hom(E,\dC)$ with $v_0\in\Hom(F,\dR)$ the underlying real place of $F$. Let $V$ be a (nondegenerate) hermitian space over $E/F$ of signature $(p,q)$ at $v_0\in\Hom(F,\dR)$ and $(m,0)$ at other real places, for some integer $p,q\geq 1$ and $m=p+q$. Put $H\coloneqq\rU(V)$ as a reductive group over $F$. Write $V_\infty\coloneqq V\otimes_\dQ\dR$, and $V_u\coloneqq V\otimes_{E,u}\dC$ for $u\in\Hom(E,\dC)$. Take a hermitian tuple $\lambda=(\lambda_u)_u\in\fW_m^{\Hom(E,\dC)}$. Put
\[
\tS_{[\lambda]}(V_\infty)\coloneqq\bigotimes_{v\in\Hom(F,\dR)}\tS_{[\lambda_v]}(V_v)
\]
(tensor over $\dC$) as a representation of $H(F_\infty)$ with complex coefficients, where we have applied Definition \ref{de:schur_unitary} to $E/F=E_v/F_v$, $\fU=\{u\in\Hom(E,\dC)\res v(u)=v\}$, $\lambda_v=(\lambda_u)_{u\in\fU}$, and $\dL=\dC$ for every $v\in\Hom(F,\dR)$.

The corresponding symmetric domain $\fD$ of $\Res_{F/\dQ}H$ (with respect to $u_0$) is the set of negative definite $q$-planes in $V_{u_0}$, which is naturally a hermitian symmetric domain of (complex) dimension $pq$. Let $L\subseteq H(\bA_F^\infty)$ be a neat open compact subgroup. We have the complex Shimura variety
\[
X_L\coloneqq H(F)\backslash\(\fD\times H(\bA_F^\infty)/L\),
\]
together with the complex local system
\[
H(F)\backslash\(\fD\times \tS_{[\lambda]}(V_\infty)\times H(\bA_F^\infty)/L\)
\]
on $X_L$, which we still denote as $\tS_{[\lambda]}(V_\infty)$ by abuse of notation.

For every tuple $x=(x_1,\dots,x_r)\in V^r$ satisfying $T(x)\in\Sym_r(F)^+$ and every element $h\in H(\bA_F^\infty)/L$, we have a special morphism
\[
\zeta_{x,h}\colon Y_{(hLh^{-1})^x}\xrightarrow{\cdot h} X_L
\]
give by the right translation by $h$, where $Y_{L_x}$ is the complex Shimura variety for the point-wise stabilizer $H^x$ of $x$ in $H$ and the open compact subgroup $(hLh^{-1})^x\coloneqq hLh^{-1}\cap H^x(\bA_F^\infty)$. The morphism $\zeta_{x,h}$ is finite and unramified of codimension $q\cdot\rank T(x)$.

Now we suppose that $r\geq s(\lambda_u)$ for every $u\in\Hom(E,\dC)$, so that $\lambda^+\coloneqq(\lambda_u^+)_u$ is naturally an element of $(\fW_r^+)^{\Hom(E,\dC)}$. Put $b=(b_u\coloneqq b(\lambda_u^+))_u\in\dN^{\Hom(E,\dC)}$ and $b_v\coloneqq(b_u)_{v(u)=v}$ for every $v\in\Hom(F,\dR)$. We have the complex representation
\[
\tT_b(V_\infty)\coloneqq\bigotimes_{v\in\Hom(F,\dR)}\tT_{b_v}(V_v)
\]
(tensor over $\dC$) of $\fS_b\times H(F_\infty)$, together with canonical projections
\begin{align}\label{eq:harmonic_unitary}
[\phantom{a}]\colon\tT_b(V_\infty)\to
\tT_{[b]}(V_\infty)\coloneqq\bigotimes_{v\in\Hom(F,\dR)}\tT_{[b_v]}(V_v)
\end{align}
(Definition \ref{de:schur_unitary}) and
\begin{align}\label{eq:schur_unitary}
\pi_{\lambda^+}\coloneqq\otimes_u\pi_{\lambda_u^+}\colon\tT_{[b]}(V_\infty)\to\tS_{[\lambda]}(V_\infty)
\end{align}
(Notation \ref{no:lambda_orthogonal}). We adopt the convention that $\tT_b(V_\infty)=\{0\}$ for $b\in\dZ^{\Hom(F,\dR)}\setminus\dN^{\Hom(F,\dR)}$.

\begin{notation}
Put $\fF(\lambda^+,r)\coloneqq\prod_{u\in\Hom(E,\dC)}\fF(\lambda^+_u,r)$.
\begin{enumerate}
  \item Denote by $\{\epsilon_1,\dots,\epsilon_r\}$ the standard basis of $\dQ^r$. For every element $f=(f_u)_u\in\fF(\lambda^+,r)$, we have the vector
      \[
      \epsilon_f\coloneqq \otimes_{u\in\Hom(E,\dC)}\epsilon_{f_u}\in\tT^b(\dQ^r),
      \]
      so that $\{\pi_{\lambda^+}(\epsilon_f)\}$ is a basis of $\tS_{\lambda^+}(\dQ^r)$ \cite{FM06}*{Theorem~3.7}.

  \item For every tuple $x=(x_1,\dots,x_r)\in V_\infty^r$ and every element $f=(f_u)_u\in\fF(\lambda^+,r)$, we have the vector
      \[
      x_f\coloneqq\otimes_{u\in\Hom(E,\dC)}(x_u)_{f_u}\in\tT_b(V_\infty),
      \]
      where $x_u$ denotes the natural image of $x$ in $(V_u)^r$, hence the element $\pi_{\lambda^+}([x_f])\in\tS_{[\lambda]}(V_\infty)$.
\end{enumerate}
\end{notation}

For every $f\in\fF(\lambda^+,r)$, the element $\pi_{\lambda^+}([x_f])$ is invariant under $H^x(F_\infty)$. Thus we obtain a (unique) map
\begin{align}\label{eq:special_cycle_unitary}
\tS_{\lambda^+}(\dC^r)\to\zeta_{x,h}^*\tS_{[\lambda]}(V_\infty)
\end{align}
of complex local systems on $X_L$ sending $\pi_{\lambda^+}(\epsilon_f)$ to the global section $\zeta_{x,h}^*\pi_{\lambda^+}([x_f])$ for every $f\in\fF(\lambda^+,r)$, where $\tS_{\lambda^+}(\dC^r)$ is regarded as the constant local system. By purity, \eqref{eq:special_cycle_unitary} induces a map
\[
\rH^0(Y_{L^x},\tS_{\lambda^+}(\dC^r))=\rH^0(Y_{L^x},\dC)\otimes_\dC\tS_{\lambda^+}(\dC^r)
\to\rH^{q\rank T(x),q\rank T(x)}(X_L,\tS_{[\lambda]}(V_\infty))
\]
or
\[
\rH^0(Y_{L^x},\dC)\to\tS_{\lambda^+}(\dC^r)^\vee\otimes_\dC\rH^{q\rank T(x),q\rank T(x)}(X_L,\tS_{[\lambda]}(V_\infty)).
\]
Denote by
\[
\rZ^\lambda(x,h)\in\tS_{\lambda^+}(\dC^r)^\vee\otimes_\dC\rH^{q\rank T(x),q\rank T(x)}(X_L,\tS_{[\lambda]}(V_\infty))
\]
the image of the fundamental class of $Y_{L^x}$ under the above map.\footnote{The element $\rZ^\lambda(x,h)$ vanishes unless $\rank T(x)\geq s(\lambda_u)$ for every $u\in\Hom(E,\dC)$.} Finally, put
\[
\rC^\lambda(x,h)\coloneqq \rZ^\lambda(x,h)\wedge \Omega_L^{r-\rank T(x)}\in
\tS_{\lambda^+}(\dC^r)^\vee\otimes_\dC\rH^{rq,rq}(X_L,\tS_{[\lambda]}(V_\infty)),
\]
where $\Omega_L\in\rH^{q,q}(X_L,\dC)$ denotes the top Chern class of the dual tautological bundle on $X_L$. It is easy to see that $\rC^\lambda(x,h)$ depends only on $h^{-1}x\in L\backslash(V^\infty)^r$. Thus, it makes sense to define, for every element $x\in L\backslash(V^\infty)^r$ satisfying $T(x)\in\Sym_r(F)^+$, an element
\[
\rC^\lambda(x)\in
\tS_{\lambda^+}(\dC^r)^\vee\otimes_\dC\rH^{rq,rq}(X_L,\tS_{[\lambda]}(V_\infty)),
\]
which is compatible under the pullback along the map $X_{L'}\to X_L$ for $L'\subseteq L$.

\begin{definition}\label{de:cycle_unitary}
Take a Schwartz function $\phi^\infty\in\sS((V^\infty)^r)^L$. For every $T\in\Herm_r(F)^+$, define
\[
\rC^\lambda_T(\phi^\infty)\coloneqq\sum_{x\in L\backslash(V^\infty)^r_T}
\phi^\infty(x)\cdot \rC^\lambda(x),
\]
which is indeed a finite sum. The \emph{generating series of special cycles in cohomology} of weight $\lambda$ is defined as the formal $q$-expansion (of genus $r$):
\[
\rC^\lambda(\phi^\infty)\coloneqq\sum_{T\in\Herm_r(F)^+}\rC^\lambda_T(\phi^\infty)\cdot \tq^T
\]
with coefficients in $\tS_{\lambda^+}(\dC^r)^\vee\otimes_\dC\rH^{rq,rq}(X_L,\tS_{[\lambda]}(V_\infty))$.
\end{definition}

Next, we construct the Funke--Millson form associated with the special cycles. Fix a base point of $\fD$ and denote its stabilizer in $H(F_{v_0})$ by $L_{v_0}$ (so that $\fD\simeq H(F_{v_0})/L_{v_0}$). The base point gives rise to an orthogonal decomposition $V_{v_0}=V_{v_0}^+\oplus V_{v_0}^-$ in which $V_{v_0}^{+/-}$ is positive/nagative definite. Put $L_\infty\coloneqq L_{v_0}\times\prod_{v\neq v_0}H(F_v)$, which is a maximal compact subgroup of $H(F_\infty)$. Denote by $\fh\coloneqq V_{u_0}\otimes_\dC V_{\ol{u_0}}$ the complexified Lie algebra of the real Lie group $H(F_{v_0})$, which has the Harish-Chandra decomposition $\fh=\fl\oplus\fp$ where $\fl$ is the complexified Lie algebra of $L_{v_0}$.

Choose an orthonormal basis\footnote{This in particular requires that every element in the basis has norm either $1$ or $-1$.} $\{e_{v,1},\dots,e_{v,m}\}$ of $V_v$ for every $v\in\Hom(F,\dR)$ so that for $v=v_0$, $e_{v_0,\alpha}\in V_{v_0}^+$ for $1\leq \alpha\leq p$ and $e_{v_0,\mu}\in V_{v_0}^-$ for $p+1\leq \mu\leq m$. For $u\in\Hom(E,\dC)$, we write $e^u_j\in V_u$ for $u(e_{v(u),j})$.

We recall the description of $\fp$ from \cite{KM90}*{\S 5}. Define elements $X_{jk}\in\fh$ for $1\leq j<k\leq m$ and $X'_{jk}\in\fh$ for $1\leq j\leq k\leq m$ by
\[
X_{jk}\coloneqq -(e^{u_0}_j\otimes e^{\ol{u_0}}_k-e^{u_0}_k\otimes e^{\ol{u_0}}_j),\qquad
X'_{jk}\coloneqq -\ti(e^{u_0}_j\otimes e^{\ol{u_0}}_k+e^{u_0}_k\otimes e^{\ol{u_0}}_j).
\]
Then $\{X_{jk}\res 1\leq j<k\leq m\}\cup\{X'_{jk}\res 1\leq j\leq k\leq m\}$ forms a (complex) basis for $\fh$. Let $\{\omega_{jk},\omega'_{jk}\}$ be the dual basis for $\fh^\vee$. Then $\{\omega_{\alpha\mu},\omega'_{\alpha\mu}\res 1\leq \alpha\leq p,p+1\leq \mu\leq m\}$ is a basis for $\fp^\vee$. Put
\[
\xi_{\alpha\mu}\coloneqq\omega_{\alpha\mu}+\ti\omega'_{\alpha\mu},\qquad 1\leq \alpha\leq p,p+1\leq \mu\leq m.
\]
Then $\{\xi_{\alpha\mu}\}$ forms a basis of the (holomorphic) cotangent space of $\fD=H(F_{v_0})/L_{v_0}$ at the identity coset.

The basis $\{e_{v,1},\dots,e_{v,m}\res v\in\Hom(F,\dR)\}$ we chose gives rise to every $u\in\Hom(E,\dC)$ a coordinate system $(z_{ij}^u)_{1\leq i\leq r,1\leq j\leq m}\in(\dC^m)^r$ to $V_\infty^r$. Define the Gaussian function $\nu_{r,0}\in\sS(V_\infty^r)$ by the formula
\[
\nu_{r,0}(x)\coloneqq\prod_{v\in\Hom(F,\dR)}\prod_{i=1}^r\prod_{j=1}^m\te^{-\pi\prod_{v(u)=v}z_{ij}^u}.
\]
Following \cite{KM86}, for $1\leq i\leq n$, we define two (Howe) operators
\[
\rh_i,\ol\rh_i\colon\sS(V_\infty^r)\otimes_\dC\wedge^\bullet\fp^\vee\to\sS(V_\infty^r)\otimes_\dC\wedge^{\bullet+ rq}\fp^\vee
\]
by the formulae
\begin{align*}
\rh_i&\coloneqq\frac{1}{2^q}\prod_{\mu=p+1}^m\(\sum_{\alpha=1}^p \(z_{i\alpha}^{\ol{u_0}}-\frac{1}{\pi}\frac{\partial}{\partial z_{i\alpha}^{u_0}}\)\otimes L(\xi_{\alpha\mu})\), \\
\ol\rh_i&\coloneqq\frac{1}{2^q}\prod_{\mu=p+1}^m\(\sum_{\alpha=1}^p \(z_{i\alpha}^{u_0}-\frac{1}{\pi}\frac{\partial}{\partial z_{i\alpha}^{\ol{u_0}}}\)\otimes L(\ol{\xi_{\alpha\mu}})\),
\end{align*}
where $L(\xi)$ denotes the left multiplication by an element $\xi\in\fp^\vee$. Put
\[
\varphi_{r,0}\coloneqq\(\ti^q\cdot\rh_1\ol\rh_1\nu_{r,0}\)\wedge\cdots\wedge\(\ti^q\cdot\rh_r\ol\rh_r\nu_{r,0}\)
\in\sS(V_\infty^r)\otimes_\dC\wedge^{rq,rq}\fp^\vee,
\]
where $\wedge^{rq,rq}\fp^\vee$ denotes the subspace of $\wedge^{2rq}\fp^\vee$ of Hodge type $(rq,rq)$. Then $\varphi_{r,0}$ is the Kudla--Millson form (for the trivial weight).

In general, write
\[
\tT(V_\infty)\coloneqq\bigoplus_{c\in\dN^{\Hom(E,\dC)}}\tT_c(V_\infty),
\]
and consider
\begin{align}\label{eq:algebra}
\fA\coloneqq\End_\dC\(\sS(V_\infty^r)\otimes_\dC\wedge^\bullet\fp^\vee\otimes_\dC\tT(V_\infty)\).
\end{align}
For $1\leq i\leq n$ and $u\in\Hom(E,\dC)$, define an element
\[
\rE_i^u\coloneqq
\begin{dcases}
\frac{1}{2}\sum_{\alpha=1}^p\(z_{i\alpha}^u-\frac{1}{\pi}\frac{\partial}{\partial z_{i\alpha}^{\ol{u}}}\)\otimes 1\otimes L(e^u_\alpha),&\text{if $v(u)=v_0$},\\
\frac{1}{2}\sum_{j=1}^m\(z_{ij}^u-\frac{1}{\pi}\frac{\partial}{\partial z_{ij}^{\ol{u}}}\)\otimes 1\otimes L(e^u_j),&\text{if $v(u)\neq v_0$}
\end{dcases}
\]
in $\fA$, where $L(e^u_j)$ denotes the left multiplication by $e^u_j$ in $\tT(V_\infty)$. For every $u\in\Hom(E,\dC)$, we define a homomorphism $\rE^u\colon\dQ^r\to\fA$ by the formula
\[
\rE^u(\epsilon_i)=\rE_i^u.
\]
The collection $\{\rE^u\}_u$ induces a complex linear map
\[
\rE^b\colon\tT^b(\dC^r)\to\fA
\]
using composition on $\fA$. Finally, we define an element
\[
\varphi_{r,b}\in\Hom_\dC\(\tT^b(\dC^r),\sS(V_\infty^r)\otimes_\dC\wedge^{rq,rq}\fp^\vee\otimes_\dC\tT_b(V_\infty)\)
\]
by the formula
\[
\varphi_{r,b}(w)=\rE^b(w)(\varphi_{r,0}),\quad w\in\tT^b(\dC^r).
\]
It is clear that $\varphi_{r,b}$ takes values in $\(\sS(V_\infty^r)\otimes_\dC\wedge^{rq,rq}\fp^\vee\otimes_\dC\tT_b(V_\infty)\)^{L_\infty}$, which is canonically isomorphic to $\(\sS(V_\infty^r)\otimes_\dC\sA^{rq,rq}(\fD)\otimes_\dC\tT_b(V_\infty)\)^{H(F_\infty)}$ by taking $H(F_\infty)$-translation. Thus, we will, according to the context, also regard
\[
\varphi_{r,b}\in\Hom_\dC\(\tT^b(\dC^r),\sS(V_\infty^r)\otimes_\dC\sA^{rq,rq}(\fD)\otimes_\dC\tT_b(V_\infty)\).
\]
It is straightforward to check that $\varphi_{r,b}$ does not depend on the choice of $L_{v_0}$ and the bases $\{e_{v,j}\}$.

\begin{lem}\label{le:invariance}
The element $\varphi_{r,b}$ belongs to
\[
\Hom_\dC\(\tT^b(\dC^r)\otimes\delta^{m/2},\sS(V_\infty^r)\otimes_\dC\sA^{rq,rq}(\fD)\otimes_\dC\tT_b(V_\infty)\)
^{\widetilde{K_{r,\infty}}\times H(F_\infty)\times\fS_b},
\]
where $\widetilde{K_{r,\infty}}\times H(F_\infty)$ acts on $\sS(V_\infty^r)$ via the Weil representation $\omega_{\psi_{F,\infty}}$ (with respect to $\psi_{F,\infty}$), and the other actions are obvious ones.
\end{lem}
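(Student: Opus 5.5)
We check the three invariances separately: under $\fS_b$, under $H(F_\infty)$, and under $\widetilde{K_{r,\infty}}$ acting through $\omega_{\psi_{F,\infty}}$. The first two are formal. The third is the real content, and we reduce it to the Kudla--Millson case together with an intertwining property of the operators $\rE^u_i$.

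\emph{Step 1: $\fS_b$-invariance.} For fixed $u$, the operators $\rE^u_i$ and $\rE^u_{i'}$ commute up to the order of their tensor insertions, since the differential parts commute. Let $\sigma\in\fS_{b_u}$ permute the factors of $\tT^{b_u}(\dC^r)$. Then $\rE^b(\sigma w)$ applied to $\varphi_{r,0}$ equals $\sigma$ applied to the $\tT_b(V_\infty)$-component of $\rE^b(w)(\varphi_{r,0})$. This is exactly the required equivariance.

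\emph{Step 2: $H(F_\infty)$-invariance.} By construction, $\varphi_{r,b}$ is first defined as an element of $\sS(V_\infty^r)\otimes\wedge^{rq,rq}\fp^\vee\otimes\tT_b(V_\infty)$, and $H(F_\infty)$-invariant forms correspond to $L_\infty$-invariant elements. So it suffices to prove $L_\infty$-invariance. We have $L_\infty=\rU(p)\times\rU(q)\times\prod_{v\ne v_0}\rU(m)$. The Kudla--Millson form $\varphi_{r,0}$ is $L_\infty$-invariant by \cite{KM86}. For $v(u)=v_0$, the vector-valued operator $\sum_\alpha(z^u_{i\alpha}-\pi^{-1}\partial/\partial z^{\ol u}_{i\alpha})\otimes L(e^u_\alpha)$ is a contraction of the $\rU(p)$-equivariant identity of $V^+_{v_0}$, so it is independent of the choice of orthonormal basis of $V^+_{v_0}$. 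It also commutes with the action of $\rU(p)\times\rU(q)$ on all three tensor factors. The same argument at $v\neq v_0$ uses all of $V_v$. Hence $\rE^b(w)$ is $L_\infty$-equivariant, and invariance of $\varphi_{r,0}$ propagates to $\varphi_{r,b}$.

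\emph{Step 3: $\widetilde{K_{r,\infty}}$-equivariance.} This is the main obstacle. First, the Kudla--Millson form satisfies $\omega(k)\varphi_{r,0}=\delta(k)^{m/2}\varphi_{r,0}$ by \cite{KM86}. The unitary analogue is standard, and it can also be checked infinitesimally: apply $\omega(\fl)$ for the complexified Lie algebra $\fl\cong\mathfrak{gl}_r\oplus\mathfrak{gl}_r$ of $K_{r,v}$, using the Fock-model description in which $z-\pi^{-1}\partial_{\bar z}$ acts as a creation operator. Second, we prove the intertwining relation
\[
\omega(k)\circ\rE^u(\epsilon_i)\circ\omega(k)^{-1}=\rE^u(U_1\epsilon_i)
\]
when $u(k)=(U_1,U_2)$, together with the analogous relation with $\ol{U_2}$ for $\ol u$.

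We verify this relation infinitesimally. The Lie algebra of $K_{r,v}$ acts in the Weil representation through second-order operators: quadratic terms in $z$, terms $z\,\partial$, and second-order derivatives $\partial\partial$. For $v\ne v_0$ these are sums over all coordinates $j$; at $v_0$ the $\alpha$-coordinates and the $\mu$-coordinates enter with opposite signs, because the form has signature $(p,q)$. The operators $z^u_{i\alpha}-\pi^{-1}\partial/\partial z^{\ol u}_{i\alpha}$ are precisely the raising operators in these coordinates. Their commutators with $\omega(X)$, for $X\in\mathrm{Lie}(K_{r,v})$, are therefore linear combinations of the same operators with $i$ transformed by the standard representation: $U_1$ for $u$ and $\ol{U_2}$ for $\ol u$.

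The sign bookkeeping is the delicate point. In the $\mu$-directions the roles of $u$ and $\ol u$ are swapped, but $\rE^u_i$ involves only $\alpha$-directions at $v_0$, so the swap does not enter. The remaining care is to match the choice of $u$ versus $\ol u$ in the identification $K_{r,v}\cong\rU(r)\times\rU(r)$, which uses $u^{-1}$.

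Granting these two facts, we combine them:
\[
\omega(k)\varphi_{r,b}(w)=\rE^b(kw)\,\omega(k)\varphi_{r,0}=\delta(k)^{m/2}\varphi_{r,b}(kw).
\]
This is exactly invariance in $\Hom(\tT^b(\dC^r)\otimes\delta^{m/2},\dots)$. Finally, the actions of $\widetilde{K_{r,\infty}}$ and $H(F_\infty)$ commute, so the three invariances together give the lemma.
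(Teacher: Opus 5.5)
Your proposal is correct and is essentially the paper's argument: $H(F_\infty)$- and $\fS_b$-invariance are formal, and $\widetilde{K_{r,\infty}}$-equivariance comes from the Kudla--Millson eigen-property of $\varphi_{r,0}$ together with the way the Lie algebra of $\widetilde{K_{r,\infty}}$ commutes past the operators $\rE^u_i$. The paper writes your commutator identity in the Fock model, where $\rE^u_i$ becomes multiplication by the variables $\sfz^u_{i\alpha}$ and $\varphi_{r,b}=\varphi_{r,0}\cdot\varphi_b$; the generator $\sigma^u_{j,k}$ (after the $\delta^{-m/2}$ twist) kills $\varphi_{r,0}$ and acts on $\varphi_b$ through $\sum_{\alpha}\sfz^u_{k\alpha}\,\partial/\partial\sfz^u_{j\alpha}$, which is exactly your intertwining relation.
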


\begin{proof}[Proof of Lemma \ref{le:invariance}]
The invariance under $H(F_\infty)$ is immediate from the construction. The invariance under $\fS_b$ follows by the same proof of \cite{FM06}*{Proposition~5.2}. For the invariance under $\widetilde{K_{r,\infty}}$, we use the Fock model and mimic the proof of \cite{FM06}*{Theorem~6.2}. The Fock model of the infinitesimal Weil representation of $G_r(F_\infty)\times H(F_\infty)$ with respect to the bases $\{e_{v,j}\}$ is realized on the polynomial ring
\[
\sF(V_\infty^r)\coloneqq\dC[\sfz_{ij}^u\res u\in\Hom(E,\dC),1\leq i\leq r,1\leq j\leq m].
\]
We normalize the intertwining operator so that the following operators between the Schr\"{o}dinger and the Fock models correspond:
\begin{align*}
z_{i\alpha}^{u}-\frac{1}{\pi}\frac{\partial}{\partial z_{i\alpha}^{\ol{u}}} &\leftrightarrow \frac{-\ti}{\sqrt{2}\pi}\sfz^{u}_{i\alpha},\quad v(u)=v_0,1\leq\alpha\leq p,\\
z_{i\mu}^{u}-\frac{1}{\pi}\frac{\partial}{\partial z_{i\mu}^{\ol{u}}} &\leftrightarrow \frac{\ti}{\sqrt{2}\pi}\sfz^{u}_{i\mu},\quad v(u)=v_0,p+1\leq\mu\leq m,\\
z_{ij}^{u}-\frac{1}{\pi}\frac{\partial}{\partial z_{ij}^{\ol{u}}} &\leftrightarrow \frac{-\ti}{\sqrt{2}\pi}\sfz^{u}_{ij},\quad v(u)\neq v_0,1\leq j\leq m.
\end{align*}
Indeed, for every fixed $u$, this is the Fock model adopted in \cite{FH21}*{Appendix~B} (while our variables $\sfz^u_{i\alpha},\sfz^{u}_{i\mu},\sfz^{u}_{ij}$ are denoted by $z'_{\alpha(r+i)},z'_{\mu i},z'_{j(r+i)}$ there, respectively).

For a multi-index $\balpha=(\alpha_1,\dots,\alpha_q)\in\{1,\dots,p\}^q$, we put
\[
\sfz_{i\balpha}^u\coloneqq\sfz_{i\alpha_1}^u\cdots\sfz_{i\alpha_q}^u\in\sF(V_\infty^r),\quad 1\leq i\leq n
\]
for $u$ above $v_0$, and denote by $\omega_{\balpha}$ and $\ol\omega_{\balpha}$ the forms in $\sA^{q,0}(\fD)$ and $\sA^{0,q}(\fD)$ giving by the elements
\[
L(\xi_{\alpha_1 p+1})\cdots L(\xi_{\alpha_q p+q}),\quad
L(\ol{\xi_{\alpha_1 p+1}})\cdots L(\ol{\xi_{\alpha_q p+q}})
\]
in $\wedge^q\fp^\vee$. Then in the Fock model,
\[
\varphi_{r,0}=\ti^{rq}\(\frac{-1}{8\pi^2}\)^{rq}\sum_{\balpha_1,\dots,\balpha_r}\sum_{\balpha'_1,\dots,\balpha'_r}
\sfz^{\ol{u_0}}_{1\balpha_1}\cdots\sfz^{\ol{u_0}}_{r\balpha_r}\cdot\sfz^{u_0}_{1\balpha'_1}\cdots\sfz^{u_0}_{r\balpha'_r}
\otimes\(\omega_{\balpha_1}\wedge\ol\omega_{\balpha'_1}\)\wedge\cdots\wedge\(\omega_{\balpha_r}\wedge\ol\omega_{\balpha'_r}\)
\]
in $\sF(V_\infty^r)\otimes_\dC\sA^{rq,rq}(\fD)$.

In the case of nontrivial weights, take an element $u\in\Hom(E,\dC)$. We introduce the set
\[
\tB_u\coloneqq
\begin{dcases}
\{1,\dots,p\}^{b_u}, & \text{if $v(u)=v_0$},\\
\{1,\dots,m\}^{b_u}, & \text{if $v(u)\neq v_0$}.
\end{dcases}
\]
For $\bbeta=(\beta_1,\dots,\beta_{b_u})\in\tB_u$, put
\[
\sfz^{\ol{u}}_{i\bbeta}\coloneqq\sfz_{i\beta_1}^{\ol{u}}\cdots\sfz_{i\beta_{b_u}}^{\ol{u}}\in\sF(V_\infty^r),\quad 1\leq i\leq n,
\]
and
\[
e^u_{\bbeta}\coloneqq e^u_{\beta_1}\otimes\cdots\otimes e^u_{\beta_{b_u}}\in\tT^{b_u}(V_u).
\]
Define a complex linear map $\varphi_b^u\colon\tT^{b_u}(\dC^r)\to\sF(V_\infty^r)\otimes_\dC\tT^{b_u}(V_u)$ by the formula
\[
\varphi_b^u(\epsilon_{i_1}\otimes\cdots\otimes\epsilon_{{i_{b_u}}})=\(\frac{-\ti}{2\sqrt{2}\pi}\)^{b_u}
\sum_{\bbeta\in\tB_u}\sfz^{\ol{u}}_{i_1\beta_1}\cdots\sfz^{\ol{u}}_{i_{b_u}\beta_{b_u}}\otimes e^u_{\bbeta}.
\]
Taking the tensor product over all $u\in\Hom(E,\dC)$ and using the product in $\sF(V_\infty^r)$, we obtain a linear map
\[
\varphi_b\colon\tT^b(\dC^r)\to\sF(V_\infty^r)\otimes_\dC\tT_b(V_\infty).
\]

By definition, we have $\varphi_{r,b}=\varphi_{r,0}\cdot\varphi_b$ in the Fock model. Denote by $\tilde\omega_{\psi_{F,\infty}}$ the Weil representation twisted by the character $\delta^{-m/2}$.

Note that the complexified Lie algebra of $\widetilde{K_{r,\infty}}$ is generated by elements $\{\sigma^u_{j,k}\res u\in\Hom(E,\dC),1\leq j,k\leq r\}$, where $\sigma^u_{j,k}$ denotes the endomorphism of $\dC^r$, regarded in the copy indexed by $u\in\Hom(E,\dC)$, that sends $\epsilon_j$ to $\epsilon_k$ and annihilates the rest elements in the basis and the rest copies of $\dC^r$. Thus, it suffices to show that, for every $u\in\Hom(E,\dC)$ and $1\leq j,k\leq r$, $\sigma^u_{j,k}$ annihilates $\varphi_{r,b}$, that is,
\begin{align}\label{eq:invariance}
\tilde\omega_{\psi_{F,\infty}}(\sigma^u_{j,k})\(\varphi_{r,b}(\epsilon_{i_1}\otimes\cdots\otimes\epsilon_{{i_{b_{u'}}}})\)
=\varphi_{r,b}\(\sigma^u_{j,k}(\epsilon_{i_1}\otimes\cdots\otimes\epsilon_{{i_{b_{u'}}}})\)
\end{align}
for every $u'\in\Hom(E,\dC)$ (here, $\epsilon_i$'s are regarded in the copy indexed by $u'$). We show this for $u$ above $v_0$ and leave the other similar case to the readers. Indeed, by \cite{FM06}*{Lemma~A.1}, we have
\begin{align*}
\tilde\omega_{\psi_{F,\infty}}(\sigma^u_{j,k})\(\varphi_{r,b}(\epsilon_{i_1}\otimes\cdots\otimes\epsilon_{{i_{b_{u'}}}})\)
&=\tilde\omega_{\psi_{F,\infty}}(\sigma^u_{j,k})(\varphi_{r,0})
\cdot\varphi_b\(\sigma^u_{j,k}(\epsilon_{i_1}\otimes\cdots\otimes\epsilon_{{i_{b_{u'}}}})\) \\
&\quad+\varphi_{r,0}\cdot\sum_{\alpha=1}^p\sfz^u_{k\alpha}\frac{\partial}{\partial\sfz^u_{j\alpha}}
\(\varphi_b(\epsilon_{i_1}\otimes\cdots\otimes\epsilon_{{i_{b_{u'}}}})\).
\end{align*}
We have $\tilde\omega_{\psi_{F,\infty}}(\sigma^u_{j,k})(\varphi_{r,0})=0$ since $\varphi_{r,0}\in\(\delta^{-m/2}\otimes\sS(V_\infty^r)\otimes_\dC\sA^{rq,rq}(\fD)\)^{\widetilde{K_{r,\infty}}}$ by \cite{KM86}*{Theorem~3.1}. On the other hand, it is straightforward to see that
\[
\sum_{\alpha=1}^p\sfz^u_{k\alpha}\frac{\partial}{\partial\sfz^u_{j\alpha}}
\(\varphi_b(\epsilon_{i_1}\otimes\cdots\otimes\epsilon_{{i_{b_{u'}}}})\)
=\varphi_b\(\sigma^u_{j,k}(\epsilon_{i_1}\otimes\cdots\otimes\epsilon_{{i_{b_{u'}}}})\)
\]
(note that both sides are zero if $u'\neq u$). Thus, \eqref{eq:invariance} holds. The lemma follows.
\end{proof}

\begin{remark}[Pullback trick]\label{re:pullback}
When $r=1$, we can recover the Funke--Millson form $\varphi_{1,b}$ from the orthogonal case. Put $V'\coloneqq\Res_{E/F}V$, regarded as a quadratic space over $F$ of dimension $2m$, with signature $(2p,2q)$ at $v_0$ and $(2m,0)$ at other real places of $F$. Thus, for every $v\in\Hom(F,\dR)$, $V_v\otimes_\dR\dC=\bigoplus_{v(u)=v}V_u$ as complex vector spaces. Define $b'\in\dN^{\Hom(F,\dR)}$ by the formula $b'_v=\sum_{v(u)=v}b_u$. Then $\tT^{b'}(\dC)$ is canonically identified with $\tT^b(\dC)$ and we have a natural projection map $\tT_{b'}(V'_\infty)_\dC\to\tT_b(V_\infty)$. Denote by $\fD'$ the (real) symmetric domain of $\Res_{F/\dQ}\SO(V')$ of dimension $4pq$, so that we have a natural embedding $\fD\to\fD'$ of real symmetric domains. Taking the pullback of forms along such an embedding, combined with the projection $\tT_{b'}(V'_\infty)_\dC\to\tT_b(V_\infty)$ and the identification $V'_\infty=V_\infty$ as real vector spaces, we obtain a map
\[
\varrho^\bullet\colon\sS(V'_\infty)\otimes_\dC\sA^\bullet(\fD')\otimes_\dC\tT_{b'}(V'_\infty)_\dC
\to\sS(V_\infty)\otimes_\dC\sA^\bullet(\fD)\otimes_\dC\tT_b(V_\infty).
\]
It is straightforward to check that the following diagram
\[
\xymatrix{
\tT^{b'}(\dC) \ar[r]^-{\varphi_{1,b'}}\ar@{=}[d] & \sS(V'_\infty)\otimes_\dC\sA^{2q}(\fD')\otimes_\dC\tT_{b'}(V'_\infty)_\dC \ar[d]^-{\varrho^{2q}} \\
\tT^{b}(\dC) \ar[r]^-{\varphi_{1,b}} & \sS(V_\infty)\otimes_\dC\sA^{2q}(\fD)\otimes_\dC\tT_b(V_\infty)
}
\]
commutes.\footnote{In Section \ref{ss:2}, we only mention the Funke--Millson form for $2q=2$; the general case is of course done in \cite{FM06}, more precisely, \S5.2.}
\end{remark}

Applying \eqref{eq:harmonic_unitary} to $\varphi_{r,b}$, we obtain an element
\[
\varphi_{r,[b]}\in\Hom_\dC\(\tT^b(\dC^r)\otimes\delta^{m/2},\sS(V_\infty^r)\otimes_\dC\sA^{rq,rq}(\fD)
\otimes_\dC\tT_{[b]}(V_\infty)\)^{\widetilde{K_{r,\infty}}\times H(F_\infty)\times\fS_b}.
\]
Then applying the projector $\pi_{\lambda^+}$ to $\varphi_{r,[b]}$, we obtain the element
\[
\varphi_{r,\lambda}\in\Hom_\dC\(\tS_{\lambda^+}(\dC^r)\otimes\delta^{m/2},\sS(V_\infty^r)\otimes_\dC\sA^{rq,rq}(\fD)
\otimes_\dC\tS_{[\lambda]}(V_\infty)\)^{\widetilde{K_{r,\infty}}\times H(F_\infty)}.
\]
For every $x\in V_\infty^r$ and $h\in H(\bA_F^\infty)$, we can evaluate $\varphi_{r,\lambda}$ at $x$ and then take its pushforward along the composite map
\begin{align*}
\fD\to X_{hLh^{-1}}\xrightarrow{\cdot h} X_L
\end{align*}
to obtain an element
\[
\varphi_\lambda(x,h)_L\in\tS_{\lambda^+}(\dC^r)^\vee\otimes_\dC\sA^{rq,rq}(X_L,\tS_{[\lambda]}(V_\infty)),
\]
where $\sA^{\bullet,\bullet}(X_L,\tS_{[\lambda]}(V_\infty))\coloneqq\sA^{\bullet,\bullet}(X_L)\otimes_\dC\tS_{[\lambda]}(V_\infty)$. It is clear that $\varphi_\lambda(x,h)$ is invariant under the diagonal action of $H(F)$ on $V_\infty^r\times H(\bA_F^\infty)$.

Take a Schwartz function $\phi^\infty\in\sS((V^\infty)^r)^L$.

\begin{definition}[Generating function of Funke--Millson forms]\label{de:form_unitary}
Define a function $\rF^\lambda(\phi^\infty)$ on $\dH_r$ valued in $\tS_{\lambda^+}(\dC^r)^\vee\otimes_\dC\sA^{rq,rq}(X_L,\tS_{[\lambda]}(V_\infty)_\dC)$ by the formula
\begin{align*}
\rF^\lambda(\phi^\infty)(\tau)\coloneqq\sum_{(x,h)\in H(F)\backslash V^r\times H(\bA_F^\infty)/L}\phi^\infty(h^{-1}x)\cdot \(a.\varphi_\lambda(xa,h)_L\)\cdot\prod_{v\in\Hom(F,\dR)}\te^{2\pi\ti \tr(T(x)x_{\tau})_v},
\end{align*}
where $a\in\GL_r(E_\infty)$ is an arbitrary element satisfying $y_{\tau}=a\cdot\pres{t}{\ol{a}}$ and $\det a\in(F_\infty)_{>0}$, which acts on $\phi_\lambda(xa)$ via the factor $\tS_{\lambda^+}(\dC^r)^\vee$. It is clear that $\rF^\lambda(\phi^\infty)(\tau)$ is absolutely convergent and well-defined (that is, independent of the choice of $a$).
\end{definition}

\begin{proposition}\label{pr:form_unitary}
Take a Schwartz function $\phi^\infty\in\sS((V^\infty)^r)^L$.
\begin{enumerate}
  \item The function $\rF^\lambda(\phi^\infty)$ is a (not necessarily holomorphic) hermitian Siegel modular form (of genus $r$ with respect to the CM extension $E/F$) for the $\widetilde{K_{r,\infty}}$-type $\tS_{\lambda^+}(\dC^r)^\vee\otimes\delta^{-m/2}$ with coefficients in closed forms in $\sA^{rq,rq}(X_L,\tS_{[\lambda]}(V_\infty))$.

  \item In view of (1), the associated cohomology class of $\rF^\lambda(\phi^\infty)$ coincides with $\rC^\lambda(\phi^\infty)$.

  \item The $q$-expansion $\rC^\lambda(\phi^\infty)$ is a (holomorphic) hermitian Siegel modular form for the $\widetilde{K_{r,\infty}}$-type $\tS_{\lambda^+}(\dC^r)^\vee\otimes\delta^{-m/2}$ with coefficients in $\rH^{rq,rq}(X_L,\tS_{[\lambda]}(V_\infty))$. It is a cusp form if $r=\max_u\{s(\lambda_u)\}$.
\end{enumerate}
\end{proposition}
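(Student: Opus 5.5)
The argument mirrors the proof of Proposition \ref{pr:form_orthogonal}: we adapt \cite{FM06}*{Theorems~7.1 and 7.6} to the unitary setting, using the Kudla--Millson results of \cite{KM86,KM90} for the weight-$\b{0}$ form $\varphi_{r,0}$ as input.

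\emph{Part (1).} Modularity comes from the standard theta-correspondence argument. By Lemma \ref{le:invariance}, $\varphi_{r,\lambda}$ transforms under $\widetilde{K_{r,\infty}}$ by the type $\tS_{\lambda^+}(\dC^r)\otimes\delta^{m/2}$ and is $H(F_\infty)$-invariant. Hence the theta function
\[
\theta(g,h;\varphi_{r,\lambda}\otimes\phi^\infty)=\sum_{x\in V^r}\omega(g)(\varphi_{r,\lambda}\otimes\phi^\infty)(h^{-1}x)
\]
is left $G_r(F)$-invariant in $g$ by Poisson summation; this is the usual automorphy of the Weil representation on $G_r\times H$. Evaluating at $g=n(x_\tau)m(a)$ gives $\rF^\lambda(\phi^\infty)(\tau)$, up to the automorphy factor for the stated $K$-type. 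Absolute convergence is clear from the Gaussian decay. Closedness needs more work. The form $\varphi_{r,0}$ is closed by \cite{KM86}. For general $b$, I would show that $\rE_i^u$ commutes with the exterior differential $d$ on $\sS\otimes\wedge^\bullet\fp^\vee\otimes\tT$. Concretely, $d=\sum\omega_{\alpha\mu}\otimes X_{\alpha\mu}+\dots$, where the $X$'s act by the Weil representation (together with the action on $\tT(V_\infty)$ at $v_0$). The commutator of the Weil action of $\fp$ with the operators $z-\pi^{-1}\partial$ only moves indices between the positive and negative parts. After projection to the harmonic part $[\;]$ and through $\pi_{\lambda^+}$, these cross terms cancel exactly as in \cite{FM06}*{\S5}. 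Alternatively, for $r=1$ one can use Remark \ref{re:pullback}: pulling back along $\fD\to\fD'$ and then projecting reduces closedness directly to the orthogonal case. For general $r$, the form $\varphi_{r,b}$ is a wedge-type product of $r=1$ pieces in the Fock model, since $\varphi_{r,b}=\varphi_{r,0}\cdot\varphi_b$, and closedness follows.

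\emph{Part (2).} This is the main obstacle. The plan follows \cite{FM06}*{Theorem~7.1}. Unfold the Fourier coefficient $\rF^\lambda_T$ for $T$ positive definite as a sum over $H(F)$-orbits of $x$ with $T(x)=T$. Each orbit contributes the pushforward from the stabilizer cover $H^x(F_{v_0})\backslash\fD$ of $\varphi_{r,\lambda}(x)$. One then shows that $[\varphi_{r,\lambda}(x)]$ is Poincaré dual, with coefficients, to the special cycle $\fD_x$ decorated by the flat section $\pi_{\lambda^+}([x_f])$. To do this, decompose $\varphi_{r,b}$ using $z-\pi^{-1}\partial$ applied to the Gaussian. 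The part of $\varphi_b$ involving only the $v_0$-positive coordinates of $x$ reproduces the section $x_f$ along $\fD_x$, together with the Kudla--Millson Thom form, whose class is the fundamental class by \cite{KM90}. The remaining terms are either exact or killed by the harmonic projection. This is the unitary analogue of \cite{FM06}*{Theorem~5.9}: the relation $[\varphi_{1,b}]=[\varsigma_j\varphi_{1,b-1}]+\sum[A\varphi_{1,b-2}]$ holds up to exact forms, and the $A$-terms vanish after applying $[\;]$. The Fock-model formula from the proof of Lemma \ref{le:invariance} makes this computation tractable. For degenerate $T$, the coefficient involves $\Omega_L^{r-\rank T}$; this follows from the Kudla--Millson degenerate-term analysis \cite{KM90}, extended by the restriction/induction argument of \cite{FM06}*{\S7}.

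\emph{Part (3).} Holomorphy of the cohomology class follows as in \cite{FM06}*{Theorem~6.11}. The lowering operator $\rL_v$ applied to $\varphi_{r,b}$ produces terms of the form $A\varphi_{r,b-\text{shift}}$ plus an exact form. The $A$-terms vanish after the harmonic projection $[\;]$, so the class is annihilated by all lowering operators. Combining this with (1) and (2) shows that $\rC^\lambda(\phi^\infty)$ is a holomorphic modular form of the stated type. For cuspidality when $r=\max_u s(\lambda_u)$, note that every term with $\rank T<r$ vanishes, because $\rZ^\lambda(x,h)=0$ when $\rank T(x)<s(\lambda_u)$ by the footnote. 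Thus only nondegenerate $T$ occur at every cusp, by the same argument applied to all $G_r(F)$-translates, and so the form is cuspidal.
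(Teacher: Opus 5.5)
Your outline is correct in substance and follows the paper's skeleton. Lemma~\ref{le:invariance} gives modularity and the $\widetilde{K_{r,\infty}}$-type, and closedness is reduced to $r=1$ via the product structure of $\varphi_{r,b}$. Parts (2) and (3) then follow the arguments of \cite{FM06}*{Theorems~7.6 and~5.10}, driven by a unitary analogue of \cite{FM06}*{Theorem~5.9(i)} and of the lowering-operator identity, with all $A$-terms killed by the harmonic projection. These are exactly Lemma~\ref{le:form_unitary_1} and Lemma~\ref{le:form_unitary_2}.

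You differ from the paper in how those two unitary identities are obtained. You propose to redo the Funke--Millson computations directly in the unitary Fock model. The paper instead uses the pullback trick of Remark~\ref{re:pullback} for Lemma~\ref{le:form_unitary_2}(1),(2) as well, not only for closedness. It pulls back the orthogonal identities for $V'=\Res_{E/F}V$ (signature $(2p,2q)$) along $\fD\to\fD'$. The only checks needed are that the projection $p_u$ intertwines $\varsigma'$ with $\varsigma^u$, and that it sends $A'_{ik}$ to $2A^u_{ij}\circ p_u$ when the second slot lies in the $\ol{u}$-block and to $0$ otherwise. The pullback route costs no new computation and inherits \cite{FM06} wholesale. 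Your direct route is self-contained and would produce explicit primitives, as the paper does by hand in Lemma~\ref{le:holomorphy_unitary} for $(p,q)=(1,1)$. For general $(p,q)$, however, it is a real calculation that you only assert is tractable. Also, in the unitary identities the $A$-terms are $A^u_{ij}\varphi_{1,b-1_u-1_{\ol{u}}}$, pairing a $u$-slot with a $\ol{u}$-slot, not $A\varphi_{1,b-2}$.

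Your first argument for closedness should be dropped or corrected. $\rE^u_i$ does not commute with $d$, and it is not the harmonic projection that removes the commutator. For $u$ above $v_0$, write $\rE^u_i$ as an $H(F_{v_0})$-equivariant operator minus $\frac{1}{2}\sum_\mu (z^u_{i\mu}+\pi^{-1}\partial/\partial z^{\ol{u}}_{i\mu})\otimes L(e^u_\mu)$. Then $[d,\rE^u_i]$ consists of two kinds of terms:
\begin{itemize}
\item terms containing the annihilation operators $z^u_{i\mu}+\pi^{-1}\partial/\partial z^{\ol{u}}_{i\mu}$, which kill the Gaussian in the negative variables;
\item terms $B_\mu\otimes L(e^u_\mu)$, with $B_\mu$ one of the factors of $\rh_i$ or $\ol\rh_i$, which vanish on $\varphi_{r,0}$ because $B_\mu^2=0$ and all the creation operators commute.
\end{itemize}
So $\varphi_{r,b}$ is already closed before any projection, as the paper states. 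Your alternative via Remark~\ref{re:pullback} is precisely the paper's argument and suffices.

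A minor slip in (2): each orbit contributes a pushforward from $\Gamma_x\backslash\fD$, with $\Gamma_x$ the arithmetic stabilizer, not from $H^x(F_{v_0})\backslash\fD$.
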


\begin{proof}
First consider (1). By Lemma \ref{le:invariance}, $\rF^\lambda(\phi^\infty)$ is a hermitian Siegel modular form for the $\widetilde{K_{r,\infty}}$-type $\tS_{\lambda^+}(\dC^r)^\vee\otimes\delta^{-m/2}$. It remains to show that it takes values in closed forms on $X_L$. We show that indeed $\varphi_{r,b}$ already takes values in closed forms on $\fD$. By the same argument for \cite{FM06}*{Theorem~5.7}, it suffices to consider the case where $r=1$. When $r=1$, we use the pullback trick in Remark \ref{re:pullback} to pass to the orthogonal case, which has already been addressed in \cite{FM06}. Since it is easy to see that the map $\varrho^\bullet$ commutes with differential operators, $\varphi_{1,b}$ takes values in closed forms in $\sA^{2q}(\fD)$ by \cite{FM06}*{Theorem~6.3}  (and also Remark \ref{re:form_orthogonal}).

The proof of (2) follows from the same argument for \cite{FM06}*{Theorem~7.6} using Lemma \ref{le:form_unitary_1} below to replace \cite{FM06}*{Theorem~5.9(ii)}.

The proof of (3) follows from the same argument for \cite{FM06}*{Theorem~5.10} using Lemma \ref{le:form_unitary_2}(2) below.
\end{proof}

For every $u\in\Hom(E,\dC)$ with $v=v(u)$, we have a map
\[
\sigma^u\colon\dC^r\to(V_v^r)^\vee\otimes_{F_v,u}V_u
\]
sending $\epsilon_i$ to the projection to the $i$-th factor (then composing with $u$) for $1\leq i\leq r$. They together induce a map
\[
\sigma^b\colon\tT^b(\dC^r)\to(V_\infty^r)^\vee\otimes_{F_\infty}\tT_b(V_\infty).
\]
Taking the restriction along the direct summand $\tS_{\lambda^+}(\dC^r)\subset\tT^b(\dC^r)$, we obtain the map
\[
\sigma^b\res_{\tS_{\lambda^+}(\dC^r)}\colon\tS_{\lambda^+}(\dC^r)\to(V_\infty^r)^\vee\otimes_{F_\infty}\tT_b(V_\infty).
\]
Composing with the projection $\pi_{\lambda^+}\circ[\phantom{a}]\colon\tT_b(V_\infty)\to\tS_{[\lambda]}(V_\infty)$, we obtain the map
\[
\sigma_\lambda\colon\tS_{\lambda^+}(\dC^r)\to(V_\infty^r)^\vee\otimes_{F_\infty}\tS_{[\lambda]}(V_\infty).
\]
Regard $(V_\infty^r)^\vee\otimes_{F_\infty}\tS_{[\lambda]}(V_\infty)$ as a subspace of $\fA$ \eqref{eq:algebra} in the following way: for $l\otimes v\in(V_\infty^r)^\vee\otimes_{F_\infty}\tS_{[\lambda]}(V_\infty)$ and $\phi\otimes X\otimes w\in\sS(V_\infty^r)\otimes\wedge^{rq,rq}\fp^\vee\otimes\tT(V_\infty)$, $((l\otimes v).(\phi\otimes X\otimes w))(x)=l(x)\phi(x)\otimes X\otimes(v\otimes w)$ for $x\in V_\infty^r$. Then we obtain a map
\[
\sigma_\lambda\colon\tS_{\lambda^+}(\dC^r)\to\fA.
\]

\begin{lem}\label{le:form_unitary_1}
We have in cohomology that
\[
[\varphi_{r,\lambda}]=[\sigma_\lambda(\varphi_{r,0})].
\]
\end{lem}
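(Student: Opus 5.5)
We imitate the proof of \cite{FM06}*{Theorem~5.9(ii)}, which in turn rests on the recursion \cite{FM06}*{Theorem~5.9(i)}. The strategy is to express $\varphi_{r,b}$ as $\sigma^b(\varphi_{r,0})$ plus correction terms, and then check that every correction term either dies under $\pi_{\lambda^+}\circ[\phantom{a}]$ or is exact on $\fD$.

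\textbf{The recursion.} Each operator $\rE^u_i$ is the sum of two pieces:
\begin{itemize}
  \item the multiplication part $\tfrac12\sum z^u_{i\cdot}\otimes L(e^u_\cdot)$;
  \item the derivative part $-\tfrac{1}{2\pi}\sum\partial/\partial z^{\ol u}_{i\cdot}\otimes L(e^u_\cdot)$.
\end{itemize}
Fix $u$ and a tensor slot $j\le b_u$. We expand $\rE^u_{i_j}$ applied to $\rE^{b-1_u}(w')\varphi_{r,0}$. The multiplication part gives a partial contribution to $\sigma^b$. At the places $v(u)\ne v_0$ and in the directions $\alpha\le p$ this is only a partial contribution, since $\sigma^u$ uses all coordinates $e^u_j$.

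The derivative part is commuted to the right, past the earlier operators $\rE^{u'}_{i_k}$. It hits two kinds of factors:
\begin{itemize}
  \item the multiplication parts of factors with $u'=\ol u$ and the same index $i$, producing contraction-insertion terms $A_I$ as in Remark~\ref{re:form_orthogonal};
  \item the Gaussian and Howe factors in $\varphi_{r,0}$.
\end{itemize}
When it hits the Gaussian $\nu_{r,0}$, the result $-\tfrac1\pi\partial_{\ol z}$ of $\te^{-\pi z\ol z}$ equals $z\,\nu$. Combined with the factor $\tfrac12$, this converts the operator into the multiplication by $z^u$ required by $\sigma$.

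For the negative directions $\mu>p$ at $v_0$, which appear in $\sigma^u$ but not in $\rE^u$, we must show that the components $z^u_{i\mu}\otimes e^u_\mu$ contribute an exact form modulo the $A_I$. Here we follow FM: $z_{i\mu}\varphi_{r,0}$ should be $d$ of an explicit form built by dropping one Howe factor $L(\xi_{\alpha\mu})$. This uses the unitary analogue of the identity $d\psi=\ldots$ from \cite{KM86}.

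\textbf{Steps in order.}
\begin{enumerate}
  \item Reduce to $r=1$. All operators factor over $i$, and the statement for general $r$ follows from the wedge-product structure of $\varphi_{r,0}$ and closedness, as in \cite{FM06}*{Theorem~5.7}.
  \item For $r=1$, invoke the pullback trick of Remark~\ref{re:pullback}. The recursion of Remark~\ref{re:form_orthogonal} holds for $\varphi_{1,b'}$ on $V'$, namely $[\varphi_{1,b}]=[\varsigma_j\varphi_{1,b-1}]+\tfrac1{4\pi}\sum[A_{jk}\varphi_{1,b-2}]$, and by FM this relation holds in cohomology, with the $\varsigma$-term matching $\sigma$. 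We pull it back by $\varrho^\bullet$, which commutes with $d$. The map $\varsigma_j$ maps to the unitary insertion of $x$, which is the $u$-component of $\sigma^u$ after projecting $\tT_{b'}(V'_\infty)_\dC\to\tT_b(V_\infty)$.
  \item Observe that the $A_{jk}$-terms map either to zero or to the unitary expansion operators $A_I$. In both cases they are killed by the harmonic projection $[\phantom{a}]$, since $\tT_{[b]}$ is a complement to the image of the $A_I$ in \eqref{eq:decomposition_unitary}.
  \item Induct on $b$ to get $[\varphi_{1,[b]}]=[\,[\sigma^b(\varphi_{1,0})]\,]$, then apply $\pi_{\lambda^+}$ and restrict to $\tS_{\lambda^+}(\dC^r)$.
\end{enumerate}

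\textbf{Expected obstacle.} The main difficulty is step 2: checking that the projection $\tT_{b'}(V'_\infty)_\dC\to\tT_b(V_\infty)$ transports FM's recursion exactly, with matching constants $\tfrac12$, $\tfrac{1}{2\sqrt2\pi}$ and signs. Concretely, one must verify that the orthogonal insertion of $x$, read after complexification and projection to the $u$-slot, is the hermitian insertion. One must also verify that orthogonal contractions pairing two $u$-slots, as opposed to a $u$-slot with a $\ol u$-slot, vanish under the projection. If the pullback comparison fails at the level of constants, we would instead redo FM's homotopy argument directly in the Fock model, using the description $\varphi_{r,b}=\varphi_{r,0}\cdot\varphi_b$ from the proof of Lemma~\ref{le:invariance}.
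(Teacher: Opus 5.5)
Your proposal is correct and is essentially the paper's argument. The paper isolates your steps 2--3 as Lemma~\ref{le:form_unitary_2}(1), proved by exactly the pullback trick of Remark~\ref{re:pullback} together with the compatibility \eqref{eq:form_unitary}: orthogonal insertions pairing two $u$-slots vanish, while those pairing a $u$-slot with a $\ol{u}$-slot become $2A^u_{ij}$, which is precisely the check you flagged as the main obstacle. It then deduces the statement by the iteration argument of \cite{FM06}*{Theorem~5.9}, as in your steps 1 and 4, so your preliminary heuristic about commuting the derivative parts through $\varphi_{r,0}$ is not needed.
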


\begin{proof}
This follows from Lemma \ref{le:form_unitary_2}(1) below, via the same argument for \cite{FM06}*{Theorem~5.9}.
\end{proof}

Take an element $u\in\Hom(E,\dC)$. For $(i,j)\in\{1,\dots,b_u\}\times\{1,\dots,b_{\ol{u}}\}$, we have an operator
\[
A_{ij}^u\colon\tT^{b_u-1}(V_u)\otimes_\dC\tT^{b_{\ol{u}}-1}(V_{\ol{u}})\to\tT^{b_u}(V_u)\otimes_\dC\tT^{b_{\ol{u}}}(V_{\ol{u}})
\]
given by the insertion of the map $\dC\to V_u\otimes_\dC V_{\ol{u}}$ that is the dual of the hermitian form to the $(i,j)$-th spot; we regard it as a map
\[
A_{ij}^u\colon\tT_{b-1_u-1_{\ol{u}}}(V_\infty)\to\tT_b(V_\infty),
\]
where $n_u=(0,\dots,n,\dots,0)$ denotes the element in $\dN^{\Hom(E,\dC)}$ with only nonzero entry $n$ at $u$ for $n\in\dN$.
On the other hand, for $1\leq i\leq b_u$, we have an operator
\[
\varsigma_i^u\colon \sS(V_{v(u)})\otimes_\dC\tT^{b_u-1}(V_u)\to\sS(V_{v(u)})\otimes_\dC\tT^{b_u}(V_u)
\]
such that its value on $x\in V_{v(u)}$ is the operator that inserts $u(x)$ at the $i$-th spot; we regard it as a map
\[
\varsigma_i^u\colon\sS(V_\infty)\otimes_\dC\tT_{b-1_u}(V_\infty)\to\sS(V_\infty)\otimes_\dC\tT_b(V_\infty).
\]

\begin{lem}\label{le:form_unitary_2}
Consider the case where $r=1$.
\begin{enumerate}
  \item For every $u\in\Hom(E,\dC)$ and every $1\leq i\leq b_u$, we have
      \[
      [\varphi_{1,b}]=[\varsigma_i^u\varphi_{1,b-1_u}]+
      \frac{1}{2\pi}\sum_{j=1}^{b_{\ol{u}}}[A_{ij}^u\varphi_{1,b-1_u-1_{\ol{u}}}].
      \]

  \item For every $u\in\Hom(E,\dC)$, we have
      \[
      [\omega_{\psi_{F,\infty}}(\rL_{v(u)})\varphi_{1,b}]=
      \frac{1}{4\pi}\(\sum_{i=1}^{b_u}\sum_{j=1}^{b_{\ol{u}}}[A_{ij}^u\varphi_{1,b-1_u-1_{\ol{u}}}]
      +\sum_{i=1}^{b_{\ol{u}}}\sum_{j=1}^{b_u}[A_{ij}^{\ol{u}}\varphi_{1,b-1_u-1_{\ol{u}}}]\),
      \]
      where $\rL_v$ denotes the lowering operator $\frac{1}{2}\(\begin{smallmatrix} 1& -\ti \\ -\ti & -1 \end{smallmatrix}\)$ in $\fg_{1,v}$ for $v\in\Hom(F,\dR)$.
\end{enumerate}
\end{lem}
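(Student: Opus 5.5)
The natural approach is to reduce Lemma \ref{le:form_unitary_2} to the orthogonal relations recorded in Remark \ref{re:form_orthogonal}, which generalize \cite{FM06}*{Theorem~5.9(i)} and \cite{FM06}*{Theorem~6.11}. The bridge is the pullback trick of Remark \ref{re:pullback}. Put $V'=\Res_{E/F}V$ and $b'_v=\sum_{v(u)=v}b_u$. The map $\varrho^\bullet$ commutes with exterior differentiation and is equivariant for $\widetilde{K_{1,\infty}}$ acting through the Weil representation, since the Weil representations of $G_1$ on $\sS(V_\infty)$ and of $\mathrm{Sp}$ on $\sS(V'_\infty)$ match under $V'_\infty=V_\infty$. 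Hence $\varrho^\bullet$ sends exact forms to exact forms and intertwines $\omega(\rL_v)$. It therefore suffices to show that the orthogonal identities for $\varphi_{1,b'}$ map under $\varrho^{2q}$, together with the projection $\tT_{b'}(V'_\infty)_\dC\to\tT_b(V_\infty)$, to the stated unitary identities. That projection is the one that splits each factor $V_v\otimes_\dR\dC=V_u\oplus V_{\ol u}$ and keeps the $V_u$-component in the positions assigned to $u$.

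For (1), fix $u$ and $1\le i\le b_u$. Identify the $i$-th $u$-slot with a slot $j$ of $\tT^{b'_v}(V_v)$, and apply the orthogonal identity
\[
[\varphi_{1,b'}]=[\varsigma_j^v\varphi_{1,b'-1_v}]+\frac{1}{4\pi}\sum_k[A^v_{jk}\varphi_{1,b'-2_v}].
\]
Under the projection, $\varsigma^v_j$ becomes $\varsigma^u_i$, since inserting $x$ and projecting gives $u(x)$. Next, write the dual of the real quadratic form on $V'_v\otimes\dC$ in terms of the hermitian form. The inverse of $\RE(\,,\,)$ is a sum of $V_u\otimes V_{\ol u}$ and $V_{\ol u}\otimes V_u$ pieces, each carrying a factor $2$ relative to the hermitian coevaluation, and it has no $V_u\otimes V_u$ component. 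So after projection only the pairs $(j,k)$ with $k$ in a $\ol u$-slot survive, each contributing $2A^u_{ij}$. This converts $\frac{1}{4\pi}$ into $\frac{1}{2\pi}$ and gives the formula in (1).

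For (2), apply the relation for $\omega(\rL_v)\varphi_{1,b'}$ from Remark \ref{re:form_orthogonal}, namely $\frac{1}{8\pi}\sum_j\sum_k[A^v_{jk}\varphi_{1,b'-2_v}]$, and project. Now $j$ runs over both $u$-slots and $\ol u$-slots, and the same factor $2$ turns $\frac{1}{8\pi}$ into the $\frac{1}{4\pi}$ in front of the two sums over $A^u_{ij}$ and $A^{\ol u}_{ij}$. For places $v\ne v(u)$ everything is compatible factorwise, so nothing needs checking there.

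\textbf{Main obstacle.} The hard step is the bookkeeping of constants and index conventions. The orthogonal $A^v_{jk}$ inserts into spot $(j,k+1)$ or $(k,j)$, and this must be matched with the ordering of $u$- and $\ol u$-slots inside $\tT^{b'_v}$. One must also verify that the normalization of $\varphi_{1,b}$, including the factor $(-\ti/2\sqrt2\pi)^{b_u}$ and the operators $\rE^u_i$ built from $z^u-\frac1\pi\partial_{z^{\ol u}}$, really corresponds to the real Funke--Millson operators so that the diagram in Remark \ref{re:pullback} commutes. If these constants do not line up, the fallback is to redo the proofs of \cite{FM06}*{Theorem~5.9(i), 6.11} directly in the Fock model of Lemma \ref{le:invariance}. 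There, $\rL$ acts as a second-order operator in the variables $\sfz$, and applying it to $\varphi_{1,0}\varphi_b$ produces exactly the contraction terms $\sum_\alpha\partial_{\sfz^{u}_\alpha}\partial_{\sfz^{\ol u}_\alpha}$, which correspond to the $A_{ij}$. The exactness statement in (1) is then obtained by exhibiting $\varphi_{1,b}-\varsigma\varphi_{1,b-1_u}-\dots$ as $d$ of an explicit $(2q-1)$-form built like $\psi$ in \cite{FM06}.
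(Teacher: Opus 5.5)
Your proposal is correct and is essentially the paper's proof. The paper also reduces both parts to the orthogonal relations of Remark~\ref{re:form_orthogonal} via the commutative diagram of Remark~\ref{re:pullback}. It uses exactly your two projection facts: $p_u\circ\varsigma^{\prime v(u)}_i=\varsigma^u_i\circ p_u$, and $p_u\circ A^{\prime v(u)}_{ik}$ equals $2A^u_{i(k+1-b_u)}\circ p_u$ for $k\geq b_u$ and vanishes for $k<b_u$. This factor $2$ turns $\frac{1}{4\pi}$ into $\frac{1}{2\pi}$ in (1) and $\frac{1}{8\pi}$ into $\frac{1}{4\pi}$ in (2), so the Fock-model fallback you sketch is not needed.
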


\begin{proof}
We use the pullback trick in Remark \ref{re:pullback} to reduce the statements to the known ones in the orthogonal case. Recall that $b'_{v(u)}=b_u+b_{\ul{u}}$. In the projection map $p_u\colon (\tT^{b_{v(u)}}V'_v)_\dC\to\tT^{b_u}V_u\otimes_\dC\tT^{b_{\ol{u}}}V_{\ol{u}}$, we label the first $b_u$ spots for the factor $\tT^{b_u}V_r$ and the latter $b_{\ul{u}}$ spots for the factor $\tT^{b_{\ol{u}}}V_{\ol{u}}$. Denote by $A_{jk}^{\prime v}$ and $\varsigma_j^{\prime v}$ the corresponding operators in the quadratic case from Remark \ref{re:form_orthogonal}. It is easy to see that for $1\leq i\leq d_u$, we have
\begin{align}
p_u\circ A^{\prime v(u)}_{ik}=
\begin{dcases}\label{eq:form_unitary}
2 \cdot A^u_{i(k+1-d_u)}\circ p_u,& d_u\leq k<d_u+d_{\ol{u}},\\
0 ,& 1\leq k<d_u.
\end{dcases}
\end{align}

For (1), it is clear that for $1\leq i\leq d_u$, $p_u\circ \varsigma_i^{\prime v(u)}=\varsigma_i^u \circ p_u$. Thus, by \eqref{eq:form_unitary} and the commutative diagram in Remark \ref{re:pullback}, it reduces to \cite{FM06}*{Theorem~5.9(i)} (and also Remark \ref{re:form_orthogonal}).

For (2), by \eqref{eq:form_unitary} (and its counterpart for $\ol{u}$) and the commutative diagram in Remark \ref{re:pullback}, it reduces to \cite{FM06}*{Theorem~5.10(i)}  (and also Remark \ref{re:form_orthogonal}).
\end{proof}

\section{Relative Hodge structure}
\label{ss:4}

In this section, we review the notion of relative Hodge structure with both real and complex coefficients.

\begin{notation}
Let $X$ be a smooth complex manifold.
\begin{enumerate}
  \item Denote by $\ol{X}$ the complex conjugation of $X$, which is again a smooth complex manifold.

  \item For a sheaf $V$ of complex vector spaces over $X$, we put $\ol{V}\coloneqq V\otimes_{\dC,\ol{\phantom{a}}}\dC$.

  \item Denote by $\sA_X^d$ and $\sA_X^{r,s}$ the sheaf of complex valued smooth $d$-forms and $(r,s)$-forms, respectively, on the underlying smooth manifold of $X$.

  \item For a coherent sheaf $\sV$ on $X$, we denote by $\sV^\infty$ the underlying complex smooth sheaf (on the underlying smooth manifold of $X$).\footnote{In particular, $\sO_X^\infty=\sA_X^0$ and more generally $(\Omega_X^r)^\infty=\sA_X^{r,0}$ for $r>0$.} Note that $\ol{\sV}$ is canonically a coherent sheaf on $\ol{X}$, and $\ol{\sV}^\infty$ can be canonically identified with $\ol{\sV^\infty}$ once we identify the underlying smooth manifolds of $X$ and $\ol{X}$.

  \item Let $\sV$ be a coherent sheaf on $X$ and $\cW$ a complex smooth subsheaf of $\sV^\infty$. We say that $\cW$ is \emph{holomorphic} if there exists a (unique) coherent subsheaf $\sW$ of $\sV$ on $X$ such that $\cW=\sW^\infty$. We say that $\cW$ is \emph{anti-holomorphic} if $\ol{\cW}$ is a holomorphic complex smooth subsheaf of $\ol{\sV^\infty}=\ol{\sV}^\infty$ (over $\ol{X}$).

  \item For a sheaf written like $\sF_X$ on $X$, we simply write $\sF(X)$ for $\sF_X(X)$.
\end{enumerate}
\end{notation}

We first recall the notion of relative Hodge structure (a.k.a. variation of Hodge structures).

\begin{definition}
Let $X$ be a smooth complex manifold and $w$ an integer.
\begin{enumerate}
  \item A \emph{pure (relative) real Hodge structure on $X$ of weight $w$} is a pair $\cW=(W,\tF^\bullet_\cW)$ in which
      \begin{itemize}
        \item $W$ is a real local system on $X$ of finite rank (and put $\sW\coloneqq \sO_X\otimes_\dR W$), and

        \item $\tF^\bullet_\cW$ is a decreasing filtration of $\sW^\infty$ by holomorphic complex smooth subsheaves,
      \end{itemize}
      such that $\tF^\bullet_\cW$ satisfies Griffiths transversality, and
        \[
        \sW^\infty=\bigoplus_{p+q=w}\cW^{(p,q)},
        \]
      where (after we identify $\sW^\infty$ with $\ol{\sW^\infty}$ using the real structure on $W$)
        \[
        \cW^{(p,q)}\coloneqq \tF^p_\cW\cap\ol{\tF^q_\cW}.
        \]

  \item Denote by $\sfM^w(X,\dR)$ the abelian category of pure real Hodge structures on $X$ of weight $w$.

  \item For an $\dR$-ring $\dL$, we denote by $\sfM^w_\dL(X,\dR)$ the category of $\dL$-linear objects in $\sfM^w(X,\dR)$.
\end{enumerate}
\end{definition}

\begin{definition}[\cite{SS}*{\S2}]
Let $X$ be a smooth complex manifold and $w$ an integer.
\begin{enumerate}
  \item A \emph{pure (relative) complex Hodge structure on $X$ of weight $w$} is a triple $\cV=(V,\tF^{\prime\bullet}_\cV,\tF^{\prime\prime\bullet}_\cV)$ in which
      \begin{itemize}
        \item $V$ is a complex local system on $X$ of finite rank (and put $\sV\coloneqq \sO_X\otimes_\dC V$),

        \item $\tF^{\prime\bullet}_\cV$ is a decreasing filtration of $\sV^\infty$ by holomorphic complex smooth subsheaves, and

        \item $\tF^{\prime\prime\bullet}_\cV$ is a decreasing filtration of $\sV^\infty$ by anti-holomorphic complex smooth subsheaves,
      \end{itemize}
      such that both $\tF^{\prime\bullet}_\cV$ and $\ol{\tF^{\prime\prime\bullet}_\cV}$ satisfy Griffiths transversality, and
        \[
        \sV^\infty=\bigoplus_{p+q=w}\cV^{(p,q)},
        \]
      where
        \[
        \cV^{(p,q)}\coloneqq \tF^{\prime p}_\cV\cap\tF^{\prime\prime q}_\cV.
        \]

  \item For a pure complex Hodge structure $\cV=(V,\tF^{\prime\bullet}_\cV,\tF^{\prime\prime\bullet}_\cV)$ on $X$ of weight $w$,
      \begin{itemize}
        \item we define its \emph{complex conjugation} to be
            \[
            \ol\cV\coloneqq(\ol{V},\ol{\tF^{\prime\prime\bullet}_\cV},\ol{\tF^{\prime\bullet}_\cV}),
            \]
            which is a pure complex Hodge structure on $\ol{X}$ of weight $w$;

        \item we define its \emph{dual} to be
            \[
            \cV^\vee\coloneqq(V^\vee,(\tF^{\prime -\bullet}_\cV)^\perp,(\tF^{\prime\prime -\bullet}_\cV)^\perp),
            \]
      \end{itemize}
            which is a pure complex Hodge structure on $X$ of weight $-w$.

  \item Denote by $\sfM^w(X,\dC)$ the abelian category of pure complex Hodge structures on $X$ of weight $w$.
\end{enumerate}
\end{definition}

\begin{remark}
We define a functor
\[
\pres\dC-\colon\sfM^w_\dC(X,\dR)\to\sfM^w(X,\dC)
\]
such that for $\cW=(W,\tF^\bullet_\cW)$, $\pres\dC\cW$ is the complex Hodge structure $\cV=(V,\tF^{\prime\bullet}_\cV,\tF^{\prime\prime\bullet}_\cV)$ in which $V=W$ but regarded as a complex local system via the $\dC$-action (so that $\sW=\sV\oplus\ol\sV$) and
\[
\tF^{\prime p}_\cV=\tF^p_\cW\cap\sV^\infty,\quad \tF^{\prime\prime q}_\cV=\ol{\tF^q_\cW\cap\ol\sV^\infty}.
\]
We define another functor
\[
\pres\dR-\colon\sfM^w(X,\dC)\to\sfM^w_\dC(X,\dR)
\]
such that for $\cV=(V,\tF^{\prime\bullet}_\cV,\tF^{\prime\prime\bullet}_\cV)$, $\pres\dR\cV$ is the real Hodge structure $\cW=(W,\tF^\bullet_\cW)$ in which $W=V$ but regarded as a real local system (so that $\sW=\sV\oplus\ol\sV$) and
\[
\tF^p_\cW=\tF^{\prime p}_\cV\oplus\ol{\tF^{\prime\prime p}_\cV}.
\]
It is clear that the functors $\pres\dC-$ and $\pres\dR-$ are inverse to each other hence induce a complex linear equivalence between complex linear categories $\sfM^w_\dC(X,\dR)$ and $\sfM^w(X,\dC)$.
\end{remark}

\begin{definition}\label{de:polarization}
Let $\cV=(V,\tF^{\prime\bullet}_\cV,\tF^{\prime\prime\bullet}_\cV)$ be a complex Hodge structure of weight zero on $X$. We say that a flat hermitian pairing $(\;,\;)_V\colon V\times V\to\dC$ is a \emph{polarization} of complex Hodge structure if, after writing $\pres\dR\cV$ as $\cW=(W,\tF^\bullet_\cW)$, the flat symmetric pairing $(\;,\;)_W\colon W\times W\to\dC$ defined as the real trace of $(\;,\;)_V$ is a polarization of the real Hodge structure $\cW$ (of weight zero).
\end{definition}

We collect some facts about the Hodge decomposition with coefficients in a Hodge structure. In the real case, we follow Deligne in an unpublished notes (see \cite{Zuc79}*{\S 1~\&~\S 2} for an account). Let $\cW=(W,\tF^\bullet_\cW)$ be a real Hodge structure over $X$. Recall that the Gauss--Manin connection $\nabla\colon\sW\to\Omega_X^1\otimes_{\sO_X}\sW$ is defined as $\partial\otimes 1$ on $\sW=\sO_X\otimes_\dR W$, from which we obtain a complex $\Omega_X^\bullet(W)\coloneqq\Omega_X^\bullet\otimes_\dR W$. It extends to an operator $\rD$ on $\sA_X^\bullet(W)\coloneqq\sA_X^\bullet\otimes_\dR W$ by taking $\rD\coloneqq\rd\otimes 1$.

The Griffith transversality implies that
\begin{align*}
\rD(\sA^{r,s}_X\otimes_{\sO_X^\infty}\tF^p_\cW)&\subseteq
\sA^{r+1,s}_X\otimes_{\sO_X^\infty}\tF^{p-1}_\cW\oplus\sA^{r,s+1}_X\otimes_{\sO_X^\infty}\tF^p_\cW, \\
\rD(\sA^{r,s}_X\otimes_{\sO_X^\infty}\ol{\tF^q_\cW})&\subseteq
\sA^{r,s+1}_X\otimes_{\sO_X^\infty}\ol{\tF^{q-1}_\cW}\oplus\sA^{r+1,s}_X\otimes_{\sO_X^\infty}\ol{\tF^q_\cW}.
\end{align*}
Combining them, we obtain
\[
\rD(\sA^{r,s}_X\otimes_{\sO_X^\infty}\cW^{(p,q)})\subseteq\sA^{r+1,s}_X\otimes_{\sO_X^\infty}(\cW^{(p,q)}\oplus\cW^{(p-1,q+1)})
\oplus\sA^{r,s+1}_X\otimes_{\sO_X^\infty}(\cW^{(p,q)}\oplus\cW^{(p+1,q-1)}).
\]
In other words, the \emph{real Gauss--Manin operator} $\rD$ splits into four components:
\begin{align*}
\partial'&\colon\sA^{r,s}_X\otimes_{\sO_X^\infty}\cW^{(p,q)}\to\sA^{r+1,s}_X\otimes_{\sO_X^\infty}\cW^{(p,q)},\\
\ol\partial'&\colon\sA^{r,s}_X\otimes_{\sO_X^\infty}\cW^{(p,q)}\to\sA^{r,s+1}_X\otimes_{\sO_X^\infty}\cW^{(p,q)},\\
\nabla'&\colon\sA^{r,s}_X\otimes_{\sO_X^\infty}\cW^{(p,q)}\to\sA^{r+1,s}_X\otimes_{\sO_X^\infty}\cW^{(p-1,q+1)},\\
\ol\nabla'&\colon\sA^{r,s}_X\otimes_{\sO_X^\infty}\cW^{(p,q)}\to\sA^{r,s+1}_X\otimes_{\sO_X^\infty}\cW^{(p+1,q-1)}.
\end{align*}

\begin{notation}\label{no:gm_real}
Put $\rD'\coloneqq\partial'+\ol\nabla'$, $\rD''\coloneqq\ol\partial'+\nabla'$, and $\rD^\tc\coloneqq(4\pi\ti)^{-1}(\rD'-\rD'')$ on $\sA_X^\bullet(W)$.
\end{notation}

In the complex case, one can easily derive the theory from the real one. Let $\cV=(V,\tF^{\prime\bullet}_\cV,\tF^{\prime\prime\bullet}_\cV)$ be a complex Hodge structure over $X$. Again we have the Gauss--Manin connection $\nabla\colon\sV\to\Omega_X^1\otimes_{\sO_X}\sV$ defined as $\partial\otimes 1$ on $\sV=\sO_X\otimes_\dC V$, from which we obtain a complex $\Omega_X^\bullet(V)\coloneqq\Omega_X^\bullet\otimes_\dC V$. It extends to an operator $\rD$ on $\sA_X^\bullet(V)\coloneqq\sA_X^\bullet\otimes_\dC V$ by taking $\rD\coloneqq\rd\otimes 1$. Write $\cW=(W,\tF^\bullet_\cW)$ for $\pres\dR\cV$. Then the operator $\pres\dR\rD$ on $\sA_X^\bullet(W)=\sA_X^\bullet\otimes_\dR W=\sA_X^\bullet\otimes_\dC V\oplus\sA_X^\bullet\otimes_\dC\ol{V}$ in the real case equals $\rD\oplus\ol\rD$. From this and the fact that $\cW^{(p,q)}=\cV^{(p,q)}\oplus\ol{\cV^{(q,p)}}$, it is easy to derive that the \emph{complex Gauss--Manin operator} $\rD$ splits into four components:
\begin{align*}
\partial'&\colon\sA^{r,s}_X\otimes_{\sO_X^\infty}\cV^{(p,q)}\to\sA^{r+1,s}_X\otimes_{\sO_X^\infty}\cV^{(p,q)},\\
\ol\partial'&\colon\sA^{r,s}_X\otimes_{\sO_X^\infty}\cV^{(p,q)}\to\sA^{r,s+1}_X\otimes_{\sO_X^\infty}\cV^{(p,q)},\\
\nabla'&\colon\sA^{r,s}_X\otimes_{\sO_X^\infty}\cV^{(p,q)}\to\sA^{r+1,s}_X\otimes_{\sO_X^\infty}\cV^{(p-1,q+1)},\\
\ol\nabla'&\colon\sA^{r,s}_X\otimes_{\sO_X^\infty}\cV^{(p,q)}\to\sA^{r,s+1}_X\otimes_{\sO_X^\infty}\cV^{(p+1,q-1)},
\end{align*}
such that $\pres\dR\partial$ (the corresponding component for $\pres\dR\rD$) is the direct sum of $\partial'$ (for $(p,q)$) and $\ol{\partial'}$ (for $(q,p)$), and similarly for the other three.

\begin{notation}\label{no:gm_complex}
Put $\rD'\coloneqq\partial'+\ol\nabla'$, $\rD''\coloneqq\ol\partial'+\nabla'$, and $\rD^\tc\coloneqq(4\pi\ti)^{-1}(\rD'-\rD'')$ on $\sA_X^\bullet(V)$.
\end{notation}

\section{Archimedean local height pairing over complex curves with coefficients}
\label{ss:5}

In this section, we review the notion of archimedean local height pairing over complex curves with coefficients, following \cite{Bry89}. Let $X$ be a one-dimensional complex manifold.

We start from the case of real coefficients. Consider a pure real Hodge structure $\cW=(W,\tF^\bullet_\cW)$ on $X$ of weight $0$ (that is, an object in $\sfM^0(X,\dR)$), equipped with a flat symmetric pairing $(\;,\;)_W\colon W\times W\to \dR$ that is a polarization of real Hodge structure. Put
\[
\Div(X,\cW)\coloneqq\bigoplus_{x\in X} W_x\cap\cW^{(0,0)}_x
\]
as a real vector space. For every element $\sfv\in\Div(X,\cW)$, we have its support $|\sfv|$ which is a (possibly empty) finite subset of $X$.

\begin{notation}\label{no:green_real}
We introduce the following complex vector spaces.
\begin{itemize}
  \item Put $\sA^{1,1}(X,W)\coloneqq\sA^{1,1}(X)\otimes_\dR W$.

  \item Denote by $\sD(X)$ the space of complex $0$-currents on $X$, and put $\sD(X,W)\coloneqq\sD(X)\otimes_\dR W$.

  \item Denote by $\sG(X)$ the space of complex smooth functions on $X$ away from a finite set that are locally integrable on $X$, and put $\sG(X,W)\coloneqq\sG(X)\otimes_\dR W$.
\end{itemize}
In particular, for $g\in\sG(X,W)$, $\rD\rD^\tc g$ is naturally an element of $\sD(X,W)$ via the pairing $(\;,\;)_W$.
\end{notation}

The polarization induces maps
\[
[-]\colon\sA^{1,1}(X,W)\to\sD(X,W)
\]
by integration, and
\[
\delta_-\colon\Div(X,\cW)\to\sD(X,W)
\]
by evaluation.

\begin{definition}\label{de:green_real}
Take an element $\sfv\in\Div(X,\cW)$. A \emph{Green function} for $\sfv$ is an element $g_\sfv\in\sG(X,W)$ defined away from $|\sfv|$ such that
\[
\rD\rD^\tc g_\sfv+\delta_\sfv=[\omega_\sfv]
\]
(Notation \ref{no:gm_real}) holds in $\sD(X,W)$ for a (necessarily unique) element $\omega_\sfv\in\sA^{1,1}(X,W)$, called the \emph{tail form} of $g_\sfv$.
\end{definition}

\begin{remark}
In Definition \ref{de:green_real}, it is clear that the cohomology class of the tail form $\omega_\sfv$ coincides with the image of $\sfv$ under the (geometric) cycle class map $\Div(X,\cW)\to\rH^2(X,W(1))$.
\end{remark}

\begin{notation}
Denote by
\begin{itemize}
  \item $\Div_\sharp(X,\cW)$ the real vector space of pairs $(\sfv,g_\sfv)$ with $\sfv\in\Div(X,\cW)$ and $g_\sfv$ a Green function for $\sfv$;

  \item $\Div_\sharp(X,\cW)^\heartsuit$ the real subspace of $\Div_\sharp(X,\cW)$ consisting of pairs $(\sfv,g_\sfv^\heartsuit)$ in which $\sfv\in\Div(X,\cW)$ and the tail form of $g_\sfv^\heartsuit$ vanishes;

  \item $\Div^\Box(X,\cW)$ the real linear subcone of $\Div(X,\cW)\times\Div(X,\cW)$ consisting of pairs $(\sfv_1,\sfv_2)$ satisfying $|\sfv_1|\cap|\sfv_2|=\emptyset$;

  \item $\Div^\Box_\sharp(X,\cW)$ the preimage of $\Div^\Box(X,\cW)$ under the natural forgetful map
      \[
      \Div_\sharp(X,\cW)\times\Div_\sharp(X,\cW)\to\Div(X,\cW)\times\Div(X,\cW);
      \]

  \item $\Div(X,\cW)^0$ the kernel of the cycle class map $\Div(X,\cW)\to\rH^2(X,W(1))$, and $\Div^\Box(X,\cW)^0$ the preimage of $\Div^\Box(X,\cW)$ under the natural inclusion map
      \[
      \Div(X,\cW)^0\times\Div(X,\cW)^0\to\Div(X,\cW)\times\Div(X,\cW).
      \]
\end{itemize}
Since $g_\sfv$ determines both $\sfv$ and $\omega_\sfv$, in what follows, we will simply write the component of Green function for an element in $\Div_\sharp(X,\cW)$.
\end{notation}

Similar to the case of constant coefficients, we have a star product pairing
\[
\ast\colon\Div^\Box_\sharp(X,\cW)\to\sD(X)
\]
defined by the formula
\[
g_{\sfv_1}\ast g_{\sfv_2}=\tr_W\(g_{\sfv_1}\wedge\delta_{\sfv_2}+\omega_{\sfv_1}\wedge g_{\sfv_2}\)
\]
where $\tr_W\colon W\otimes_\dR W\to\dR$ is the real linear map induced by $(\;,\;)_W$. It is easy to see that $\ast$ is real linear in both variables and symmetric.

\begin{definition}\label{de:canonical_real}
Suppose that $X$ is proper. We define the \emph{canonical (archimedean) height pairing} to be the map
\[
\langle\;,\;\rangle_X\colon\Div^\Box(X,\cW)^0\to\dR
\]
such that for $(\sfv_1,\sfv_2)\in\Div^\Box(X,\cW)^0$,
\[
\langle \sfv_1,\sfv_2\rangle_X\coloneqq\frac{1}{2}\int_X g_{\sfv_1}^\heartsuit\ast\delta_{\sfv_2},
\]
where $(\sfv_i,g_{\sfv_i}^\heartsuit)\in\Div_\sharp(X,\cW)^\heartsuit$ is an arbitrary lift of $\sfv_i$. Indeed, the lift exists by \cite{Bry89}*{Proposition~2.9}, and it is clear that $\langle \sfv_1,\sfv_2\rangle_X$ does not depend on the choice of the lift.
\end{definition}

\begin{remark}\label{re:beilinson_real}
The canonical height pairing in the above definition has an alternative interpretation using absolute Hodge cohomology (denoted by $\rH^\bullet_\hs$) similar to Beilinson's index in \cite{Bei87}. Take a pair $(\sfv_1,\sfv_2)\in\Div^\Box(X,\cW)^0$; for $i=1,2$, put $U_i\coloneqq X\setminus|\sfv_i|$, write $\gamma_i$ for the absolute cycle class of $\sfv_i$ in $\rH^2_{\hs,|\sfv_i|}(X,\cW(1))$ whose image in $\rH^2_\hs(X,\cW(1))$ must vanish by \cite{Bry89}*{Lemma~2.2}, take an element $\tilde\gamma_i\in\rH^1_\hs(U_i,\cW(1))$ that maps to $\gamma_i$ under the co-boundary map $\rH^1_\hs(U_i,\cW(1))\to\rH^2_{\hs,|\sfv_i|}(X,\cW(1))$; then $\langle \sfv_1,\sfv_2\rangle$ equals to the image of $\tr_W\(\tilde\gamma_1\cup\tilde\gamma_2\)\in\rH^2_\hs(U_1\cap U_2,\dR(2))$ under the composite map
\[
\rH^2_\hs(U_1\cap U_2,\dR(2))\to\rH^3_\hs(X,\dR(2))\xrightarrow{\Tr_{X/\bullet}}\rH^1_\hs(\bullet,\dR(1))=\dR,
\]
where $\bullet$ denotes a single point. Indeed, the above image equals the negative of the height pairing in \cite{Bry89}*{\S2}, which then equals our pairing $\langle \sfv_1,\sfv_2\rangle$ by \cite{Bry89}*{Theorem~2.8} since the function $h$ there satisfies that $(\sfv_1,-2h)\in\Div_\sharp(X,\cW)^\heartsuit$ by the Poincar\'e--Lelong formula.
\end{remark}

\begin{remark}\label{re:height_sesqui}
Put $\Div_\sharp(X,\cW)_\dC\coloneqq\Div_\sharp(X,\cW)\otimes_\dR\dC$ and via the similar process we obtain $\Div^\Box_\sharp(X,\cW)_\dC$ and $\Div^\Box(X,\cW)^0_\dC$. We extend $\ast$ and $\langle\;,\;\rangle_X$ to maps
\[
\ast\colon\Div^\Box_\sharp(X,\cW)_\dC\to\sD(X),\qquad
\langle\;,\;\rangle_X\colon\Div^\Box(X,\cW)^0_\dC\to\dC,
\]
respectively, that are complex linear in the first variable and complex conjugate-linear in the second, and conjugate-symmetric.
\end{remark}

We now move to the case of complex coefficients. Consider a pure complex Hodge structure $\cV=(V,\tF^{\prime\bullet}_\cV,\tF^{\prime\prime\bullet}_\cV)$ on $X$ of weight $0$ (that is, an object in $\sfM^0(X,\dC)$), equipped with a flat hermitian pairing $(\;,\;)_V\colon V\times V\to \dC$ that is a polarization of complex Hodge structure (Definition \ref{de:polarization}). Put
\[
\Div(X,\cV)\coloneqq\bigoplus_{x\in X} V_x\cap\cV^{(0,0)}_x
\]
as a complex vector space. For every element $\sfv\in\Div(X,\cV)$, we have its support $|\sfv|$ which is a (possibly empty) finite subset of $X$.

\begin{notation}\label{no:green_complex}
We introduce the following complex vector spaces.
\begin{itemize}
  \item Put $\sA^{1,1}(X,V)\coloneqq\sA^{1,1}(X)\otimes_\dC V$.

  \item Put $\sD(X,V)\coloneqq\sD(X)\otimes_\dC V$ and $\sG(X,V)\coloneqq\sG(X)\otimes_\dC V$.
\end{itemize}
In particular, for $g\in\sG(X,V)$, $\rD\rD^\tc g$ is naturally an element of $\sD(X,\ol{V})$ via the pairing $(\;,\;)_V$.
\end{notation}

The polarization induces maps
\[
[-]\colon\sA^{1,1}(X,V)\to\sD(X,\ol{V})
\]
by integration, and
\[
\delta_-\colon\Div(X,\cV)\to\sD(X,\ol{V})
\]
by evaluation.

\begin{definition}\label{de:green_complex}
Take an element $\sfv\in\Div(X,\cV)$. A \emph{Green function} for $\sfv$ is an element $g_\sfv\in\sG(X,V)$ defined away from $|\sfv|$ such that
\[
\rD\rD^\tc g_\sfv+\delta_\sfv=[\omega_\sfv]
\]
(Notation \ref{no:gm_complex}) holds in $\sD(X,\ol{V})$ for a (necessarily unique) element $\omega_\sfv\in\sA^{1,1}(X,V)$, called the \emph{tail form} of $g_\sfv$.
\end{definition}

\begin{remark}
In Definition \ref{de:green_complex}, it is clear that the cohomology class of the tail form $\omega_\sfv$ coincides with the image of $\sfv$ under the (geometric) cycle class map $\Div(X,\cV)\to\rH^2(X,V(1))$.
\end{remark}

\begin{notation}
Denote by
\begin{itemize}
  \item $\Div_\sharp(X,\cV)$ the complex vector space of pairs $(\sfv,g_\sfv)$ with $\sfv\in\Div(X,\cV)$ and $g_\sfv$ a Green function for $\sfv$;

  \item $\Div_\sharp(X,\cV)^\heartsuit$ the complex subspace of $\Div_\sharp(X,\cV)$ consisting of pairs $(\sfv,g_\sfv^\heartsuit)$ in which $\sfv\in\Div(X,\cV)$ and the tail form of $g_\sfv^\heartsuit$ vanishes;

  \item $\Div^\Box(X,\cV)$ the complex linear subcone of $\Div(X,\cV)\times\Div(X,\cV)$ consisting of pairs $(\sfv_1,\sfv_2)$ satisfying $|\sfv_1|\cap|\sfv_2|=\emptyset$;

  \item $\Div^\Box_\sharp(X,\cV)$ the preimage of $\Div^\Box(X,\cV)$ under the natural forgetful map
      \[
      \Div_\sharp(X,\cV)\times\Div_\sharp(X,\cV)\to\Div(X,\cV)\times\Div(X,\cV);
      \]

  \item $\Div(X,\cV)^0$ the kernel of the cycle class map $\Div(X,\cV)\to\rH^2(X,V(1))$, and $\Div^\Box(X,\cV)^0$ the preimage of $\Div^\Box(X,\cV)$ under the natural inclusion map
      \[
      \Div(X,\cV)^0\times\Div(X,\cV)^0\to\Div(X,\cV)\times\Div(X,\cV).
      \]
\end{itemize}
Since $g_\sfv$ determines both $\sfv$ and $\omega_\sfv$, in what follows, we will simply write the component of Green function for an element in $\Div_\sharp(X,\cV)$.
\end{notation}

Similar to the case of real coefficients, we have a star product pairing
\[
\ast\colon\Div^\Box_\sharp(X,\cV)\to\sD(X)
\]
defined by the formula
\[
g_{\sfv_1}\ast g_{\sfv_2}=\tr_V\(g_{\sfv_1}\wedge\delta_{\ol{\sfv_2}}+\omega_{\sfv_1}\wedge\ol{g_{\sfv_2}}\)
\]
where $\tr_V\colon V\otimes_\dC\ol{V}\to\dC$ is the complex linear map induced by $(\;,\;)_V$.

\begin{definition}\label{de:canonical_complex}
Suppose that $X$ is proper. We define the \emph{canonical (archimedean) height pairing} to be the map
\[
\langle\;,\;\rangle_X\colon\Div^\Box(X,\cV)^0\to\dC
\]
such that for $(\sfv_1,\sfv_2)\in\Div^\Box(X,\cV)^0$,
\[
\langle \sfv_1,\sfv_2\rangle_X\coloneqq\frac{1}{2}\int_X g_{\sfv_1}^\heartsuit\ast\delta_{\ol{\sfv_2}},
\]
where $(\sfv_1,g_{\sfv_1}^\heartsuit)\in\Div_\sharp(X,\cV)^\heartsuit$ is an arbitrary lift of $\sfv_1$. It is easy to see that $\langle\;,\;\rangle_X$ is complex linear in the first variable and complex conjugate-linear in the second, and conjugate-symmetric.
\end{definition}

\begin{remark}\label{re:beilinson_complex}
The canonical height pairing in the above definition has an alternative interpretation using absolute Hodge cohomology similar to the case of real coefficients (Remark \ref{re:beilinson_real}). We leave the detail to the readers.
\end{remark}

\section{Archimedean arithmetic Siegel--Weil formula over orthogonal Shimura curves}
\label{ss:6}

We continue the discussion in \S\ref{ss:2} but with $m=3$ and $r=1$. In particular, $\lambda_v=(b_v,0,-b_v)$ for every $v\in\Hom(F,\dR)$.

\begin{definition}\label{de:hodge_orthogonal}
We define real Hodge structures on the local systems $\tT_b(V_\infty)$ and $\tS_{[\lambda]}(V_\infty)$ over $\fD$, respectively.
\begin{itemize}
  \item For $v\in\Hom(F,\dR)\setminus\{v_0\}$, define $\cW_v$ to be the unique real Hodge structure on $\sV_v\coloneqq V_v\otimes_\dR\sO_\fD$ satisfying that $\cW_v^{(0,0)}=\sV_v^\infty$.

  \item For $v=v_0$, denote by $V_\fD^-$ the tautological real subbundle of negative two planes of $V_v\otimes_\dR\sA^\dR_\fD$ over (the underlying smooth manifold of) $\fD$ and by $V_\fD^+$ its orthogonal complement, where $\sA^\dR_\fD$ denotes the sheaf of \emph{real} smooth functions on $\fD$. The the universal orientation on $V_v\otimes_\dR\sA^\dR_\fD$ gives rise to a $\dC$-linear structure on $V_\fD^-$. Define $\cW_v$ to be the unique real Hodge structure on $\sV_v\coloneqq V_v\otimes_\dR\sO_\fD$ satisfying that $\cW_v^{(0,0)}=V_\fD^+\otimes_\dR\dC$ and that $\cW_v^{(-1,1)}$ (resp.\ $\cW_v^{(1,-1)}$) is the subsheaves of $V_\fD^-\otimes_\dR\dC$ on which the two $\dC$-linear actions are the same (resp.\ conjugate). It is easy to see that the quadratic form on $V_v$ induces a polarization of Hodge structure on $\cW_v$.\footnote{Indeed, $\cW_v$ is the unique real Hodge structure of weight zero on $\sV_v$ of which the quadratic form on $V_v$ induces a polarization, satisfying $\cW_v^{(p,-p)}=0$ for $|p|\geq 2$.}
\end{itemize}
By taking tensor product of $\cW_v$ and further applying \eqref{eq:harmonic_orthogonal} and \eqref{eq:schur_orthogonal}, we obtain real Hodge structures $\cW_b$ and $\cW_\lambda$ on $\tT_b(V_\infty)$ and $\tS_{[\lambda]}(V_\infty)$ over $\fD$, respectively. These real Hodge structures are all $H(F_\infty)$-invariant and descend to the orthogonal Shimura curve
\[
X_L=H(F)\backslash\(\fD\times H(\bA_F^\infty)/L\).
\]
In particular, the underlying real local system of $\cW_\lambda$ is $\tS_{[\lambda]}(V_\infty)$.
\end{definition}

Fix a base point of $\fD$ and denote its stabilizer in $H(F_{v_0})$ by $L_{v_0}$ (so that $\fD\simeq H(F_{v_0})/L_{v_0}$). The base point gives rise to an orthogonal decomposition $V_{v_0}=V_{v_0}^+\oplus V_{v_0}^-$ in which $V_{v_0}^+$ is positive definite of dimension one and $V_{v_0}^-$ is negative definite of dimension two together with an orientation. Put $L_\infty\coloneqq L_{v_0}\times\prod_{v\neq v_0}H(F_v)$ as a subgroup of $H(F_\infty)$. Denote by $\fh$ the Lie algebra of $H(F_{v_0})$, which has the Harich-Chandra decomposition $\fh=\fl\oplus\fp$ where $\fl$ is the Lie algebra of $L_{v_0}$. Choose an orthonormal basis $\{e_{v,1},e_{v,2},e_{v,3}\}$ of $V_v$ for every $v\in\Hom(F,\dR)$ so that for $v=v_0$, $e_{v_0,1}\in V_{v_0}^+$ and $e_{v_0,2},e_{v_0,3}\in V_{v_0}^-$ such that $e_{v_0,2}\wedge e_{v_0,3}$ gives the orientation. Write $\{X_{12},X_{13}\}$ for the standard basis of $\fp$ (see \cite{FM06}*{Page~918}) and $\{\omega_{12},\omega_{13}\}$ its dual basis. Note that
\[
\Omega\coloneqq\frac{1}{2\pi}\cdot\omega_{12}\wedge\omega_{13}\in\wedge^2\fp^\vee
\]
is independent of the choice of the basis (of $V_{v_0}$).

The basis $\{e_{v,1},e_{v,2},e_{v,3}\}$ gives rise to a coordinate system $x=(x_{v,1},x_{v,2},x_{v,3})_v\in(\dR^3)^{\Hom(F,\dR)}$.
Define the Gaussian function $\nu_0\in\sS(V_\infty)$ by the formula
\[
\nu_0(x)\coloneqq\prod_{v\in\Hom(F,\dR)}\te^{-\pi(x_{v,1}^2+x_{v,2}^2+x_{v,3}^2)}.
\]
For the Kudla--Millson form $\varphi_0\coloneqq\varphi_{1,0}$, we have
\[
\varphi_0(x)=(4\pi x_{v_0,1}^2-1)\nu_0(x)\cdot\Omega
\]
(in particular, $\varphi_0(0)=-\Omega$). Put
\[
\fA\coloneqq\End_\dC\(\sS(V_\infty)\otimes_\dR\wedge^\bullet\fp^\vee\otimes_\dR\tT(V_\infty)\),
\]
in which
\[
\tT(V_\infty)\coloneqq\bigoplus_{c\in\dN^{\Hom(F,\dR)}}\tT_c(V_\infty).
\]
We have operators
\[
\rE_v\coloneqq
\begin{dcases}
\frac{1}{2}\(x_{v,1}-\frac{1}{2\pi}\frac{\partial}{\partial x_{v,1}}\)\otimes 1\otimes L(e_{v,1}),&\text{if $v=v_0$},\\
\frac{1}{2}\sum_{j=1}^3\(x_{v,j}-\frac{1}{2\pi}\frac{\partial}{\partial x_{v,j}}\)\otimes 1\otimes L(e_{v,j}),&\text{if $v\neq v_0$}
\end{dcases}
\]
in $\fA$ for $v\in\Hom(F,\dR)$, where $L(e_{v,j})$ denotes the left multiplication by $e_{v,j}$ in $\tT(V_\infty)$.

Then by \cite{FM06}*{Definition~5.1},
\begin{align*}
\varphi_b\coloneqq\varphi_{1,b}=\(\prod_{v\in\Hom(F,\dR)}\rE_v^{b_v}\)\varphi_0
&\in\(\sS(V_\infty)\otimes_\dR\wedge^2\fp^\vee\otimes_\dR\tT_b(V_\infty)\)^{L_\infty} \\
&=\(\sS(V_\infty)\otimes_\dC\sA^2(\fD)\otimes_\dR\tT_b(V_\infty)\)^{H(F_\infty)}.
\end{align*}
Applying the operator \eqref{eq:harmonic_orthogonal} to $\varphi_b$, we obtain
\[
[\varphi_b]\in\(\sS(V_\infty)\otimes_\dC\sA^2(\fD)\otimes_\dR\tT_{[b]}(V_\infty)\)^{H(F_\infty)}.
\]
Since $m=3$, $[\varphi_b]$ already belongs to $\sS(V_\infty)\otimes_\dC\sA^2(\fD)\otimes_\dR\tS_{[\lambda]}(V_\infty)$, so that
\[
\varphi_\lambda\coloneqq\pi_{\lambda^+}([\varphi_b])=[\varphi_b].
\]
This special feature when $m=3$ will appear repeatedly later.

On the other hand, we introduce
\begin{align*}
\nu_b\coloneqq\(\prod_{v\in\Hom(F,\dR)}\rE_v^{b_v}\)\nu_0
\in\(\sS(V_\infty)\otimes_\dR\tT_b(V_\infty)\)^{L_\infty}
=\(\sS(V_\infty)\otimes_\dC\sA^0(\fD)\otimes_\dR\tT_b(V_\infty)\)^{H(F_\infty)},
\end{align*}
and similarly
\[
\nu_\lambda\coloneqq[\nu_b]\in\(\sS(V_\infty)\otimes_\dC\sA^0(\fD)\otimes_\dR\tS_{[\lambda]}(V_\infty)\)^{H(F_\infty)}.
\]

The lemma below is an analogue of \cite{BF04}*{Theorem~4.4} in the case of general weights (but only for $p=1$).

\begin{lem}\label{le:holomorphy_orthogonal}
We have
\[
\omega_{\psi_{F,v_0}}(\rL_{v_0})\varphi_b=-\rD\rD^\tc\nu_b+
\frac{1}{8\pi}\sum_{j=1}^{b_{v_0}}\sum_{k=1}^{b_{v_0}-1}A_{jk}^{v_0}\varphi_{b-2_{v_0}}.
\]
In particular,
\[
\omega_{\psi_{F,v_0}}(\rL_{v_0})\varphi_\lambda=-\rD\rD^\tc\nu_\lambda.
\]
\end{lem}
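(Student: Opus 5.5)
Since everything is $L_\infty$-invariant and $H(F_\infty)$-equivariant, I will check the identity pointwise at the base point of $\fD$, i.e.\ as an identity in $\sS(V_\infty)\otimes\wedge^2\fp^\vee\otimes\tT_b(V_\infty)$. The guiding case is $b=0$, which is \cite{BF04}*{Theorem~4.4} for $p=1$: $\omega_{\psi_{F,v_0}}(\rL_{v_0})\varphi_0=-\rD\rD^\tc\nu_0$. For general $b$ I will commute $\omega(\rL_{v_0})$ and $\rD\rD^\tc$ past $\prod_v\rE_v^{b_v}$ and track the commutators, in the spirit of Remark \ref{re:form_orthogonal}, where the generalized \cite{FM06}*{Theorem~6.11} is already recorded in cohomology. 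The task here is to promote that statement to an identity of forms, with exact term $-\rD\rD^\tc\nu_b$.

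\textbf{Step 1 (Weil representation side).} The lowering operator acts at $v_0$ by a second order operator $\omega(\rL_{v_0})=c_1\sum_j(x_{v_0,j}-\tfrac{1}{2\pi}\partial_{v_0,j})^2$ in signature-adapted form, i.e.\ a combination of squares of creation-type operators with signs $+$ for $j=1$ and $-$ for $j=2,3$. I will compute its commutator with $\rE_{v_0}$; for $v\neq v_0$ the operators commute. Using the Heisenberg relations $[x-\tfrac{1}{2\pi}\partial,\,x+\tfrac{1}{2\pi}\partial]=\tfrac{1}{\pi}$, the commutator $[\omega(\rL_{v_0}),\rE_{v_0}]$ is again of first order in $\rE$-type operators. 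Iterating, $\omega(\rL_{v_0})\rE_{v_0}^{b}\varphi_0$ equals $\rE^{b}_{v_0}\omega(\rL_{v_0})\varphi_0$, plus terms with one $\rE$ replaced by a derivative-type operator, plus double commutator terms. Under the symmetric placement in $\tT^{b}$, the double commutators produce $\tfrac{1}{8\pi}\sum_{j,k}A^{v_0}_{jk}\varphi_{b-2_{v_0}}$, since pairing two slots gives the dual quadratic form $e_1\otimes e_1-e_2\otimes e_2-e_3\otimes e_3$ after using $\varphi_0$'s structure. This mirrors the computation in \cite{FM06}*{Theorem~6.11}; I can also follow it in the Fock model as in Lemma \ref{le:invariance}, where $\rE$ becomes multiplication by $\sfz$ and $\omega(\rL)$ becomes a Laplacian-type operator $\sum\pm\partial^2/\partial\sfz^2$. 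That makes the commutator combinatorics transparent.

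\textbf{Step 2 (Green side).} Here $\rD=\rd\otimes1$ on $\tT_b$-valued forms with constant coefficients $e_{v,j}$, so $\rD$ commutes with $\rE_v$ as functions on $V_\infty\times\fD$. However, $\rD^\tc$ involves the Hodge decomposition of $\cW_b$, and the $\rE_{v_0}$ insert only $e_{v_0,1}\in V_{v_0}^+$, which is of type $(0,0)$ at the base point but moves with $z$. So $\rD\rD^\tc\nu_b$ will be computed at the base point using the equivariant description: derivatives in $\fp$ directions act on the $\tT_b$ factor through $X_{12},X_{13}$ moving $e_1$ into $e_2,e_3$, which are $(\mp1,\pm1)$-types, giving the $\ol\nabla',\nabla'$ pieces. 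I expect $-\rD\rD^\tc\nu_b=\rE^{b}_{v_0}(-\rD\rD^\tc\nu_0)$ plus exactly the single-replacement terms from Step 1. Matching these cross terms is the main obstacle, together with normalizing $\rD^\tc=(4\pi\ti)^{-1}(\rD'-\rD'')$ against $\Omega=\tfrac{1}{2\pi}\omega_{12}\wedge\omega_{13}$. I would first try verifying it for $b_{v_0}=1$ explicitly, then induct on $b_{v_0}$ using the first-order commutator recursion.

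\textbf{Step 3.} Combining Steps 1 and 2 with the $b=0$ case gives the first identity. For the second, apply the harmonic projection $[\;]$: each $A^{v_0}_{jk}$ has image in the complement $\tT_{[b]}(V)_{b-2}$ of $\tT_{[b]}(V)$ in \eqref{eq:harmonic_orthogonal}, so it is killed. Since $[\;]$ is $H(F_\infty)$-equivariant and flat, it commutes with $\omega(\rL_{v_0})$ and $\rD\rD^\tc$ (it respects the Hodge structure $\cW_b\to\cW_\lambda$). Finally, $\varphi_\lambda=[\varphi_b]$ and $\nu_\lambda=[\nu_b]$ because $m=3$.
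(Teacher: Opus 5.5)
Your overall frame (check the identity at the base point, preferably in the Fock model) is the same as the paper's. However, the mechanism you propose for the correction term is wrong, and Steps 1 and 2 would not go through as stated.

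\textbf{The Weil side cannot produce the $A$-terms.} $\rE_{v_0}$ only ever inserts $e_{v_0,1}$, and $\omega_{\psi_{F,v_0}}(\rL_{v_0})$ acts only on the Schwartz factor. So at the base point, $\omega_{\psi_{F,v_0}}(\rL_{v_0})\varphi_b$ is (a Schwartz function)$\otimes\Omega\otimes e_{v_0,1}^{\otimes b_{v_0}}$, times the fixed factors at $v\neq v_0$. Concretely, in the Fock model $\omega(\rL)=-2\pi\partial^2/\partial\sfz_1^2+\frac{1}{8\pi}(\sfz_2^2+\sfz_3^2)$ and $\rE_{v_0}$ is a multiple of $\sfz_1\otimes L(e_1)$. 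Hence the double commutator $[[\omega(\rL),\rE_{v_0}],\rE_{v_0}]$ is a scalar times $L(e_1)^2$: it inserts $e_1\otimes e_1$, never the dual form $e_1\otimes e_1-e_2\otimes e_2-e_3\otimes e_3$. No commutator bookkeeping on this side can produce $A^{v_0}_{jk}\varphi_{b-2_{v_0}}$, which has $e_2\otimes e_2$ and $e_3\otimes e_3$ components. Your description of $\omega(\rL_{v_0})$ is also off. It is annihilation-type in the positive variable; if it were creation-type there, as you wrote, it would commute with $\rE_{v_0}$ outright and Step 1 would have no commutators at all.

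\textbf{The $A$-terms come from the coefficient system on the Green side.} Write $-\rD\rD^\tc\nu_b=(2\pi\ti)^{-1}(\partial'\ol\partial'\nu_b+\ol\nabla'\nabla'\nu_b)$. The piece $\ol\nabla'\nabla'$ acts on $e_1^{\otimes b}$ through $\rho(X_{12}+\ti X_{13})\rho(X_{12}-\ti X_{13})$, and
\[
\rho(X_{12}+\ti X_{13})\rho(X_{12}-\ti X_{13})e_1^{\otimes b}=b(b+1)e_1^{\otimes b}-\sum_{j=1}^{b}\sum_{k=1}^{b-1}A_{jk}e_1^{\otimes(b-2)}.
\]
The $A$-terms arise from the contributions where the two $\rho$'s hit different slots; these produce $e_2\otimes e_2+e_3\otimes e_3$ in those slots.

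\textbf{Consequences for your Steps 1 and 2.}
\begin{itemize}
\item Your Step 2 expectation, ``$\rE^{b}_{v_0}(-\rD\rD^\tc\nu_0)$ plus single-replacement terms'', is false. $\ol\nabla'\nabla'\nu_0=0$ because $\nu_0$ has constant coefficients, whereas $\ol\nabla'\nabla'\nu_b$ has genuine two-slot terms.
\item These first appear at $b_{v_0}=2$, so your $b_{v_0}=1$ test would not detect the problem.
\item $\rD$ does not commute with $\rE_{v_0}$, since $e_{v_0,1}$ moves with the point of $\fD$.
\end{itemize}

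\textbf{What works instead.} No induction is needed. In the Fock model $\nu_b\propto\sfz_1^b\otimes 1\otimes e_1^{\otimes b}$ and $\varphi_b\propto\sfz_1^{b+2}\otimes\Omega\otimes e_1^{\otimes b}$. Compute $\partial'\ol\partial'\nu_b$ with \cite{FM06}*{Lemma~A.2} and $\ol\nabla'\nabla'\nu_b$ with the identity above. This gives $-\rD\rD^\tc\nu_b$ as an $e_1^{\otimes b}$-component minus $\frac{1}{8\pi}\sum_{j,k}A_{jk}\varphi_{b-2}$. A direct computation then shows that $\omega(\rL)\varphi_b$ equals exactly that $e_1^{\otimes b}$-component. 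Your Step 3 (killing the $A$-terms by the harmonic projection) is fine.
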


\begin{remark}\label{re:gauss_manin}
For the space $\(\sS(V_\infty)\otimes_\dR\wedge^\bullet\fp^\vee\otimes_\dR\tT(V_\infty)\)^{L_\infty}$, the four components of the Gauss--Manin operator are given by the following formulae:
\begin{align*}
\partial'&=\frac{1}{2}\cdot\omega_{\psi_{F,v_0}}(X_{12}-\ti X_{13})\otimes L(\omega_{12}+\ti \omega_{13})\otimes 1,\\
\ol\partial'&=\frac{1}{2}\cdot\omega_{\psi_{F,v_0}}(X_{12}+\ti X_{13})\otimes L(\omega_{12}-\ti \omega_{13})\otimes 1,\\
\nabla'&=\frac{1}{2}\cdot 1\otimes L(\omega_{12}+\ti \omega_{13})\otimes\rho(X_{12}-\ti X_{13}),\\
\ol\nabla'&=\frac{1}{2}\cdot 1\otimes L(\omega_{12}-\ti \omega_{13})\otimes\rho(X_{12}+\ti X_{13}),
\end{align*}
where $L(\omega)$ denotes the left multiplication by an element $\omega\in\fp_\dC^\vee$, and $\rho$ denotes the right induced action of $\fp$ on $\tT(V_\infty)$ (through the factor $\tT(V_{v_0})$).
\end{remark}

\begin{proof}
For the identity in the lemma, both sides have the same factor for places away from $v_0$. Thus, without loss of generality, we may assume $F=\dQ$ and suppress $v_0$ in all subscripts (while simply write $V$ for $V_\infty$). We will freely use Remark \ref{re:gauss_manin}.

By definition, we have
\[
-\rD\rD^\tc\nu_b=(2\pi\ti)^{-1}\(\partial'\ol\partial'\nu_b+\ol\nabla'\nabla'\nu_b\).
\]
For further computation, we use the Fock model $\dC[\sfz_1,\sfz_2,\sfz_3]$ of the Weil representation $\omega_{\psi_F}$ as in \cite{FM06}*{Appendix~A}. Under such model,
\begin{align*}
\nu_b&=\(\frac{-\ti}{4\pi}\)^b\sfz_1^b\otimes1\otimes e_1^{\otimes b}, \\
\varphi_b&=\frac{-1}{8\pi^2}\(\frac{-\ti}{4\pi}\)^b\sfz_1^{b+2}\otimes(\omega_{12}\wedge\omega_{13})\otimes e_1^{\otimes b},
\end{align*}
and by \cite{FM06}*{Lemma~A.2},
\begin{align*}
\partial'&=\(-2\pi\(\frac{\partial^2}{\partial\sfz_1\partial\sfz_2}-\ti\frac{\partial^2}{\partial\sfz_1\partial\sfz_3}\)
+\frac{1}{8\pi}(\sfz_1\sfz_2-\ti\sfz_1\sfz_3)\)\otimes L(\omega_{12}+\ti\omega_{13})\otimes 1, \\
\ol\partial'&=\(-2\pi\(\frac{\partial^2}{\partial\sfz_1\partial\sfz_2}+\ti\frac{\partial^2}{\partial\sfz_1\partial\sfz_3}\)
+\frac{1}{8\pi}(\sfz_1\sfz_2+\ti\sfz_1\sfz_3)\)\otimes L(\omega_{12}-\ti\omega_{13})\otimes 1.
\end{align*}
It follows that
\begin{align}\label{eq:holomorphy1}
\partial'\ol\partial'\nu_b=2\pi\(\frac{-\ti}{4\pi}\)^b
\((b+1)\ti\sfz_1^b-\frac{\ti}{32\pi^2}\sfz_1^{b+2}(\sfz_2^2+\sfz_3^2)\)\otimes\Omega\otimes e_1^{\otimes b}.
\end{align}
For $\ol\nabla'\nabla'\nu_b$, it equals
\[
\pi\ti\(\frac{-\ti}{4\pi}\)^b
\sfz_1^b\otimes\Omega\otimes\rho(X_{12}+\ti X_{13})\rho(X_{12}-\ti X_{13})e_1^{\otimes b}.
\]
An easy exercise in combinatorics shows that
\[
\rho(X_{12}+\ti X_{13})\rho(X_{12}-\ti X_{13})e_1^{\otimes b}
=b(b+1)e_1^{\otimes b}-\sum_{j=1}^{b}\sum_{k=1}^{b-1}A_{jk}e_1^{\otimes(b-2)}.
\]
It follows that
\begin{align}\label{eq:holomorphy2}
\ol\nabla'\nabla'\nu_b=2\pi\(\frac{-\ti}{4\pi}\)^b\frac{b(b+1)\ti}{2}\sfz_1^{b}\otimes\Omega\otimes e_1^{\otimes b}-\frac{\ti}{2}\(\frac{-1}{8\pi^2}\)^{-1}\(\frac{-\ti}{4\pi}\)^{2}\sum_{j=1}^{b}\sum_{k=1}^{b-1}A_{jk}\varphi_{b-2}.
\end{align}
Combining \eqref{eq:holomorphy1} and \eqref{eq:holomorphy2}, we have
\[
-\rD\rD^\tc\nu_b=2\pi\(\frac{-\ti}{4\pi}\)^b
\(\frac{(b+1)(b+2)}{4\pi}\sfz_1^b-\frac{\ti}{64\pi^3}\sfz_1^{b+2}(\sfz_2^2+\sfz_3^2)\)\otimes\Omega\otimes e_1^{\otimes b}-\frac{1}{8\pi}\sum_{j=1}^{b}\sum_{k=1}^{b-1}A_{jk}\varphi_{b-2}.
\]

On the other hand, by \cite{FM06}*{Lemma~A.1}, we have
\[
\omega_{\psi_F}(\rL)=-2\pi\frac{\partial^2}{\partial\sfz_1^2}+\frac{1}{8\pi}(\sfz_2^2+\sfz_3^2),
\]
which implies that
\[
\omega_{\psi_F}(\rL)\varphi_b=2\pi\(\frac{-\ti}{4\pi}\)^b
\(\frac{(b+1)(b+2)}{4\pi}\sfz_1^b-\frac{\ti}{64\pi^3}\sfz_1^{b+2}(\sfz_2^2+\sfz_3^2)\)
\otimes\Omega\otimes e_1^{\otimes b}.
\]

The lemma follows.
\end{proof}

Define a function $\mu\colon V_\infty\to\dR_{>0}$ by the formula
\[
\mu(x)\coloneqq\tq^{T(x)}(\ti)=\prod_{v\in\Hom(F,\dR)}\te^{-2\pi T(x_v)},
\]
and put
\[
\varphi_\lambda^\circ\coloneqq\varphi_\lambda\cdot\mu^{-1},\qquad
\nu_\lambda^\circ\coloneqq\nu_\lambda\cdot\mu^{-1}.
\]
Define an action $\star$ of $\dR_{>0}$ on $V_\infty$ via the formula $t\star x=(tx_{v_0},x_v,\dots)$.

\begin{lem}\label{le:nu_orthogonal}
For $x\in V_\infty$, we have
\[
\rD\rD^\tc\frac{\nu^\circ_\lambda(t^{1/2}\star x)}{t^{b_{v_0}/2}}=-t\frac{\rd}{\rd t}\frac{\varphi^\circ_\lambda(t^{1/2}\star x)}{t^{b_{v_0}/2}}
\]
for every $t\in\dR_{>0}$.
\end{lem}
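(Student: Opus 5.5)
The plan is to deduce the identity from Lemma \ref{le:holomorphy_orthogonal}, in the form $\omega_{\psi_{F,v_0}}(\rL_{v_0})\varphi_\lambda=-\rD\rD^\tc\nu_\lambda$. The idea is to read the lowering operator $\rL_{v_0}$ as the $\bar\tau$-derivative of the Weil-translated Schwartz function along the imaginary axis, after which $\frac{\rd}{\rd t}$ appears. All factors at places $v\neq v_0$ are untouched by the $\star$-action, by $\rD\rD^\tc$ and by $\rL_{v_0}$; the factor $\mu^{-1}$ away from $v_0$ is a constant in $t$. So, as in the proof of Lemma \ref{le:holomorphy_orthogonal}, we may assume $F=\dQ$ and drop $v_0$ from the notation.

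\emph{Step 1 (Weil translation).} For $g_\tau=n(x_\tau)m(y_\tau^{1/2})$ we have $\omega_\psi(g_\tau)\phi(x)=y_\tau^{3/4}\te^{2\pi\ti T(x)x_\tau}\phi(y_\tau^{1/2}x)$ (recall $\dim V=3$). Put $k=b+\frac32$, which is the $K_{1}$-type of $\varphi_\lambda$ by Lemma \ref{le:invariance}/Proposition \ref{pr:form_orthogonal}. Consider the $\tau$-dependent function
\[
\Phi(\tau)\coloneqq y_\tau^{-k/2}\,\tq^{-T(x)}(\tau)\,\omega_\psi(g_\tau)\varphi_\lambda(x).
\]
The two oscillating factors $\te^{\pm2\pi\ti T(x)x_\tau}$ cancel, and $\te^{2\pi T(x)y_\tau}$ equals $\mu(y_\tau^{1/2}x)^{-1}$. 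Hence
\[
\Phi(\tau)=y_\tau^{-b/2}\varphi^\circ_\lambda(y_\tau^{1/2}x),
\]
which depends only on $y_\tau$. The same computation for $\nu_\lambda$, of weight $k-2=b-\frac12$, gives $y_\tau^{1-b/2}\nu^\circ_\lambda(y_\tau^{1/2}x)$.

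\emph{Step 2 (lowering operator as $\partial_{\bar\tau}$).} I would invoke the standard identity
\[
y^{-(k-2)/2}\,\omega_\psi(g_\tau)\bigl(\omega_\psi(\rL)\phi\bigr)=-2\ti\,y^{2}\,\frac{\partial}{\partial\bar\tau}\Bigl(y^{-k/2}\omega_\psi(g_\tau)\phi\Bigr)
\]
for $\phi$ of weight $k$, up to a normalizing constant that I would fix by testing on a Gaussian. I would multiply it by the holomorphic factor $\tq^{-T(x)}(\tau)$, which commutes with $\partial_{\bar\tau}$. Since $\Phi$ depends only on $y$, we have $\partial_{\bar\tau}=\frac{\ti}{2}\partial_y$, and the right-hand side becomes $y^2\frac{\rd}{\rd y}\Phi$. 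Evaluating at $y=t$ and applying Lemma \ref{le:holomorphy_orthogonal} turns the left-hand side into $-\rD\rD^\tc$ of $t^{1-b/2}\nu^\circ_\lambda(t^{1/2}\star x)$.

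\emph{Step 3 (commuting $\rD\rD^\tc$ with scaling).} $\rD\rD^\tc$ acts on $\sA^\bullet(\fD)\otimes\tS_{[\lambda]}(V)$, while $\mu$ and the scaling act only on the Schwartz variable. To pass between them I would use $H(F_\infty)$-equivariance: evaluating the invariant form at $x$ and translating by $h$ is the same as evaluating at $h^{-1}x$. The function $\mu(x)$ is $H$-invariant and the scaling commutes with the $H$-action, so both commute with the Gauss--Manin operator. Comparing the powers of $t$ on the two sides, the extra $t$ on the $\nu$-side should cancel one power of $y^2$, leaving exactly $-t\frac{\rd}{\rd t}\bigl(t^{-b/2}\varphi^\circ_\lambda(t^{1/2}\star x)\bigr)$.

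I expect the main obstacle to be bookkeeping rather than concept. One has to get the precise normalization of $\rL$ in the Weil representation (sign, $2\ti$, and the weight shift $k\mapsto k-2$, including the metaplectic $\frac34$-power), and check that the resulting powers of $t$ match the claimed $t^{b/2}$ normalization on both sides. The footnote in Remark \ref{re:form_orthogonal} warns of a sign error in \cite{FM06}. If the abstract identity is inconvenient, my fallback is to verify it directly in the Fock/Schr\"odinger model. For this I would use $\omega_\psi(\rL)=-2\pi\partial_{\sfz_1}^2+\frac{1}{8\pi}(\sfz_2^2+\sfz_3^2)$ from the proof of Lemma \ref{le:holomorphy_orthogonal}, translated back to $x$-coordinates and compared with $t\frac{\rd}{\rd t}$ applied to the explicit Gaussian-times-polynomial form of $\varphi_\lambda$.
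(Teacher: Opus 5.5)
Your proposal is correct and is essentially the paper's argument. The paper also converts the lowering operator into a scaling derivative using the $K$-type, via $\omega_{\psi_F}(\rL)\phi(x)=\left.\left(2\pi T(x)+\frac{\rd}{\rd t}-\frac{w}{4}\right)\right|_{t=1}t^{3/4}\phi(t^{1/2}x)$ for $\phi$ of weight $w/2$ (with $w=3+2b$ for $\varphi_\lambda$), and combines this with Lemma~\ref{le:holomorphy_orthogonal} and the constancy of $\mu$ on $\fD$; the difference is that it works only at $t=1$ and rescales, whereas you run the Maass operator $-2\ti y^2\partial_{\bar\tau}$ along the whole imaginary axis. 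That displayed formula is exactly your Step~2 identity at the base point (it is $\rL$ written in Iwasawa coordinates), so it fixes your normalizing constant as exactly $1$ and no test computation is needed.
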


\begin{proof}
Again without loss of generality, we may assume $F=\dQ$ and suppress $v_0$ in all subscripts (while simply write $V$ for $V_\infty$). In particular, $t^{1/2}\star x$ is simply $t^{1/2}x$. It suffices to prove the lemma for $t=1$, which we will deduce from Lemma \ref{le:holomorphy_orthogonal}.

For every element $\phi\in\sS(V)$ on which $\widetilde{K_1}$ acts by $\delta^{w/2}$ (for $w\in\dZ$), we have
\[
\omega_{\psi_F}(\rL)\phi(x)=\left.\(2\pi T(x)+\frac{\rd}{\rd t}-\frac{w}{4}\)\right|_{t=1}t^{3/4}\phi(t^{1/2}x).
\]
Applying this formula to $\varphi_\lambda$ (so that $w=3+2b$), we have for $x\in V$,
\begin{align*}
\omega_{\psi_F}(\rL)\varphi_\lambda(x)&=
\left.\(2\pi T(x)+\frac{\rd}{\rd t}-\frac{3+2b}{4}\)\right|_{t=1}t^{3/4}\varphi_\lambda(t^{1/2}x) \\
&=\left.\frac{\rd}{\rd t}\right|_{t=1}\(\frac{\varphi_\lambda^\circ(t^{1/2}x)}{t^{b/2}}\cdot t^{3/4+b/2}\mu(t^{1/2}x)\)+2\pi T(x)\varphi_\lambda(x)-\frac{3+2b}{4}\varphi_\lambda(x) \\
&=\(\left.\frac{\rd}{\rd t}\right|_{t=1}\frac{\varphi_\lambda^\circ(t^{1/2}x)}{t^{b/2}}\)\cdot\mu(x).
\end{align*}
Thus, the lemma follows from Lemma \ref{le:holomorphy_orthogonal} and the fact that $\mu$ is a constant function on $\fD$.
\end{proof}

Take an element $x\in V_\infty$ with $x_{v_0}\neq 0$. Denote by $\fD_x\subseteq\fD$ the subset of oriented $2$-planes perpendicular to $x_{v_0}$. If $T(x_{v_0})>0$ (resp.\ $T(x_{v_0})\leq 0$), then $\fD_x$ consists of two points (resp.\ is empty). Put
\[
\sfv^\circ_b(x)\coloneqq\(\fD_x,\otimes_{v\in\Hom(F,\dR)}x_v^{\otimes b_v}\)\in\Div(\fD,\cW_b)
\]
(see Definition \ref{de:hodge_orthogonal} for the Hodge structure) and consequently
\[
\sfv^\circ_\lambda(x)\coloneqq[\sfv^\circ_b(x)]\in\Div(\fD,\cW_\lambda)
\]
(so that $\sfv^\circ_\lambda(x)=0$ if $T(x_{v_0})\leq 0$). Define
\[
g_\lambda^\circ(x)\coloneqq\int_1^\infty\frac{\nu_\lambda^\circ(t^{1/2}\star x)}{t^{b_{v_0}/2}}\frac{\rd t}{t},
\]
which is absolutely convergent over $\fD\setminus\fD_x$ (after we regard $\nu_\lambda^\circ(x)$ as an element in $\sA^0(\fD)\otimes_\dR\tS_{[\lambda]}(V_\infty)$). Finally, put
\[
g_\lambda(x)\coloneqq \mu(x)\cdot g_\lambda^\circ(x),\qquad
\sfv_\lambda(x)\coloneqq \mu(x)\cdot \sfv_\lambda^\circ(x).
\]

\begin{lem}\label{le:green_orthogonal}
For every $x\in V_\infty$ with $x_{v_0}\neq 0$, $g_\lambda(x)$ is a smooth function on $\fD\setminus\fD_x$ with logarithmic growth along $\fD_x$ and rapid decay along the boundary of $\fD$, and satisfies
\[
\rD\rD^\tc g_\lambda(x) + \delta_{\sfv_\lambda(x)}=[\varphi_\lambda(x)].
\]
In particular, $g_\lambda(x)$ is a Green function for $\sfv_\lambda(x)$ with the tail form $\varphi_\lambda(x)$.
\end{lem}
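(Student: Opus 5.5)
Since $\mu(x)$ is a positive constant on $\fD$, it suffices to prove the statement for $g^\circ_\lambda(x)$, $\sfv^\circ_\lambda(x)$ and $\varphi^\circ_\lambda(x)$, namely
\[
\rD\rD^\tc g^\circ_\lambda(x)+\delta_{\sfv^\circ_\lambda(x)}=[\varphi^\circ_\lambda(x)].
\]
The places $v\neq v_0$ contribute constant vectors: $\nu_\lambda$ and $\varphi_\lambda$ there are $x_v^{\otimes b_v}$ times a Gaussian, up to the projection $[\phantom{a}]$. So, as in the proofs of Lemmas \ref{le:holomorphy_orthogonal} and \ref{le:nu_orthogonal}, I would reduce to $F=\dQ$ and suppress $v_0$.

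\textbf{Away from $\fD_x$.} By Lemma \ref{le:nu_orthogonal}, differentiating under the integral gives
\[
\rD\rD^\tc\int_1^{T}\frac{\nu^\circ_\lambda(t^{1/2}x)}{t^{b/2}}\frac{\rd t}{t}=-\int_1^T\frac{\rd}{\rd t}\frac{\varphi^\circ_\lambda(t^{1/2}x)}{t^{b/2}}\rd t=\varphi^\circ_\lambda(x)-\frac{\varphi^\circ_\lambda(T^{1/2}x)}{T^{b/2}}.
\]
At a point $z\in\fD$, $\nu_0^\circ(x)=\te^{-2\pi R(x,z)}$, where $R(x,z)=-(x_z^-,x_z^-)\geq 0$ is the Kudla--Millson majorant and vanishes exactly on $\fD_x$. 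The factor $\rE^b$ only produces polynomials in $x$. Hence on compact subsets of $\fD\setminus\fD_x$ the integrands and all their derivatives decay like $\te^{-2\pi tR}$ times a polynomial. This justifies exchanging derivatives and integral, gives the limit $T\to\infty$, and shows that $g^\circ_\lambda(x)$ is smooth with $\rD\rD^\tc g^\circ_\lambda=\varphi^\circ_\lambda$ on $\fD\setminus\fD_x$. As $z\to\partial\fD$, $R(x,z)\to\infty$ (like the distance in $\fD$), which gives rapid decay. The same bound shows that $g_\lambda$ is locally integrable.

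\textbf{Near a point $z_0\in\fD_x$ (the main step).} Take $T(x)>0$ and use a local coordinate $w$ with $w(z_0)=0$ and $R(x,z)\approx c|w|^2$. Then
\[
g^\circ_\lambda(x)=\int_1^\infty\te^{-2\pi tR}P(t,z)\frac{\rd t}{t},
\]
where $P$ is polynomial in $t^{1/2}$. Split $P$ according to the Hodge decomposition of $\cW_\lambda$. The component of $\nu_b$ in $\cW^{(0,0)}$, the $e_1^{\otimes b}$-direction in the Fock model, is $x_z^{+\otimes b}$ up to the normalization by $t^{-b/2}$, plus terms carrying factors of $x^-_z$, which vanish on $\fD_x$. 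So the leading singularity is $-\log|w|^2\cdot x^{\otimes b}$ up to the right constant, and the remaining terms are continuous at $z_0$.

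The delta term then follows by a Stokes/Poincar\'e--Lelong argument on $\fD\setminus\{|w|<\epsilon\}$ with $\epsilon\to0$, using the expressions for $\partial',\ol\partial',\nabla',\ol\nabla'$ in Remark \ref{re:gauss_manin}. The $\nabla'$ terms, which act only on the coefficient, contribute nothing in the limit because they involve no derivative of the log singularity. Only the $(0,0)$ part survives and gives $\delta$ at $z_0$ with vector $[x^{\otimes b}]$; summing over the two points of $\fD_x$ gives $\delta_{\sfv^\circ_\lambda(x)}$.

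I expect the normalization to be the main obstacle: matching the constant, including $(4\pi\ti)^{-1}$ and the Fock-model factors, so that the coefficient is exactly $-1$ times $\delta$. I would fix it by comparing with the weight-zero case, where $g_{\b0}$ is Kudla's Green function $-\mathrm{Ei}(-2\pi R)$ and the identity is known, and then checking that the extra factor $\rE^b$ contributes exactly $x^{\otimes b}$ at $z_0$ after division by $t^{b/2}$.
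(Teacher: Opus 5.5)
Your proposal is essentially the paper's proof. You make the same reduction to $g^\circ_\lambda$ at the single place $v_0$, and you get smoothness and the Green equation off $\fD_x$ from Lemma \ref{le:nu_orthogonal} together with the exponential decay of $\varphi^\circ_\lambda(t^{1/2}x)$. You then identify the singularity along $\fD_x$ with that of the weight-zero Kudla Green function times $[x^{\otimes b}]$; the paper takes the logarithmic growth, the $\delta$-term and its normalization directly from the weight-zero case, while you spell out the Poincar\'e--Lelong step.

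One caveat, which the paper's own argument shares: $R(x,z)\to\infty$ at every boundary point of $\fD$ only when $T(x)>0$. For $T(x)\leq 0$, $R$ stays bounded (or tends to $0$) near the boundary points given by isotropic lines in $x^\perp$; for instance, when $T(x)<0$ one has $R(x,z)=-(x,x)$ constant along the geodesic where $x_z^+=0$. So rapid decay of $g_\lambda(x)$ by itself fails there, and should be read jointly for the pairs $(x_1,x_2)\in V_\infty^\Box$ that enter the star product.
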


\begin{proof}
Again without loss of generality, we may assume $F=\dQ$ and suppress $v_0$ in all subscripts. It suffices to show the lemma for $g_\lambda^\circ$, $\sfv_\lambda^\circ$ and $\varphi^\circ_\lambda$.

It is clear that $g_\lambda^\circ(x)$ is a smooth function on $\fD\setminus\fD_x$. For the growth along $\fD_x$, we may assume that $\fD_x$ consists of the base point and its conjugate, without loss of generality. We see that
\[
\frac{\nu_\lambda^\circ(t^{1/2}x)}{t^{b/2}}=\te^{-2\pi t(x_2^2+x_3^2)}\cdot [x^{\otimes b}]+O(t^{-1}),\quad t\mapsto\infty.
\]
Thus, $g_\lambda^\circ$ has the same asymptotic behavior as $g_0^\circ$ along $\fD_x$, multiplied by $[x^{\otimes b}]$. Then the logarithmic growth along $\fD_x$ follows from the well-known fact that $g_0^\circ$ has logarithmic growth along $\fD_x$. By Lemma \ref{le:nu_orthogonal}, we have
\[
\rD\rD^\tc g^\circ_\lambda(x) + \delta_{\sfv^\circ_\lambda(x)}=[\varphi^\circ_\lambda(x)]
\]
as $\varphi^\circ_\lambda(t^{1/2}\star x)$ has exponential decay when $t\to\infty$.

The rapid decay follows from the observation that along the boundary of $\fD$, $\nu_\lambda^\circ(t^{1/2}x)$ is bounded by a function with rapid decay depending only on $x$, multiplied by $\te^{-t}$, for $t\geq 1$.

The lemma is proved.
\end{proof}

Now we consider star products of $g_\lambda$. Put
\[
V_\infty^\Box\coloneqq\left\{\left.(x_1,x_2)\in V_\infty^2\right| T(x_{1,v_0},x_{2,v_0})\in\Sym_2^\circ(\dR)\right\}.
\]
For $(x_1,x_2)\in V_\infty^\Box$ (so that both $x_{1,v_0}$ and $x_{2,v_0}$ are nonzero), $(g_\lambda(x_1),g_\lambda(x_2))$ belongs to $\Div_\sharp^\Box(\fD,\cW_\lambda)$. Lemma \ref{le:green_orthogonal} indicates that
\[
g_\lambda(x_1)\ast g_\lambda(x_2)
\]
is an integrable $(1,1)$-current on $\fD$, so that its integration over $\fD$ is well-defined. So, how to compute
\[
\int_\fD g_\lambda(x_1)\ast g_\lambda(x_2)?
\]

Equip $\dR^3$ with the standard Euclidean quadratic form. Denote by $\phi_0\in\sS(\dR^3)$ the standard Gaussian function on $\dR^3$, that is, $\phi_0(y)=\te^{-\pi(y_1^2+y_2^2+y_3^2)}$. Take an element $v\in\Hom(F,\dR)$. Define two functions
\[
\phi_{b_v}\in\sS(\dR^3)\otimes\tT^{b_v}(\dR^3),\quad
\phi_{\lambda_v}\in\sS(\dR^3)\otimes\tS_{[\lambda_v]}(\dR^3)
\]
by the formulae
\[
\phi_{b_v}(y)=\phi_0(y)\cdot y^{\otimes b_v},\quad
\phi_{\lambda_v}(y)=[\phi_{b_v}(y)]=\phi_0(y)[y^{\otimes b_v}],
\]
respectively.\footnote{The function $\phi_{\lambda_v}$ is an eigenfunction for $\widetilde{K_{1,v}}$ with respect to the Weil representation $\omega_{\psi_{F,v}}$ of eigen-character $\delta_v^{3/2+b_v}$; but $\phi_{b_v}$ is not an eigenfunction as long as $b_v\geq 2$ since it is not ``harmonic''.} Define an $\SO(\dR^3)$-invariant function $\Phi_{\lambda_v}\in\sS((\dR^3)^2)$ by the formula
\[
\Phi_{\lambda_v}(y_1,y_2)=(\phi_{\lambda_v}(y_1),\phi_{\lambda_v}(y_2))_{\tS_{[\lambda_v]}(\dR^3)}.
\]
For every element $T_v\in\Sym_2^\circ(F_v)$, denote by $W_{T_v}(s,-,\Phi_{\lambda_v})$ the $T$-th Whittaker function on $\widetilde{G_2(F_v)}$ associated with $\Phi_{\lambda_v}$ and the parameter $s\in\dC$. More precisely, if we denote by $f_{\Phi_{\lambda_v}}(s)$ the standard Siegel--Weil section with respect to the Weil representation $\omega_{\psi_{F,v}}$, then for every $g_v\in G_2(F_v)$, $W_{T_v}(s,g_v,\Phi_{\lambda_v})$ is the analytic continuation of the $T_v$-th Whittaker coefficient of $f_{\Phi_{\lambda_v}}(s)$ (which is absolutely convergent for $\RE s\gg 0$).

Taking products, we have for $g=(g_v)_v\in\widetilde{G_2(F_\infty)}$ and $T=(T_v)_v\in\Sym_2^\circ(F_\infty)$,
\[
W_T(s,g,\Phi_\lambda)\coloneqq\prod_{v\in\Hom(F,\dR)}W_{T_v}(s,g_v,\Phi_{\lambda_v}).
\]
Write
\[
W'_T(0,g,\Phi_\lambda)\coloneqq\left.\frac{\rd}{\rd s}\right|_{s=0}W_{T(x_1,x_2)}(s,g,\Phi_\lambda).
\]
The theorem below, which can be regarded as an arithmetic analogue of the local Siegel--Weil formula at $v_0$, will be proved in the next section.

\begin{theorem}\label{th:sw_local_orthogonal}
For $(x_1,x_2)\in V_\infty^\Box$, we have
\[
\int_\fD g_\lambda(x_1)\ast g_\lambda(x_2)
=4 C_\infty^{-1}\cdot W'_{T(x_1,x_2)}(0,1_4,\Phi_\lambda),
\]
where $C_\infty\coloneqq(8\sqrt{2}\pi^2\ti)^{[F:\dQ]}$.
\end{theorem}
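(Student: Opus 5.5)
We follow the strategy of \cite{GS19}: write the star product as an integral over two scaling parameters, turn it into a derivative of a Siegel--Weil type integral, and identify that integral with a Whittaker function through the local Siegel--Weil formula.

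\textbf{Step 1: reduce to the place $v_0$.} For $v\neq v_0$ the Hodge structure is of type $(0,0)$ and nothing depends on $\fD$. The factor $(\nu_{\lambda_v}(x_{1,v}),\nu_{\lambda_v}(x_{2,v}))$ should then equal $\Phi_{\lambda_v}(x_{1,v},x_{2,v})$, up to the Fock-model normalization. This factor should be the value at $s=0$ (a nonzero multiple of it) of $W_{T_v}(s,1_4,\Phi_{\lambda_v})$, since $\rO(\dR^3)$ is compact at $v$ and the classical local Siegel--Weil formula applies. Carrying out this comparison produces one factor $(8\sqrt2\pi^2\ti)^{-1}$ per place. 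Hence we may assume $F=\dQ$ and suppress $v_0$.

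\textbf{Step 2: write the star product as an integral.} By Lemma \ref{le:green_orthogonal}, $g_\lambda(x_1)\ast g_\lambda(x_2)=\tr(g_\lambda(x_1)\delta_{\sfv_\lambda(x_2)}+\varphi_\lambda(x_1)\wedge g_\lambda(x_2))$. We substitute the integral defining $g_\lambda^\circ$ and use Lemma \ref{le:nu_orthogonal}: the derivative $t\frac{\rd}{\rd t}$ of the scaled $\varphi^\circ_\lambda$ equals $-\rD\rD^\tc$ of the scaled $\nu^\circ_\lambda$. Integrating by parts on $\fD$ is justified by the rapid decay at the boundary and the logarithmic singularities. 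This yields
\[
\int_\fD g_\lambda(x_1)\ast g_\lambda(x_2)=\int_1^\infty\!\!\int_1^\infty \Big(\text{terms built from }\int_\fD \varphi^\circ_\lambda(t_1^{1/2}x_1)\wedge\nu^\circ_\lambda(t_2^{1/2}x_2)\text{ and }\partial_{t_i}\Big)\frac{\rd t_1}{t_1}\frac{\rd t_2}{t_2},
\]
as in \cite{GS19}*{\S2}. Equivalently, we introduce $\Phi(t_1,t_2)$, the $\fD$-integral of the tail forms after the scaling $\mathrm{diag}(t_1^{1/2},t_2^{1/2})$. We then express the star product as $-\frac{\rd}{\rd s}\big|_{s=0}$ of a Mellin transform $\int\!\!\int \Phi(t_1,t_2)\,(t_1t_2)^{s}\,\frac{\rd t_1\rd t_2}{t_1t_2}$ over the region away from the diagonal. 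This uses the Kudla--Millson identity that a star product of two Green functions of the form $\int_1^\infty$ equals the integral over $\{t_1,t_2\geq1\}$ of the mixed $\varphi\wedge\nu$ expressions. By Lemma \ref{le:holomorphy_orthogonal}, $\frac{\rd}{\rd t}$ acts through the lowering operator, which is what allows the computation to proceed; the projection $[\;]$ kills the $A_{jk}$ terms.

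\textbf{Step 3: identify with the Whittaker derivative.} For the Schwartz form $\varphi_\lambda\otimes\varphi_\lambda$, paired through $\tr_W$, the integral $\int_\fD \varphi_\lambda(x_1)\wedge\varphi_\lambda(x_2)$ is $\rO(V)$-invariant in $(x_1,x_2)$ and is a $\widetilde{K_2}$-eigenfunction of weight $\frac32+b$. By the Fock-model description of $\varphi_b$ (only $\sfz_1$ appears), it should match $C_\infty^{-1}$ times the Schwartz function $\Phi_\lambda$ transported to signature $(3,0)$, in the sense of the matching of \cite{GS19}. Indeed, the pair $\varphi_\lambda$ on signature $(1,2)$ and $\phi_\lambda$ on $(3,0)$ share the Fock vector $\sfz_1^{b+2}$-type, up to a Cayley-type relation. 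The two-parameter Mellin integral of Step 2 then becomes the integral over $\{a\in\GL_2(\dR):\ a\,{}^ta\geq\cdot\}$ that computes the Whittaker function $W_T(s,1_4,\Phi_\lambda)$ by the Shimura-type formula, $W_T(s)=\int \omega(w n(b))\Phi_\lambda(x)\,\psi(-\tr Tb)\,\rd b$. Differentiating at $s=0$ and using that $W_T(0)$ vanishes for $T$ indefinite or of signature $(1,1)$ gives $4C_\infty^{-1}W'_T(0,1_4,\Phi_\lambda)$.

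\textbf{Main obstacle.} The hardest part is Step 3: the $\rU(2)$-invariance used in \cite{Liu11} fails, so the explicit computation must be replaced by the conceptual matching. I would first verify the matching on the $\widetilde{K_2}$-lowest-weight vector in the Fock model. I would then use that both sides satisfy the same differential equation in $T$ coming from $\rL$ (Lemma \ref{le:holomorphy_orthogonal}) and have the same asymptotics as $T\to\infty$; this should reduce the claim to checking the constant $4C_\infty^{-1}$.
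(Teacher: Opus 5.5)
Your Step~1 is essentially the paper's reduction to $F=\dQ$, and the start of Step~2 is in the right spirit. However, the \cite{GS19}*{Theorem~2.16} argument should end with the \emph{diagonal} integral: $g^\circ_\lambda(x_1)\ast g^\circ_\lambda(x_2)-\int_1^\infty\nu^\circ_\lambda(t^{1/2}x_1,t^{1/2}x_2)\,t^{-b}\,\frac{\rd t}{t}=\rD'[\alpha_\lambda]+\rD''[\beta_\lambda]$, and the right-hand side integrates to zero over $\fD$ by rapid decay. No parameter $s$ and no Mellin derivative appear on the geometric side.

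The genuine gap is Step~3. The relevant integrand is $\tr(\nu_\lambda(x_1)\wedge\varphi_\lambda(x_2)+\varphi_\lambda(x_1)\wedge\nu_\lambda(x_2))$. Your $\varphi_\lambda(x_1)\wedge\varphi_\lambda(x_2)$ is a $4$-form on the real surface $\fD$, hence zero. By the local Siegel--Weil formula for the \emph{noncompact} group $H(\dR)$, the integral of the correct integrand over $\fD$ is $C\cdot W_T(0,1_4,\Psi_\lambda)$, where $\Psi_\lambda$ is a Schwartz function on the signature-$(1,2)$ space and $C$ is independent of $\lambda$. There is no way to ``match'' $\Psi_\lambda$ with the signature-$(3,0)$ function $\Phi_\lambda$. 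Their Siegel--Weil sections lie in disjoint sets of $\widetilde{K_2}$-types ($\rI_-(0)$ versus $\rI_+(0)$), and in any case $W_T(0,1_4,\Phi_\lambda)=0$ when $\det T<0$.

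What is missing is the following identity in the degenerate principal series of $\widetilde{G_2}$:
\[
\rL_i f_{\Phi_\lambda}(s)=-\tfrac{s}{2}\,f_{\Psi_{\lambda i}}(s)+s\,f_{\Phi_{\lambda i}}(s),\qquad \Phi_{\lambda i}\in\sS((\dR^3)^2),\quad i=1,2,
\]
where $\rL_1,\rL_2$ are the lowering operators of the two embedded copies of $\widetilde{G_1}$. Take $T$-th Whittaker coefficients, divide by $s$, and let $s\to0$; the $\Phi_{\lambda i}$-terms drop out because they come from the definite space. This gives $\cW_{b,T}(0,ty,\Psi_\lambda)=-2t\frac{\rd}{\rd t}\cW'_{b,T}(0,ty,\Phi_\lambda)$. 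The $t$-integral then telescopes to $2C\,W'_T(0,1_4,\Phi_\lambda)$, once you show $\cW'_{b,T}(0,M1_2,\Phi_\lambda)\to0$ as $M\to\infty$. Finally, $C$ is fixed by the case $\lambda=\b{0}$ via \cite{Kud97}*{Theorem~11.8}.

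Your proposed substitute does not supply this identity. Lemma~\ref{le:holomorphy_orthogonal} concerns $\omega_{\psi_{F,v_0}}(\rL_{v_0})$ for $\widetilde{G_1}$ acting on $\varphi_b$ in the $x$-variable. It yields only the Green equation (Lemma~\ref{le:nu_orthogonal}) and says nothing about Whittaker functions of $\Phi_\lambda$ on $\widetilde{G_2}$. So your ``same differential equation in $T$ plus asymptotics'' argument has no equation to run on; moreover, the relevant variable is the scaling $y$, not $T$.

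For $b>0$ the displayed identity is also exactly where the weight matters, so checking it on a lowest-weight vector is not enough. The section $f_{\Phi_\lambda}$ is not a multiple of $\rR^bf_{\Phi_{\b{0}}}=f_{\Phi_b}$; instead $q_\lambda f_{\Phi_b}=f_{\Phi_\lambda}+f_{(\det\sft)P^\flat_b}$. To identify the $\rI_{(b,-1)}$-component of $\rL_2 f_{\Phi_\lambda}(s)$ with $-\frac{s}{2}f_{\Psi_{\lambda2}}(s)$, you must show two things:
\begin{enumerate}
  \item $\rL_2$ sends the correction term into $\rI_+(s)$, because $\det\sft$ has $\widetilde{K_2}$-character $\delta^2$;
  \item $\rR$ annihilates $\gamma(z)$ and $\det z$ in Lee's model, so that $\rR^b$ commutes with the $s$-dependent part of $\rL_2$.
\end{enumerate}
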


Now we study the height pairing over $X_L$. From now on, we assume $X_L$ proper.

\begin{definition}\label{de:generating}
Take a Schwartz function $\phi^\infty\in\sS(V^\infty)^L$ satisfying $\phi^\infty(0)=0$ and an element $t\in F$.
\begin{enumerate}
  \item Similar to Definition \ref{de:form_orthogonal} (with $r=1$), we define a function $\rF^\lambda_t(\phi^\infty)$ on $\dH_1$ valued in $\sA^{1,1}(X_L,\tS_{[\lambda]}(V_\infty)_\dC)$ by the formula
      \begin{align*}
      \rF^\lambda_t(\phi^\infty)(\tau)\coloneqq\sum_{(x,h)\in H(F)\backslash V_t\times H(\bA_F^\infty)/L}\phi^\infty(h^{-1}x)\cdot \varphi_\lambda(x y_\tau^{1/2},h)_L\cdot\prod_{v\in\Hom(F,\dR)}{(y_\tau)_v}^{-b_v/2}\te^{2\pi\ti(tx_\tau)_v},
      \end{align*}
      which is in fact a finite sum.

  \item For every $x\in V_\infty$ with $x_{v_0}\neq 0$ and $h\in H(\bA_F^\infty)$, we evaluate $g_\lambda$ at $x$ and then take its pushforward along the map \eqref{eq:translate_orthogonal} (which is well-defined by Lemma \ref{le:green_orthogonal}) to obtain an element  $g_\lambda(x,h)_L\in\sG(X_L,\tS_{[\lambda]}(V_\infty))$ (Notation \ref{no:green_real}). It is clear that $\varphi_\lambda(x,h)$ is invariant under the diagonal action of $H(F)$ on $V_\infty\times H(\bA_F^\infty)$. We then define a function $\rG^\lambda_t(\phi^\infty)$ on $\dH_1$ valued in $\sG(X_L,\tS_{[\lambda]}(V_\infty))$ by the formula
      \begin{align*}
      \rG^\lambda_t(\phi^\infty)(\tau)\coloneqq\sum_{(x,h)\in H(F)\backslash V_t\times H(\bA_F^\infty)/L}\phi^\infty(h^{-1}x)\cdot g_\lambda(x y_\tau^{1/2},h)_L\cdot\prod_{v\in\Hom(F,\dR)}{(y_\tau)_v}^{-b_v/2}\te^{2\pi\ti(tx_\tau)_v},
      \end{align*}
      which is in fact a finite sum.

  \item For every $x\in V_\infty$ with $x_{v_0}\neq 0$ and $h\in H(\bA_F^\infty)$, we take the pushforward of $\sfv^\circ_\lambda(x)$ along the map \eqref{eq:translate_orthogonal} to obtain an element
      \[
      \sfv^\circ_\lambda(x,h)_L\in\Div(X_L,\cW_\lambda).
      \]
      It is clear that $\sfv^\circ_\lambda(x,h)_L$ is invariant under the diagonal action of $H(F)$ on $V_\infty\times H(\bA_F^\infty)$. We define
      \[
      \rD^\lambda_t(\phi^\infty)\coloneqq\sum_{(x,h)\in H(F)\backslash V_t\times H(\bA_F^\infty)/L}\phi^\infty(h^{-1}x)\cdot
      \sfv^\circ_\lambda(x,h)_L\in\Div(X_L,\cW_\lambda),
      \]
      which is again a finite sum. When $v_0(t)\leq 0$, $\rD^\lambda_t(\phi^\infty)=0$ by definition. When $v_0(t)>0$, it is straightforward to verify that the image of $\rD^\lambda_t(\phi^\infty)$ under the cycle class map $\Div(X_L,\cW_\lambda)\to\rH^{1,1}(X_L,\tS_{[\lambda]}(V_\infty)_\dC)$ coincides with $\rC^\lambda_t(\phi^\infty)$ (Definition \ref{de:cycle_orthogonal}).
\end{enumerate}
\end{definition}

The proposition below reveals the relation among three functions $\rF^\lambda_t(\phi^\infty)$, $\rG^\lambda_t(\phi^\infty)$, and $\rD^\lambda_t(\phi^\infty)$ we just defined.

\begin{proposition}\label{pr:green_orthogonal}
For $\phi^\infty\in\sS(V^\infty)^L$ satisfying $\phi^\infty(0)=0$ and $t\in F$, the equality
\[
\rD\rD^\tc\rG^\lambda_t(\phi^\infty)+\delta_{\rD^\lambda_t(\phi^\infty)}\cdot \tq^t=[\rF^\lambda_t(\phi^\infty)]
\]
holds as functions on $\dH_1$ valued in $\sD(X_L,\tS_{[\lambda]}(V_\infty))$. In other words, for every $\tau\in\dH_1$, $\rG^\lambda_t(\phi^\infty)(\tau)$ is a Green function for $\tq^t(\tau)\cdot\rD^\lambda_t(\phi^\infty)$ with the tail form $\rF^\lambda_t(\phi^\infty)(\tau)$.
\end{proposition}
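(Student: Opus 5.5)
This proposition follows termwise from Lemma \ref{le:green_orthogonal}, applied at the point $x y_\tau^{1/2}$ and transported to $X_L$ along \eqref{eq:translate_orthogonal}. The sums defining $\rF^\lambda_t$, $\rG^\lambda_t$ and $\rD^\lambda_t$ run over the same finite index set $H(F)\backslash V_t\times H(\bA_F^\infty)/L$ with the same coefficients $\phi^\infty(h^{-1}x)$. Since $\phi^\infty(0)=0$, the term $x=0$ never contributes, so every $x$ that matters has $x_{v_0}\neq0$, because $V$ is anisotropic at $v_0$ only in the sense of being nondegenerate. More precisely, if $x\in V$ is nonzero then its image under $v_0$ is nonzero. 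Hence all $g_\lambda(xy_\tau^{1/2},h)_L$ are defined.

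The key steps are as follows.
\begin{enumerate}
\item Fix $(x,h)$ and put $x'=x y_\tau^{1/2}\in V_\infty$, where $y_\tau^{1/2}$ scales each component $x_v$ by $(y_\tau)_v^{1/2}$. Lemma \ref{le:green_orthogonal} gives on $\fD$ the identity $\rD\rD^\tc g_\lambda(x')+\delta_{\sfv_\lambda(x')}=[\varphi_\lambda(x')]$.
\item Push this identity forward along $\fD\to X_{hLh^{-1}}\xrightarrow{\cdot h}X_L$. The operators $\rD$ and $\rD^\tc$ are $H(F_\infty)$-equivariant and local, and the Hodge structure $\cW_\lambda$ descends (Definition \ref{de:hodge_orthogonal}). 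Hence pushforward commutes with $\rD\rD^\tc$, $\delta$ and $[\;]$. This is where one uses that the pushforwards are finite sums modulo the $H(F)$-action, a neatness issue that is handled by neatness of $L$.
\item Compute the divisor term: $\sfv_\lambda(x')=\mu(x')\sfv^\circ_\lambda(x')$. The plane $\fD_{x'}$ equals $\fD_x$. Also $[x'^{\otimes b}]=\prod_v(y_\tau)_v^{b_v/2}[x^{\otimes b}]$ and $\mu(x')=\prod_v\te^{-2\pi(ty_\tau)_v}$, since $T(x)=t$.
\item Multiply by the scalar $\prod_v(y_\tau)_v^{-b_v/2}\te^{2\pi\ti(tx_\tau)_v}$ and sum. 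The powers of $y_\tau$ cancel, and the exponentials combine to $\tq^t(\tau)$. This gives $\tq^t\cdot\rD^\lambda_t(\phi^\infty)$ as the divisor term. The other two terms are, by definition, $\rD\rD^\tc\rG^\lambda_t$ and $[\rF^\lambda_t]$.
\end{enumerate}
When $v_0(t)\le0$, both sides have vanishing divisor part and the identity is the smooth one.

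The main point requiring care is step (3): $\sfv^\circ_\lambda$ is defined through the tensor $\otimes_v x_v^{\otimes b_v}$, which is homogeneous of degree $b_v$ at every place, not just at $v_0$. The factor $\prod_v(y_\tau)_v^{-b_v/2}$ in the definition is designed exactly to absorb this homogeneity. One must also check that the scaling used here, multiplication by $y_\tau^{1/2}$ at all places, is compatible with the $\star$-action appearing in the definitions of $g^\circ_\lambda$ and $\mu$.
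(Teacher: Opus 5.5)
Your proposal is correct and is essentially the paper's proof: you apply Lemma \ref{le:green_orthogonal} termwise at $xy_\tau^{1/2}$, then identify the divisor term with $\tq^t(\tau)\cdot\rD^\lambda_t(\phi^\infty)$ using $\sfv^\circ_\lambda(xy_\tau^{1/2})=\prod_v(y_\tau)_v^{b_v/2}\,\sfv^\circ_\lambda(x)$ and $\tq^t(\tau)=\mu(xy_\tau^{1/2})\prod_v\te^{2\pi\ti(tx_\tau)_v}$. The compatibility check with the $\star$-action that you flag at the end is not actually needed, because the lemma holds for every $x\in V_\infty$ with $x_{v_0}\neq0$ and $\mu$ does not involve $\star$.
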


\begin{proof}
By Lemma \ref{le:green_orthogonal} and the construction, for every $\tau\in\dH_1$, $\rG^\lambda_t(\phi^\infty)(\tau)$ is a Green function for
\[
\sum_{(x,h)\in H(F)\backslash V_t\times H(\bA_F^\infty)/L}\phi^\infty(h^{-1}x)\cdot\mu(xy_\tau^{1/2})
\sfv_\lambda(xy_\tau^{1/2},h)_L\cdot\prod_{v\in\Hom(F,\dR)}{(y_\tau)_v}^{-b_v/2}\te^{2\pi\ti(tx_\tau)_v}
\]
with the tail form $\rF^\lambda_t(\phi^\infty)(\tau)$. The proposition follows as
\begin{align*}
\sfv_\lambda(xy_\tau^{1/2},h)_L&=\sfv_\lambda(x,h)_L\cdot\prod_{v\in\Hom(F,\dR)}{(y_\tau)_v}^{b_v/2}, \\
\tq^t(\tau)&=\mu(xy_\tau^{1/2})\cdot\prod_{v\in\Hom(F,\dR)}\te^{2\pi\ti(tx_\tau)_v}.
\end{align*}
\end{proof}

\begin{definition}
Define $\rd^\natural h$ to be the unique (positive) Haar measure on $H(\bA_F^\infty)$ (whose existence is ensured by the local Siegel--Weil formula) such that for every $\Phi^\infty\in\sS((V^\infty)^2)$ and every $T\in\Sym_2^\circ(\bA_F^\infty)$,
\[
W_T(0,1_4,\Phi^\infty)=C_\infty^{-1}\int_{H(\bA_F^\infty)}\Phi^\infty(h^{-1}x)\rd^\natural h,
\]
where $x$ is an arbitrary element in $(V^\infty)^2_T$. Here, $C_\infty$ is the constant in Theorem \ref{th:sw_local_orthogonal}.
\end{definition}

\begin{definition}
We say that an element $\Phi^\infty\in\sS((V^\infty)^2)$ is \emph{regularly supported} if there exists a finite place $v$ of $F$ such that $\Phi^\infty(x)=0$ as long as $\det T(x_v)=0$.
\end{definition}

\begin{notation}
For every pair $(t_1,t_2)\in F^2$, put
\[
\fT(t_1,t_2)_{v_0}\coloneqq\left\{T\in\Sym_2(F)\left|T=\(\begin{smallmatrix} t_1 & * \\ * & t_2 \end{smallmatrix}\),v_0(\det T)<0\right.\right\}.
\]
\end{notation}

\begin{theorem}\label{th:sw_global_orthogonal}
For every pair $(t_1,t_2)\in F^2$ and every pair $(\phi^\infty_1,\phi^\infty_2)\in(\sS(V^\infty)^L)^2$ such that $\phi^\infty_1\otimes\ol{\phi^\infty_2}$ is regularly supported, the identity
\[
\vol(L,\rd^\natural h)\int_{X_L}\rG^\lambda_{t_1}(\phi^\infty_1)(\tau_1)\ast\rG^\lambda_{t_2}(\phi^\infty_2)(\tau_2)=
4 \sum_{T\in\fT(t_1,t_2)_{v_0}}W'_T(0,g_{\tau_1,\tau_2},\Phi_\lambda\otimes(\phi^\infty_1\otimes\ol{\phi^\infty_2}))
\]
holds as functions on $\dH_1\times\dH_1$, where
\[
g_{\tau_1,\tau_2}\coloneqq
\begin{pmatrix} 1 & & x_{\tau_1} & \\ & 1 & & x_{\tau_2} \\ && 1 & \\ &&& 1  \end{pmatrix}\cdot
\begin{pmatrix} y_{\tau_1}^{1/2} & & & \\ & y_{\tau_2}^{1/2} & & \\ && y_{\tau_1}^{-1/2} & \\ &&& y_{\tau_2}^{-1/2}  \end{pmatrix}\in G_2(F_\infty).
\]
\end{theorem}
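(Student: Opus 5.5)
We reduce Theorem \ref{th:sw_global_orthogonal} to the local identity in Theorem \ref{th:sw_local_orthogonal} by unfolding the sums defining $\rG^\lambda_{t_1}(\phi^\infty_1)$ and $\rG^\lambda_{t_2}(\phi^\infty_2)$ and then applying the local Siegel--Weil formula at the finite places.

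First, expand both Green functions via Definition \ref{de:generating}(2). Their star product is a double sum over pairs $(x_1,h),(x_2,h')$. After pushing forward along \eqref{eq:translate_orthogonal}, the integral over $X_L$ of the star product of two such pushforwards becomes an integral over $\fD$ of $g_\lambda(x_1)\ast g_\lambda(h^{-1}x_2)$. The standard unfolding (as in Kudla's and Liu's arguments) rewrites the double sum over $H(F)\backslash(V_{t_1}\times V_{t_2})\times H(\bA_F^\infty)/L$ as a sum over $H(F)$-orbits of pairs $(x_1,x_2)\in V^2$ with $T(x_1,x_2)$ having diagonal $(t_1,t_2)$. The resulting expression is
\[
\vol(L)^{-1}\sum_{(x_1,x_2)\in H(F)\backslash V^2}\int_{H(\bA_F^\infty)}\Phi^\infty(h^{-1}(x_1,x_2))\rd h\int_\fD g_\lambda(x_1y_1^{1/2})\ast g_\lambda(x_2y_2^{1/2}),
\]
multiplied by the factors $y^{-b/2}\te^{2\pi\ti t x_\tau}$. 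Here the archimedean parts twist by complex conjugation in the second slot, giving $\Phi^\infty=\phi^\infty_1\otimes\ol{\phi^\infty_2}$. Since $H^{(x_1,x_2)}$ is trivial when $T(x_1,x_2)$ is nondegenerate, the stabilizer contributes nothing in that case.

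The regular-support hypothesis kills every term with $\det T(x_1,x_2)=0$. This is needed because the star product is only defined on $\Div^\Box_\sharp$, that is, for disjoint supports, so degenerate pairs must be excluded. For nondegenerate $T$, if $v_0(\det T)>0$ the pair spans a positive plane at $v_0$ (since the signature is $(1,2)$), so $\fD_{x_1}\cap\fD_{x_2}=\emptyset$. In that case both $\int_\fD g\ast g$ and $W'_T$ should match, and more precisely the local archimedean Whittaker function at $v_0$ vanishes to first order only for $T$ with $v_0(\det T)<0$. I will check that terms with $v_0(\det T)>0$ contribute zero on both sides: on the right they are excluded by the definition of $\fT$, and on the left they must vanish, which is the delicate point. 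I expect instead that $\fT$ exactly records the rational $T$ representable by $V$, where $V_{v_0}$ of signature $(1,2)$ forces $\det T<0$ or indefinite type. Here I would verify that positive definite $T$ is not represented when $V$ is anisotropic, because such $T$ is not globally represented, which is the incoherence. So only $T\in\fT(t_1,t_2)_{v_0}$ occurs, and the sum over orbits becomes a sum over $T$ together with a choice of $x\in V^2_T$.

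Next, for each $T$, the finite integral equals $C_\infty W_T(0,1,\Phi^\infty)/C$ by the definition of $\rd^\natural h$, up to the normalization relating $\rd^\natural h$ to the Haar measure. The factor $\vol(L,\rd^\natural h)$ cancels the $\vol(L)^{-1}$ from unfolding. The archimedean factor is handled by Theorem \ref{th:sw_local_orthogonal} applied to $(x_1y_1^{1/2},x_2y_2^{1/2})$: it gives $4C_\infty^{-1}W'_{T'}(0,1_4,\Phi_\lambda)$ with $T'=aTa$. Since the finite $W_T$ is nonvanishing and only the $v_0$ factor is differentiated, the product gives $W'_T$.

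The main obstacle is transporting $g_{\tau_1,\tau_2}$: one must show that
\[
W_T(s,g_{\tau_1,\tau_2},\Phi_\lambda)=W_{T'}(s,1_4,\Phi_\lambda)\,\prod_v y^{\cdot}\,\te^{2\pi\ti\tr(Tx_\tau)}
\]
with exactly the weights $y^{-b_v/2}$ appearing in Definition \ref{de:generating}. This follows from the $K$-type $\delta^{3/2+b}$ of $\Phi_{\lambda_v}$, noted in the footnote, together with the transformation of Whittaker functions under $n(x)m(a)$. Tracking the half-integral powers $\det a^{3/2+b}$ against the prefactors is where I expect bookkeeping errors. I would verify this first at a single place with $F=\dQ$.
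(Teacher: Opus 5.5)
Your route is the one the paper uses. Its proof is only a pointer to the standard unfolding of \cite{Kud97}*{\S 12} and \cite{GS19}*{\S 5.3}, fed by Proposition~\ref{pr:green_orthogonal} and Theorem~\ref{th:sw_local_orthogonal}. Your ingredients are the right ones: the unfolding, trivial stabilizers, regular support removing degenerate pairs, $\rd^\natural h$ at finite places, \cite{Kud97}*{Proposition~9.5} at $v\neq v_0$, and Theorem~\ref{th:sw_local_orthogonal} at $v_0$. The genuine gap is in the paragraph deciding which $T$ survive.

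\textbf{Which $T$ survive.} A nondegenerate real symmetric $2\times 2$ matrix with $\det T>0$ is definite of \emph{either} sign.
\begin{itemize}
\item Positive definite $T$ never occurs at $v_0$, simply because $V_{v_0}$ has signature $(1,2)$. Anisotropy and incoherence are irrelevant here: every $T(x_1,x_2)$ with $x_i\in V$ is tautologically represented by $V$, and incoherence concerns $\Phi_\lambda\otimes\phi^\infty$, not which terms appear.
\item Negative definite planes do exist in $V_{v_0}$. Since $V$ is dense in $V_{v_0}$, there are pairs $(x_1,x_2)\in V^2$ with $T(x_1,x_2)$ negative definite at $v_0$. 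Such terms can occur only if $v_0(t_1)<0$ and $v_0(t_2)<0$, and then they generally do.
\item For these terms $\fD_{x_1}=\fD_{x_2}=\emptyset$ and the Green functions are smooth. Theorem~\ref{th:sw_local_orthogonal}, which is stated on all of $V_\infty^\Box$, evaluates the local integral as $4C_\infty^{-1}W'_T(0,1_4,\Phi_\lambda)$.
\item This is not zero in general. Already for $\lambda=\b{0}$ and negative definite $T$, $W_{T,v_0}(s,1_4,\Phi_{\b{0}})$ has a simple zero at $s=0$: in Shimura's integral representation the only factor vanishing there is $\Gamma_2(s/2)^{-1}$.
\end{itemize}
So your claim that the $v_0$-Whittaker function vanishes to first order only when $v_0(\det T)<0$ is false, and the hoped-for vanishing ``on the left'' does not happen. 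What the unfolding actually produces is a sum over all nondegenerate $T$ with diagonal $(t_1,t_2)$ that are not positive definite at $v_0$. This agrees with $\fT(t_1,t_2)_{v_0}$ exactly when $v_0(t_1)\geq 0$ or $v_0(t_2)\geq 0$:
\begin{itemize}
\item if some $v_0(t_i)>0$, then $v_0(\det T)>0$ would force $T$ to be positive definite at $v_0$;
\item if $t_i=0$, nondegeneracy forces $\det T<0$.
\end{itemize}
When both $v_0(t_i)$ are negative, you must either keep the negative definite $T$ on the right-hand side or exclude that case.

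\textbf{Smaller points.}
\begin{itemize}
\item You also need the converse direction. If $T\in\fT(t_1,t_2)_{v_0}$ but $V^2_T=\emptyset$, the corresponding right-hand term must vanish. By Hasse--Minkowski, $T$ is then not represented by $V_w$ for some place $w\neq v_0$, finite or real. The local Siegel--Weil formula gives $W_{T,w}(0,\cdot)=0$; at a real $w$ this is because $\Phi_{\lambda_w}$ lives on $V_w\simeq\dR^3$. Hence $W_T(s)$ vanishes to order at least two. When $V^2_T\neq\emptyset$ it is a single $H(F)$-orbit by Witt's theorem, which you use implicitly.
\item You do not need the finite factor to be nonvanishing. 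What justifies ``only the $v_0$ factor is differentiated'' is $W_{T,v_0}(0,\cdot,\Phi_{\lambda_{v_0}})=0$, which holds because $\Phi_{\lambda_{v_0}}$ lives on the positive definite $\dR^3$.
\item In your ``main obstacle,'' the powers of $y$ come not from the $K$-type but from the $m(a)$-equivariance of Siegel--Weil sections. At each real place, for $\det a>0$, $W_T(s,m(a),\Phi)=|\det a|^{3/2-s}W_{{}^taTa}(s,1_4,\Phi)$, hence $W'_{{}^taTa}(0,1_4,\Phi_\lambda)=|\det a|^{-3/2}W'_T(0,m(a),\Phi_\lambda)$.
\item Combined with the prefactors $(y_\tau)_v^{-b_v/2}$, this leaves $\prod_v\det(y_v)^{-(b_v+3/2)/2}$, with $y_v\coloneqq\mathrm{diag}((y_{\tau_1})_v,(y_{\tau_2})_v)$, in front of $W'_T(0,g_{\tau_1,\tau_2},\cdot)$. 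Compare the factor $(\det y)^{-(3+2b)/4}$ in the paper's $\cW_{b,T}$. This factor does not cancel, so it must be carried along or absorbed into the normalization of the right-hand side.
\end{itemize}
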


Note that when $\phi^\infty_1\otimes\ol{\phi^\infty_2}$ is regularly supported, $\phi^\infty_1(0)=\phi^\infty_2(0)=0$, and $\rD^\lambda_{t_1}(\phi^\infty_1)(\tau_1)$ and $\rD^\lambda_{t_2}(\phi^\infty_2)(\tau_2)$ are well-defined and have disjoint supports.

\begin{proof}
This is a standard consequence of Proposition \ref{pr:green_orthogonal} and Theorem \ref{th:sw_local_orthogonal}. See, for example, \cite{Kud97}*{\S 12} or \cite{GS19}*{\S 5.3} for the detail.
\end{proof}

To end this section, we derive a result on \emph{canonical} height pairings. Suppose that $\lambda$ is not the trivial weight, so that $\rH^2(X_L,\tS_{[\lambda]}(V_\infty)(1))=0$ hence $\rD^\lambda_t(\phi^\infty)\in\Div(X_L,\cW_\lambda)^0$. Our goal is to obtain an averaging formula for
\[
\left\langle\rD^\lambda_{t_1}(\phi^\infty_1)(\tau_1),\rD^\lambda_{t_2}(\phi^\infty_2)(\tau_2)\right\rangle_{X_L}
\]
(Definition \ref{de:canonical_real}) under the same context of Theorem \ref{th:sw_global_orthogonal}.

\begin{theorem}\label{th:canonical_orthogonal}
Let $K$ be an open compact subgroup of $G_1(\bA_F^\infty)$ and choose a fundamental domain $\dH_1^K$ of $\dH_1$ for the congruent subgroup $G_1(F)\cap K$. Consider an element $t_2\in F$ and a pair $(\phi^\infty_1,\phi^\infty_2)\in(\sS(V^\infty)^L)^2$ such that $\phi^\infty_1\otimes\ol{\phi^\infty_2}$ is regularly supported. For every (holomorphic) Hilbert cusp form $f$ on $\dH_1$ of weights $(b_v+3/2)_v$ and level $G_1(F)\cap K$,
\begin{align*}
&\vol(L,\rd^\natural h)\int_{\dH_1^K}\ol{f(\tau_1)}\sum_{t_1\in F}
\left\langle\rD^\lambda_{t_1}(\phi^\infty_1)(\tau_1),\rD^\lambda_{t_2}(\phi^\infty_2)(\tau_2)\right\rangle_{X_L}\rd\tau_1 \\
&\qquad\quad=2\int_{\dH_1^K}\ol{f(\tau_1)}\sum_{t_1\in F}\sum_{T\in\fT(t_1,t_2)_{v_0}}
W'_T(0,g_{\tau_1,\tau_2},\Phi_\lambda\otimes(\phi^\infty_1\otimes\ol{\phi^\infty_2}))
\rd\tau_1
\end{align*}
holds in which both sides are absolutely convergent, where $\rd\tau_1$ denotes the measure on $\dH_1$ that induces the Petersson inner product.
\end{theorem}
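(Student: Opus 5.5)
Starting from Theorem \ref{th:sw_global_orthogonal}, the plan is to replace the Green function $\rG^\lambda_{t_1}(\phi^\infty_1)(\tau_1)$ by a harmonic (tail-free) Green function for the same divisor. Fix $\tau_2$ and write $\rG_2\coloneqq\rG^\lambda_{t_2}(\phi^\infty_2)(\tau_2)$, $\rD_2\coloneqq\tq^{t_2}(\tau_2)\rD^\lambda_{t_2}(\phi^\infty_2)$. Let $g_1^\heartsuit$ be a lift of $\tq^{t_1}(\tau_1)\rD^\lambda_{t_1}(\phi^\infty_1)$ in $\Div_\sharp(X_L,\cW_\lambda)^\heartsuit$; it exists because $\lambda$ is nontrivial, so the class vanishes. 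The difference $\rG^\lambda_{t_1}(\phi^\infty_1)(\tau_1)-g_1^\heartsuit$ is then smooth. Its image under $\rD\rD^\tc$ equals $[\rF^\lambda_{t_1}(\phi^\infty_1)(\tau_1)]$, by Proposition \ref{pr:green_orthogonal}. Since $\rH^2(X_L,\tS_{[\lambda]}(V_\infty)(1))=0$, the $\rD\rD^\tc$-lemma for polarized variations (as in \cite{Bry89}) writes $\rF^\lambda_{t_1}(\phi^\infty_1)(\tau_1)=\rD\rD^\tc\beta_{t_1}(\tau_1)$ with a canonical smooth $\beta_{t_1}(\tau_1)$. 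This $\beta$ is obtained from a Green operator of the harmonic theory, and the harmonic part vanishes because $\rH^0$ and $\rH^2$ of the nontrivial local system are zero. Consequently $\rG^\lambda_{t_1}(\phi^\infty_1)(\tau_1)=g_1^\heartsuit+\beta_{t_1}(\tau_1)$ up to $\rD\rD^\tc$-closed sections, and these are zero for the same reason.

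Next I expand the star product using its symmetry and Stokes. This gives
\[
\int_{X_L}\rG_1\ast\rG_2=\int_{X_L} g_1^\heartsuit\ast\delta_{\rD_2}+\int_{X_L}\tr\bigl(\beta_{t_1}(\tau_1)\wedge(\delta_{\rD_2}+\rD\rD^\tc\rG_2)\bigr)+\bigl(\text{terms in }\beta_{t_1}\wedge\rF^\lambda_{t_2}\bigr),
\]
so that, after the factor $\tfrac12$ and the normalization $\tq^{t_i}$,
\[
\int_{X_L}\rG_1\ast\rG_2=2\langle\rD_{t_1},\rD_{t_2}\rangle_{X_L}\cdot(\text{exponential factors})+\int_{X_L}\tr\bigl(\beta_{t_1}(\tau_1)\wedge\rF^\lambda_{t_2}(\phi^\infty_2)(\tau_2)\bigr).
\]
The error term is linear in the smooth form $\rF^\lambda_{t_1}(\phi^\infty_1)(\tau_1)$, through the Green operator $\beta=\rG(\rF)$. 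I then sum over $t_1$, multiply by $\ol{f(\tau_1)}$, and integrate over $\dH_1^K$. Theorem \ref{th:sw_global_orthogonal} turns the left side into the Whittaker side, which is $4\sum_T W'_T$. The height side becomes $2\vol\cdot\sum\langle\ ,\ \rangle$, and dividing by $2$ gives the claimed constant $2$. The error term becomes the pairing of $\beta$ applied to the Petersson inner product $\int\ol{f(\tau_1)}\sum_{t_1}\rF^\lambda_{t_1}(\phi^\infty_1)(\tau_1)\rd\tau_1$.

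The key vanishing is that this Petersson projection is zero as a form, or at least that its $\beta$-image is. The function $\sum_{t_1}\rF^\lambda_{t_1}(\phi^\infty_1)(\tau_1)$ is the full generating series $\rF^\lambda(\phi^\infty_1)$, less the $x=0$ term, which vanishes since $\phi^\infty_1(0)=0$. By Proposition \ref{pr:form_orthogonal} it is a non-holomorphic modular form of weight $b+3/2$. By Lemma \ref{le:holomorphy_orthogonal} we have $\omega(\rL)\varphi_\lambda=-\rD\rD^\tc\nu_\lambda$, so $\rF^\lambda$ equals a holomorphic form plus $\rD\rD^\tc$ of something. The holomorphic projection of $\rF$, that is its cohomology class, is zero because the cohomology vanishes. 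Pairing with the cusp form $f$ kills $\rL$-images by the adjointness of the raising and lowering operators. The result is that $\langle f,\rF^\lambda\rangle_{\mathrm{Pet}}$ is a $\rD\rD^\tc$-exact form whose class is zero. The finer claim, that the form itself vanishes, should follow by arguing that $\langle f,\rF^\lambda\rangle$ is a $\rD\rD^\tc$-closed smooth form, hence $\rD\rD^\tc$-harmonic, hence zero by the vanishing of cohomology. I expect this to be the main obstacle, together with justifying the interchanges of $\sum_{t_1}$, $\int_{\dH_1^K}$ and $\int_{X_L}$. For the latter I would use the rapid decay in Lemma \ref{le:green_orthogonal} and the cuspidality of $f$ to get absolute convergence, as in \cite{GS19}*{\S5}.
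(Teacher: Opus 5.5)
Your outer reduction is correct and is the one the paper uses, via \cite{LL21}*{Proposition~10.1}. You split $\rG^\lambda_{t_1}(\phi^\infty_1)(\tau_1)$ into a tail-free Green function plus a smooth $\beta$ with $\rD\rD^\tc\beta=\rF^\lambda_{t_1}(\phi^\infty_1)(\tau_1)$, so the error is linear in $\rF^\lambda$. Everything then reduces to the identity
\[
\int_{\dH_1^K}\ol{f(\tau_1)}\sum_{t_1\in F}\rF^\lambda_{t_1}(\phi^\infty_1)(\tau_1)\rd\tau_1=0,
\]
which must hold as a form on $X_L$, not merely in cohomology. Your argument for this identity, the step you flagged, does not work.

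On the curve $X_L$ this Petersson projection is a top-degree form, so being ``$\rD\rD^\tc$-closed'' is automatic. Closedness does not give harmonicity. The vanishing of $\rH^2(X_L,\tS_{[\lambda]}(V_\infty)(1))$ only says the form is $\rD\rD^\tc$-exact, which is the information you already used to define $\beta$. Since every smooth $(1,1)$-form with these coefficients is exact, the decomposition ``holomorphic part plus $\rD\rD^\tc$ of something'' is vacuous: one may take the holomorphic part to be $0$. Lemma \ref{le:holomorphy_orthogonal} only controls $\rL\rF^\lambda$, that is, the failure of holomorphy in $\tau$. It does not express $\rF^\lambda$ as the image of an operator orthogonal to the cusp form $f$. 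No cohomological or Hodge-theoretic argument on $X_L$ can yield the vanishing of the form itself.

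The missing input is representation-theoretic, and it is how the paper proves the identity. The Petersson integral above is a global theta lift of $f$ to $H$, whose component at $v_0$ comes from the Schwartz form $\varphi_\lambda$. If it were nonzero, the local theta lift of the holomorphic discrete series $\sigma_{b_{v_0}+3/2}$ of $\widetilde{G_1(F_{v_0})}$ to $H(F_{v_0})=\SO(V_{v_0})$, with signature $(1,2)$, would be nonzero. But $\sigma_{b_{v_0}+3/2}$ already lifts nontrivially to the compact group $\SO(\dR^3)$, namely to $\tS_{[\lambda_{v_0}]}(\dR^3)$. The quadratic spaces of signatures $(3,0)$ and $(1,2)$ have the same dimension and discriminant but different Hasse invariants. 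Local theta dichotomy therefore forbids a nonzero lift to $\SO(V_{v_0})$, so the projection vanishes identically. With this in place, the rest of your outline goes through as in \cite{LL21}: Stokes, the bookkeeping giving the constant $2$, and absolute convergence from the cuspidality of $f$.
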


\begin{proof}
The deduction of this theorem from Theorem \ref{th:sw_global_orthogonal} follows from the same argument in the proof of \cite{LL21}*{Proposition~10.1}, as long as we show that
\[
\int_{\dH_1^K}\ol{f(\tau_1)}\sum_{t_1\in F}\rF^\lambda_{t_1}(\phi^\infty_1)(\tau_1)\rd\tau_1=0.
\]
Indeed, if this integral does not vanish, then the local theta lift of $\sigma_{b_{v_0}+3/2}$ -- the holomorphic discrete series of $\widetilde{G_2(F_{v_0})}$ of weight $b_{v_0}+3/2$ -- to $H(F_{v_0})$ does not vanish. However, as we know that the local theta lift of $\sigma_{b_{v_0}+3/2}$ to $\SO(\dR^3)$ is $\tS_{[\lambda_{v_0}]}(\dR^3)$, this cannot happen by the local theta dichotomy.

The theorem is proved.
\end{proof}

\section{Proof of Theorem \ref{th:sw_local_orthogonal}}
\label{ss:7}

We continue the discussion after the statement of Theorem \ref{th:sw_local_orthogonal}. By construction, we have
\[
g_\lambda(x_1)\ast g_\lambda(x_2)=\prod_{v\neq v_0}\Phi_{\lambda_v}(x_{1,v},x_{2,v})\cdot g_{\lambda_{v_0}}(x_{1,v_0})\ast g_{\lambda_{v_0}}(x_{2,v_0}).
\]
By the local Siegel--Weil formula, we have
\[
\frac{W_{T(x_1,x_2)_v}(0,1_4,\Phi_{\lambda_v})}{W_{T(x_1,x_2)_v}(0,1_4,\Phi_{\b{0}})}=
\frac{\Phi_{\lambda_v}(x_{1,v},x_{2,v})}{\Phi_{\b{0}}(x_{1,v},x_{2,v})}.
\]
If $\Phi_{\lambda_v}(x_{1,v},x_{2,v})=0$ for some $v\neq v_0$, then both sides of Theorem \ref{th:sw_local_orthogonal} vanish. If not, then
\[
\Phi_{\lambda_v}(x_{1,v},x_{2,v})=(8\sqrt{2}\pi^2\ti)^{-1}W_{T(x_1,x_2)_v}(0,1_4,\Phi_{\lambda_v})
\]
by \cite{Kud97}*{Proposition~9.5(1)}, hence Theorem \ref{th:sw_local_orthogonal} is reduced to the case of a single place. Therefore, to prove the theorem, we may again assume $F=\dQ$ and suppress $v_0$ in all subscripts (while simply write $V$ for $V_\infty$). Write $\lambda=(b,0,-b)$ for some nonnegative integer $b$.

For $(x_1,x_2)\in V^\Box$, put
\begin{align*}
\nu_\lambda^\circ(x_1,x_2)\coloneqq\tr_{W_\lambda}
\(\nu_\lambda^\circ(x_1)\wedge\varphi_\lambda^\circ(x_2)+\varphi_\lambda^\circ(x_1)\wedge\nu_\lambda^\circ(x_2)\),
\end{align*}
and define
\[
g_\lambda^\circ(x_1,x_2)\coloneqq\int_1^\infty\frac{\nu_\lambda^\circ(t^{1/2}x_1,t^{1/2}x_2)}{t^b}\frac{\rd t}{t},
\]
which is absolutely convergent over $\fD$ and defines an element in $\sA^{1,1}(\fD)$. Moreover, it is easy to see that $g_\lambda^\circ(x_1,x_2)$ has rapid decay along the boundary of $\fD$.

Following \cite{GS19}*{\S 2.7}, $g_\lambda^\circ(x_1,x_2)$ is closely related to $g_\lambda^\circ(x_1)\ast g_\lambda^\circ(x_2)$. Indeed, for $t_1,t_2\in\dR_{>0}$, define
\begin{align*}
\alpha_\lambda(t_1,t_2,x_1,x_2)&\coloneqq\frac{\ti}{2\pi}\cdot
\frac{\nu_\lambda^\circ\(t_1^{1/2}x_1\)}{t_1^{b/2}}\wedge\rD''\frac{\nu_\lambda^\circ\(t_2^{1/2}x_2\)}{t_2^{b/2}}\wedge\frac{\rd t_1\rd t_2}{t_1t_2},\\
\beta_\lambda(t_1,t_2,x_1,x_2)&\coloneqq\frac{\ti}{2\pi}\cdot
\rD'\frac{\nu_\lambda^\circ\(t_1^{1/2}x_1\)}{t_1^{b/2}}\wedge\frac{\nu_\lambda^\circ\(t_2^{1/2}x_2\)}{t_2^{b/2}}\wedge\frac{\rd t_1\rd t_2}{t_1t_2},
\end{align*}
and set
\begin{align*}
\alpha_\lambda(x_1,x_2)&\coloneqq\int_{1\leq t_1\leq t_2< \infty}\alpha_\lambda(t_1,t_2,x_1,x_2),\\
\beta_\lambda(x_1,x_2)&\coloneqq\int_{1\leq t_1\leq t_2< \infty}\beta_\lambda(t_1,t_2,x_1,x_2).
\end{align*}
By Remark \ref{re:gauss_manin} and the discussion for the case $\lambda=0$ \cite{GS19}*{Lemma~2.15}, we know that both $\alpha_\lambda(x_1,x_2)$ and $\beta_\lambda(x_1,x_2)$ are smooth forms on $\fD\setminus(\fD_{x_1}\cup\fD_{x_2})$ and are locally integrable on $\fD$. Denote by $[\alpha_\lambda(x_1,x_2)]$ and $[\beta_\lambda(x_1,x_2)]$ the induced currents.

\begin{lem}\label{le:sw_orthogonal_1}
The identity
\[
g_\lambda^\circ(x_1)\ast g_\lambda^\circ(x_2)-g_\lambda^\circ(x_1,x_2)=\rD'[\alpha_\lambda(x_1,x_2)]+\rD''[\beta_\lambda(x_1,x_2)]
\]
holds as currents. Moreover, we have
\[
\int_\fD g_\lambda^\circ(x_1)\ast g_\lambda^\circ(x_2)-g_\lambda^\circ(x_1,x_2)=0.
\]
\end{lem}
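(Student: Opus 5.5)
We follow \cite{GS19}*{\S2.7} (the Lemma~2.16 type argument), carried out with the Gauss--Manin operators $\rD',\rD''$ of Remark~\ref{re:gauss_manin} in place of $\partial,\ol\partial$. Write $\nu_i(t)\coloneqq\nu^\circ_\lambda(t^{1/2}x_i)/t^{b/2}$ and $\varphi_i(t)\coloneqq\varphi^\circ_\lambda(t^{1/2}x_i)/t^{b/2}$. By Lemma~\ref{le:nu_orthogonal} we have $\rD\rD^\tc\nu_i(t)=-t\,\frac{\rd}{\rd t}\varphi_i(t)$. Moreover $\rD\rD^\tc=(2\pi\ti)^{-1}\rD'\rD''$ up to sign on $0$-forms, since $\rD=\rD'+\rD''$ with $\rD'^2=\rD''^2=0$ on the relevant degrees.

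\textbf{Step 1 (pointwise identity on the open set).} On $\fD\setminus(\fD_{x_1}\cup\fD_{x_2})$ we compute $\rD'\alpha_\lambda(t_1,t_2)+\rD''\beta_\lambda(t_1,t_2)$ by the Leibniz rule for the flat pairing $\tr_{W_\lambda}$. The rule applies because $\rD'$ and $\rD''$ are compatible with the polarization: $\rD'$ on one factor is adjoint to $\rD''$ on the other under $\tr_{W_\lambda}$. The mixed terms $\tr(\rD'\nu_1\wedge\rD''\nu_2)$ cancel between $\rD'\alpha$ and $\rD''\beta$. The remaining terms are $\tr(\nu_1\wedge\rD'\rD''\nu_2)$ and $\tr(\rD''\rD'\nu_1\wedge\nu_2)$, which by Lemma~\ref{le:nu_orthogonal} become $\tr(\nu_1(t_1)\wedge t_2\partial_{t_2}\varphi_2(t_2))$ and $\tr(t_1\partial_{t_1}\varphi_1(t_1)\wedge\nu_2(t_2))$.

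\textbf{Step 2 (integrate over $t_1\le t_2$).} Integrating over the region $1\le t_1\le t_2<\infty$ with measure $\rd t_1\rd t_2/(t_1t_2)$ and using the fundamental theorem of calculus in $t_2$ (resp.\ $t_1$) produces boundary contributions. The terms at $t_2=\infty$ vanish because $\varphi_i$ decays exponentially away from $\fD_{x_i}$. The diagonal $t_1=t_2$ contributes $\int_1^\infty\tr(\nu_1\wedge\varphi_2+\varphi_1\wedge\nu_2)(t)\,\rd t/t=g^\circ_\lambda(x_1,x_2)$. The edge $t_1=1$ contributes $\tr(g^\circ_\lambda(x_1)\wedge\varphi_2(1))$-type terms, which together with the diagonal reassemble $g^\circ_\lambda(x_1)\ast g^\circ_\lambda(x_2)$ away from the supports. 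The bookkeeping matches the $\lambda=0$ case verbatim, since Lemma~\ref{le:nu_orthogonal} has exactly the same shape as in \cite{GS19}.

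\textbf{Step 3 (currents).} To upgrade the identity to currents, test against a compactly supported form and excise small discs around the points of $\fD_{x_1}\cup\fD_{x_2}$. Near $\fD_{x_1}$, the asymptotics from the proof of Lemma~\ref{le:green_orthogonal} give $\nu_1(t)\approx \te^{-2\pi t(x_2^2+x_3^2)}[x_1^{\otimes b}]+O(t^{-1})$, so the forms $\alpha_\lambda$ and $\beta_\lambda$ have the same singularity type as for $\lambda=0$, multiplied by the flat section $[x_1^{\otimes b}]$. The residue computation of \cite{GS19}*{Lemma~2.15/2.16} then shows that the boundary terms reproduce exactly $\delta_{\sfv^\circ_\lambda(x_2)}$ paired with $g^\circ_\lambda(x_1)$; this is precisely the delta term $\tr(g\wedge\delta)$ in the star product. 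I expect this to be the main obstacle: one must check that the $O(t^{-1})$ corrections and the $\nabla'$ parts of $\rD',\rD''$, which are absent when $\lambda=0$, contribute nothing in the limit. I would try to control them by showing that $\nabla'\nu_1$ is bounded by $t^{-1/2}$ times a Gaussian, so that its contribution vanishes as the disc radius tends to $0$.

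\textbf{Step 4 (the integral vanishes).} Finally, the integral of $\rD'[\alpha]+\rD''[\beta]$ over $\fD$ is zero. Take a cutoff exhaustion of $\fD$ and use the rapid decay of $\alpha_\lambda$, $\beta_\lambda$ and their derivatives along the boundary, which follows from the decay of $\nu^\circ_\lambda$ noted in Lemma~\ref{le:green_orthogonal}. Since the pairing is flat, $\rD'$ acting on a scalar $(1,1)$-current reduces to $\partial$ of a compactly decaying current, which integrates to zero by Stokes' theorem.
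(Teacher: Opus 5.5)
Your plan is the paper's proof. The paper reruns the argument of \cite{GS19}*{Theorem~2.16} with the weighted identities (Lemmas~\ref{le:nu_orthogonal} and~\ref{le:green_orthogonal}) in place of the weight-zero ones, and gets the vanishing of the integral from the rapid decay of $\alpha_\lambda(x_1,x_2)$ and $\beta_\lambda(x_1,x_2)$ at the boundary of $\fD$, exactly as in your Steps~1--4.

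Two details need fixing.

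First, the Leibniz rule holds because $\tr_{W_\lambda}$ is a morphism of variations of Hodge structure. This gives $\partial\circ\tr_{W_\lambda}=\tr_{W_\lambda}\circ(\rD'\otimes 1+1\otimes\rD')$ and $\ol\partial\circ\tr_{W_\lambda}=\tr_{W_\lambda}\circ(\rD''\otimes 1+1\otimes\rD'')$. Pairing $\rD'$ on one factor with $\rD''$ on the other is the rule for a sesquilinear pairing as in \S\ref{ss:9}, not for this bilinear one. It is the correct rule that makes your mixed terms cancel.

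Second, in your Step~2 the edge $t_1=1$ of $\{t_1\leq t_2\}$ yields $\tr_{W_\lambda}(\varphi^\circ_\lambda(x_1)\wedge g^\circ_\lambda(x_2))$, not the term you wrote. The other star-product term, $\tr_{W_\lambda}(g^\circ_\lambda(x_1)\wedge\delta_{\sfv^\circ_\lambda(x_2)})$, is the residue of $\partial$ on the $(0,1)$-part of $\alpha_\lambda$ along $\fD_{x_2}$. Since $\partial$ kills $(1,0)$-currents on a curve, the $\nabla'$-terms you worried about in Step~3 never enter this residue.
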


\begin{proof}
The first part follows in the same way by the argument for \cite{GS19}*{Theorem~2.16} after we use Lemma \ref{le:green_orthogonal} (for the subscript $\lambda$) instead of \cite{GS19}*{Proposition~2.6(b)}. The second part follows from the first, in view of the easy fact that both $\alpha_\lambda(x_1,x_2)$ and $\beta_\lambda(x_1,x_2)$ have rapid decay along the boundary of $\fD$.
\end{proof}

\begin{lem}\label{le:sw_orthogonal_2}
For $(x_1,x_2)\in V^\Box$, we have
\[
\int_\fD g_\lambda(x_1)\ast g_\lambda(x_2)=\mu(x_1)\mu(x_2)\lim_{M\to +\infty}\int_1^M\int_\fD\frac{\nu_\lambda^\circ(t^{1/2}x_1,t^{1/2}x_2)}{t^b}\frac{\rd t}{t}.
\]
\end{lem}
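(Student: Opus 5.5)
By construction $g_\lambda(x_i)=\mu(x_i)g_\lambda^\circ(x_i)$, and $\mu(x_i)$ is constant on $\fD$. The star product is bilinear and the tail forms scale the same way, so
\[
\int_\fD g_\lambda(x_1)\ast g_\lambda(x_2)=\mu(x_1)\mu(x_2)\int_\fD g_\lambda^\circ(x_1)\ast g_\lambda^\circ(x_2).
\]
Lemma \ref{le:sw_orthogonal_1} then gives
\[
\int_\fD g_\lambda(x_1)\ast g_\lambda(x_2)=\mu(x_1)\mu(x_2)\int_\fD g_\lambda^\circ(x_1,x_2).
\]
The tail form attached to $g_\lambda^\circ(x_i)$ is $\varphi_\lambda^\circ(x_i)$, the $\mu^{-1}$-normalized version of Lemma \ref{le:green_orthogonal}. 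So the real content is to show
\[
\int_\fD g_\lambda^\circ(x_1,x_2)=\lim_{M\to+\infty}\int_1^M\int_\fD\frac{\nu_\lambda^\circ(t^{1/2}x_1,t^{1/2}x_2)}{t^b}\frac{\rd t}{t},
\]
that is, to interchange the $t$-integral with the integral over $\fD$.

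By definition, $g_\lambda^\circ(x_1,x_2)=\lim_{M\to\infty}\int_1^M\nu_\lambda^\circ(t^{1/2}x_1,t^{1/2}x_2)t^{-b}\,\rd t/t$ pointwise on $\fD$. For each finite $M$, Fubini applies on $[1,M]\times\fD$. The integrand is smooth in $(t,z)$, and on this compact $t$-range it is dominated by a function with rapid decay along the boundary of $\fD$. This bound comes from the Gaussian factors in $\nu_\lambda^\circ$ and $\varphi_\lambda^\circ$, exactly as in the proof of the rapid decay in Lemma \ref{le:green_orthogonal}. Hence
\[
\int_\fD\int_1^M(\cdots)\frac{\rd t}{t}=\int_1^M\int_\fD(\cdots)\frac{\rd t}{t}.
\]
It remains to pass to the limit inside $\int_\fD$. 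For this I will use dominated convergence on $\fD$ with respect to the partial integrals $G_M\coloneqq\int_1^M(\cdots)\,\rd t/t$. The expected main obstacle is producing an integrable majorant for $|G_M|$ that is uniform in $M$.

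For the majorant I would follow \cite{GS19}*{\S2.7}, in the case $\lambda=\b{0}$. The integrand has the explicit form
\[
\tr\bigl(\nu_\lambda^\circ(t^{1/2}x_1)\wedge\varphi_\lambda^\circ(t^{1/2}x_2)+\cdots\bigr)t^{-b}.
\]
By the Fock model formulas, $\nu_\lambda^\circ$ and $\varphi_\lambda^\circ$ are $\exp(-2\pi t\,R(x_i,z))$ times polynomials in $t^{1/2}$ and the coordinates. Here $R(x,z)$ is the majorant-minus-norm term: the negative-plane component $-(x,x)$ restricted to $z$, which vanishes exactly on $\fD_x$. The powers $t^{b/2}$ from each factor are absorbed by the $t^{-b}$ normalization; this is why the normalization was chosen.

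Since $(x_1,x_2)\in V^\Box$, the span of $x_1,x_2$ is a nondegenerate $2$-plane, so $R(x_1,z)+R(x_2,z)$ is bounded below by a positive multiple of a function growing toward the boundary. Away from $\fD_{x_1}\cup\fD_{x_2}$ this gives exponential decay in $t$, uniform on compacta. Near $\fD_{x_i}$ the $t$-integral behaves like $\int_1^\infty e^{-ct|z-z_0|^2}\,\rd t/t\sim\log|z-z_0|$, up to polynomial factors. Consequently $\sup_M|G_M|$ is bounded by a logarithmic singularity near the finitely many points of $\fD_{x_1}\cup\fD_{x_2}$, times a rapidly decaying factor at the boundary. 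This bound is integrable on $\fD$, so dominated convergence applies.

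If the uniform log bound is delicate because the polynomial factors carry extra powers of $t$, I would fall back on an alternative estimate. Near a point of $\fD_{x_1}$ only the $x_1$-factor can fail to decay, while the $x_2$-factor still decays like $e^{-ct}$. This is exactly where the nondegeneracy of $T(x_1,x_2)$ is used, since it forces $\fD_{x_1}\cap\fD_{x_2}=\emptyset$. The product is then integrable in $t$ uniformly near that point, which again gives the majorant. Combining the three steps yields the lemma.
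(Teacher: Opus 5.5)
Your proposal is correct and follows the paper's route: you pull out $\mu(x_1)\mu(x_2)$, use Lemma \ref{le:sw_orthogonal_1} to replace the star product by $g_\lambda^\circ(x_1,x_2)$, and then interchange the $t$-integral with $\int_\fD$. The paper justifies that interchange by uniform convergence of the partial integrals on $\fD$ plus absolute convergence of the double integral, which is equivalent to your Fubini-plus-dominated-convergence. One correction: your ``fallback'' should be the main argument, and the logarithmic-singularity heuristic should be dropped. Near $\fD_{x_1}$ the $x_2$-Gaussian already forces uniform exponential decay in $t$, since $\fD_{x_1}\cap\fD_{x_2}=\emptyset$, so $g_\lambda^\circ(x_1,x_2)$ is a smooth form on all of $\fD$ with no singularity, as the paper states when defining it.
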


\begin{proof}
Lemma \ref{le:sw_orthogonal_1} implies that
\[
\int_\fD g_\lambda(x_1)\ast g_\lambda(x_2)=\mu(x_1)\mu(x_2)\int_\fD g_\lambda^\circ(x_1,x_2).
\]
By construction,
\[
g_\lambda^\circ(x_1,x_2)=\lim_{M\to +\infty}\int_1^M\frac{\nu_\lambda^\circ(t^{1/2}x_1,t^{1/2}x_2)}{t^b}\frac{\rd t}{t},
\]
in which the convergence is uniform on $\fD$. It follow that
\[
\int_\fD g_\lambda^\circ(x_1,x_2)=\lim_{M\to +\infty}\int_\fD\int_1^M\frac{\nu_\lambda^\circ(t^{1/2}x_1,t^{1/2}x_2)}{t^b}\frac{\rd t}{t}.
\]
The lemma follows since the double integral is absolute convergent.
\end{proof}

Define two functions $\Psi_{\lambda1}$ and $\Psi_{\lambda2}$ in $\sS(V^2)$ such that
\begin{align*}
\Psi_{\lambda1}(x_1,x_2)\otimes\Omega=\tr_{W_\lambda}\(\nu_\lambda(x_1)\wedge\varphi_\lambda(x_2)\), \qquad
\Psi_{\lambda2}(x_1,x_2)\otimes\Omega=\tr_{W_\lambda}\(\varphi_\lambda(x_1)\wedge\nu_\lambda(x_2)\)
\end{align*}
hold in $\sS(V^2)\otimes_\dR\wedge^2\fp^\vee$. Finally, put $\Psi_\lambda\coloneqq\Psi_{\lambda1}+\Psi_{\lambda2}$.

\begin{lem}\label{le:sw_orthogonal_3}
There exists a constant $C\in\dC^\times$ independent of $\lambda$, such that for every $(x_1,x_2)\in V^\Box$,
\[
\mu(x_1)\mu(x_2)\int_\fD\nu_\lambda^\circ(x_1,x_2)=C\cdot W_{T(x_1,x_2)}(0,1_4,\Psi_\lambda)
\]
holds.
\end{lem}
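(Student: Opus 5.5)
The plan is to reduce the integral over $\fD$ to a local Siegel--Weil computation, in the spirit of \cite{GS19}*{Lemma~2.18} and \cite{Kud97}. First I would note that $\mu(x_1)\mu(x_2)\nu_\lambda^\circ(x_1,x_2)=\Psi_\lambda(x_1,x_2)\otimes\Omega$ as an element of $\sS(V^2)\otimes\wedge^2\fp^\vee$, evaluated at the base point. The definitions of $\nu_\lambda^\circ,\varphi_\lambda^\circ$ and $\Psi_{\lambda1},\Psi_{\lambda2}$ give this directly. Both sides are $H(\dR)$-equivariant. Hence, as a function of $z\in\fD\simeq H(\dR)/L_\dR$, the integrand is $\Psi_\lambda(h^{-1}x_1,h^{-1}x_2)\Omega$ with $z=hL_\dR$. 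The left-hand side then becomes the orbital integral
\[
\int_{H(\dR)}\Psi_\lambda(h^{-1}x_1,h^{-1}x_2)\,\rd h,
\]
for the Haar measure normalized by $\Omega$. I would check absolute convergence separately: $T(x_1,x_2)$ is nondegenerate, so the orbit is closed, and $\Psi_\lambda$ is Schwartz.

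Second, I would apply the local Siegel--Weil formula at the archimedean place. For $T$ nondegenerate and $\Psi\in\sS(V^2)$, it gives $W_T(0,1_4,\Psi)=c\int_{H(\dR)}\Psi(h^{-1}x)\rd h$ for any $x\in V^2_T$. The constant $c$ depends only on the measure normalization and on $(V,\psi)$, not on $\Psi$. This is \cite{Kud97}*{Proposition~9.5} in the nondegenerate archimedean case, valid at $s=0$ because $m=3>2\cdot 2-1$... To be careful: when $m=3,r=2$ the Siegel--Weil section at $s_0=(m-r-1)/2=0$ lies in the convergent or regularized range. I would justify the identity by noting that $\Psi_\lambda$ is $K$-finite and that $W_T(s,1_4,\Psi)$ is entire in $s$. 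Then I would use the identity of $W_T(0)$ with the orbital integral for $T$ nondegenerate, either as proved by Kudla or via a Rallis-type argument comparing Fourier transforms.

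Third, I would dispose of the independence of $C$ from $\lambda$ and from $T$. Since $C=\Omega$-normalization times $c$, and $c$ is universal for fixed $V$, the constant is uniform. One subtlety is that $V^2_T$ could have two $H(\dR)$-orbits, since $H=\SO$ and not $\rO$. For signature $(1,2)$ and $\det T<0$ or $>0$, I would check that $\SO(V)$ acts transitively on $V^2_T$, or else that the $\rO$-invariance of $\Psi_\lambda$ makes the two orbits contribute equally. $\Psi_\lambda$ is built from $\rO(V)$-equivariant objects, but $\Omega$ uses the orientation; the orientation reversal swaps the two components of $\fD$. That is consistent, because $\fD$ is the set of oriented planes.

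The main obstacle is the second step: making sure that the Whittaker function at $s=0$ equals the orbital integral with a constant not depending on the (non-Gaussian, weight-dependent) function $\Psi_\lambda$. My first approach would be to realize $W_T(s,1_4,\Psi)=\int_{\Sym_2(\dR)}\omega(w n(b))\Psi(x')\psi(-\tr Tb)\,\rd b$ for $\RE s$ large, using the standard Siegel--Weil section. I would then pass to $s=0$ by the Fourier-transform argument, so that the integral over $b$ of $\Psi$-data against $\psi(-\tr Tb)$ becomes the integral of $\Psi$ over the fiber $V^2_T$ with the Leray measure. This is linear in $\Psi$ with a universal constant.
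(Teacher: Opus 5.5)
Your proposal is correct and follows the paper's proof. You identify $\mu(x_1)\mu(x_2)\nu_\lambda^\circ(x_1,x_2)$ with $\Psi_\lambda(x_1,x_2)\otimes\Omega$, unfold $\int_\fD$ into an orbital integral over $H(\dR)$ using the $L$-invariance of $\Psi_\lambda$, and apply the archimedean local Siegel--Weil formula with one Haar measure $\rd h$ valid for all $\Psi$ and all nondegenerate $T$, so that $C$ is fixed by $\Omega=C\cdot\rd h/\rd l$. Your orbit concern resolves favorably: the reflection in the line orthogonal to $\mathrm{span}(x_1,x_2)$ fixes $(x_1,x_2)$ and has determinant $-1$, so by Witt's theorem $V^2_T$ is a single $H(\dR)$-orbit.
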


\begin{proof}
By construction, the left-hand side equals
\[
\int_\fD\Psi_\lambda(x_1,x_2)\otimes\Omega.
\]
By the local Siegel--Weil formula, there exists a unique Haar measure $\rd h$ on $H(\dR)$ such that for every $\Psi\in\sS(V^2)$ and every $(x_1,x_2)\in V^2$ with $T(x_1,x_2)\in\Sym_2^\circ(\dR)$,
\[
\int_{H(\dR)}\Psi(h^{-1}(x_1,x_2))\rd h=W_{T(x_1,x_2)}(0,1_4,\Psi).
\]
Note that $\Psi_\lambda$ is invariant under the action of $L$ (recall that $\fD\simeq H(\dR)/L$). Thus, the lemma holds if we take $C$ such that
\[
\Omega=C\cdot\rd h/\rd l,
\]
where $\rd l$ is the Haar measure on $L$ with total volume $1$.
\end{proof}

To continue, we need to further analyze degenerate principle series. To ease notation, we simply write $G_r$ for $G_r(\dR)=\Sp_{2r}(\dR)$, so that we have the standard maximal compact subgroup $K_r\subseteq G_r(\dR)$, whose metaplectic double cover is denoted by $\widetilde{K_r}\subseteq\widetilde{G_r}=\Mp_{2r}(\dR)$. For $\alpha\in\dZ/4\dZ$ and $s\in\dC$, we have the principal series representation $\rI^\alpha_r(s)$ of $\widetilde{G_r}$ as in \cite{GS19}*{(3.8)} (with $G'=\widetilde{G_r}$ and $P$ its standard upper-triangular Siegel parabolic subgroup). We have maps
\[
f_\bullet\colon\sS((\dR^3)^2)\to\rI^3_2(0),\qquad
f_\bullet\colon\sS(V^2)\to\rI^3_2(0)
\]
by taking Siegel--Weil sections with respect to $\omega_\psi$ (where $\psi$ sends $t\in\dR$ to $\te^{2\pi t\ti}$). For $s\in\dC$, we denote by $f_\bullet(s)\in\rI^3_2(s)$ the induced standard section of $f_\bullet$.

Let $\iota_1$ and $\iota_2$ be the unique homomorphisms from $\widetilde{G_1}$ to $\widetilde{G_2}$ that lift the homomorphisms
\begin{align*}
\begin{pmatrix} a & b \\ c & d \end{pmatrix}
\mapsto
\begin{pmatrix}  a && b & \\  & 1 && 0 \\ c && d  & \\ & 0 && 1\end{pmatrix},\qquad
\begin{pmatrix} a & b \\ c & d \end{pmatrix}
\mapsto
\begin{pmatrix} 1 & & 0 & \\ & a && b \\ 0 && 1 & \\ & c && d \end{pmatrix}
\end{align*}
from $G_1$ to $G_2$, respectively. Recall that we have the lowering operator $\rL$, and write $\rL_1\coloneqq(\iota_1)_*\rL$ and $\rL_2\coloneqq(\iota_2)_*\rL$.

\begin{proposition}\label{pr:whittaker_orthogonal}
There exist elements $\Phi_{\lambda1}$ and $\Phi_{\lambda2}$ in $\sS((\dR^3)^2)$ such that
\begin{align*}
\rL_1 f_{\Phi_\lambda}(s)&=-\frac{s}{2}\cdot f_{\Psi_{\lambda1}}(s)+s\cdot f_{\Phi_{\lambda1}}(s), \\
\rL_2 f_{\Phi_\lambda}(s)&=-\frac{s}{2}\cdot f_{\Psi_{\lambda2}}(s)+s\cdot f_{\Phi_{\lambda2}}(s),
\end{align*}
hold for $s\in\dC$.
\end{proposition}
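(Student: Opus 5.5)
This is the weight-$\lambda$ analogue of \cite{GS19}*{Proposition~3.x} (the $\lambda=\b{0}$ case), and I would prove it the same way: by comparing $\omega_\psi$ on $\sS((\dR^3)^2)$ with the induced action on Siegel--Weil sections. Since $f_\bullet$ intertwines $\omega_\psi$ with $\rI^3_2(0)$, at $s=0$ we have $\rL_1 f_{\Phi_\lambda}(0)=f_{\omega_\psi(\rL_1)\Phi_\lambda}$. For general $s$, the standard section satisfies
\[
\rL_1 f_{\Phi}(s)=f_{\omega_\psi(\rL_1)\Phi}(s)+s\cdot f_{\Phi'}(s)
\]
for an explicit $\Phi'$: multiply $\Phi$ by a degree-two polynomial coming from $\rL_1$ applied to $|\det a(g)|^{s}$ in the Iwasawa decomposition. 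Concretely, this is $\Phi$ times a multiple of the $(1,1)$-entry of a complexified moment matrix, of the shape $(x_{1},x_{1})$ paired with $\ti$-rotation. So the task reduces to two steps: computing $\omega_\psi(\rL_1)\Phi_\lambda$, and collecting the $s$-linear part.

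\textbf{Step 1.} Compute $\omega_\psi(\rL_1)\Phi_\lambda$. Write $\Phi_\lambda(y_1,y_2)=(\phi_\lambda(y_1),\phi_\lambda(y_2))$. Since $\rL_1$ acts only on the $y_1$ variable, $\omega_\psi(\rL_1)\Phi_\lambda=(\omega_\psi(\rL)\phi_\lambda(y_1),\phi_\lambda(y_2))$. The function $\phi_\lambda$ is the $\widetilde{K_1}$-eigenfunction of weight $3/2+b$ that is harmonic (the footnote). Hence it is a lowest-weight vector of the holomorphic discrete series, and $\omega_\psi(\rL)\phi_\lambda=0$. So the $s=0$ term vanishes. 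This is consistent with the claimed formula: both sides vanish at $s=0$, and the proposition says $\rL_1 f_{\Phi_\lambda}(s)$ is divisible by $s$.

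\textbf{Step 2.} Match the $s$-linear term with $-\tfrac12 f_{\Psi_{\lambda1}}(s)$ modulo $f$ of something. Since $\Phi'$ is only determined up to the kernel of $f_\bullet(s)$ for all $s$, a direct equality $\Phi'=-\tfrac12\Psi_{\lambda1}+\Phi_{\lambda1}$ holds trivially: simply define $\Phi_{\lambda1}\coloneqq\Phi'+\tfrac12\Psi_{\lambda1}$. The content of the proposition is therefore that $\Psi_{\lambda1}$, a priori a function on $V^2$, induces sections in $\rI^3_2$ comparable with those from $\sS((\dR^3)^2)$. I would make this precise through Lemma \ref{le:sw_orthogonal_3}: $W_T(0,1_4,\Psi_\lambda)$ is an orbital integral over $H(\dR)$. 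For \cite{GS19}, the $s$-derivative identity needed later is only at the level of Whittaker functionals, so $\Phi_{\lambda1}$ should be chosen so that its Whittaker derivative absorbs the discrepancy.

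\textbf{Main obstacle.} The main obstacle is Step 2 done honestly. One must show that $\rL_1 f_{\Phi_\lambda}(s)+\tfrac{s}{2}f_{\Psi_{\lambda1}}(s)$ is $s$ times a section coming from $\sS((\dR^3)^2)$. Here $\Psi_{\lambda1}$ lives on the signature-$(1,2)$ space, while $\Phi_\lambda$ lives on the positive definite one, and both sit in $\rI^3_2$ via the Fock model. My plan is to work $\widetilde{K_2}$-finitely: decompose both sections into $\widetilde{K_2}$-types in $\rI^3_2(s)$, which are determined by their values on $\widetilde{K_2}$. Then compute $\Psi_{\lambda1}$ in the Fock model using the expressions $\nu_b\propto\sfz_1^b$ and $\varphi_b\propto\sfz_1^{b+2}$ from the proof of Lemma \ref{le:holomorphy_orthogonal}. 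This shows that its $\widetilde{K_2}$-type is $(b+3/2-2,\,b+3/2)$, the same type as $\rL_1 f_{\Phi_\lambda}$. Since each such $\widetilde{K_2}$-type occurs in $\rI^3_2(s)$ with multiplicity one and is realized by a standard section from the compact model, the two restrictions to $\widetilde{K_2}$ are proportional. Evaluating at $1$ then fixes the constant, with the normalizations from Lemma \ref{le:holomorphy_orthogonal} and \cite{FM06}*{Lemma~A.1}, to match the $-\tfrac{s}{2}$. Any remaining $s$-divisible piece defines $\Phi_{\lambda1}$. The case of $\rL_2$ is symmetric.
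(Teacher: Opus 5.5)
Your proposal has a genuine gap, and it starts with the framework in your first paragraph. It is not true that $\rL_1 f_\Phi(s)-f_{\omega_\psi(\rL_1)\Phi}(s)=s\,f_{\Phi'}(s)$ for some $\Phi'\in\sS((\dR^3)^2)$ obtained by multiplying $\Phi$ by a polynomial. The $s$-linear term is $s\,f_\Phi(s)\cdot h$, where $h$ is the left $P$-invariant function obtained by differentiating $\log|\det a(g)|$ along $\rL_1$. This is in general not a Siegel--Weil section of the positive definite space. Already for $\lambda=\b{0}$ one has $\omega_\psi(\rL_i)\Phi_{\b{0}}=0$ and $\rL_i f_{\Phi_{\b{0}}}(s)=-\frac{s}{2}f_{\Psi_{\b{0}i}}(s)$ with $f_{\Psi_{\b{0}i}}\in\rI_{(0,-1)}(0)$. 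But the $\widetilde{K_2}$-finite image of $\sS((\dR^3)^2)$ is $\rI_+(0)$ (types $\mu=(c_1,c_2)$ with $c_2\geq 0$), and that of $\sS(V^2)$ is $\rI_-(0)$.

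This dichotomy is the actual content of the proposition. Anything landing in $\rI_+$ is automatically of the form $f_{\Phi_{\lambda i}}$ with $\Phi_{\lambda i}\in\sS((\dR^3)^2)$. What has to be proved is that the $\rI_-$-part of $s^{-1}\rL_i f_{\Phi_\lambda}(s)$ is exactly $-\frac12 f_{\Psi_{\lambda i}}(s)$. Your Step~2 does not address this: it first adds a function on $V^2$ to one on $(\dR^3)^2$, then retreats to an unspecified statement about Whittaker functionals. Corollary~\ref{co:whittaker_orthogonal} instead needs an identity of sections whose error term comes from the compact model, so that its Whittaker function at $s=0$ vanishes for $\det T<0$.

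Step~1 ($\omega_\psi(\rL_1)\Phi_\lambda=0$) is fine, and your last paragraph points in the right direction, but it misuses multiplicity one. The weight $(b-\frac12,b+\frac32)$ you compute is a weight of the diagonal torus $\widetilde{K_1}\times\widetilde{K_1}$, not a $\widetilde{K_2}$-type, and it occurs in infinitely many $\rI_\mu$. Since $f_{\Phi_\lambda}\in\rI_{(b,0)}(0)$ and $\rL_1\in\fp^-$, the section $s^{-1}\rL_1 f_{\Phi_\lambda}(s)$ lies in $\rI_{(b,-1)}(s)\oplus\rI_{(b-1,0)}(s)$ and a priori has both components. Meanwhile $f_{\Psi_{\lambda 1}}\in\rI_{(b,-1)}(0)$, a type that is \emph{not} realized by the compact model, contrary to what you assert. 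Multiplicity one only tells you that the $\rI_{(b,-1)}$-component is proportional to $f_{\Psi_{\lambda1}}$. Evaluation at $1$ does not isolate that component, so it cannot fix the constant $-\frac12$.

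The paper obtains the constant by writing $q_\lambda f_{\Phi_b}=f_{\Phi_\lambda}+f_{(\det\sft)P^\flat_b}$ with $f_{\Phi_b}=\rR^bf_{\Phi_{\b{0}}}$. Since $\det\sft$ has $\widetilde{K_2}$-character $\delta^2$, the second term contributes only to $\rI_+$ after applying $\rL_i$. In Lee's explicit realization of $\rI(s)$, the raising operator $\rR$ annihilates $\gamma(z)$ and $\det z$. So the $\rI_{(b,-1)}$-component of $\rL_i(\rR^bf_{\Phi_{\b{0}}})(s)$ is obtained by applying $\rR^b$ to the $\lambda=\b{0}$ identity, giving $-\frac{s}{2}(\rR^bf_{\Psi_{\b{0}i}})(s)=-\frac{s}{2}q_\lambda^{-1}f_{\Psi_{\lambda i}}(s)$.
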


When $b=0$ (so that $\lambda=\b{0}$), one may take $\Phi_{\b{0}1}=\Phi_{\b{0}2}=0$ by \cite{GS19}*{Lemma~3.1~\&~(3.89)}.

\begin{proof}
We prove for $\rL_2$ and the case for $\rL_1$ is similar.

We first review some facts from \cite{LZ13}. Denote by $\rI(s)$ the $\widetilde{K_2}$-finite subspace of $\rI^3_2(s)$. Then
\[
\rI(s)=\bigoplus_{\mu=(c_1,c_2)\in\fW_2}\rI_\mu(s),
\]
where $\rI_\mu(s)$ denotes the subspace whose $\widetilde{K_2}$-type has weight $2\mu+(3/2,3/2)$,\footnote{Please be aware of the multiple $2$ here.} which has multiplicity one. Put
\[
\rI_+(s)=\bigoplus_{\substack{\mu=(c_1,c_2)\in\fW_2\\c_2\geq 0}}\rI_\mu(s),\qquad
\rI_-(s)=\bigoplus_{\substack{\mu=(c_1,c_2)\in\fW_2\\c_2< 0}}\rI_\mu(s).
\]
Denote by $\sS'((\dR^3)^2)$ (resp.\ $\sS'(V^2)$) the subspace of $\sS((\dR^3)^2)$ (resp.\ $\sS(V^2)$) consisting of Schwartz functions of the form $P\cdot\Phi_{\b{0}}=P\cdot(\phi_0\otimes\phi_0)$ (resp.\ $P\cdot(\nu_0\otimes\nu_0)$) where $P$ is a polynomial, whose image under the map $f_\bullet$ is contained in $\rI(0)$. More precisely, such images of $\sS'((\dR^3)^2)$ and $\sS'(V^2)$ are $\rI_+(0)$ and $\rI_-(0)$, respectively.

We use the way in \cite{Lee96}*{\S 2} to describe elements in $\rI(s)$. In particular,
\begin{align*}
f_{\Phi_{\b{0}}}(s)&\doteq(-\det\ol{z})^{-3/2}\cdot|\det z|^{-s},\\
f_{\Psi_{\b{0}2}}(s)&\doteq\gamma(z)(\det z)^{-1}(-\det\ol{z})^{-3/2}\cdot|\det z|^{-s},
\end{align*}
where $\gamma(z)=z_{11}\ol{z}_{22}-z_{12}\ol{z}_{21}$. Here, $\doteq$ means that the left-hand side is given by the restriction of the right-hand side to $X^o$. Moreover, $\rL_2$ corresponds to the operator
\[
\frac{1}{2}\varepsilon_{22}^*=\ol{z}_{21}\frac{\partial}{\partial z_{21}}+\ol{z}_{22}\frac{\partial}{\partial z_{22}}
\]
\cite{Lee96}*{(2.5)}. It follows easily that $\rL_2f_{\Phi_{\b{0}}}(s)=-\frac{s}{2}\cdot f_{\Psi_{\b{0}2}}(s)$.

Now we consider a general weight $\lambda=(b,0,-b)$. Denote by $\rR$ the operator given by the element
\[
\frac{-\ti}{8\pi}w''_1\circ w''_2\in\fp_2^+
\]
\if false
\[
-\frac{1}{2\pi}\begin{pmatrix}  & 1 & & \ti \\ 1 & & \ti & \\ & \ti & & -1 \\ \ti & & -1 &  \end{pmatrix}\in\fg_2
\]
\fi
in the notation of \cite{FM06}*{Lemma~A.1}. By the same lemma and the construction of $\Psi_{\lambda2}$, we have
\[
f_{\Psi_{\lambda2}}=q_\lambda\cdot\rR^b f_{\Psi_{\b{0}2}}\in\rI_{(b,-1)}(0)
\]
(see Example \ref{ex:ratio_orthogonal} for $q_\lambda$). On the other hand, by \cite{FM06}*{Lemma~A.1} again, $\rR^b f_{\Phi_{\b{0}}}=f_{\Phi_b}$, where $\Phi_b\coloneqq\phi_b\otimes\phi_b$. We need to study the relation between $f_{\Phi_b}$ and $f_{\Phi_\lambda}$.

Write $\(\begin{smallmatrix} y_1 \\ y_2 \end{smallmatrix}\)=\(\begin{smallmatrix} y_{11} & y_{12} & y_{13} \\ y_{21} & y_{22} & y_{23} \end{smallmatrix}\)$ for the standard coordinates of $(\dR^3)^2$. We use the Fock model $\sS'((\dR^3)^2)\simeq\dC\left[\begin{smallmatrix}\sfy_{11} & \sfy_{12} & \sfy_{13} \\ \sfy_{21} & \sfy_{22} & \sfy_{23} \end{smallmatrix}\right]$, in which $\sfy_{ij}$ corresponds to $2\pi\ti\(y_{ij}-\frac{1}{2\pi}\frac{\partial}{\partial y_{ij}}\)$. Let $P_\lambda$ and $P_b$ be the polynomials in the Fock model corresponding to $\Phi_\lambda$ and $\Phi_b$, respectively. Then we have $\Phi_\lambda=(4\pi\ti)^{-2b}P_\lambda\cdot\Phi_0$ and $P_b=(4\pi\ti)^{-2b}(\sfy_{11}\sfy_{21}+\sfy_{12}\sfy_{22}+\sfy_{13}\sfy_{23})^b$. Define a symmetric matrix
\[
\sft=\begin{pmatrix} \sft_{11} & \sft_{12} \\ \sft_{21} & \sft_{22} \end{pmatrix}
\coloneqq\begin{pmatrix} \sfy_{11}^2+\sfy_{12}^2+\sfy_{13}^2 & \sfy_{11}\sfy_{21}+\sfy_{12}\sfy_{22}+\sfy_{13}\sfy_{23} \\ \sfy_{11}\sfy_{21}+\sfy_{12}\sfy_{22}+\sfy_{13}\sfy_{23} & \sfy_{21}^2+\sfy_{22}^2+\sfy_{23}^2 \end{pmatrix}
\]
in which we assign bi-degree $(2,0)$ to $\sft_{11}$, $(0,2)$ to $\sft_{22}$, and $(1,1)$ to $\sft_{12}=\sft_{21}$. Denote by $\dC[\sft]_{b,b}$ the space of polynomials in entries of $\sft$ of homogeneous bi-degree $(b,b)$ that are symmetric in $\sft_{12}$ and $\sft_{21}$, which is a subspace of the Fock model. It is clear both $P_\lambda$ and $P_b$ belong to $\dC[\sft]_{b,b}$, and there exist unique elements $c\in\dC$ and $P^\flat_b\in\dC[\sft]_{b-2,b-2}$ such that
\[
P_b=c\cdot P_\lambda + (\det\sft)P^\flat_b.
\]
Evaluating both sides at $\sft=\(\begin{smallmatrix} 1 & 1 \\ 1 & 1 \end{smallmatrix}\)$, we obtain that $c=q_\lambda^{-1}$. It follows that
\[
q_\lambda\cdot f_{\Phi_b}=f_{\Phi_\lambda}+f_{(\det\sft)P^\flat_b}
\]
(where we have replaced $P^\flat_b$ by $q_\lambda^{-1}P^\flat_b$). Note that $f_{\Phi_\lambda}\in\rI_{(b,0)}(0)$, and since $P_\lambda$ is harmonic in the $y_2$-variable, $\rL_2f_{\Phi_\lambda}=0$. It follows that
\[
\rL_2f_{\Phi_\lambda}(s)=s f_-(s)+sf_+(s)
\]
for unique elements $f_-\in\rI_{(b,-1)}(0)$ and $f_+\in\rI_+(0)$. For the proposition, it remains to show that $f_-(s)=-\frac{1}{2}f_{\Psi_{b2}}(s)$.

By \cite{FM06}*{Lemma~A.1} again, we know that $\det\sft$ is an eigenvector for the action of $\widetilde{K_2}$, of eigen-character $\delta^2$ (determinant-square). Thus,
\[
f_{(\det\sft)P^\flat_b}\in\bigoplus_{\substack{\mu=(c_1,c_2)\in\fW_2\\c_1+c_2=b\\ c_2>0}}\rI_\mu(0),
\]
so that $\rL_2f_{(\det\sft)P^\flat_b}(s)\in\rI_+(s)$. It follows that $s f_-(s)$ is also the projection of $q_\lambda\cdot\rL_2(\rR^b f_{\Phi_{\b{0}}})(s)$ to the factor $\rI_{(b,-1)}(s)$. Together, it reduces to show that the projection of $\rL_2(\rR^b f_{\Phi_{\b{0}}})(s)$ to the factor $\rI_{(b,-1)}(s)$ coincides with $-\frac{s}{2}\cdot(\rR^b f_{\Psi_{\b{0}2}})(s)$.

Now we have
\[
\rL_2(\rR^b f_{\Phi_{\b{0}}})(s)-(\rL_2\rR^b f_{\Phi_{\b{0}}})(s)\doteq
-\frac{s}{2}\cdot\rR^b\((-\det\ol{z})^{-3/2}\)\cdot\gamma(z)(\det z)^{-1}|\det z|^{-s},
\]
and
\[
-\frac{s}{2}\cdot(\rR^b f_{\Psi_{\b{0}2}})(s)\doteq
-\frac{s}{2}\cdot\rR^b\((-\det\ol{z})^{-3/2}\cdot\gamma(z)(\det z)^{-1}\)\cdot|\det z|^{-s}.
\]
Since $\rR$ annihilates both $\gamma(z)$ and $\det z$ by \cite{Lee96}*{(2.5)}, we have
\[
\rL_2(\rR^b f_{\Phi_{\b{0}}})(s)-(\rL_2\rR^b f_{\Phi_{\b{0}}})(s)=-\frac{s}{2}\cdot(\rR^b f_{\Psi_{\b{0}2}})(s).
\]
The the claim follows as $\rL_2\rR^b f_{\Phi_{\b{0}}}\in\rI_+(0)$.

The proposition is proved.
\end{proof}

\begin{remark}\label{re:harmonic_orthogonal}
Indeed, if we denote by $\rR_1$ and $\rR_2$ the operators given by the elements
\[
\frac{-\ti}{8\pi}w''_1\circ w''_1,\qquad \frac{-\ti}{8\pi}w''_2\circ w''_2
\]
in $\fp_2^+$ (in the notation of \cite{FM06}*{Lemma~A.1}) respectively, then
\[
\Phi_\lambda=Q_\lambda(\rR_1,\rR,\rR,\rR_2)\Phi_{\b{0}}
\]
(see Example \ref{ex:ratio_orthogonal} for $Q_\lambda$).
\end{remark}

For $\Phi\in\sS((\dR^3)^2)\oplus\sS(V^2)$, $T\in\Sym_2^\circ(\dR)$, and $y\in\Sym_2^\circ(\dR)^+$, we define a function
\[
\cW_{b,T}(s,y,\Phi)\coloneqq W_T(s,m(a),\Phi)\cdot(\det y)^{-\frac{3+2b}{4}}\te^{2\pi\tr(Ty)},
\]
which has an analytic continuation to the entire complex plane. Here, $a$ is any element in $\GL_2(\dR)$ such that $y=a\cdot\pres{t}a$ and $\det a\in\dR_{>0}$.

\begin{corollary}\label{co:whittaker_orthogonal}
For every $T\in\Sym_2^\circ(\dR)$ with $\det T<0$, the identity
\[
\cW_{b,T}(0,ty,\Psi_\lambda)=-2t\frac{\rd}{\rd t}\cW'_T(0,ty,\Phi_\lambda)
\]
holds for every $t\in\dR_{>0}$.
\end{corollary}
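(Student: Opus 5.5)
The plan is to follow \cite{GS19}*{\S 3} (the case $\lambda=\b{0}$, their Lemma~3.1 and (3.89)). The identity will come from applying Proposition \ref{pr:whittaker_orthogonal} to Whittaker coefficients and then translating the operator $\rL_1+\rL_2$ on the $m(a)$-part into the derivative $t\frac{\rd}{\rd t}$ on $\cW$. First add the two identities of Proposition \ref{pr:whittaker_orthogonal}:
\[
(\rL_1+\rL_2)f_{\Phi_\lambda}(s)=-\frac{s}{2}f_{\Psi_\lambda}(s)+s\,f_{\Phi_{\lambda1}+\Phi_{\lambda2}}(s).
\]
Then take the $T$-th Whittaker coefficient. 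This commutes with the right action of $\fg_2$, so
\[
(\rL_1+\rL_2)W_T(s,-,\Phi_\lambda)=-\frac{s}{2}W_T(s,-,\Psi_\lambda)+s\,W_T(s,-,\Phi_{\lambda1}+\Phi_{\lambda2}).
\]
Differentiate at $s=0$. The term $sW_T(s,\cdot,\Phi_{\lambda1}+\Phi_{\lambda2})$ has derivative $W_T(0,\cdot,\Phi_{\lambda1}+\Phi_{\lambda2})$. Since $\Phi_{\lambda i}\in\sS((\dR^3)^2)$ and $\det T<0$, the matrix $T$ is not represented by the positive definite space $\dR^3$. By the local Siegel--Weil formula (the value at $s=0$ is an orbital integral over $(\dR^3)^2_T=\emptyset$), this term vanishes. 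The term $-\frac{s}{2}W_T(s,\cdot,\Psi_\lambda)$ has derivative $-\frac12W_T(0,\cdot,\Psi_\lambda)$. Hence
\[
(\rL_1+\rL_2)W'_T(0,g,\Phi_\lambda)=-\tfrac12W_T(0,g,\Psi_\lambda).
\]

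Next, convert $\rL_1+\rL_2$ into a differential operator in $y$ when evaluated at $g=m(a)$. The function $W_T(s,\cdot,\Phi)$ transforms on the left under $n(\Sym_2)$ by $\psi_T$. On the right, $\Phi_\lambda$ is a $\widetilde{K_2}$-eigenvector of character $\delta^{3/2+b}$; the twisted factor $\Phi_{\lambda_v}$ (two copies of weight $b+3/2$) gives this, as in the footnote following its definition. This suggests using the standard formula for the lowering operator on functions of $\tau$, $y\mapsto$ multiplication by $\te^{-2\pi\tr(Ty)}(\det y)^{\ldots}$. Concretely, it is the genus-$2$ analogue of the formula used in the proof of Lemma \ref{le:nu_orthogonal},
\[
\omega(\rL)\phi(x)=\left.\(2\pi T(x)+\tfrac{\rd}{\rd t}-\tfrac{w}{4}\)\right|_{t=1}t^{3/4}\phi(t^{1/2}x).
\]
Applying $\rL_1+\rL_2$ to a function of weight $(b+3/2)$ at $m(a)$ with $y=a\,\pres{t}a$ amounts to the following. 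Normalize by $(\det y)^{-(3+2b)/4}\te^{2\pi\tr(Ty)}$, which is exactly the normalization in $\cW_{b,T}$. Then, up to the overall constant, $\rL_1+\rL_2$ is the scaling derivative $t\frac{\rd}{\rd t}$ at $y\mapsto ty$. The restriction of $\rL_1+\rL_2$ to $m(a)$ involves only the scalar (Euler) part of the diagonal lowering operators, since $\rL_1+\rL_2$ is the image of the diagonal $\rL$ of $\widetilde{G_1}$ under the diagonal embedding.

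Combining gives $t\frac{\rd}{\rd t}\cW'_T(0,ty,\Phi_\lambda)=-\frac12\cW_{b,T}(0,ty,\Psi_\lambda)$, which is the claimed identity. The same computation at the point $ty$ replaces $a$ by $t^{1/2}a$.

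The main obstacle is the normalization step: checking that the $\widetilde{K_2}$-weight of $\Phi_\lambda$ and the factor $(\det y)^{-(3+2b)/4}$ match, so that no extra constant or $-w/4$ term survives and the coefficient is exactly $-2$. I would verify this by redoing the $\lambda=\b{0}$ case of \cite{GS19}*{(3.89)} with the weight shifted by $b$. One further point needs care. $\Psi_\lambda$ is not a $\widetilde{K_2}$-eigenvector for the same character; its components lie in $\rI_{(b,-1)}$ and a $\rL_1$-analogue. However, only the value at $m(a)$ is used, and the scalar normalization for $\cW_{b,T}$ is fixed by definition, so this should not matter.
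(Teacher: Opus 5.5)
Your argument is the paper's proof, which itself follows \cite{GS19}*{Lemma~3.5}. You combine the identities of Proposition~\ref{pr:whittaker_orthogonal}, pass to $T$-th Whittaker functions, and kill the $\Phi_{\lambda i}$-terms at $s=0$ because $T$ with $\det T<0$ is not represented by $\dR^3$. You then turn the lowering operators into Euler derivatives of the normalized function. The differences are cosmetic:
\begin{itemize}
\item You sum first and use $\rL_1+\rL_2=\Delta_*\rL$, where $\Delta\colon G_1\to G_2$ is the diagonal embedding. The paper instead uses $\rL_i\leftrightarrow y_i\frac{\rd}{\rd y_i}$ separately at diagonal $y$.
\item You differentiate at $s=0$ rather than dividing by $s$ and letting $s\to0$.
\end{itemize}
The $\Delta_*\rL$ version has the small merit of working for any fixed $a$ rescaled to $t^{1/2}a$, with no reduction to diagonal $y$.

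Two corrections are needed.

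\emph{The eigenvector claim is false.} You assert that $\Phi_\lambda$ is a $\widetilde{K_2}$-eigenvector of character $\delta^{3/2+b}$. This fails for $b\geq 1$: the paper notes $f_{\Phi_\lambda}\in\rI_{(b,0)}(0)$, a $\widetilde{K_2}$-type of highest weight $(2b+\frac32,\frac32)$. Already for $b=1$, the function $\Phi_\lambda(y_1,y_2)=(y_1\cdot y_2)\,\phi_0(y_1)\phi_0(y_2)$ is not invariant under $m(\SO(2))$, on which $\delta$ is trivial. What you actually use is true and sufficient. $\Phi_\lambda$ has weight $(b+\frac32,b+\frac32)$ under $\widetilde{K_1}\times\widetilde{K_1}$. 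Hence $\Delta_*\kappa$, with $\kappa=\left(\begin{smallmatrix}0&1\\-1&0\end{smallmatrix}\right)$, acts by the scalar $\ti(3+2b)$; this element generates the centre of $K_2\cong\rU(2)$.

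\emph{The constant you left open is exactly $1$.} Write $\rL=\frac12H-\ti X_++\frac{\ti}{2}\kappa$, with $H=\left(\begin{smallmatrix}1&0\\0&-1\end{smallmatrix}\right)$ and $X_+=\left(\begin{smallmatrix}0&1\\0&0\end{smallmatrix}\right)$. At $g=m(t^{1/2}a)$ the three pieces of $\Delta_*\rL$ contribute as follows:
\begin{itemize}
\item the $H$-piece gives $t\frac{\rd}{\rd t}$;
\item the $X_+$-piece gives $2\pi t\tr(Ty)$, since $m(t^{1/2}a)n(\epsilon 1_2)=n(\epsilon ty)m(t^{1/2}a)$;
\item the $\kappa$-piece gives $-\frac{3+2b}{2}$, from the central character.
\end{itemize}
The last two cancel exactly against what $t\frac{\rd}{\rd t}$ produces when it hits the normalizing factor $(\det ty)^{-(3+2b)/4}\te^{2\pi t\tr(Ty)}$. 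So $\Delta_*\rL$ becomes exactly $t\frac{\rd}{\rd t}$ on $\cW_{b,T}$, with no residual $-w/4$ term. The $-2$ therefore comes solely from the coefficient $-\frac{s}{2}$ in Proposition~\ref{pr:whittaker_orthogonal}.
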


\begin{proof}
The proof is almost the same as the one for \cite{GS19}*{Lemma~3.5}. Without loss of generality, we may assume $y=\(\begin{smallmatrix}y_1 & \\  & y_2 \end{smallmatrix}\)$ a diagonal matrix with $y_1,y_2\in\dR_{>0}$. Set $y^{1/2}\coloneqq\(\begin{smallmatrix}y_1^{1/2} & \\  & y_2^{1/2} \end{smallmatrix}\)$. Proposition \ref{pr:whittaker_orthogonal} implies that for $i=1,2$,
\[
-2^{-1}s\cW_{b,T}(s,m(y^{1/2}),\Psi_{\lambda i})+ s\cW_{b,T}(s,m(y^{1/2}),\Phi_{\lambda i})=
\rL_i\cW_T(s,m(y^{1/2}),\Phi_\lambda).
\]
On the other hand, it is easy to see that
\begin{align*}
\rL_i\cW_{b,T}(s,m(y^{1/2}),\Phi_\lambda)&=y_i\frac{\rd}{\rd y_i}\cW_{b,T}(s,m(y^{1/2}),\Phi_\lambda).
\end{align*}
Together, we have
\[
\cW_{b,T}(s,m(y^{1/2}),\Psi_\lambda)-2^{-1}\cW_{b,T}(s,m(y^{1/2}),\Phi_{\lambda 1}+\Phi_{\lambda 2})=-2s^{-1}
\sum_{i=1}^2y_i\frac{\rd}{\rd y_i}\cW_{b,T}(s,m(y^{1/2}),\Phi_\lambda).
\]
As both $\Phi_\lambda$ and $\Phi_{\lambda1}+\Phi_{\lambda2}$ belong to $\sS((\dR^3)^2)$, we have
\[
\cW_{b,T}(0,m(y^{1/2}),\Phi_{\lambda 1}+\Phi_{\lambda 2})=\cW_{b,T}(0,m(y^{1/2}),\Phi_\lambda)=0.
\]
Letting $s\to 0$, we obtain
\[
\cW_{b,T}(0,m(y^{1/2}),\Psi_\lambda)=-2\sum_{i=1}^2y_i\frac{\rd}{\rd y_i}\cW'_{b,T}(0,m(y^{1/2}),\Phi_\lambda).
\]
The corollary follows.
\end{proof}

Now we can finish the proof of Theorem \ref{th:sw_local_orthogonal}. Write $T$ for $T(x_1,x_2)$ for short, so that $\det T<0$. By Lemma \ref{le:sw_orthogonal_2} and Lemma \ref{le:sw_orthogonal_3}, we have
\begin{align*}
\int_\fD g_\lambda(x_1)\ast g_\lambda(x_2)
&=C\te^{-2\pi\tr T}\lim_{M\to +\infty}\int_1^M t^{-b}\te^{2\pi\tr tT} \cdot W_{tT}(0,1_4,\Psi_\lambda)\frac{\rd t}{t} \\
&=C\te^{-2\pi\tr T}\lim_{M\to +\infty}\int_1^M t^{-\frac{3+2b}{2}}\te^{2\pi\tr tT} \cdot W_T(0,m(t^{1/2}1_2),\Psi_\lambda)\frac{\rd t}{t} \\
&=C\te^{-2\pi\tr T}\lim_{M\to +\infty}\int_1^M \cW_{b,T}(0,t1_2,\Psi_\lambda)\frac{\rd t}{t},
\end{align*}
which, by Corollary \ref{co:whittaker_orthogonal}, equals
\[
2C\te^{-2\pi\tr T}\lim_{M\to +\infty}\(\cW'_{b,T}(0,1_2,\Phi_\lambda)-\cW'_{b,T}(0,M1_2,\Phi_\lambda)\).
\]
We claim that $\lim_{M\to +\infty}\cW'_{b,T}(0,M1_2,\Phi_\lambda)=0$. This is true for $\lambda=\b{0}$ by \cite{GS19}*{Proposition~3.3}. Indeed, the same argument of that proposition implies the same asymptotic behaviour for $Q.\Phi_{\b{0}}$ for every polynomial $Q$ in $\fp_2^+$, in particular, for $\Phi_\lambda$ by Remark \ref{re:harmonic_orthogonal}.

Thus, one obtain
\[
\int_\fD g_\lambda(x_1)\ast g_\lambda(x_2)=2C\te^{-2\pi\tr T}\cW'_{b,T}(0,1_2,\Phi_\lambda)=2C\cdot W'_T(0,1_4,\Phi_\lambda).
\]
To figure out the value of $2C$, we take $\lambda=\b{0}$. Then by \cite{Kud97}*{Theorem~11.8}, we obtain
\[
2C=(2\sqrt{2}\pi^2\ti)^{-1}.
\]
Theorem \ref{th:sw_local_orthogonal} is finally proved.

\section{Archimedean arithmetic Siegel--Weil formula over unitary Shimura curves}
\label{ss:8}

We continue the discussion in \S\ref{ss:3} but with $m=2$ and $r=1$. In particular, $\lambda_u=(b_u,-b_{\ol{u}})$ for every $u\in\Hom(E,\dC)$.

\begin{definition}\label{de:hodge_unitary}
We define complex Hodge structures on the local systems $\tT_b(V_\infty)$ and $\tS_{[\lambda]}(V_\infty)$ over $\fD$, respectively.
\begin{itemize}
  \item For $u\in\Hom(E,\dC)\setminus\{u_0,\ol{u_0}\}$, define $\cV_u$ to be the unique complex Hodge structure on $\sV_u\coloneqq V_u\otimes_\dC\sO_\fD$ satisfying that $\cV_u^{(0,0)}=\sV_u^\infty$.

  \item For $u=u_0$, denote by $V_\fD^-$ the tautological complex subbundle of negative lines of $\sV_u^\infty=V_u\otimes_\dC\sA_\fD$ (where $\sV_u\coloneqq V_u\otimes_\dC\sO_\fD$) over (the underlying smooth manifold of) $\fD$, which is a holomorphic subbundle of $\sV_u^\infty$, and by $V_\fD^+$ its orthogonal complement. Define $\cV_u$ to be the unique complex Hodge structure on $\sV_u$ satisfying that $\cV_u^{(0,0)}=V_\fD^+$ and $\cV_u^{(1,-1)}=V_\fD^-$.\footnote{Indeed, $\cV_u$ is the unique complex Hodge structure of weight zero on $\sV_u$ of which the hermitian form on $V_u$ induces a polarization (Definition \ref{de:polarization}), satisfying $\cV_u^{(p,-p)}=0$ for $|p|\geq 2$.}

  \item For $u=\ol{u_0}$, define $\cV_u$ to be the complex conjugation of $\cV_{u_0}$, which is a complex Hodge structure on $\sV_u=\ol{\sV_{u_0}}$. Note that $\cV_u$ is also canonically isomorphic to the dual of $\cV_{u_0}$ induced by the hermitian form on $V_u$.
\end{itemize}
By taking tensor product of $\cV_u$ and further applying \eqref{eq:harmonic_unitary} and \eqref{eq:schur_unitary}, we obtain complex Hodge structures $\cV_b$ and $\cV_\lambda$ on $\tT_b(V_\infty)$ and $\tS_{[\lambda]}(V_\infty)$ over $\fD$, respectively. These complex Hodge structures are all $H(F_\infty)$-invariant and descend to the unitary Shimura curve
\[
X_L=H(F)\backslash\(\fD\times H(\bA_F^\infty)/L\).
\]
In particular, the underlying complex local system of $\cV_\lambda$ is $\tS_{[\lambda]}(V_\infty)$.
\end{definition}

Recall that we have chosen an orthonormal basis $\{e_{v,1},e_{v,2}\}$ of $V_v$ for every $v\in\Hom(F,\dR)$, together with all follow-up notation in \S\ref{ss:3}. Note that
\[
\Omega\coloneqq\frac{\ti}{2\pi}\cdot\xi_{12}\wedge\ol{\xi_{12}}\in\wedge^{1,1}\fp^\vee
\]
is independent of the choice of the basis (of $V_{v_0}$).

For the Kudla--Millson form $\varphi_0\coloneqq\varphi_{1,0}$, we have
\[
\varphi_0(x)=(2\pi|z^{u_0}_1|^2-1)\nu_0(x)\cdot\Omega
\]
(in particular, $\varphi_0(0)=-\Omega$). By Lemma \ref{le:invariance}, we have the elements
\begin{align*}
\varphi_b\coloneqq\varphi_{1,b}=\(\prod_{u\in\Hom(E,\dC)}\rE_u^{b_u}\)\varphi_0
&\in\(\sS(V_\infty)\otimes_\dC\wedge^{1,1}\fp^\vee\otimes_\dC\tT_b(V_\infty)\)^{L_\infty} \\
&=\(\sS(V_\infty)\otimes_\dC\sA^{1,1}(\fD)\otimes_\dC\tT_b(V_\infty)\)^{H(F_\infty)}
\end{align*}
and then
\[
[\varphi_b]\in\(\sS(V_\infty)\otimes_\dC\sA^{1,1}(\fD)\otimes_\dC\tT_{[b]}(V_\infty)\)^{H(F_\infty)}
\]
by applying \eqref{eq:harmonic_unitary}. Since $m=2$, $[\varphi_b]$ already belongs to $\sS(V_\infty)\otimes_\dC\sA^{1,1}(\fD)\otimes_\dR\tS_{[\lambda]}(V_\infty)$, so that
\[
\varphi_\lambda\coloneqq\pi_{\lambda^+}([\varphi_b])=[\varphi_b].
\]
This special feature when $m=2$ will appear repeatedly later.

On the other hand, we introduce
\begin{align*}
\nu_b\coloneqq\(\prod_{u\in\Hom(E,\dC)}\rE_u^{b_u}\)\nu_0
\in\(\sS(V_\infty)\otimes_\dC\tT_b(V_\infty)\)^{L_\infty}
=\(\sS(V_\infty)\otimes_\dC\sA^0(\fD)\otimes_\dC\tT_b(V_\infty)\)^{H(F_\infty)},
\end{align*}
and similarly
\[
\nu_\lambda\coloneqq[\nu_b]\in\(\sS(V_\infty)\otimes_\dC\sA^0(\fD)\otimes_\dC\tS_{[\lambda]}(V_\infty)\)^{H(F_\infty)}.
\]

The lemma below is an analogue of \cite{FH21}*{Theorem~3.3} in the case of general weights (but only for $(p,q)=(1,1)$).

\begin{lem}\label{le:holomorphy_unitary}
We have
\[
\omega_{\psi_{F,v_0}}(\rL_{v_0})\varphi_b=-\rD\rD^\tc\nu_b+
\frac{1}{4\pi}\(\sum_{i=1}^{b_{u_0}}\sum_{j=1}^{b_{\ol{u_0}}}[A_{ij}^{u_0}\varphi_{b-1_{u_0}-1_{\ol{u_0}}}]
+\sum_{i=1}^{b_{\ol{u_0}}}\sum_{j=1}^{b_{u_0}}[A_{ij}^{\ol{u_0}}\varphi_{b-1_{u_0}-1_{\ol{u_0}}}]\).
\]
In particular,
\[
\omega_{\psi_{F,v_0}}(\rL_{v_0})\varphi_\lambda=-\rD\rD^\tc\nu_\lambda.
\]
\end{lem}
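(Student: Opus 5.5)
Reduce to $F=\dQ$ and imitate the proof of Lemma \ref{le:holomorphy_orthogonal}. Both sides of the identity only involve the factor at $v_0$: the operators $\omega_{\psi_{F,v_0}}(\rL_{v_0})$, $\rD\rD^\tc$ and $A^{u_0}_{ij},A^{\ol{u_0}}_{ij}$ act trivially on the factors at other places. So I will assume $F=\dQ$, write $u=u_0$, $\ol u=\ol{u_0}$, $b=b_u$ and $\ol b=b_{\ol u}$, and suppress $v_0$.

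\textbf{Fock model.} I will use the Fock model $\dC[\sfz^u_1,\sfz^u_2,\sfz^{\ol u}_1,\sfz^{\ol u}_2]$ from the proof of Lemma \ref{le:invariance}. There the Gaussian and the Kudla--Millson form are explicit monomials:
\[
\nu_b=c_b\,(\sfz^{\ol u}_1)^{b}(\sfz^{u}_1)^{\ol b}\otimes 1\otimes (e^u_1)^{\otimes b}\otimes(e^{\ol u}_1)^{\otimes \ol b},
\]
and $\varphi_b$ is the same expression multiplied by the factor $-\frac{1}{8\pi^2}\sfz^{\ol u}_1\sfz^u_1$ coming from $\varphi_0$, with the form $\xi_{12}\wedge\ol{\xi_{12}}$ (a constant multiple of $\Omega$). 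Here $c_b$ is a product of powers of $\frac{-\ti}{2\sqrt2\pi}$; the index set $\tB_u$ is $\{1\}^{b_u}$ because $p=1$.

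\textbf{Gauss--Manin side.} Write $-\rD\rD^\tc\nu_b$ as a constant multiple of $\partial'\ol\partial'\nu_b+\ol\nabla'\nabla'\nu_b$, the analogue of Remark \ref{re:gauss_manin} for the unitary case. Here
\begin{itemize}
\item $\partial'$ and $\ol\partial'$ are $\omega_\psi(X_{12}\mp\ti X'_{12})$ tensored with $L(\xi_{12})$ or $L(\ol{\xi_{12}})$;
\item $\nabla'$ and $\ol\nabla'$ are $L(\xi_{12})$ or $L(\ol{\xi_{12}})$ tensored with the action $\rho$ of the corresponding elements of $\fp$ on $\tT_b(V)$.
\end{itemize}
In the Fock model, $\omega_\psi$ of these $\fp$-elements are second-order operators of the form $\pm 2\pi\,\partial^2/\partial\sfz^{\cdot}_1\partial\sfz^{\cdot}_2+\frac{1}{8\pi}\sfz^{\cdot}_1\sfz^{\cdot}_2$, read off from \cite{FH21}*{Appendix~B}. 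Apply them to the monomial to get the analogue of \eqref{eq:holomorphy1}: a term proportional to $(\sfz^{\ol u}_1)^b(\sfz^u_1)^{\ol b}$ and a term carrying the extra factor $\sfz^{u}_2\sfz^{\ol u}_2$.

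\textbf{The $\rho\rho$ term (main step).} The key combinatorial computation is
\[
\rho(X^+)\rho(X^-)\bigl((e^u_1)^{\otimes b}\otimes(e^{\ol u}_1)^{\otimes\ol b}\bigr).
\]
The element of $\fp$ sends $e_1\mapsto e_2$ in one factor and $e_2\mapsto e_1$ in the conjugate factor. Applying the two operators in succession therefore gives
\begin{itemize}
\item a diagonal part, a constant multiple of the original vector (of the shape $(b+1)(\ol b+1)$ up to normalization);
\item a cross part, where $e^u_2$ in slot $i$ of the $u$-factor is paired with $e^{\ol u}_2$ in slot $j$ of the $\ol u$-factor.
\end{itemize}
Since $e_2$ has norm $-1$, each cross pairing $e^u_2\otimes e^{\ol u}_2$ equals $-(\text{coevaluation}) + e^u_1\otimes e^{\ol u}_1$. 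This rewrites the cross part as $-\sum_{i,j}A^u_{ij}$ applied to the tensor of degree one lower, plus further diagonal terms. The $A^{\ol u}_{ij}$ sum arises symmetrically. Getting the exact diagonal constant right is the step I expect to be the main obstacle; I would fix it by checking the small case $b=1,\ol b=0$.

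\textbf{Weil-representation side and conclusion.} Compute $\omega_\psi(\rL)$ in the Fock model, where it is $-2\pi\,\partial^2/\partial\sfz^u_1\partial\sfz^{\ol u}_1+\frac{1}{8\pi}\sfz^u_2\sfz^{\ol u}_2$ up to normalization. Apply it to $\varphi_b$ and compare with the Gauss--Manin side: the monomial terms should match exactly, leaving the $A$-terms with coefficient $\frac1{4\pi}$. This coefficient is the one already appearing in Lemma \ref{le:form_unitary_2}(2), which serves as a consistency check. Alternatively, the whole identity could be pulled back from Lemma \ref{le:holomorphy_orthogonal} via Remark \ref{re:pullback}, using \eqref{eq:form_unitary}; that lemma only covers $m=3$, though, so I will use the direct computation.

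The ``in particular'' statement follows by applying $[\;]$: each $A_{ij}$ has image in the complement of $\tT_{[b]}$ in \eqref{eq:decomposition_unitary}, so these terms are killed, and $[\;]$ commutes with $\rD\rD^\tc$ and with $\omega_\psi(\rL)$.
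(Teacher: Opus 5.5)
Your plan is the paper's own proof. It reduces to $F=\dQ$, writes $-\rD\rD^\tc\nu_b=(2\pi\ti)^{-1}(\partial'\ol\partial'\nu_b+\ol\nabla'\nabla'\nu_b)$, evaluates this and $\omega_{\psi}(\rL)\varphi_b$ on the explicit monomials in the Fock model, and rewrites $e^{u}_2\otimes e^{\ol u}_2$ through the coevaluation. For the constants you flagged, the computation closes as follows: the $\rho\rho$ term gives $(b+1)\ol b$ times the original vector (not $(b+1)(\ol b+1)$) minus a single double sum $\sum_{i,j}A^{u}_{ij}$, and the missing factor $b+1$ comes from $\partial'\ol\partial'\nu_b$; that single sum, with coefficient $\frac{1}{2\pi}$, already accounts for both sums in the statement, since $A^{u_0}_{ij}$ and $A^{\ol{u_0}}_{ji}$ are the same operator, so there is no separate ``symmetric'' contribution.
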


\begin{proof}
For the identity in the lemma, both sides have the same factor for places away from $v_0$. Thus, without loss of generality, we may assume $F=\dQ$ and suppress $v_0$ in all subscripts (while simply write $V$ for $V_\infty$). Write $b'\coloneqq b_{u_0}$ and $b''\coloneqq b_{\ol{u_0}}$. For $i=1,2$, write $e'_i\coloneqq e_i^{u_0},e''_i\coloneqq e_i^{\ol{u_0}}$ and $z_i\coloneqq z_i^{u_0}$.

For the space $\(\sS(V)\otimes\wedge^\bullet\fp^\vee\otimes_\dC\tT(V_\infty)\)^{L_\infty}$, the four components of the Gauss--Manin operator are given by the following formulae:
\begin{align*}
\partial'&=\frac{1}{2}\cdot\omega_{\psi_F}(X_{12}-\ti X'_{12})\otimes L(\xi_{12})\otimes 1,\\
\ol\partial'&=\frac{1}{2}\cdot\omega_{\psi_F}(X_{12}+\ti X'_{12})\otimes L(\ol{\xi_{12}})\otimes 1,\\
\nabla'&=\frac{1}{2}\cdot 1\otimes L(\xi_{12})\otimes\rho(X_{12}-\ti X'_{12}),\\
\ol\nabla'&=\frac{1}{2}\cdot 1\otimes L(\ol{\xi_{12}})\otimes\rho(X_{12}+\ti X'_{12}),
\end{align*}
where $L(\xi)$ denotes the left multiplication by an element $\xi\in\fp^\vee$, and $\rho$ denotes the right induced action of $\fp$ on $\tT(V)$.

By definition, we have
\[
-\rD\rD^\tc\nu_b=(2\pi\ti)^{-1}\(\partial'\ol\partial'\nu_b+\ol\nabla'\nabla'\nu_b\).
\]
For further computation, we use the Fock model $\dC[\sfz'_1,\sfz'_2;\sfz''_1,\sfz''_2]$ of the Weil representation $\omega_{\psi_F}$ as in \cite{FH21}*{Appendix~A~\&~B} so that the following correspond:
\begin{align*}
z_1-\frac{1}{\pi}\frac{\partial}{\partial\ol{z}_1} \leftrightarrow \frac{-\ti}{\sqrt{2}\pi}\sfz'_1,&\qquad
z_2-\frac{1}{\pi}\frac{\partial}{\partial\ol{z}_2} \leftrightarrow \frac{\ti}{\sqrt{2}\pi}\sfz'_2;\\
\ol{z}_1-\frac{1}{\pi}\frac{\partial}{\partial z_1} \leftrightarrow \frac{-\ti}{\sqrt{2}\pi}\sfz''_1,&\qquad
\ol{z}_2-\frac{1}{\pi}\frac{\partial}{\partial z_2} \leftrightarrow \frac{\ti}{\sqrt{2}\pi}\sfz''_2.
\end{align*}
Under such model,
\begin{align*}
\nu_b&=\(\frac{-\ti}{2\sqrt{2}\pi}\)^{b'+b''}(\sfz'_1)^{b'}(\sfz''_1)^{b''}\otimes1\otimes\((e'_1)^{\otimes b'}\otimes(e''_1)^{\otimes b''}\), \\
\varphi_b&=\frac{-1}{4\pi}\(\frac{-\ti}{2\sqrt{2}\pi}\)^{b'+b''}(\sfz'_1)^{b'+1}(\sfz''_1)^{b''+1}
\otimes\Omega\otimes\((e'_1)^{\otimes b'}\otimes(e''_1)^{\otimes b''}\),\\
\end{align*}
and by \cite{FH21}*{\S A.2},
\begin{align*}
\partial'&=\(-4\pi\frac{\partial^2}{\partial\sfz'_1\partial\sfz''_2}+\frac{1}{4\pi}\sfz''_1\sfz'_2\)\otimes L(\xi_{12})\otimes 1, \\
\ol\partial'&=\(-4\pi\frac{\partial^2}{\partial\sfz''_1\partial\sfz'_2}+\frac{1}{4\pi}\sfz'_1\sfz''_2\)\otimes L(\ol{\xi_{12}})\otimes 1.
\end{align*}
It follows that
\begin{align}\label{eq:holomorphy3}
\partial'\ol\partial'\nu_b=(2\pi\ti)\(\frac{-\ti}{2\sqrt{2}\pi}\)^{b'+b''}\((b'+1)(\sfz'_1)^{b'}(\sfz''_1)^{b''}
-\frac{1}{16\pi^2}(\sfz'_1)^{b'+1}(\sfz''_1)^{b''+1}\sfz'_2\sfz''_2\)\otimes\Omega\otimes\((e'_1)^{\otimes b'}\otimes(e''_1)^{\otimes b''}\).
\end{align}
For $\ol\nabla'\nabla'\nu_b$, it equals
\[
(2\pi\ti)\(\frac{-\ti}{2\sqrt{2}\pi}\)^{b'+b''}
(\sfz'_1)^{b'}(\sfz''_1)^{b''}\otimes\Omega\otimes\rho\(\frac{X_{12}+\ti X'_{12}}{2}\)\rho\(\frac{X_{12}-\ti X'_{12}}{2}\)\((e'_1)^{\otimes b'}\otimes(e''_1)^{\otimes b''}\).
\]
Note that the actions of $\rho\(\frac{X_{12}-\ti X'_{12}}{2}\)$ and $\rho\(\frac{X_{12}+\ti X'_{12}}{2}\)$ on $V\otimes_\dR\dC$ are given by the following assignments
\begin{align*}
e'_1\mapsto 0,\quad e'_2\mapsto e'_1,\quad e''_1\mapsto e''_2,\quad e''_2\mapsto 0,\\
e'_1\mapsto e'_2,\quad e'_2\mapsto 0,\quad e''_1\mapsto 0,\quad e''_2\mapsto e''_1,
\end{align*}
respectively, which implies that
\begin{align*}
\quad &\rho\(\frac{X_{12}+\ti X'_{12}}{2}\)\rho\(\frac{X_{12}-\ti X'_{12}}{2}\)\((e'_1)^{\otimes b'}\otimes(e''_1)^{\otimes b''}\) \\
&=(b'+1)b''\((e'_1)^{\otimes b'}\otimes(e''_1)^{\otimes b''}\)-\sum_{i=1}^{b'}\sum_{j=1}^{b''} A_{ij}\((e'_1)^{\otimes b'-1}\otimes(e''_1)^{\otimes b''-1}\).
\end{align*}
Combining with \eqref{eq:holomorphy3}, we obtain
\begin{align*}
-\rD\rD^\tc\nu_b=\widetilde\varphi_b-\frac{1}{2\pi}\sum_{i=1}^{b'}\sum_{j=1}^{b''} A_{ij}\varphi_{b-1},
\end{align*}
where
\[
\widetilde\varphi_b\coloneqq
\(\frac{-\ti}{2\sqrt{2}\pi}\)^{b'+b''}\((b'+1)(b''+1)(\sfz'_1)^{b'}(\sfz''_1)^{b''}
-\frac{1}{16\pi^2}(\sfz'_1)^{b'+1}(\sfz''_1)^{b''+1}\sfz'_2\sfz''_2\)\otimes\Omega\otimes\((e'_1)^{\otimes b'}\otimes(e''_1)^{\otimes b''}\).
\]

On the other hand, by \cite{FH21}*{\S~A.2}, we have
\[
\omega_{\psi_F}(\rL)=-4\pi\frac{\partial^2}{\partial\sfz'_1\partial\sfz''_1}+\frac{1}{4\pi}\sfz'_1\sfz''_1,
\]
which implies that $\omega_{\psi_F}(\rL)\varphi_b=\widetilde\varphi_b$.

The lemma follows.

\end{proof}

Define a function $\mu\colon V_\infty\to\dR_{>0}$ by the formula
\[
\mu(x)\coloneqq\tq^{T(x)}(\ti)=\prod_{v\in\Hom(F,\dR)}\te^{-2\pi T(x_v)},
\]
and put
\[
\varphi_\lambda^\circ\coloneqq\varphi_\lambda\cdot\mu^{-1},\qquad
\nu_\lambda^\circ\coloneqq\nu_\lambda\cdot\mu^{-1}.
\]
Define an action $\star$ of $\dR_{>0}$ on $V_\infty$ via the formula $t\star x=(tx_{v_0},x_v,\dots)$.

\begin{lem}\label{le:nu_unitary}
For $x\in V_\infty$, we have
\[
\rD\rD^\tc\frac{\nu^\circ_\lambda(t^{1/2}\star x)}{t^{(b_{u_0}+b_{\ol{u_0}})/2}}=-t\frac{\rd}{\rd t}\frac{\varphi^\circ_\lambda(t^{1/2}\star x)}{t^{(b_{u_0}+b_{\ol{u_0}})/2}}
\]
for every $t\in\dR_{>0}$.
\end{lem}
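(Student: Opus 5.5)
I follow the proof of Lemma \ref{le:nu_orthogonal} essentially verbatim, with Lemma \ref{le:holomorphy_unitary} replacing Lemma \ref{le:holomorphy_orthogonal}. Both sides factor as a product over places. Away from $v_0$ the $\star$-action does nothing, and $\rD\rD^\tc$ acts only through the $v_0$-component, since the Hodge structures $\cV_u$ for $u\notin\{u_0,\ol{u_0}\}$ are constant of type $(0,0)$. So I may assume $F=\dQ$ and suppress $v_0$; then $t^{1/2}\star x=t^{1/2}x$. Write $b\coloneqq b_{u_0}+b_{\ol{u_0}}$.

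First I reduce to $t=1$. Replacing $x$ by $t_0^{1/2}x$ turns the $t$-derivative at $t_0$ into the $t$-derivative at $1$, because $t\frac{\rd}{\rd t}$ is invariant under dilation. The powers $t_0^{-b/2}$ appear on both sides and cancel. Hence it suffices to prove
\[
\rD\rD^\tc\nu_\lambda^\circ(x)=-\left.\frac{\rd}{\rd t}\right|_{t=1}\frac{\varphi_\lambda^\circ(t^{1/2}x)}{t^{b/2}}.
\]

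The key input is the homogeneity formula for the lowering operator in the Schr\"odinger model. For $\phi\in\sS(V)$ on which $\widetilde{K_1}$ acts by the character of weight $w$,
\[
\omega_{\psi_F}(\rL)\phi(x)=\left.\Bigl(2\pi T(x)+\frac{\rd}{\rd t}-\frac{w}{4}\Bigr)\right|_{t=1}t^{m/4}\phi(t^{1/2}x),
\]
with $m=2$ here. I check this formula for the Gaussian and then transport it. Concretely, $\rL$ equals $-\frac12(\text{Euler operator})+2\pi T(x)$ plus a weight term, once the $\widetilde{K_1}$-eigenvalue is used to eliminate the $\partial_{x}$-$\partial_{y}$ mixture. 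The only delicate point is the weight: $\varphi_\lambda$ has $\widetilde{K_1}$-weight $w=m+2b=2+2b$. This follows from Lemma \ref{le:invariance}, because $\tS_{\lambda^+}(\dC)$ is the character $\delta^{b_{u_0}}\cdot\ol{\delta}^{\,b_{\ol{u_0}}}$-type with total degree $b$. I expect confirming this normalization (the shift $w/4=m/4+b/2$) to be the main obstacle, though it is routine.

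Next, write $\varphi_\lambda=\varphi_\lambda^\circ\mu$ and note that $\mu(t^{1/2}x)=\te^{-2\pi tT(x)}$. Differentiating the product at $t=1$, the $\mu$-derivative produces $-2\pi T(x)\varphi_\lambda$, which cancels the $2\pi T(x)$ term. The factor $t^{m/4+b/2}$ produces exactly $\frac{w}{4}\varphi_\lambda$, which cancels the $-w/4$ term. This leaves
\[
\omega_{\psi_F}(\rL)\varphi_\lambda(x)=\mu(x)\left.\frac{\rd}{\rd t}\right|_{t=1}\frac{\varphi_\lambda^\circ(t^{1/2}x)}{t^{b/2}}.
\]
By Lemma \ref{le:holomorphy_unitary}, the left side equals $-\rD\rD^\tc\nu_\lambda=-\mu(x)\,\rD\rD^\tc\nu_\lambda^\circ$. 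The last equality holds because $\mu(x)$ depends only on $x$ and is therefore constant on $\fD$. Dividing by $\mu(x)$ gives the claim.
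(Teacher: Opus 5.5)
Your proposal follows the paper's route. The paper proves this lemma by rerunning the proof of Lemma \ref{le:nu_orthogonal} with Lemma \ref{le:holomorphy_unitary} as input: reduce to $F=\dQ$ and $t=1$, and express $\omega_{\psi_F}(\rL)$ through the homogeneity formula. The $2\pi T(x)$ and weight terms then cancel against the $t$-derivatives of $\mu(t^{1/2}x)$ and of the power of $t$, and one divides by $\mu(x)$, which is constant on $\fD$.

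The one thing to fix is the normalization you flagged. As you state it, it is wrong, although the two errors cancel.

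Here $V$ has real dimension $2m=4$, so $\omega_{\psi_F}(m(t^{1/2}))\phi(x)=t\,\phi(t^{1/2}x)$. The formula therefore reads
\[
\omega_{\psi_F}(\rL)\phi(x)=\Bigl(2\pi T(x)+\frac{\rd}{\rd t}-\frac{k}{2}\Bigr)\Big|_{t=1}\,t\,\phi(t^{1/2}x),
\]
where $k$ is the $\SO(2)$-weight of $\phi$.

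For $\varphi_\lambda$ one has $k=2+b$:
\begin{itemize}
\item The factor $\delta^{m/2}=\delta$ contributes $2$, because in the unitary normalization $\delta$ restricts to $\te^{2\ti\theta}$ on $\SO(2)$.
\item The factor $\tS_{\lambda^+}(\dC)$, on which $(U_1,U_2)$ acts by $U_1^{b_{u_0}}\ol{U_2}^{\,b_{\ol{u_0}}}$, contributes $b$. It does not contribute the $2b$ implicit in your $w=m+2b$; it is not the character $\delta^{b_{u_0}}\ol\delta^{\,b_{\ol{u_0}}}$.
\end{itemize}
So the shift is $k/2=1+b/2$. This is the paper's $w/4$ with $w=4+2b$.

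Your exponent $m/4=\tfrac12$ and your shift $w/4=(1+b)/2$ are each too small by exactly $\tfrac12$. The cancellation only uses their difference: the derivative at $t=1$ of the power of $t$, minus the shift, equals $-b/2$. That difference is correct in your version. Hence your identity $\omega_{\psi_F}(\rL)\varphi_\lambda(x)=\mu(x)\,\frac{\rd}{\rd t}\big|_{t=1}t^{-b/2}\varphi^\circ_\lambda(t^{1/2}x)$, and the rest of the proof, stand once the normalization is corrected.

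Also, $\frac{\rd}{\rd t}\big|_{t=1}\phi(t^{1/2}x)$ is $+\tfrac12$ times the Euler operator, not $-\tfrac12$, as your own displayed formula shows.
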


\begin{proof}
This follows from the same argument in the the proof of Lemma \ref{le:nu_orthogonal} (with $w=4+2(b_{u_0}+b_{\ol{u_0}})$).
\end{proof}

Take an element $x\in V_\infty$ with $x_{v_0}\neq 0$. Denote by $\fD_x\subseteq\fD$ the subset of complex lines perpendicular to $x_{v_0}$. If $T(x_{v_0})>0$ (resp.\ $T(x_{v_0})\leq 0$), then $\fD_x$ consists of one point (resp.\ is empty). Put
\[
\sfv^\circ_b(x)\coloneqq\(\fD_x,\otimes_{u\in\Hom(F,\dR)}x_u^{\otimes b_u}\)\in\Div(\fD,\cV_b)
\]
(see Definition \ref{de:hodge_unitary} for the Hodge structure) and consequently
\[
\sfv^\circ_\lambda(x)\coloneqq[\sfv^\circ_b(x)]\in\Div(\fD,\cV_\lambda)
\]
(so that $\sfv^\circ_\lambda(x)=0$ if $T(x_{v_0})\leq 0$). Define
\[
g_\lambda^\circ(x)\coloneqq\int_1^\infty\frac{\nu_\lambda^\circ(t^{1/2}\star x)}{t^{(b_{u_0}+b_{\ol{u_0}})/2}}\frac{\rd t}{t},
\]
which is absolutely convergent over $\fD\setminus\fD_x$ (after we regard $\nu_\lambda^\circ(x)$ as an element in $\sA^0(\fD)\otimes_\dC\tS_{[\lambda]}(V_\infty)$). Finally, put
\[
g_\lambda(x)\coloneqq \mu(x)\cdot g_\lambda^\circ(x),\qquad
\sfv_\lambda(x)\coloneqq \mu(x)\cdot \sfv_\lambda^\circ(x).
\]

The lemma below is an analogue of Lemma \ref{le:green_orthogonal} in the unitary case, with the similar proof which we omit.

\begin{lem}\label{le:green_unitary}
For every $x\in V_\infty$ with $x_{v_0}\neq 0$, $g_\lambda(x)$ is a smooth function on $\fD\setminus\fD_x$ with logarithmic growth along $\fD_x$ and rapid decay along the boundary of $\fD$, and satisfies
\[
\rD\rD^\tc g_\lambda(x) + \delta_{\sfv_\lambda(x)}=[\varphi_\lambda(x)].
\]
In particular, $g_\lambda(x)$ is a Green function for $\sfv_\lambda(x)$ with the tail form $\varphi_\lambda(x)$.
\end{lem}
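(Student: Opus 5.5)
We follow the proof of Lemma \ref{le:green_orthogonal} step by step. The contributions from places other than $v_0$ factor out as constants on $\fD$, so we may assume $F=\dQ$ and suppress $v_0$. Since $\mu(x)$ is constant on $\fD$, it is enough to prove the statements for $g^\circ_\lambda(x)$, $\sfv^\circ_\lambda(x)$ and $\varphi^\circ_\lambda(x)$. Write $b'=b_{u_0}$ and $b''=b_{\ol{u_0}}$.

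\emph{Smoothness.} For $z\in\fD\setminus\fD_x$, the integrand $\nu^\circ_\lambda(t^{1/2}x)t^{-(b'+b'')/2}$ is a polynomial in $t^{1/2}$ times $\te^{-2\pi t R(x,z)}$. Here $R(x,z)=|x_{z}^-|^2$ is the squared norm of the projection of $x$ onto the negative line $z$, and it is positive away from $\fD_x$. The integral over $t\ge 1$ therefore converges locally uniformly together with all derivatives, so $g^\circ_\lambda(x)$ is smooth on $\fD\setminus\fD_x$.

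\emph{Growth near $\fD_x$ and decay at the boundary.}
\begin{itemize}
\item Near $\fD_x$, assume $T(x)>0$ and put the point of $\fD_x$ at the base point. Expanding $\nu_b$ through the operators $\rE^u_1$, the top-order term in $t$ is
\[
\te^{-2\pi t|z_2|^2}\,[x_{u_0}^{\otimes b'}\otimes x_{\ol{u_0}}^{\otimes b''}]+O(t^{-1/2}),
\]
where the correction is uniform near the base point.
\item So $g^\circ_\lambda(x)$ differs from $g^\circ_{\b0}(x)\cdot[x^{\otimes b}]$ by a function that is bounded near $\fD_x$. The known logarithmic growth of the Kudla--Millson Green function $g^\circ_{\b0}$ then gives logarithmic growth for $g^\circ_\lambda(x)$.
\item For the boundary, $R(x,z)\to\infty$ as $z$ leaves compact sets, and the polynomial prefactors grow at most polynomially. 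This gives rapid decay, uniformly in $t\ge1$.
\end{itemize}

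\emph{The current equation.} By Lemma \ref{le:nu_unitary}, on $\fD\setminus\fD_x$,
\[
\rD\rD^\tc\int_1^M \frac{\nu^\circ_\lambda(t^{1/2}x)}{t^{(b'+b'')/2}}\frac{\rd t}{t}=\varphi^\circ_\lambda(x)-\frac{\varphi^\circ_\lambda(M^{1/2}x)}{M^{(b'+b'')/2}}.
\]
Away from $\fD_x$ the last term decays exponentially as $M\to\infty$, so $\rD\rD^\tc g^\circ_\lambda(x)=\varphi^\circ_\lambda(x)$ holds as smooth forms on $\fD\setminus\fD_x$. It remains to identify the delta contribution at the point of $\fD_x$, which is the main obstacle.

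To handle it, we pair with a test section and use Stokes on the complement of a small disc around $\fD_x$. The boundary term comes from the logarithmic singularity of the leading part $g^\circ_{\b0}(x)[x^{\otimes b}]$. By the Poincaré--Lelong formula for the $\lambda=\b0$ case, this term contributes $-\delta$ with coefficient $[x^{\otimes b}]$, and that coefficient is $\sfv^\circ_\lambda(x)$. We also need the Gauss--Manin components $\nabla',\ol\nabla'$ to contribute nothing in the limit. They involve no derivative of the logarithm, and the $O(t^{-1/2})$ correction yields a function that is $o(\log)$ with bounded first derivatives times $r\log r$, so the corresponding boundary integrals vanish as the disc shrinks.

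Finally, $[x^{\otimes b}]$ lies in $\cV^{(0,0)}_\lambda$ at $\fD_x$, because $x\perp$ the negative line there and so $x\in V^+_\fD$. Hence $\sfv^\circ_\lambda(x)\in\Div(\fD,\cV_\lambda)$. Multiplying by $\mu(x)$ completes the proof.
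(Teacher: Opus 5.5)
Your proposal is correct and follows the same route as the paper, which omits this proof as the analogue of Lemma \ref{le:green_orthogonal}: reduce to $F=\dQ$ and the circled objects, compare the integrand near $\fD_x$ with the weight-zero kernel $\te^{-2\pi t|z_2|^2}$ times the flat section $[x^{\otimes b}]$ to get logarithmic growth, and obtain the current equation from Lemma \ref{le:nu_unitary} together with the exponential decay of $\varphi^\circ_\lambda(t^{1/2}x)$ away from $\fD_x$; your Stokes/Poincar\'e--Lelong identification of the delta term, including the check that the $\sA^0$-linear components $\nabla',\ol\nabla'$ leave no boundary terms, just makes explicit what the paper leaves implicit. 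One caveat applies equally to the paper's statement and argument: $R(x,z)\to\infty$ at the boundary of $\fD$ only when $T(x_{v_0})\neq 0$ (for isotropic $x_{v_0}$ one has $R(x,z)\to 0$ as $z$ tends radially to the boundary point $[x_{v_0}]$, so there is no decay there), which is harmless in the global setting because $V$ is anisotropic and the relevant $x$ are nonzero rational vectors.
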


Now we consider star products of $g_\lambda$. Put
\[
V_\infty^\Box\coloneqq\left\{\left.(x_1,x_2)\in V_\infty^2\right| T(x_{1,v_0},x_{2,v_0})\in\Herm_2^\circ(\dR)\right\}.
\]
For $(x_1,x_2)\in V_\infty^\Box$ (so that both $x_{1,v_0}$ and $x_{2,v_0}$ are nonzero), $(g_\lambda(x_1),g_\lambda(x_2))$ belongs to $\Div_\sharp^\Box(\fD,\cV_\lambda)$. Lemma \ref{le:green_unitary} indicates that
\[
g_\lambda(x_1)\ast g_\lambda(x_2)
\]
is an integrable $(1,1)$-current on $\fD$, so that its integration over $\fD$ is well-defined. So, how to compute
\[
\int_\fD g_\lambda(x_1)\ast g_\lambda(x_2)?
\]

Take an element $v\in\Hom(F,\dR)$. Put $\dC_v\coloneqq F_v\otimes_FE$, which canonically contains $\dR$. Equip $\dC_v^2$ with the standard Euclidean hermitian form. Denote by $\phi_0\in\sS(\dC_v^2)$ the standard Gaussian function on $\dC_v^2$, that is, $\phi_0(y)=\te^{-\pi(|y_1|^2+|y_2|^2)}$. Applying Definition \ref{de:schur_unitary} to $E/F=\dC_v/F_v$, $V=\dC_v^2$, $\dL=\dC$, we obtain a complex representation
\[
\tT_{b_v}(\dC_v^2)\coloneqq\bigotimes_{\substack{u\in\Hom(E,\dC)\\ v(u)=v}}\tT^{b_u}(\dC^2)
\]
of $\rU(\dC_v^2)$ (where $\rU(\dC_v^2)$ acts on $\tT^{b_u}(\dC^2)$ via $u$), and the direct summand $\tS_{[\lambda_v]}(\dC_v^2)$ of $\tT_{b_v}(\dC_v^2)$. Define two functions
\[
\phi_{b_v}\in\sS(\dC_v^2)\otimes\tT_{{b_v}}(\dC_v^2),\quad
\phi_{\lambda_v}\in\sS(\dC_v^3)\otimes\tS_{[\lambda_v]}(\dC_v^2)
\]
by the formulae
\[
\phi_{b_v}(y)=\phi_0(y)\cdot\otimes_{v(u)=v}u(y)^{\otimes b_u},\quad
\phi_{\lambda_v}(y)=[\phi_{b_v}(y)]=\phi_0(y)[\otimes_{v(u)=v}u(y)^{\otimes b_u}],
\]
respectively. Define a $\rU(\dC_v^2)$-invariant function $\Phi_{\lambda_v}\in\sS((\dC_v^2)^2)$ by the formula
\[
\Phi_{\lambda_v}(y_1,y_2)=(\phi_{\lambda_v}(y_1),\phi_{\lambda_v}(y_2))_{\tS_{[\lambda_v]}(\dC_v^2)}.
\]
For every element $T_v\in\Herm_2^\circ(F_v)$, denote by $W_{T_v}(s,-,\Phi_{\lambda_v})$ the $T$-th Whittaker function on $G_2(F_v)$\footnote{Since $m=2$ is even, the Whittaker function, a priori on $\widetilde{G_2(F_v)}$, descends to a function on $G_2(F_v)$.} associated with $\Phi_{\lambda_v}$ and the parameter $s\in\dC$.

Taking products, we have for $g=(g_v)_v\in G_2(F_\infty)$ and $T=(T_v)_v\in\Herm_2^\circ(F_\infty)$,
\[
W_T(s,g,\Phi_\lambda)\coloneqq\prod_{v\in\Hom(F,\dR)}W_{T_v}(s,g_v,\Phi_{\lambda_v}).
\]
Write
\[
W'_T(0,g,\Phi_\lambda)\coloneqq\left.\frac{\rd}{\rd s}\right|_{s=0}W_{T(x_1,x_2)}(s,g,\Phi_\lambda).
\]
The theorem below, which can be regarded as an arithmetic analogue of the local Siegel--Weil formula at $v_0$, will be proved in the next section.

\begin{theorem}\label{th:sw_local_unitary}
For $(x_1,x_2)\in V_\infty^\Box$, we have
\[
\int_\fD g_\lambda(x_1)\ast g_\lambda(x_2)
=C_\infty^{-1}\cdot W'_{T(x_1,x_2)}(0,1_4,\Phi_\lambda),
\]
where $C_\infty\coloneqq(8\pi^3)^{[F:\dQ]}$.
\end{theorem}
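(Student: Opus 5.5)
We follow the proof of Theorem \ref{th:sw_local_orthogonal} in \S\ref{ss:7}, with the orthogonal ingredients replaced by their unitary counterparts.

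\emph{Reduction to one place.} By construction,
\[
g_\lambda(x_1)\ast g_\lambda(x_2)=\prod_{v\neq v_0}\Phi_{\lambda_v}(x_{1,v},x_{2,v})\cdot g_{\lambda_{v_0}}(x_{1,v_0})\ast g_{\lambda_{v_0}}(x_{2,v_0}).
\]
At each definite place $v\neq v_0$ the local Siegel--Weil formula for the compact group $\rU(\dC_v^2)$ gives $\Phi_{\lambda_v}(x_{1,v},x_{2,v})=(8\pi^3)^{-1}W_{T_v}(0,1_4,\Phi_{\lambda_v})$. We first check this for $\lambda_v=\b 0$ (the standard computation of the holomorphic weight-$2$ Whittaker value for $\rU(1,1)$). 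For general $\lambda_v$ we compare ratios, exactly as in the orthogonal case. This reduces the theorem to $F=\dQ$, where we suppress $v_0$ and write $\lambda=(b',-b'')$ with $b'=b_{u_0}$, $b''=b_{\ol{u_0}}$.

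\emph{Star product via Gillet--Soul\'e.} We put
\[
\nu^\circ_\lambda(x_1,x_2)\coloneqq\tr_V\(\nu^\circ_\lambda(x_1)\wedge\ol{\varphi^\circ_\lambda(x_2)}+\varphi^\circ_\lambda(x_1)\wedge\ol{\nu^\circ_\lambda(x_2)}\),
\]
and let $g^\circ_\lambda(x_1,x_2)$ be its integral $\int_1^\infty t^{-(b'+b'')}(\cdots)\,\rd t/t$. Using Lemma \ref{le:green_unitary} and Lemma \ref{le:nu_unitary}, the argument of \cite{GS19}*{Theorem~2.16} (as in Lemma \ref{le:sw_orthogonal_1}) shows that $g^\circ_\lambda(x_1)\ast g^\circ_\lambda(x_2)-g^\circ_\lambda(x_1,x_2)$ is $\rD'[\alpha]+\rD''[\beta]$, with $\alpha,\beta$ rapidly decaying. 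Its integral therefore vanishes. As in Lemma \ref{le:sw_orthogonal_2}, $\int_\fD g_\lambda(x_1)\ast g_\lambda(x_2)$ then equals $\mu(x_1)\mu(x_2)\lim_M\int_1^M\int_\fD\nu^\circ_\lambda(t^{1/2}x_1,t^{1/2}x_2)t^{-(b'+b'')}\rd t/t$. Next we define $\Psi_{\lambda1},\Psi_{\lambda2}\in\sS(V^2)$ by $\Psi_{\lambda i}\otimes\Omega=$ the two terms of $\tr_V$, and put $\Psi_\lambda\coloneqq\Psi_{\lambda1}+\Psi_{\lambda2}$. The local Siegel--Weil formula on the indefinite $\rU(V)$ then gives $\int_\fD\Psi_\lambda\otimes\Omega=C\cdot W_T(0,1_4,\Psi_\lambda)$ for a constant $C$ independent of $\lambda$.

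\emph{Main step: the analogue of Proposition \ref{pr:whittaker_orthogonal}.} We need
\[
\rL_i f_{\Phi_\lambda}(s)=-\tfrac{s}{2}f_{\Psi_{\lambda i}}(s)+s f_{\Phi_{\lambda i}}(s)
\]
for some $\Phi_{\lambda i}\in\sS((\dC^2)^2)$. We work in the $\widetilde{K_2}$-type decomposition of the degenerate principal series of $\rU(2,2)$. The relevant $K$-types are now indexed by pairs of weights for $\rU(2)\times\rU(2)$, and the images of $\sS'$ for the definite and the indefinite spaces lie in complementary ``$\pm$'' parts, by local theta dichotomy. We use Lee's coordinate model for $\rU(n,n)$ in place of the Sp model, together with two raising operators $\rR',\rR''$ in $\fp^+$ corresponding to $\sfz'_1\sfz'_2$-type and $\sfz''_1\sfz''_2$-type Fock monomials. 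These satisfy $f_{\Phi_{b}}=\rR'^{b'}\rR''^{b''}f_{\Phi_{\b0}}$, and $f_{\Psi_{\lambda2}}$ is proportional to $\rR'^{b'}\rR''^{b''}f_{\Psi_{\b0 2}}$. Write $P_b=cP_\lambda+(\det\sft)P^\flat$, where $\sft$ is the hermitian Gram matrix of Fock variables. Since $\det\sft$ shifts the $K$-type into the ``$+$'' part, the problem reduces to the $\lambda=\b0$ identity $\rL_2 f_{\Phi_{\b0}}(s)=-\tfrac s2 f_{\Psi_{\b02}}(s)$ (\cite{FH21}/\cite{Liu11}), together with the fact that $\rR',\rR''$ annihilate $\gamma(z)$ and $\det z$. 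We expect this step to be the main obstacle, in particular matching normalizations so that the ratio constant $q_\lambda$ cancels.

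\emph{Conclusion.} As in Corollary \ref{co:whittaker_orthogonal}, the identity of the previous step yields, for $\det T<0$,
\[
\cW_{T}(0,ty,\Psi_\lambda)=-2t\tfrac{\rd}{\rd t}\cW'_T(0,ty,\Phi_\lambda),
\]
with the normalization $(\det y)^{-(2+b'+b'')/2}\te^{2\pi\tr Ty}$; here $\cW_T(0,\cdot,\Phi_\lambda)=0$ holds because $\Phi_\lambda$ comes from the definite space and $T$ is indefinite. Integrating in $t$ and using the decay $\cW'_T(0,M1_2,\Phi_\lambda)\to0$, which follows from the \cite{GS19}*{Proposition~3.3} argument applied to $\fp^+$-polynomials applied to $\Phi_{\b0}$, we obtain $\int_\fD g_\lambda(x_1)\ast g_\lambda(x_2)=2C\,W'_T(0,1_4,\Phi_\lambda)$. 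Finally, we determine $2C=(8\pi^3)^{-1}$ from the known case $\lambda=\b0$ (\cite{Liu11}).
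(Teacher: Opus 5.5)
Your proposal follows the paper's proof essentially step for step: reduction to $F=\dQ$ via the definite-place local Siegel--Weil formula, the Gillet--Soul\'e comparison of $g_\lambda(x_1)\ast g_\lambda(x_2)$ with $g^\circ_\lambda(x_1,x_2)$, the $\lambda$-independent constant $C$, the $K_2$-type analysis in Lee's model for $\rU(2,2)$ using $\rR',\rR''$ and $P_b=cP_\lambda+(\det\sft)P^\flat_b$, and calibration at $\lambda=\b{0}$. The only slip is a normalization carried over from the orthogonal case: here $f_{\Phi_{\b{0}}}(s)\doteq(\det\ol{w})^{-1}|\det w|^{-2s}$, so the coefficient is $-s$ rather than $-\tfrac{s}{2}$, giving $-t\frac{\rd}{\rd t}$ in the corollary and $C$ instead of $2C$; this is harmless because the coefficient is the same for all $\lambda$ and is absorbed when you fix the constant at $\lambda=\b{0}$.
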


Now we study the height pairing over $X_L$. From now on, we assume $X_L$ proper.

Take a Schwartz function $\phi^\infty\in\sS(V^\infty)^L$ satisfying $\phi^\infty(0)=0$ and an element $t\in F$. By Definition \ref{de:generating} after replacing $b_v$ by $\sum_{v(u)=v}b_u$, we have functions $\rF^\lambda_t(\phi^\infty)$ and $\rG^\lambda_t(\phi^\infty)$ on $\dH_1$ valued in $\sA^{1,1}(X_L,\tS_{[\lambda]}(V_\infty))$ and $\sG(X_L,\tS_{[\lambda]}(V_\infty))$, respectively, and an element $\rD^\lambda_t(\phi^\infty)\in\Div(X_L,\cW_\lambda)$ satisfying that when $v_0(t)\leq 0$, $\rD^\lambda_t(\phi^\infty)=0$; and when $v_0(t)>0$, its image under the cycle class map $\Div(X_L,\cW_\lambda)\to\rH^{1,1}(X_L,\tS_{[\lambda]}(V_\infty))$ coincides with $\rC^\lambda_t(\phi^\infty)$ (Definition \ref{de:cycle_unitary}).

The proposition below is the analogue of Proposition \ref{pr:green_orthogonal} in the unitary case, whose proof is similar which we omit.

\begin{proposition}\label{pr:green_unitary}
For $\phi^\infty\in\sS(V^\infty)^L$ satisfying $\phi^\infty(0)=0$ and $t\in F$, the equality
\[
\rD\rD^\tc\rG^\lambda_t(\phi^\infty)+\delta_{\rD^\lambda_t(\phi^\infty)}\cdot \tq^t=[\rF^\lambda_t(\phi^\infty)]
\]
holds as functions on $\dH_1$ valued in $\sD(X_L,\tS_{[\lambda]}(V_\infty))$. In other words, for every $\tau\in\dH_1$, $\rG^\lambda_t(\phi^\infty)(\tau)$ is a Green function for $\tq^t(\tau)\cdot\rD^\lambda_t(\phi^\infty)$ with the tail form $\rF^\lambda_t(\phi^\infty)(\tau)$.
\end{proposition}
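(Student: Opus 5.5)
This is the unitary analogue of Proposition \ref{pr:green_orthogonal}, and I would argue it in the same formal way, reducing to the pointwise statement Lemma \ref{le:green_unitary}. Fix $\tau\in\dH_1$. The sum defining $\rG^\lambda_t(\phi^\infty)(\tau)$ is finite: $X_L$ is proper and $\phi^\infty$ has compact support. Hence $\rD\rD^\tc$ and the currents can be applied termwise.

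For each pair $(x,h)$ in $H(F)\backslash V_t\times H(\bA_F^\infty)/L$, apply Lemma \ref{le:green_unitary} to the vector $xy_\tau^{1/2}$, which lies in $V_\infty$ with $x_{v_0}\neq0$ because $\phi^\infty(0)=0$. Then push forward along $\fD\to X_L$, $z\mapsto(z,h)$. Pushforward commutes with $\rD\rD^\tc$, since the Gauss--Manin operator and its Hodge components are $H(F_\infty)$-equivariant and the Hodge structure $\cV_\lambda$ descends. This gives
\[
\rD\rD^\tc g_\lambda(xy_\tau^{1/2},h)_L+\delta_{\sfv_\lambda(xy_\tau^{1/2},h)_L}=[\varphi_\lambda(xy_\tau^{1/2},h)_L].
\]
Summing with weights $\phi^\infty(h^{-1}x)\prod_v(y_\tau)_v^{-b_v/2}\te^{2\pi\ti(tx_\tau)_v}$, where $b_v=\sum_{v(u)=v}b_u$, produces $\rG^\lambda_t(\phi^\infty)(\tau)$ on the Green-function side and $\rF^\lambda_t(\phi^\infty)(\tau)$ on the tail-form side.

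It remains to identify the divisor term. Here one uses two identities:
\[
\sfv_\lambda(xy_\tau^{1/2},h)_L=\mu(xy_\tau^{1/2})\,\sfv^\circ_\lambda(x,h)_L\prod_v(y_\tau)_v^{b_v/2},
\]
which holds because $\sfv^\circ_\lambda$ is homogeneous of degree $b_v$ in $x_v$ via $\otimes_u x_u^{\otimes b_u}$, and
\[
\mu(xy_\tau^{1/2})\prod_v\te^{2\pi\ti(tx_\tau)_v}=\tq^t(\tau),
\]
which holds because $T(x)=t$. With these, the divisor term collapses to $\tq^t(\tau)\,\rD^\lambda_t(\phi^\infty)$. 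When $v_0(t)\le0$, both $\fD_x$ and this term are empty, and the identity says $\rG^\lambda_t$ is smooth with $\rD\rD^\tc\rG^\lambda_t=\rF^\lambda_t$.

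The only step requiring care is the homogeneity bookkeeping: $y_\tau^{1/2}$ scales all places, while the star action only scales $v_0$. The factor $(y_\tau)_v^{-b_v/2}$ at $v\neq v_0$ is exactly cancelled by the scaling of $x_v^{\otimes b_v}$, and at $v_0$ one uses the $t^{(b_{u_0}+b_{\ol{u_0}})/2}$ normalization implicit in $g_\lambda$. Beyond this I expect no obstacle. The argument also confirms that $\rG^\lambda_t(\phi^\infty)(\tau)$ is locally integrable with logarithmic singularities, so the identity holds in $\sD(X_L,\tS_{[\lambda]}(V_\infty))$ as asserted.
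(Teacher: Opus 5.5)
Your proposal is correct and follows the same route as the paper. The paper omits this proof as identical to that of Proposition~\ref{pr:green_orthogonal}: apply Lemma~\ref{le:green_unitary} to each $xy_\tau^{1/2}$, push forward and sum, then collapse the divisor term using $\sfv^\circ_\lambda(xy_\tau^{1/2})=\prod_v(y_\tau)_v^{b_v/2}\,\sfv^\circ_\lambda(x)$ and $\mu(xy_\tau^{1/2})\prod_v\te^{2\pi\ti(tx_\tau)_v}=\tq^t(\tau)$. Your first displayed identity, with $\sfv^\circ_\lambda$ on the right, is the precise form of what the paper writes there; the separate $v_0$ normalization you mention is already absorbed into Lemma~\ref{le:green_unitary}, and homogeneity handles $v_0$ exactly as at the other places.
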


\begin{definition}\label{de:measure_unitary}
Define $\rd^\natural h$ to be the unique (positive) Haar measure on $H(\bA_F^\infty)$ (whose existence is ensured by the local Siegel--Weil formula) such that for every $\Phi^\infty\in\sS((V^\infty)^2)$ and every $T\in\Herm_2^\circ(\bA_F^\infty)$,
\[
W_T(0,1_4,\Phi^\infty)=C_\infty^{-1}\int_{H(\bA_F^\infty)}\Phi^\infty(h^{-1}x)\rd^\natural h,
\]
where $x$ is an arbitrary element in $(V^\infty)^2_T$. Here, $C_\infty$ is the constant in Theorem \ref{th:sw_local_unitary}.
\end{definition}

\begin{definition}\label{de:regular_unitary}
We say that an element $\Phi^\infty\in\sS((V^\infty)^2)$ is \emph{regularly supported} if there exists a finite place $v$ of $F$ such that $\Phi^\infty(x)=0$ as long as $\det T(x_v)=0$.
\end{definition}

\begin{notation}\label{no:moment_unitary}
For every pair $(t_1,t_2)\in F^2$, put
\[
\fT(t_1,t_2)_{v_0}\coloneqq\left\{T\in\Herm_2(F)\left|T=\(\begin{smallmatrix} t_1 & * \\ * & t_2 \end{smallmatrix}\),v_0(\det T)<0\right.\right\}.
\]
\end{notation}

The theorem below is the analogue of Theorem \ref{th:sw_global_orthogonal} in the unitary case.

\begin{theorem}\label{th:sw_global_unitary}
For every pair $(t_1,t_2)\in F^2$ and every pair $(\phi^\infty_1,\phi^\infty_2)\in(\sS(V^\infty)^L)^2$ such that $\phi^\infty_1\otimes\ol{\phi^\infty_2}$ is regularly supported, the identity
\[
\vol(L,\rd^\natural h)\int_{X_L}\rG^\lambda_{t_1}(\phi^\infty_1)(\tau_1)\ast\rG^\lambda_{t_2}(\phi^\infty_2)(\tau_2)=
4 \sum_{T\in\fT(t_1,t_2)_{v_0}}W'_T(0,g_{\tau_1,\tau_2},\Phi_\lambda\otimes(\phi^\infty_1\otimes\ol{\phi^\infty_2}))
\]
holds as functions on $\dH_1\times\dH_1$, where
\[
g_{\tau_1,\tau_2}\coloneqq
\begin{pmatrix} 1 & & x_{\tau_1} & \\ & 1 & & x_{\tau_2} \\ && 1 & \\ &&& 1  \end{pmatrix}\cdot
\begin{pmatrix} y_{\tau_1}^{1/2} & & & \\ & y_{\tau_2}^{1/2} & & \\ && y_{\tau_1}^{-1/2} & \\ &&& y_{\tau_2}^{-1/2}  \end{pmatrix}\in G_2(F_\infty).
\]
\end{theorem}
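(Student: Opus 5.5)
We will prove Theorem~\ref{th:sw_global_unitary} by the same unfolding argument as Theorem~\ref{th:sw_global_orthogonal}, with Proposition~\ref{pr:green_unitary} and Theorem~\ref{th:sw_local_unitary} as the two inputs, following \cite{Kud97}*{\S 12} and \cite{GS19}*{\S 5.3}. Fix $\tau_1,\tau_2\in\dH_1$. Regular support of $\phi^\infty_1\otimes\ol{\phi^\infty_2}$ gives $\phi^\infty_i(0)=0$, so both generating functions are defined. It also implies that the divisors $\rD^\lambda_{t_1}(\phi^\infty_1)$ and $\rD^\lambda_{t_2}(\phi^\infty_2)$ have disjoint supports. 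Hence $(\rG^\lambda_{t_1}(\phi^\infty_1)(\tau_1),\rG^\lambda_{t_2}(\phi^\infty_2)(\tau_2))$ lies in $\Div^\Box_\sharp(X_L,\cV_\lambda)$, and its star product is an integrable current.

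First expand the star product bilinearly, with the second argument conjugated as in the definition for complex coefficients. This gives a double sum over pairs of classes $(x_1,h)$ and $(x_2,h')$ in $H(F)\backslash V_{t_i}\times H(\bA_F^\infty)/L$. Next, use the uniformization $X_L=H(F)\backslash(\fD\times H(\bA_F^\infty)/L)$ to unfold one sum against the integral over $X_L$. The pair of sums then becomes a single sum over $H(F)$-orbits of pairs $x=(x_1,x_2)\in V^2$ with $T(x)$ having diagonal $(t_1,t_2)$. Each orbit contributes
\[
\int_{H^x(F)\backslash H(\bA_F^\infty)}(\phi^\infty_1\otimes\ol{\phi^\infty_2})(h^{-1}x)\int_\fD g_\lambda(x_1y_{\tau_1}^{1/2})\ast g_\lambda(x_2y_{\tau_2}^{1/2}),
\]
multiplied by the normalizing factors $\prod_v(y_{\tau_i})_v^{-b_v/2}\te^{2\pi\ti(t_ix_{\tau_i})_v}$ from Definition~\ref{de:generating}, with $b_v=\sum_{v(u)=v}b_u$. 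The factor $\vol(L,\rd^\natural h)$ converts the sum over $H(\bA_F^\infty)/L$ into this integral.

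Regular support forces $\det T(x)\neq 0$. Since $V$ is anisotropic (as $X_L$ is proper), $H^x(F)$ is trivial. Moreover $T(x)$ is positive definite at every $v\neq v_0$, because $V_v$ is definite there, and has signature $(1,1)$ at $v_0$. So $T(x)$ ranges exactly over $\fT(t_1,t_2)_{v_0}$. In particular $(x_1y_{\tau_1}^{1/2},x_2y_{\tau_2}^{1/2})\in V_\infty^\Box$, and Theorem~\ref{th:sw_local_unitary} evaluates the inner archimedean integral as $C_\infty^{-1}W'_{T}(0,m(y^{1/2}),\Phi_\lambda)$.

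The remaining step is bookkeeping. The finite integral is $C_\infty W_T(0,1_4,\phi^\infty_1\otimes\ol{\phi^\infty_2})$ by Definition~\ref{de:measure_unitary}, and the constants $C_\infty$ cancel. One then checks that the factors $y^{-b_v/2}$ and the exponentials assemble, via the standard transformation of Whittaker functions under $n(x_\tau)m(y^{1/2})$, into $W'_T(0,g_{\tau_1,\tau_2},\Phi_\lambda\otimes(\phi^\infty_1\otimes\ol{\phi^\infty_2}))$. Three points need care here. The $K$-type $\delta^{1+b_v}$ of $\Phi_{\lambda_v}$ must exactly absorb the $y^{-b_v/2}$ factors. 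At $v\neq v_0$ the factor $\Phi_{\lambda_v}$ must match its Whittaker function through the local Siegel--Weil formula, as in \S\ref{ss:7}. The overall factor $4$ has to be tracked; it should arise as in the orthogonal case from the normalization of $\ast$ and the sesquilinear conventions.

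I expect this constant and weight bookkeeping to be the main obstacle, and I will check it against the $\lambda=\b 0$ case from \cite{Liu11}. Absolute convergence of the sum over $T$ follows as in \cite{GS19}*{\S 5.3}, using the rapid decay of $g_\lambda$ from Lemma~\ref{le:green_unitary}.
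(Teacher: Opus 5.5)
Your route is the one the paper intends. The paper gives Theorem~\ref{th:sw_global_unitary} only as the unitary analogue of Theorem~\ref{th:sw_global_orthogonal}: the standard unfolding of Proposition~\ref{pr:green_unitary} against Theorem~\ref{th:sw_local_unitary} and Definition~\ref{de:measure_unitary}. All the substance is therefore in the bookkeeping you defer, and two of your predictions there are wrong.

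\textbf{The constant.} The factor $4$ cannot come from the normalization of $\ast$ or from sesquilinearity, because both are already built into Theorem~\ref{th:sw_local_unitary}, your only archimedean input.
\begin{itemize}
\item In the orthogonal case the $4$ is inherited verbatim from the $4C_\infty^{-1}$ in Theorem~\ref{th:sw_local_orthogonal}.
\item In the unitary case the local formula is $C_\infty^{-1}W'_T$ with no $4$, and $\rd^\natural h$ is normalized with the same $C_\infty$.
\item The unfolding itself adds no further factor: the $H(F)$-action is free, $H^x$ is trivial, and each represented $T$ gives one orbit.
\end{itemize}
So you will get coefficient $1$. This is also what the paper uses downstream: Theorem~\ref{th:canonical_unitary} has $\tfrac12=\tfrac12\cdot 1$, while Theorem~\ref{th:canonical_orthogonal} has $2=\tfrac12\cdot 4$. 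The $4$ in the display looks like a carry-over from the orthogonal statement, and you should not try to produce it.

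\textbf{The weight factor.} Theorem~\ref{th:sw_local_unitary} applied to $(x_1y_{\tau_1}^{1/2},x_2y_{\tau_2}^{1/2})$ does not give $C_\infty^{-1}W'_T(0,m(y^{1/2}),\Phi_\lambda)$. It gives $C_\infty^{-1}W'_{T'}(0,1_4,\Phi_\lambda)$, where $T'=y_v^{1/2}Ty_v^{1/2}$ and $y_v=\mathrm{diag}((y_{\tau_1})_v,(y_{\tau_2})_v)$.
\begin{itemize}
\item Left equivariance of Whittaker functions gives, at each place, $W_T(s,m(y_v^{1/2}),\Phi)=(\det y_v)^{1-s}W_{T'}(s,1_4,\Phi)$.
\item Since $W_{T'}(0,1_4,\Phi_{\lambda_{v_0}})=0$, only the value $\det y_v$ at $s=0$ survives in the derivative.
\item The factors $\prod_v\det(y_v)^{-b_v/2}$ coming from the definition of $\rG^\lambda_t$ are not absorbed by any $K$-type: $m(y^{1/2})$ lies in the Levi, not in $K_\infty$. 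They simply remain.
\end{itemize}
The outcome is
\[
\vol(L,\rd^\natural h)\int_{X_L}\rG^\lambda_{t_1}(\phi^\infty_1)(\tau_1)\ast\rG^\lambda_{t_2}(\phi^\infty_2)(\tau_2)=\prod_v\det(y_v)^{-\frac{2+b_v}{2}}\sum_{T\in\fT(t_1,t_2)_{v_0}}W'_T(0,g_{\tau_1,\tau_2},\Phi_\lambda\otimes(\phi^\infty_1\otimes\ol{\phi^\infty_2})),
\]
with $b_v=\sum_{v(u)=v}b_u$. The right side thus comes out in the classical normalization, the same one built into $\cW_{b,T}$ in \S\ref{ss:9}, and the statement has to be read that way.

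\textbf{Minor points.}
\begin{itemize}
\item $H^x$ is trivial because $\det T(x)\neq0$ forces $x_1,x_2$ to span the two-dimensional $V$; anisotropy plays no role. Each nonempty $V^2_T$ is then a single $H(F)$-orbit.
\item $T(x)$ does not exhaust $\fT(t_1,t_2)_{v_0}$. The missing $T$ contribute nothing because $W_{T_{v_0}}(0,\cdot,\Phi_{\lambda_{v_0}})=0$ and, by the Hasse principle, some other local factor also vanishes at $s=0$, so $W'_T=0$.
\item The local Siegel--Weil reduction at $v\neq v_0$ is already contained in Theorem~\ref{th:sw_local_unitary}, so you need not redo it.
\end{itemize}
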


To end this section, we derive a result on \emph{canonical} height pairings. Suppose that $\lambda$ is not the trivial weight, so that $\rH^2(X_L,\tS_{[\lambda]}(V_\infty)(1))=0$ hence $\rD^\lambda_t(\phi^\infty)\in\Div(X_L,\cV_\lambda)^0$. The theorem below is the analogue of Theorem \ref{th:canonical_orthogonal} in the unitary case, providing an averaging formula for
\[
\left\langle\rD^\lambda_{t_1}(\phi^\infty_1)(\tau_1),\rD^\lambda_{t_2}(\phi^\infty_2)(\tau_2)\right\rangle_{X_L}
\]
(Definition \ref{de:canonical_complex}) under the same context of Theorem \ref{th:sw_global_unitary}.

\begin{theorem}\label{th:canonical_unitary}
Let $K$ be an open compact subgroup of $G_1(\bA_F^\infty)$ and choose a fundamental domain $\dH_1^K$ of $\dH_1$ for the congruent subgroup $G_1(F)\cap K$. Consider an element $t_2\in F$ and a pair $(\phi^\infty_1,\phi^\infty_2)\in(\sS(V^\infty)^L)^2$ such that $\phi^\infty_1\otimes\ol{\phi^\infty_2}$ is regularly supported. For every (holomorphic) hermitian Hilbert cusp form $f$ on $\dH_1$ of weights $(2b_u+1,-2b_{\ol{u}}-1)_u$ and level $G_1(F)\cap K$,
\begin{align*}
&\vol(L,\rd^\natural h)\int_{\dH_1^K}\ol{f(\tau_1)}\sum_{t_1\in F}
\left\langle\rD^\lambda_{t_1}(\phi^\infty_1)(\tau_1),\rD^\lambda_{t_2}(\phi^\infty_2)(\tau_2)\right\rangle_{X_L}\rd\tau_1 \\
&\qquad\quad=\frac{1}{2}\int_{\dH_1^K}\ol{f(\tau_1)}\sum_{t_1\in F}\sum_{T\in\fT(t_1,t_2)_{v_0}}
W'_T(0,g_{\tau_1,\tau_2},\Phi_\lambda\otimes(\phi^\infty_1\otimes\ol{\phi^\infty_2}))
\rd\tau_1
\end{align*}
holds in which both sides are absolutely convergent, where $\rd\tau_1$ denotes the measure on $\dH_1$ that induces the Petersson inner product.
\end{theorem}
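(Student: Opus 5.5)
I will follow the proof of Theorem \ref{th:canonical_orthogonal} closely, now starting from Theorem \ref{th:sw_global_unitary} in place of Theorem \ref{th:sw_global_orthogonal}.

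\textbf{Step 1: compare the star product with the canonical pairing.} Fix $\tau_1,\tau_2$ and write $\sfv_i=\tq^{t_i}(\tau_i)\rD^\lambda_{t_i}(\phi^\infty_i)$. Regular support of $\phi^\infty_1\otimes\ol{\phi^\infty_2}$ guarantees that $\sfv_1,\sfv_2$ have disjoint supports. Since $\lambda$ is nontrivial, both lie in $\Div(X_L,\cV_\lambda)^0$. By Proposition \ref{pr:green_unitary}, $\rG_i\coloneqq\rG^\lambda_{t_i}(\phi^\infty_i)(\tau_i)$ is a Green function for $\sfv_i$ with tail form $\rF_i\coloneqq\rF^\lambda_{t_i}(\phi^\infty_i)(\tau_i)$.

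Because $\rF_i$ is cohomologically trivial, we can solve $\rD\rD^\tc h_i=\rF_i$ for a smooth section $h_i$; here we use the $\partial\bar\partial$-lemma for polarized variations of Hodge structure on the compact curve (Zucker). Then $\rG_i-h_i$ is a Green function for $\sfv_i$ with vanishing tail form, that is, an element of $\Div_\sharp(X_L,\cV_\lambda)^\heartsuit$. Expanding $\int_{X_L}\rG_1\ast\rG_2$ via the definition of $\ast$ and Stokes (integrating $\rD\rD^\tc$ by parts against the flat polarization) gives
\[
\int_{X_L}\rG_1\ast\rG_2=2\langle\sfv_1,\sfv_2\rangle_{X_L}+\text{(terms involving $\rF_1$ or $\rF_2$ paired against smooth or Green data)}.
\]
More precisely, the error is a sum of integrals of $\tr_V(\rF_1\wedge\ol{\,\cdot\,})$ and of $\tr_V(\cdot\wedge\ol{\rF_2})$ against sections depending linearly on $\rF_1$ or $\rF_2$. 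With $\tq^{t_i}$ absorbed, this is exactly the structure of the argument in \cite{LL21}*{Proposition~10.1}. The normalisation $\frac12$ in Definition \ref{de:canonical_complex} accounts for the factor in Step 3.

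\textbf{Step 2: kill the error terms by averaging against $\ol f$.} Every error term is linear in $\rF^\lambda_{t_1}(\phi^\infty_1)(\tau_1)$, possibly after using the symmetry $\ast$ to move $\rF_2$-terms through the $\rF_1$-dependence of $h_1$. So it suffices to show
\[
\int_{\dH_1^K}\ol{f(\tau_1)}\sum_{t_1\in F}\rF^\lambda_{t_1}(\phi^\infty_1)(\tau_1)\rd\tau_1=0.
\]
By Proposition \ref{pr:form_unitary}(1), $\sum_{t_1}\rF^\lambda_{t_1}$ is a (non-holomorphic) theta function, so this integral is a global theta lift of $f$ to $H$ with coefficients in $\tS_{[\lambda]}$. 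If it were nonzero, the local theta lift at $v_0$ of the holomorphic discrete series of $G_1(F_{v_0})=\rU(1,1)$ of weight $(2b_{u_0}+1,-2b_{\ol{u_0}}-1)$ to $\rU(1,1)$ would be nonzero. But by Lemma \ref{le:invariance} this discrete series lifts to the compact form $\rU(2)$ with representation $\tS_{[\lambda_{v_0}]}$. Local theta dichotomy (Harris--Kudla--Sweet, for unitary pairs) then forbids a nonzero lift to the indefinite form of the same dimension with the same sign character. Hence the integral vanishes.

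\textbf{Step 3: constants and convergence.} Insert Theorem \ref{th:sw_global_unitary}. The left side carries $\vol(L,\rd^\natural h)$ times $2\langle\cdot,\cdot\rangle$, and the right side carries $4\sum_T W'_T$, which gives the factor $\frac{4}{2}\cdot\frac{1}{\ldots}$. I expect to have to double-check this bookkeeping: the stated factor $\frac12$ (versus $2$ in the orthogonal case) must come from the complex normalisation of $\ast$ and $\tr_V$ together with the sesquilinear conventions of Remark \ref{re:height_sesqui}. That reconciliation is where I anticipate the main subtlety, alongside the absolute convergence needed to interchange $\sum_{t_1}$ with $\int_{\dH_1^K}$.

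For convergence, $f$ is cuspidal, and both $\sum_{t_1}\rG$ and $\sum_{t_1}\sum_T W'_T$ have at most polynomial growth in $y_{\tau_1}$. This follows from the rapid decay in Lemma \ref{le:green_unitary} and from the Whittaker asymptotics used in Section \ref{ss:7}. This justifies the interchange exactly as in \cite{LL21}.
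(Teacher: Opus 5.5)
Your Steps 1 and 2 follow the paper's proof. The paper deduces the statement from Theorem \ref{th:sw_global_unitary} by the same argument as in the orthogonal case (Liu--Liu, Proposition~10.1). Its only new input is the vanishing of $\int_{\dH_1^K}\ol{f(\tau_1)}\sum_{t_1}\rF^\lambda_{t_1}(\phi^\infty_1)(\tau_1)\rd\tau_1$, obtained from local theta dichotomy at $v_0$.

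Your error analysis can be made exact. Write $\rG_i=g_i^\heartsuit+h_i$ with $\rD\rD^\tc h_i=\rF_i$; integration by parts then gives $\int_{X_L}\rG_1\ast\rG_2=2\langle\sfv_1,\sfv_2\rangle_{X_L}+\int_{X_L}\tr_V(\rF_1\wedge\ol{h_2})$. Since $h_2$ does not depend on $\tau_1$, the averaging against $\ol{f}$ kills this term. One citation needs correcting: that the discrete series lifts to $\tS_{[\lambda_{v_0}]}(\dC_{v_0}^2)$ on the compact form $\rU(2)$ is the compact dual pair correspondence. Lemma \ref{le:invariance} concerns the indefinite space and does not give it.

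The genuine gap is Step 3, and the explanation you propose there is wrong. The complex conventions produce no extra factor. Definition \ref{de:canonical_complex} carries the same $\frac12$ as Definition \ref{de:canonical_real}, and sesquilinearity only produces the conjugations already visible in $\phi^\infty_1\otimes\ol{\phi^\infty_2}$. If you quote the factor $4$ displayed in Theorem \ref{th:sw_global_unitary}, your argument yields $2$, as in Theorem \ref{th:canonical_orthogonal}, not $\frac12$. The paper's one-line deduction read literally would yield $2$ as well.

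The constant must be traced back to the local formula. Theorem \ref{th:sw_local_unitary} gives $\int_\fD g_\lambda(x_1)\ast g_\lambda(x_2)=C_\infty^{-1}W'_{T(x_1,x_2)}(0,1_4,\Phi_\lambda)$, with no factor $4$: Section \ref{ss:9} finds $C=C_\infty^{-1}$, whereas Section \ref{ss:7} finds $2C=4C_\infty^{-1}$. Moreover, $\rd^\natural h$ is normalised by the same $C_\infty$ in Definition \ref{de:measure_unitary}. For nondegenerate $T(x)$ the stabiliser $H^x$ is trivial, so unfolding gives
$\vol(L,\rd^\natural h)\int_{X_L}\rG^\lambda_{t_1}(\phi^\infty_1)(\tau_1)\ast\rG^\lambda_{t_2}(\phi^\infty_2)(\tau_2)=\sum_{T\in\fT(t_1,t_2)_{v_0}}W'_T(0,g_{\tau_1,\tau_2},\Phi_\lambda\otimes(\phi^\infty_1\otimes\ol{\phi^\infty_2}))$,
with factor $1$. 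The $4$ printed in Theorem \ref{th:sw_global_unitary} is inconsistent with the inputs the paper itself supplies; it appears to be carried over from Theorem \ref{th:sw_global_orthogonal}. Dividing the factor $1$ by the $2$ from Step 1 gives exactly $\frac12$.

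To close the proof, rederive the global identity by unfolding from Theorem \ref{th:sw_local_unitary} and Definition \ref{de:measure_unitary}, rather than quoting its displayed constant.
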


\begin{proof}
The deduction of this theorem from Theorem \ref{th:sw_global_unitary} follows from the same argument in the proof of \cite{LL21}*{Proposition~10.1}, as long as we show that
\[
\int_{\dH_1^K}\ol{f(\tau_1)}\sum_{t_1\in F}\rF^\lambda_{t_1}(\phi^\infty_1)(\tau_1)\rd\tau_1=0.
\]
Indeed, if this integral does not vanish, then the local theta lift of $\sigma_{b_{u_0}+1/2,-b_{\ol{u_0}}-1/2}$ -- the holomorphic discrete series of $G_2(F_{v_0})$ of Harish-Chandra weight $(b_{u_0}+1/2,-b_{\ol{u_0}}-1/2)$ -- to $H(F_{v_0})$ does not vanish. However, as we know that the local theta lift of $\sigma_{b_{u_0}+1/2,-b_{\ol{u_0}}-1/2}$ to $\rU(\dC_{v_0}^2)$ is $\tS_{[\lambda_{v_0}]}(\dC_{v_0}^2)$, this cannot happen by the local theta dichotomy.

The theorem is proved.
\end{proof}

\section{Proof of Theorem \ref{th:sw_local_unitary}}
\label{ss:9}

The proof of Theorem \ref{th:sw_local_unitary} follows from the same line of Theorem \ref{th:sw_global_orthogonal}. We will sketch the process by only indicating the difference. First, by the same argument (but using \cite{Liu11}*{Proposition~4.5(2)} instead of \cite{Kud97}*{Proposition~9.5(1)}), we may again assume $F=\dQ$ and suppress $v_0$ in all subscripts (while simply write $V$ for $V_\infty$). We identify $E\otimes_\dQ\dR$ with $\dC$ via the embedding $u_0$, and write $b'\coloneqq b_{u_0}$ and $b''\coloneqq b_{\ol{u_0}}$ so that $\lambda_{u_0}=(b',-b'')$ and $\lambda_{\ol{u_0}}=(b'',-b')$.

For $(x_1,x_2)\in V^\Box$, put
\begin{align*}
\nu_\lambda^\circ(x_1,x_2)\coloneqq\tr_{V_\lambda}
\(\nu_\lambda^\circ(x_1)\wedge\ol{\varphi_\lambda^\circ(x_2)}+\varphi_\lambda^\circ(x_1)\wedge\ol{\nu_\lambda^\circ(x_2)}\),
\end{align*}
and define
\[
g_\lambda^\circ(x_1,x_2)\coloneqq\int_1^\infty\frac{\nu_\lambda^\circ(t^{1/2}x_1,t^{1/2}x_2)}{t^{b'+b''}}\frac{\rd t}{t},
\]
which is absolutely convergent over $\fD$ and defines an element in $\sA^{1,1}(\fD)$. Moreover, it is easy to see that $g_\lambda^\circ(x_1,x_2)$ has rapid decay along the boundary of $\fD$.

\begin{lem}\label{le:sw_unitary_2}
For $(x_1,x_2)\in V^\Box$, we have
\[
\int_\fD g_\lambda(x_1)\ast g_\lambda(x_2)=\mu(x_1)\mu(x_2)\lim_{M\to +\infty}\int_1^M\int_\fD\frac{\nu_\lambda^\circ(t^{1/2}x_1,t^{1/2}x_2)}{t^{b'+b''}}\frac{\rd t}{t}.
\]
\end{lem}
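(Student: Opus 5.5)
The argument runs parallel to the proof of Lemma \ref{le:sw_orthogonal_2}, combined with the unitary analogue of Lemma \ref{le:sw_orthogonal_1}. First I will establish that analogue: with $\alpha_\lambda,\beta_\lambda$ defined as in \S\ref{ss:7}, but now pairing $\nu^\circ_\lambda(t_1^{1/2}x_1)/t_1^{(b'+b'')/2}$ against $\rD''$ (resp.\ $\rD'$) of the \emph{complex conjugate} of $\nu^\circ_\lambda(t_2^{1/2}x_2)/t_2^{(b'+b'')/2}$ through $\tr_{V_\lambda}$, the goal is the identity of currents
\[
g_\lambda^\circ(x_1)\ast g_\lambda^\circ(x_2)-g_\lambda^\circ(x_1,x_2)=\rD'[\alpha_\lambda(x_1,x_2)]+\rD''[\beta_\lambda(x_1,x_2)].
\]
The proof is the argument of \cite{GS19}*{Theorem~2.16}. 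Formally, $g^\circ_\lambda(x_i)=\int_1^\infty(\cdot)\,\rd t_i/t_i$ and Lemma \ref{le:nu_unitary} gives $\rD\rD^\tc$ of the integrand as $-t\frac{\rd}{\rd t}$ of the $\varphi^\circ_\lambda$-term. We then integrate by parts over the region $1\le t_1\le t_2$ and its complement. The boundary terms at $t_i=1$ produce $g^\circ_\lambda(x_1,x_2)$. The terms at $t_i\to\infty$ produce $\delta_{\sfv^\circ_\lambda}$ and then vanish, by the asymptotics used in Lemma \ref{le:green_unitary}.

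Two ingredients make this work. The first is that the polarization $\tr_{V_\lambda}$ is flat and compatible with the Hodge decomposition, so the Leibniz rule
\[
\rd\,\tr_{V_\lambda}(a\wedge\ol{b})=\tr_{V_\lambda}(\rD a\wedge\ol b)\pm\tr_{V_\lambda}(a\wedge\ol{\rD b})
\]
splits into its $\rD',\rD''$ components. This is where complex conjugation interchanges $\rD'$ and $\rD''$; on $\ol{\cV}$ we have $\ol{\partial'}$ and $\ol{\nabla'}$ swapping types. The second is the local integrability of $\alpha_\lambda$ and $\beta_\lambda$ near $\fD_{x_1}\cup\fD_{x_2}$. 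This follows as in \cite{GS19}*{Lemma~2.15}, because near $\fD_{x}$ the integrand is $\te^{-2\pi t|\text{normal coordinate}|^2}$ times the constant vector $[x^{\otimes b}]$ plus $O(t^{-1})$, as in the proof of Lemma \ref{le:green_orthogonal}.

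Next, $\alpha_\lambda$ and $\beta_\lambda$ (and their derivatives) decay rapidly at the boundary of $\fD$, for the same reason as in Lemma \ref{le:green_orthogonal}: the factor $\te^{-t}$ is uniform in $t\ge1$. Stokes then gives $\int_\fD\rD'[\alpha_\lambda]=\int_\fD\rD''[\beta_\lambda]=0$. As $\rD'$ and $\rD''$ act on scalar-valued forms here as $\partial,\ol\partial$, we obtain
\[
\int_\fD g_\lambda(x_1)\ast g_\lambda(x_2)=\mu(x_1)\mu(x_2)\int_\fD g^\circ_\lambda(x_1,x_2).
\]
The factor $\mu(x_1)\mu(x_2)$ comes out because $\mu$ is real and constant on $\fD$, so $\ol{\mu(x_2)}=\mu(x_2)$.

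Finally, write $g^\circ_\lambda(x_1,x_2)$ as $\lim_{M\to\infty}\int_1^M$. The convergence is uniform on $\fD$, since for $(x_1,x_2)\in V^\Box$ the integrand decays exponentially in $t$ uniformly (the majorant in $t\ge1$ used for the star product). The double integral over $[1,M]\times\fD$ converges absolutely by rapid decay at the boundary. Fubini therefore gives the stated formula.

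The main obstacle is the current identity: keeping track of the signs and conjugations in the complex (hermitian) star product, and checking that the boundary terms at infinity reproduce exactly $\tr_{V_\lambda}(g\wedge\delta_{\ol{\sfv}}+\omega\wedge\ol g)$. I would first try to reduce to the real case via the functor $\pres\dR-$ and Remark \ref{re:height_sesqui}, so as to invoke the orthogonal argument verbatim.
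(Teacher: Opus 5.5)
Your proposal is correct and takes the same route as the paper, whose proof is a reference to Lemma \ref{le:sw_orthogonal_2}: the unitary analogue of the current identity in Lemma \ref{le:sw_orthogonal_1} (via the Garcia--Sankaran argument), whose contribution integrates to zero by Stokes and the boundary decay of $\alpha_\lambda,\beta_\lambda$, followed by uniform convergence in $t$ and Fubini. One small correction to your heuristic bookkeeping: only the region $1\leq t_1\leq t_2$ is needed, and $g^\circ_\lambda(x_1,x_2)$ comes from the diagonal $t_1=t_2$, not from the ends; the ends $t_1=1$ and $t_2\to\infty$ reproduce the star-product terms $\tr_{V_\lambda}(\varphi^\circ_\lambda(x_1)\wedge\ol{g^\circ_\lambda(x_2)})$ and $\tr_{V_\lambda}(g^\circ_\lambda(x_1)\wedge\delta_{\ol{\sfv^\circ_\lambda(x_2)}})$, rather than vanishing.
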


\begin{proof}
This follows from the similar proof of Lemma \ref{le:sw_orthogonal_2}.
\end{proof}

Define two functions $\Psi_{\lambda1}$ and $\Psi_{\lambda2}$ in $\sS(V^2)$ such that
\begin{align*}
\Psi_{\lambda1}(x_1,x_2)\otimes\Omega=\tr_{V_\lambda}\(\nu_\lambda(x_1)\wedge\ol{\varphi_\lambda(x_2)}\), \qquad
\Psi_{\lambda2}(x_1,x_2)\otimes\Omega=\tr_{V_\lambda}\(\varphi_\lambda(x_1)\wedge\ol{\nu_\lambda(x_2)}\)
\end{align*}
hold in $\sS(V^2)\otimes_\dC\wedge^2\fp^\vee$. Finally, put $\Psi_\lambda\coloneqq\Psi_{\lambda1}+\Psi_{\lambda2}$.

\begin{lem}\label{le:sw_unitary_3}
There exists a constant $C\in\dC^\times$ independent of $\lambda$, such that for every $(x_1,x_2)\in V^\Box$,
\[
\mu(x_1)\mu(x_2)\int_\fD\nu_\lambda^\circ(x_1,x_2)=C\cdot W_{T(x_1,x_2)}(0,1_4,\Psi_\lambda)
\]
holds.
\end{lem}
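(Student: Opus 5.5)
The plan follows the proof of Lemma \ref{le:sw_orthogonal_3}, adapted to the unitary setting. The first step is to unwind the definitions. Since $\mu$ is constant on $\fD$ and $\nu_\lambda^\circ=\nu_\lambda\mu^{-1}$, $\varphi_\lambda^\circ=\varphi_\lambda\mu^{-1}$, with $\mu$ real (so $\ol{\mu}=\mu$), the factor $\mu(x_1)\mu(x_2)$ cancels exactly. The left-hand side then becomes
\[
\int_\fD\Psi_\lambda(x_1,x_2)\otimes\Omega,
\]
where the integrand is regarded as an $H(\dR)$-equivariant form on $\fD$ via translation, as in the construction of $\varphi_b$ and $\nu_b$. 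More precisely, the form at the point $hL_\infty$ is $\Psi_\lambda(h^{-1}(x_1,x_2))\,h_*\Omega$. This uses that $\nu_\lambda$ and $\varphi_\lambda$ are $H(\dR)$-invariant elements, and that $\tr_{V_\lambda}$ is $H(\dR)$-invariant because the hermitian pairing on $\tS_{[\lambda]}(V)$ is $\rU(V)$-invariant. It also uses that $\wedge^{1,1}\fp^\vee$ is one-dimensional and spanned by $\Omega$, which is what makes $\Psi_{\lambda1}$ and $\Psi_{\lambda2}$ well defined.

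The second step is to invoke the local Siegel--Weil formula for $\rU(1,1)\times\rU(1,1)$ (the archimedean case, as in \cite{Liu11}). It gives a Haar measure $\rd h$ on $H(\dR)$ such that
\[
\int_{H(\dR)}\Psi(h^{-1}(x_1,x_2))\rd h=W_{T(x_1,x_2)}(0,1_4,\Psi)
\]
for all $\Psi\in\sS(V^2)$ and all $(x_1,x_2)$ with $T(x_1,x_2)$ nonsingular. In the present setting $m=2$ and $r=2$, so $s=0$ sits in the range $m\ge r$ where the integral is absolutely convergent when $T$ is nonsingular; indefinite $T$ is allowed.

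The third step is to check that $\Psi_\lambda$ is invariant under the stabilizer $L_{v_0}$ of the base point, which holds because $\Omega$ is $L_{v_0}$-invariant. The integral over $\fD=H(\dR)/L_{v_0}$ then unfolds to the integral over $H(\dR)$. Choosing $C$ with $\Omega=C\cdot\rd h/\rd l$, where $\rd l$ is the probability Haar measure on $L_{v_0}$, makes $C$ depend only on the normalization of $\Omega$ and $\rd h$, and hence not on $\lambda$.

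The main obstacle is justifying absolute convergence of the unfolded integral for this non-Gaussian $\Psi_\lambda$. I expect this to go through because $\Psi_\lambda$ is a polynomial times a Gaussian, and because the local Siegel--Weil integral converges absolutely for such functions whenever $T$ is nonsingular (for $\rU(1,1)$, $H^x$ is trivial when $T(x)$ is nonsingular, so the orbit is closed).
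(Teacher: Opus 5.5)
Your proposal is correct and is essentially the paper's argument, namely the proof of Lemma \ref{le:sw_orthogonal_3} transported to the unitary case: cancel $\mu(x_1)\mu(x_2)$ to rewrite the left side as $\int_\fD\Psi_\lambda(x_1,x_2)\otimes\Omega$, use the invariance of $\Psi_\lambda$ under the stabilizer of the base point to unfold the integral to $H(\dR)$, and apply the local Siegel--Weil formula with $C$ defined by $\Omega=C\cdot\rd h/\rd l$. One small slip is that the relevant dual pair is $H\times G_2=\rU(1,1)\times\rU(2,2)$, not $\rU(1,1)\times\rU(1,1)$; the convergence issue you flag is harmless, since for nonsingular $T$ the group $H(\dR)$ acts freely on the closed orbit $V^2_T$.
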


\begin{proof}
This follows from the similar proof of Lemma \ref{le:sw_orthogonal_3}.
\end{proof}

To continue, we need to further analyze degenerate principle series. To ease notation, we simply write $G_r$ for $G_r(\dR)=\rU_{r,r}(\dR)\subseteq\GL_{2r}(\dC)$, so that we have the standard maximal compact subgroup $K_r\subseteq G_r(\dR)$. For $s\in\dC$, we have the principal series representation $\rI_r(s)$ of $G_r$ as in \cite{GS19}*{(3.30)} (with $G'=G_r$, $P$ its standard upper-triangular Siegel parabolic subgroup, and $\chi$ trivial). We have maps
\[
f_\bullet\colon\sS((\dC^2)^2)\to\rI_2(0),\qquad
f_\bullet\colon\sS(V^2)\to\rI_2(0)
\]
by taking Siegel--Weil sections with respect to $\omega_\psi$ (where $\psi$ sends $t\in\dR$ to $\te^{2\pi t\ti}$). For $s\in\dC$, we denote by $f_\bullet(s)\in\rI_2(s)$ the induced standard section of $f_\bullet$.

Let $\iota_1$ and $\iota_2$ be the homomorphisms from $G_1$ to $G_2$ given by
\begin{align*}
\begin{pmatrix} a & b \\ c & d \end{pmatrix}
\mapsto
\begin{pmatrix}  a && b & \\  & 1 && 0 \\ c && d  & \\ & 0 && 1\end{pmatrix},\qquad
\begin{pmatrix} a & b \\ c & d \end{pmatrix}
\mapsto
\begin{pmatrix} 1 & & 0 & \\ & a && b \\ 0 && 1 & \\ & c && d \end{pmatrix},
\end{align*}
respectively. Recall that we have the lowering operator $\rL$, and write $\rL_1\coloneqq(\iota_1)_*\rL$ and $\rL_2\coloneqq(\iota_2)_*\rL$.

\begin{proposition}\label{pr:whittaker_unitary}
There exist elements $\Phi_{\lambda1}$ and $\Phi_{\lambda2}$ in $\sS((\dC^2)^2)$ such that
\begin{align*}
\rL_1 f_{\Phi_\lambda}(s)&=-s\cdot f_{\Psi_{\lambda1}}(s)+s\cdot f_{\Phi_{\lambda1}}(s), \\
\rL_2 f_{\Phi_\lambda}(s)&=-s\cdot f_{\Psi_{\lambda2}}(s)+s\cdot f_{\Phi_{\lambda2}}(s),
\end{align*}
hold for $s\in\dC$.
\end{proposition}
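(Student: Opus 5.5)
We argue as in the proof of Proposition \ref{pr:whittaker_orthogonal}, treating only $\rL_2$; the case of $\rL_1$ is symmetric under swapping the two factors. First we settle the trivial weight. We describe $K_2$-finite vectors of $\rI_2(s)$ as in \cite{Lee96}*{\S2}, now for $\rU_{2,2}$, where $K_2=\rU(2)\times\rU(2)$ and the $K_2$-types are indexed by pairs of weights. We expect
\[
f_{\Phi_{\b{0}}}(s)\doteq(\det z)^{-1}(\det\ol z)^{-1}|\det z|^{-s}
\]
up to a harmless unit, and that $f_{\Psi_{\b{0}2}}(s)$ is obtained from $f_{\Phi_{\b{0}}}(s)$ by multiplying by a degree-zero factor $\gamma(z)(\det z)^{-1}$, with $\gamma$ the analogue of the mixed minor. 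Here $\rL_2$ is a first-order differential operator in the second-row variables. A direct computation should then give $\rL_2f_{\Phi_{\b{0}}}(s)=-s\,f_{\Psi_{\b{0}2}}(s)$, which is the statement for $\lambda=\b{0}$ with $\Phi_{\b{0}2}=0$, matching \cite{GS19}*{(3.30)ff.}. The factor is $-s$ rather than $-s/2$ because $|\det z|^{-s}$ now involves $\det z\det\ol z$ with the complex normalization. The constant must be checked against the normalization of $\Psi_{\b 02}$, which is defined through $\varphi_0$ and $\nu_0$ via Lemma \ref{le:holomorphy_unitary}.

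For general $\lambda=(b',-b'')$, we introduce two raising operators $\rR',\rR''\in\fp_2^+$. These are the unitary counterparts of $\rR$, built from \cite{FH21}*{\S A.2}: in the Fock model they multiply by the mixed products $\sum_j\sfz'_{1j}\sfz''_{2j}$ and $\sum_j\sfz''_{1j}\sfz'_{2j}$ (with signs according to the signature). Then $\rR'^{b'}\rR''^{b''}f_{\Phi_{\b{0}}}=f_{\Phi_b}$ with $\Phi_b=\phi_b\otimes\ol{\phi_b}$. From the Fock-model formulas for $\nu_b,\varphi_b$ in the proof of Lemma \ref{le:holomorphy_unitary}, projected to $\tS_{[\lambda]}$, we get $f_{\Psi_{\lambda2}}=q_\lambda\rR'^{b'}\rR''^{b''}f_{\Psi_{\b{0}2}}$ for a nonzero constant $q_\lambda$, the unitary analogue of Example \ref{ex:ratio_orthogonal}. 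Next, in the bigraded polynomial ring generated by the hermitian moment matrix $\sft$, we write $P_b=q_\lambda^{-1}P_\lambda+(\det\sft)P_b^\flat$, evaluating at $\sft=\left(\begin{smallmatrix}1&1\\1&1\end{smallmatrix}\right)$ to identify the constant. Since $\det\sft$ transforms by a character that pushes the $K_2$-type into the ``$+$'' part, the term $f_{(\det\sft)P^\flat_b}$ and its image under $\rL_2$ lie in $\rI_+(s)$. Here $\rI_\pm(s)$ is the decomposition of $\rI(s)$ into $K_2$-types according to the sign of the relevant second weight, as in \cite{LZ13}. The images of $\sS'((\dC^2)^2)$ and $\sS'(V^2)$ are $\rI_+(0)$ and $\rI_-(0)$ respectively, since $V$ has signature $(1,1)$ and $\dC^2$ has signature $(2,0)$.

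Because $P_\lambda$ is harmonic in the $y_2$-variable, $\rL_2f_{\Phi_\lambda}=0$ at $s=0$. Hence $\rL_2f_{\Phi_\lambda}(s)=s f_-(s)+s f_+(s)$ with $f_-$ in the single $K_2$-type containing $f_{\Psi_{\lambda2}}$ and $f_+\in\rI_+(0)$. We then set $f_{\Phi_{\lambda2}}\coloneqq f_+$, which is legitimate because $\rI_+(0)$ is the image of $\sS'((\dC^2)^2)$. It remains to show $f_-=-f_{\Psi_{\lambda2}}$. Using the decomposition of $P_b$, this reduces to showing that the $\rI_-$-projection of $\rL_2(\rR'^{b'}\rR''^{b''}f_{\Phi_{\b0}})(s)$ equals $-s\,\rR'^{b'}\rR''^{b''}f_{\Psi_{\b02}}(s)$. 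We compare with the constant section, $\rL_2$ applied to $\rR'^{b'}\rR''^{b''}f_{\Phi_{\b0}}$ at $s=0$, which lies in $\rI_+(0)$. The difference comes only from differentiating $|\det z|^{-s}$. Since $\rR',\rR''$ annihilate $\det z$, $\det\ol z$ and $\gamma(z)$ (the analogue of \cite{Lee96}*{(2.5)}), the claim follows.

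The main obstacle is the explicit Lee-type model for $\rU_{2,2}$. One must identify the operators $\rL_2,\rR',\rR''$ as explicit differential operators in $z$ and verify that the raising operators kill $\gamma$ and $\det z$. One must also get the constant $-s$ right, including the identification of the projection $\rI_-$ in the unitary setting. I would first verify everything for $b'=1$, $b''=0$ directly in the Fock model, and only then induct on $b'+b''$.
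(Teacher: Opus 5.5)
Your architecture matches the paper's proof step for step. The paper also does the trivial weight first in a Lee-type model. It then uses $\rR^b=\rR^{\prime\prime b''}\rR^{\prime b'}$ with $\rR^bf_{\Phi_{\b0}}=f_{\Phi_b}$ and $f_{\Psi_{\lambda2}}=q_\lambda\rR^bf_{\Psi_{\b02}}$, the splitting $P_b=q_\lambda^{-1}P_\lambda+(\det\sft)P_b^\flat$ with $\det\sft$ pushing into the $c_2>0$ types, and $\rL_2f_{\Phi_\lambda}(0)=0$ by harmonicity. It ends with the same comparison, using that the raising operators kill $\gamma$ and $\det$.

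The step that does not work as written is the base case. Your guessed $f_{\Phi_{\b0}}(s)$ is a function of $u=\det z\cdot\det\ol z$ alone, namely $u^{-1-s}$. If $\rL_2$ acts by a vector field, as in Lee's models, then $\rL_2f_{\Phi_{\b0}}(s)=-(1+s)u^{-1}(\rL_2u)\,f_{\Phi_{\b0}}(s)$. This vanishes at $s=0$, as it must since $\omega(\rL_2)$ kills the Gaussian $\Phi_{\b0}$, only if $\rL_2u=0$. But then it vanishes for every $s$, so it can never equal $-sf_{\Psi_{\b02}}(s)$.

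The $K_2$-type has to sit in a prefactor that is separate from the $s$-dependent factor and is annihilated by $\rL_2$. The paper gets this from the two-variable model of \cite{Lee94} for $\rU(2,2)$, not from \cite{Lee96}:
\[
f_{\Phi_{\b0}}(s)\doteq(\det\ol w)^{-1}|\det w|^{-2s},\qquad f_{\Psi_{\b02}}(s)\doteq\gamma(z,w)|\det w|^{-2-2s},\qquad \gamma(z,w)=w_{11}z_{22}-w_{12}z_{21}.
\]
Here $\rL_2$ corresponds to $p^+_{22}=\sum_{j=1}^2\bigl(z_{2j}\partial/\partial w_{2j}+\ol w_{2j}\partial/\partial\ol z_{2j}\bigr)$. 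This operator kills $(\det\ol w)^{-1-s}$ and sends $(\det w)^{-s}$ to $-s\,\gamma(z,w)(\det w)^{-1-s}$. So $\rL_2f_{\Phi_{\b0}}(s)=-sf_{\Psi_{\b02}}(s)$ follows at once; the $-s$ rather than $-s/2$ comes from the exponent $-2s$ on $|\det w|$. The same formulas \cite{Lee94}*{(2.6)} show that $\rR',\rR''$ kill $\gamma(z,w)$ and $\det w$, which is what your last step needs.

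A minor correction: in the unitary case the $K_2$-types split three ways, $\rI_{++}\oplus\rI_{+-}\oplus\rI_{--}$. The image of $\sS'(V^2)$ is $\rI_{+-}(0)$ (where $c_1\geq0>c_2$), not the whole $c_2<0$ part. Your argument only uses $f_-\in\rI_{(b'+b'',-1)}(0)$ and $f_+\in\rI_{++}(0)$, so nothing else changes.
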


When $b'=b''=0$ (so that $\lambda=\b{0}$), one may take $\Phi_{\b{0}1}=\Phi_{\b{0}2}=0$ by \cite{GS19}*{(3.36)~\&~(3.89)}.

\begin{proof}
We prove for $\rL_2$ and the case for $\rL_1$ is similar.

We first review some facts from \cite{Lee94}. Denote by $\rI(s)$ the $K_2$-finite subspace of $\rI_2(s)$. Then
\[
\rI(s)=\bigoplus_{\mu=(c_1,c_2)\in\fW_2}\rI_{\mu}(s),
\]
where $\rI_\mu(s)$ denotes the subspace whose $K_2$-type has weight $(\mu+(1,1))\boxtimes(-\mu-(1,1))$, which has multiplicity one. Put
\[
\rI_{++}(s)=\bigoplus_{\substack{\mu=(c_1,c_2)\in\fW_2\\ c_2\geq 0}}\rI_\mu(s),\qquad
\rI_{+-}(s)=\bigoplus_{\substack{\mu=(c_1,c_2)\in\fW_2\\ c_1\geq 0>c_2}}\rI_\mu(s),\qquad
\rI_{--}(s)=\bigoplus_{\substack{\mu=(c_1,c_2)\in\fW_2\\ 0>c_1}}\rI_\mu(s).
\]
Denote by $\sS'((\dC^2)^2)$ (resp.\ $\sS'(V^2)$) the subspace of $\sS((\dC^2)^2)$ (resp.\ $\sS(V^2)$) consisting of Schwartz functions of the form $P\cdot\Phi_{\b{0}}=P\cdot(\phi_0\otimes\phi_0)$ (resp.\ $P\cdot(\nu_0\otimes\nu_0)$) where $P$ is a polynomial, whose image under the map $f_\bullet$ is contained in $\rI(0)$. More precisely, such images of $\sS'((\dC^2)^2)$ and $\sS'(V^2)$ are $\rI_{++}(0)$ and $\rI_{+-}(0)$, respectively.

We use the way in \cite{Lee94}*{\S 2} to describe elements in $\rI(s)$.\footnote{However, the side of action of the induced representation in \cite{Lee94} is opposite to ours.} In particular,
\begin{align*}
f_{\Phi_{\b{0}}}(s)&\doteq(\det\ol{w})^{-1}\cdot|\det w|^{-2s},\\
f_{\Psi_{\b{0}2}}(s)&\doteq\gamma(z,w)\cdot |\det w|^{-2-2s},
\end{align*}
where $\gamma(z,w)=w_{11}z_{22}-w_{12}z_{21}$. Here, $\doteq$ means that the left-hand side is given by the restriction of the right-hand side to $X^{oo}$. Moreover, $\rL_2$ corresponds to the operator
\[
p_{22}^+=z_{21}\frac{\partial}{\partial w_{21}}+\ol{w}_{21}\frac{\partial}{\partial\ol{z}_{21}}+
z_{22}\frac{\partial}{\partial w_{22}}+\ol{w}_{22}\frac{\partial}{\partial\ol{z}_{22}}
\]
\cite{Lee94}*{(2.6)}. It follows easily that $\rL_2f_{\Phi_{\b{0}}}(s)=-s\cdot f_{\Psi_{\b{0}2}}(s)$.

Now we consider a general weight $\lambda=((b',-b''),(b'',-b'))$. Denote by $\rR'$ and $\rR''$ the operators given by the elements
\[
\frac{-\ti}{4\pi}(w_2\circ w_3-\ti w_2\circ w_3\ti),\qquad
\frac{-\ti}{4\pi}(w_1\circ w_4-\ti w_1\circ w_4\ti)
\]
in $\fp^+_2$ in the notation of \cite{FH21}*{Lemma~B.2}, and put $\rR^b\coloneqq\rR^{\prime\prime b''}\circ\rR^{\prime b'}$. By the same lemma and the construction of $\Psi_{\lambda2}$, we have
\[
f_{\Psi_{\lambda2}}=q_\lambda\cdot\rR^b f_{\Psi_{\b{0}2}}\in\rI_{(b'+b'',-1)}(0)
\]
where $q_\lambda$ is a nonzero rational number defined similarly as in Example \ref{ex:ratio_orthogonal}. On the other hand, by \cite{FH21}*{Lemma~B.2} again, $\rR^b f_{\Phi_{\b{0}}}=f_{\Phi_b}$, where $\Phi_b\coloneqq\phi_b\otimes\ol{\phi_b}$. We need to study the relation between $f_{\Phi_b}$ and $f_{\Phi_\lambda}$.

Write $\(\begin{smallmatrix} y_1 \\ y_2 \end{smallmatrix}\)=\(\begin{smallmatrix} y_{11} & y_{12} \\ y_{21} & y_{22} \end{smallmatrix}\)$ for the standard coordinates of $(\dC^2)^2$. We use the Fock model $\sS'((\dC^2)^2)\simeq\dC\left[\begin{smallmatrix}\sfy'_{11} & \sfy'_{12} & \sfy''_{11} & \sfy''_{12} \\ \sfy'_{21} & \sfy'_{22} & \sfy''_{21} & \sfy''_{22} \end{smallmatrix}\right]$, in which $\sfy'_{ij}$ and $\sfy''_{ij}$ correspond to $\sqrt{2}\pi\ti\(y_{ij}-\frac{1}{\pi}\frac{\partial}{\partial\ol{y_{ij}}}\)$ and $\sqrt{2}\pi\ti\(\ol{y_{ij}}-\frac{1}{\pi}\frac{\partial}{\partial y_{ij}}\)$, respectively. Let $P_\lambda$ and $P_b$ be the polynomials in the Fock model corresponding to $\Phi_\lambda$ and $\Phi_b$, respectively. Then we have $\Phi_\lambda=(2\sqrt{2}\pi\ti)^{-2(b'+b'')}P_\lambda\cdot\Phi_0$ and $P_b=(2\sqrt{2}\pi\ti)^{-2(b'+b'')}
(\sfy'_{11}\sfy''_{21}+\sfy'_{12}\sfy''_{22})^{b'}(\sfy''_{11}\sfy'_{21}+\sfy''_{12}\sfy'_{22})^{b''}$. Define a matrix
\[
\sft=\begin{pmatrix} \sft_{11} & \sft_{12} \\ \sft_{21} & \sft_{22} \end{pmatrix}
\coloneqq\begin{pmatrix} \sfy'_{11}\sfy''_{11}+\sfy'_{12}\sfy''_{12} & \sfy'_{11}\sfy''_{21}+\sfy'_{12}\sfy''_{22} \\ \sfy''_{11}\sfy'_{21}+\sfy''_{12}\sfy'_{22} & \sfy'_{21}\sfy''_{21}+\sfy'_{22}\sfy''_{22} \end{pmatrix}
\]
in which we assign bi-degree $(2,0)$ to $\sft_{11}$, $(0,2)$ to $\sft_{22}$, and $(1,1)$ to $\sft_{12}$ and $\sft_{21}$. For every integer $d\geq 0$, denote by $\dC[\sft]_{d,d}$ the space of polynomials in entries of $\sft$ of homogeneous bi-degree $(d,d)$ that are conjugate-symmetric in $\sft_{12}$ and $\sft_{21}$, which is a subspace of the Fock model. It is clear both $P_\lambda$ and $P_b$ belong to $\dC[\sft]_{b'+b'',b'+b''}$, and there exist unique elements $c\in\dC$ and $P^\flat_b\in\dC[\sft]_{b'+b''-2,b'+b''-2}$ such that
\[
P_b=c\cdot P_\lambda + (\det\sft)P^\flat_b.
\]
Evaluating both sides at $\sft=\(\begin{smallmatrix} 1 & 1 \\ 1 & 1 \end{smallmatrix}\)$, we obtain that $c=q_\lambda^{-1}$. It follows that
\[
q_\lambda\cdot f_{\Phi_b}=f_{\Phi_\lambda}+f_{(\det\sft)P^\flat_b}
\]
(where we have replaced $P^\flat_b$ by $q_\lambda^{-1}P^\flat_b$). Note that $f_{\Phi_\lambda}\in\rI_{(b'+b'',0)}(0)$, and since $P_\lambda$ is harmonic in the $y_2$-variable, $\rL_2f_{\Phi_\lambda}=0$. It follows that
\[
\rL_2f_{\Phi_\lambda}(s)=s f_{+-}(s)+sf_{++}(s)
\]
for unique elements $f_{+-}\in\rI_{(b'+b'',-1)}(0)$ and $f_{++}\in\rI_{++}(0)$. For the proposition, it remains to show that $f_{+-}(s)=-f_{\Psi_{b2}}(s)$.

By \cite{FH21}*{Lemma~B.2} again, we know that $\det\sft$ is an eigenvector for the action of $K_2$, of eigen-character $\delta$. Thus,
\[
f_{(\det\sft)P^\flat_b}\in\bigoplus_{\substack{\mu=(c_1,c_2)\in\fW_2\\ c_1+c_2=b'+b''\\ c_2>0}}\rI_\mu(0),
\]
so that $\rL_2f_{(\det\sft)P^\flat_b}(s)\in\rI_{++}(s)$. It follows that $s f_{+-}(s)$ is also the projection of $q_\lambda\cdot\rL_2(\rR^b f_{\Phi_{\b{0}}})(s)$ to the factor $\rI_{(b'+b'',-1)}(s)$. Together, it reduces to show that the projection of $\rL_2(\rR^b f_{\Phi_{\b{0}}})(s)$ to the factor $\rI_{(b'+b'',-1)}(s)$ coincides with $-s\cdot(\rR^b f_{\Psi_{\b{0}2}})(s)$.

Now we have
\[
\rL_2(\rR^b f_{\Phi_{\b{0}}})(s)-(\rL_2\rR^b f_{\Phi_{\b{0}}})(s)\doteq
-s\cdot\rR^b\((\det\ol{w})^{-1}\)\cdot\gamma(z,w)(\det w)^{-1}|\det w|^{-2s},
\]
and
\[
-s\cdot(\rR^b f_{\Psi_{\b{0}2}})(s)\doteq
-s\cdot\rR^b\((\det\ol{w})^{-1}\cdot\gamma(z,w)(\det w)^{-1}\)\cdot|\det w|^{-2s}.
\]
Since both $\rR'$ and $\rR''$ annihilate both $\gamma(z,w)$ and $\det w$ by \cite{Lee94}*{(2.6)}, we have
\[
\rL_2(\rR^b f_{\Phi_{\b{0}}})(s)-(\rL_2\rR^b f_{\Phi_{\b{0}}})(s)=-s\cdot(\rR^b f_{\Psi_{\b{0}2}})(s).
\]
The the claim follows as $\rL_2\rR^b f_{\Phi_{\b{0}}}\in\rI_{++}(0)$.

The proposition is proved.
\end{proof}

For $\Phi\in\sS((\dC^2)^2)\oplus\sS(V^2)$, $T\in\Herm_2^\circ(\dR)$, and $y\in\Herm_2^\circ(\dR)^+$, we define a function
\[
\cW_{b,T}(s,y,\Phi)\coloneqq W_T(s,m(a),\Phi)\cdot(\det y)^{-\frac{2+b'+b''}{2}}\te^{2\pi\tr(Ty)},
\]
which has an analytic continuation to the entire complex plane. Here, $a$ is any element in $\GL_2(\dC)$ such that $y=a\cdot\pres{t}{\ol{a}}$ and $\det a\in\dR_{>0}$. The corollary below is an analogue of Corollary \ref{co:whittaker_orthogonal} in the unitary case, with the same proof using Proposition \ref{pr:whittaker_unitary} instead of Proposition \ref{pr:whittaker_orthogonal}.

\begin{corollary}\label{co:whittaker_unitary}
For every $T\in\Herm_2^\circ(\dC)$ with $\det T<0$, the identity
\[
\cW_{b,T}(0,ty,\Psi_\lambda)=-t\frac{\rd}{\rd t}\cW'_T(0,ty,\Phi_\lambda)
\]
holds for every $t\in\dR_{>0}$.
\end{corollary}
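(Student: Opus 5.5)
The plan is to mimic the proof of Corollary \ref{co:whittaker_orthogonal}, replacing Proposition \ref{pr:whittaker_orthogonal} by Proposition \ref{pr:whittaker_unitary}. The only change is the missing factor $\frac12$ in front of $f_{\Psi_{\lambda i}}$, which is why the constant $-2$ becomes $-1$.

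First I reduce to diagonal $y=\mathrm{diag}(y_1,y_2)$ with $y_1,y_2>0$. For general $y\in\Herm_2^\circ(\dR)^+$, write $y=k\,\mathrm{diag}(y_1,y_2)\,\pres{t}{\ol{k}}$ with $k\in\rU(2)$ and replace $T$ by $\pres{t}{\ol{k}}Tk$. This change preserves $\det T<0$ and $\tr(Ty)$. The Whittaker functions transform compatibly because $\Phi_\lambda$ and $\Psi_\lambda$ are invariant under $H(\dR)$, and the action of $m(k)$ on $T$ only permutes the Whittaker coefficients.

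Next I apply the lowering operators. Set $y^{1/2}=\mathrm{diag}(y_1^{1/2},y_2^{1/2})$ and apply Proposition \ref{pr:whittaker_unitary} to the $T$-th Whittaker coefficient at $m(y^{1/2})$, normalized as $\cW_{b,T}$. For $i=1,2$ this gives
\[
-s\,\cW_{b,T}(s,m(y^{1/2}),\Psi_{\lambda i})+s\,\cW_{b,T}(s,m(y^{1/2}),\Phi_{\lambda i})=\rL_i\cW_{b,T}(s,m(y^{1/2}),\Phi_\lambda).
\]
The factor $(\det y)^{-(2+b'+b'')/2}\te^{2\pi\tr(Ty)}$ is designed so that $\rL_i$ acts on the normalized function as $y_i\frac{\rd}{\rd y_i}$. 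Concretely, $f_{\Phi_\lambda}$ has $K_2$-type $(b'+b''+1,1)\boxtimes(-1,-b'-b''-1)$, and the lowering operator on a Whittaker function of that weight is $y_i\partial_{y_i}$ plus $-2\pi T_{ii}y_i$ plus a weight term. Both extra terms are absorbed exactly by the normalization. Checking this exponent is the step I expect to need care.

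Summing over $i=1,2$ and dividing by $-s$ gives
\[
\cW_{b,T}(s,\cdot,\Psi_\lambda)-\cW_{b,T}(s,\cdot,\Phi_{\lambda1}+\Phi_{\lambda2})=-s^{-1}\sum_i y_i\frac{\rd}{\rd y_i}\cW_{b,T}(s,\cdot,\Phi_\lambda).
\]

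Then I let $s\to0$. Since $\Phi_\lambda$ and $\Phi_{\lambda1}+\Phi_{\lambda2}$ are Schwartz functions on the definite space $(\dC^2)^2$ and $\det T<0$, the local Siegel--Weil formula gives $W_T(0,\cdot,-)=0$ for both. This is because $T$ is not represented by the definite hermitian space. Hence the value at $s=0$ of $\cW_{b,T}(s,\cdot,\Phi_\lambda)$ vanishes, the right-hand side tends to $-\sum_i y_i\partial_{y_i}\cW'_{b,T}(0,\cdot,\Phi_\lambda)$, and the second term on the left drops out. Finally I substitute $y\mapsto ty$: the Euler operator $\sum_i y_i\partial_{y_i}$ along the ray becomes $t\frac{\rd}{\rd t}$, which yields the stated identity.
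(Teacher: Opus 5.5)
Your proposal is correct and follows the paper's own route: rerun the proof of Corollary \ref{co:whittaker_orthogonal} with Proposition \ref{pr:whittaker_unitary} in place of Proposition \ref{pr:whittaker_orthogonal}, where the missing factor $\frac{1}{2}$ turns $-2$ into $-1$, and then let $s\to 0$ using $W_T(0,\cdot,\Phi)=0$ for $\Phi$ on the definite space when $\det T<0$. Two minor touch-ups: for the factor $\te^{2\pi\tr(Ty)}$ to absorb it, the term that $\rL_i$ produces on $W_T$ must be $+2\pi T_{ii}y_i$, not $-2\pi T_{ii}y_i$; and the diagonal reduction follows from $W_T(s,m(k)g,\Phi)=W_{\pres{t}{\ol{k}}Tk}(s,g,\Phi)$ for $k\in\mathrm{SU}(2)$ (left equivariance of the section), not from $H(\dR)$-invariance of $\Phi_\lambda$ and $\Psi_\lambda$.
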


Now we can finish the proof of Theorem \ref{th:sw_local_unitary}. Write $T$ for $T(x_1,x_2)$ for short, so that $\det T<0$. By Lemma \ref{le:sw_unitary_2} and Lemma \ref{le:sw_unitary_3}, we have
\begin{align*}
\int_\fD g_\lambda(x_1)\ast g_\lambda(x_2)
&=C\te^{-2\pi\tr T}\lim_{M\to +\infty}\int_1^M t^{-(b'+b'')}\te^{2\pi\tr tT} \cdot W_{tT}(0,1_4,\Psi_\lambda)\frac{\rd t}{t} \\
&=C\te^{-2\pi\tr T}\lim_{M\to +\infty}\int_1^M t^{-(2+b+b'')}\te^{2\pi\tr tT} \cdot W_T(0,m(t^{1/2}1_2),\Psi_\lambda)\frac{\rd t}{t} \\
&=C\te^{-2\pi\tr T}\lim_{M\to +\infty}\int_1^M \cW_{b,T}(0,t1_2,\Psi_\lambda)\frac{\rd t}{t},
\end{align*}
which, by Corollary \ref{co:whittaker_unitary}, equals
\[
C\te^{-2\pi\tr T}\lim_{M\to +\infty}\(\cW'_{b,T}(0,1_2,\Phi_\lambda)-\cW'_{b,T}(0,M1_2,\Phi_\lambda)\).
\]
Similar to the orthogonal case, we have $\lim_{M\to +\infty}\cW'_{b,T}(0,M1_2,\Phi_\lambda)=0$.

Thus, one obtain
\[
\int_\fD g_\lambda(x_1)\ast g_\lambda(x_2)=C\te^{-2\pi\tr T}\cW'_{b,T}(0,1_2,\Phi_\lambda)=C\cdot W'_T(0,1_4,\Phi_\lambda).
\]
To figure out the value of $C$, we take $\lambda=\b{0}$. Then by \cite{Liu11}*{Theorem~4.17}, we obtain
\[
C=(8\pi^3)^{-1}.
\]
Theorem \ref{th:sw_local_unitary} is finally proved.

\end{document}